\let\csname ver@amsthm.sty\endcsname\relax
\let\theoremstyle\relax
\title{Tangent spaces to the Teichm\"uller space 
from the energy-conscious perspective}
\author{Divya Sharma}
\address{\scriptsize{WWU M\"unster\\
         FB 10 - Mathematik und Informatik\\
         Einsteinstr.~62,
         D-48149 M\"unster, Germany}}
\email{dsharma@uni-muenster.de}
\date{May 2021}
\keywords{Teichm\"uller space, Holomorphic quadratic differentials, Group cohomology, 
Harmonic maps, Harmonic vector fields, the universal Teichm\"uller curve.}
\newtheorem{theorem}{Theorem}[section]
\newtheorem{prop}[theorem]{Proposition}
\newtheorem{lemma}[theorem]{Lemma}
\newtheorem{coro}[theorem]{Corollary}
\theoremstyle{definition}
\newtheorem{defn}[theorem]{Definition}
\newtheorem{example}[theorem]{Example}
\newtheorem{problem}[theorem]{Open Problem}
\theoremstyle{remark}
\newtheorem{remark}[theorem]{Remark}
\newtheorem{qus}{Question}
\theoremstyle{plain}
\newtheorem{mainthmintro}{Theorem}
\newtheorem{mainprop}{Proposition}
\newtheorem{maindefn}{Definition}
\newtheorem{mainlemma}{Lemma}
\newtheorem{maincoro}{Corollary}
\newtheorem{mainopen}{Open Problem}
\newtheorem{mainremark}{Remark}
\numberwithin{equation}{section}
\def\rr{\mathbb{R}}
\def\hh{\mathbb{H}^{2}}
\def\cc{\mathbb{C}}
\def\zz{\mathbb{Z}}
\def\dd{\mathbb{D}}
\def\ss{\mathbb{S}}
\begin{document}

\begin{abstract}
Usually, the description of tangent spaces to 
the Teichmueller space $\mathscr{T}(\Sigma_{g})$ of a 
compact Riemann surface $\Sigma_{g}$ of genus $g \geq 2$ \big(which we can 
identify with the quotient space $\hh / \Gamma_{g}$ of the upper half plane $\hh$ by 
a discrete cocompact subgroup $\Gamma_{g}$ of $\mathrm{PSL}(2, \rr)$\big)
comes 
in two different flavours: the space of 
holomorphic quadratic differentials on $\Sigma_{g}$ 
which are holomorphic sections of the tensor square of the 
canonical line bundle of $\Sigma_{g}$
and the first cohomology group $H^{1}(\Gamma_{g}; \mathfrak{g})$ of the 
fundamental group $\Gamma_{g}$ of $\Sigma_{g}$ with coefficients in 
the vector space $\mathfrak{g}$
of Killing vector fields on $\hh$ (or on $\dd$), a.k.a the Lie algebra of 
$\mathrm{PSL}(2, \rr)$. In this article, we are 
concerned with connecting the above-mentioned 
descriptions using the notion of a \textit{harmonic 
vector field} on the upper half plane $\hh$ (equivalently, on $\dd$) 
that takes inspiration 
from the theory of harmonic maps 
between compact hyperbolic Riemann surfaces. As an application, we also show that 
how a harmonic vector field on $\hh$ (or on $\dd$) describes a connection on 
the universal Teichm\"uller curve. 
\end{abstract}

\maketitle

\setcounter{tocdepth}{2}
\tableofcontents

\section*{Introduction} 
\label{introduction}
In Riemann surface theory, Teichmueller theory, and the theory of moduli spaces, 
   on the one hand, we benefit a lot 
   from cross-pollination of techniques coming from sometimes disparate fields like
   topology, complex analysis, algebraic geometry, and arithmetic 
   geometry. However, on the other hand, passing from one structure/definition to 
   another is 
   quite often an arduous task, making the use of 
   different techniques simultaneously rather tricky. 
   
   Incase of a closed Riemann surface $\Sigma_{g}$ of genus $g \geq 2$, 
   to make a smooth transition from 
   \begin{displayquote}
    \textit{complex structures}. A \textit{complex structure} on $\Sigma_{g}$ is
an equivalence class of complex atlases, where two atlases, say, $\{U_{i}, 
f_{i}\}$ and $\{ V_{i}, g_{i}\}$ are equivalent iff their union forms a new complex 
atlas. We denote the set of almost 
complex structures on $\Sigma_{g}$ by $\mathscr{C}(\Sigma_{g})$ 
   \end{displayquote}
to 
    \begin{displayquote}
      \textit{hyperbolic structures}. A closed oriented surface $\Sigma_{g}$ of 
      genus $g \geq 2$
 endowed with a fixed hyperbolic metric, i.e., a Riemannian 
 metric of constant sectional curvature -1, is known as a hyperbolic surface or
 $\Sigma_{g}$ equipped with 
 a hyperbolic structure (see \cite{goldman1}, \cite{goldman2}, \cite[Chapter 5]{goldman3} for 
 equivalent definitions of hyperbolic structures on $\Sigma_{g}$),
    \end{displayquote}
we need the 
   Uniformization
   theorem (\cite{abik}, \cite{klein}). 
   From the lens of the Korn-Lichtenstein theorem\footnote{The Korn-Lichtenstein theorem is 
   same as the Newlander-Nirenberg theorem for surfaces.} (\cite{Ch55}, \cite{NN57}) 
   we watch 
   metamorphosis of 
   \begin{displayquote}
     \textit{almost complex structures}. \textit{An almost complex structure} on $\Sigma_{g}$
 is a smooth bundle endomorphism $J: T \Sigma_{g} \longrightarrow T \Sigma_{g}$ such that
  for all $x \in \Sigma_{g}$, $J_{x}^{2}=-I_{x}$ and for all non-zero
   $v \in T_{x} \Sigma_{g}$, $(v, J_{x}(v))$
   is an oriented basis for $T_{x}\Sigma_{g}.$
 Equivalently, an almost complex structure $J$ is a smooth section of the fiber bundle 
   $$\mathrm{GL}(\Sigma_{g}) \times_{\mathrm{GL}^{+}(2, \rr)} \mathrm{GL}^{+}(2, \rr) / 
   \mathrm{GL}(1, \cc) \longrightarrow \Sigma_{g},$$ where  
   $\mathrm{GL}(1, \cc)$ is the multiplicative group of 
 non-zero complex numbers embedded in the group $\mathrm{GL}^{+}(2, \rr)$ of the
real $2 \times 2$ matrices with positive determinant. We denote the set of almost 
complex structures on $\Sigma_{g}$ by $\mathscr{A}(\Sigma_{g})$. Note that  
$\mathscr{A}(\Sigma_{g})$ is endowed with the $C^{\infty}$-topology and is 
clearly contractible because the homogeneous 
space $\mathrm{GL}^{+}(2, \rr) / \mathrm{GL}(1, \cc)$ 
is contractible 
   \end{displayquote}
into 
   \begin{displayquote}
    complex structures. 
   \end{displayquote} 
   
   Usually, the problems even get worse when 
   passing from a single Riemann surface to either the parametrization space 
   $\mathscr{T}(\Sigma_{g})$ - famously known as the 
   Teichmueller space of $\Sigma_{g}$ - 
   parameterizing hyperbolic structures/complex structures/almost complex structures 
   on $\Sigma_{g}$ upto \textit{isotopy} or the
   bundles of Riemann surfaces. 
   As already mentioned, this problem is not only confined to structures but it is also 
   valid when it comes to connecting 
   different definitions and different descriptions of a mathematical object 
   in Teichmueller theory. 
   For instance, the description of the 
   Teichmueller 
   space $\mathscr{T}(\Sigma_{g})$ 
   of a closed oriented surface $\Sigma_{g}$ of genus $g \geq 2$ enjoys a 
   multifaceted viewpoint, i.e., we can view $\mathscr{T}(\Sigma_{g})$ as 
   \begin{itemize}
 \item the quotient space of the space $\mathscr{A}(\Sigma_{g})$ of 
   almost complex structures on $\Sigma_{g}$ by the action of the group 
   $\mathrm{Diff}^{+}_{0}(\Sigma_{g})$
   of orientation preserving diffeomorphisms on $\Sigma_{g}$ that are 
   isotopic to the identity. The group
   $\mathrm{Diff}_{0}^{+}(\Sigma_{g})$ acts 
 on $\mathscr{A}(\Sigma_{g})$ in the following manner
 $$ (f^{\ast}J)_{x}:= (df_{x})^{-1}J_{f(x)}df_{x}; \quad f 
 \in \mathrm{Diff}_{0}^{+}(\Sigma_{g});$$ 
   \item the quotient space of the space $\mathscr{H}(\Sigma_{g})$ of 
   Riemannian metrics of constant sectional curvature $-1$ on $\Sigma_{g}$ 
   by the action of the group $\mathrm{Diff}^{+}_{0}(\Sigma_{g})$ (\cite{fischer}, \cite{Tr92}). The 
   group $\mathrm{Diff}_{0}^{+}(\Sigma_{g})$ acts on $\mathscr{H}(\Sigma_{g})$ by pullback 
   of metrics. Note that $\mathscr{H}(\Sigma_{g}) \subset 
   \mathscr{M}(\Sigma_{g})$, where $\mathscr{M}(\Sigma_{g})$ denotes the space of 
   all Riemannian metrics on $\Sigma_{g}$;
   \item the quotient space of $\mathrm{Hom}_{0}(\Gamma_{g}, \mathrm{PSL}(2, \rr))$ 
   by the action of the Lie group $\mathrm{PSL}(2, \rr)$, where $\Gamma_{g}$ is the 
   fundamental group of $\Sigma_{g}$ and
   $\mathrm{Hom}_{0}(\Gamma_{g}, \mathrm{PSL}(2, \rr))$ is the space of 
 homomorphisms $\Gamma_{g} \longrightarrow \mathrm{PSL}(2, \mathbb{R})$
 which describe a discrete and cocompact action of $\Gamma_{g}$ on $\hh$.
   \end{itemize}
   
  The above-mentioned viewpoints are brought together in one-to-one correspondence by the 
  following commutative diagram:

  \begin{figure}[H]
  \label{differentteich}
    \begin{center}
    \[
    \xymatrixcolsep{1.5pc}
   \xymatrix{
   \mathscr{M}(\Sigma_{g}) \ar@/_2pc/[ddrr]_\varsigma & &
   \mathscr{H}(\Sigma_{g}) \ar@{_{(}->}[ll]^-{i} \ar@{->>}[r]^-{p_{1}} \ar@[red][d]^\Theta & 
   \mathscr{H}(\Sigma_{g}) / \mathrm{Diff}^{+}_{0}(\Sigma_{g})  \ar@/^1pc/[dr]^\Psi 
   \ar@{=}[d] \\
  & & \mathscr{C}(\Sigma_{g})  \ar@[red][d]^\Xi & 
   \mathscr{T}(\Sigma_{g}) \ar@{=}[d] \ar@{=}[r] & 
   \mathrm{Hom}_{0}(\Gamma_{g}, \mathrm{PSL}(2, \rr)) / \mathrm{PSL}(2, \rr) \\
  & &  \mathscr{A}(\Sigma_{g}) \ar@{->>}[r]^-{p_{2}} \ar@/^2pc/@[red][uu]^\Phi & 
    \mathscr{A}(\Sigma_{g}) / \mathrm {Diff}^{+}_{0}(\Sigma_{g})  \ar@/_1pc/[ur]_\varLambda
   }
   \]
   \end{center}
   \caption{Different perspectives to look at the Teichmueller space $\mathscr{T}(\Sigma_{g})$: 
  ``red'' arrows denote $\mathrm{Diff}^{+}_{0}(\Sigma_{g})$-equivariant bijective maps}
   \end{figure}
   In Figure \ref{differentteich}, $p_{1}$ and $p_{2}$ are the projection maps. Infact, 
   $p_{1}$ and $p_{2}$ are principal $\mathrm{Diff}^{+}_{0}(\Sigma_{g})$-bundles 
   (\cite{EE69}). 
   Clearly, $i$ is the inclusion map. 
   The map $ \varsigma$ is also clear because given a Riemannian metric $h$ on $\Sigma_{g}$, 
   we automatically have an almost complex structure on $\Sigma_{g}$ because with the 
   metric $h$, the notion of angles is clear. 
   The map $\Xi$ is an obvious (forgetful) map given by 
\begin{equation*}
     c \ni (U \subset \Sigma_{g}, \phi) \longmapsto \bigg(J_{\phi}(x):= 
     d\phi^{-1}_{x} \hat{J} d\phi_{x}, x \in U, \hat{J}:= \begin{bmatrix}
                  0& -1 \\
                  1 & 0
                 \end{bmatrix} \bigg).
\end{equation*}
The forgetful map $\Theta$ 
is a consequence of the Uniformization theorem. The continous 
map $\Xi \circ \Theta$ is a bijection. The map $\Phi$ - the inverse of $\Xi \circ \Theta$ - 
is also continous (\cite{EE69}, 
\cite{eellsscha}). One of the main ingredient of the map $\Psi$ is the \textit{holonomy} 
representation (see \cite[Section 2]{burger}). For the description of the map $\varLambda$, 
see \cite{EE69} and \cite{robbin}.

  In the literature,
   $\mathscr{T}(\Sigma_{g})$ 
  is also defined as a
   connected component of the representation variety 
   $\mathrm{Hom}(\Gamma_{g}, \mathrm{PSL}(2, \rr)) / \mathrm{PSL}(2, \rr)$ 
   (\cite{GM80}, \cite{GM88}, \cite{matsumoto}) and the universal orbifold cover 
   of the moduli space of algebraic curves $\mathfrak{M}_{g}$. 
   In their own right, these descriptions are great motivations to study 
 the Teichmueller space $\mathscr{T}(\Sigma_{g})$ in detail, 
 this article will not discuss them further. In this article, 
 $\mathrm{Hom}_{0}(\Gamma_{g}, \mathrm{PSL}(2, \rr)) / \mathrm{PSL}(2, \rr)$ will 
 be our main definition of $\mathscr{T}(\Sigma_{g})$.

 Other than the Teichmueller 
  space $\mathscr{T}(\Sigma_{g})$, there are many examples of spaces 
  in Teichmueller theory
  that enjoy a kaleidoscopic 
  picture. One famous example is tangent spaces to the Teichmueller space 
   $\mathscr{T}(\Sigma_{g})$. 
  Tangent spaces to the Teichm\"{u}ller space $\mathscr{T}(\Sigma_{g})$ 
  are best described 
using the theory of \textit{infinitesimal deformations}. 
The main slogan of the theory is to \textit{deform} a point 
in the Teichmueller space $\mathscr{T}(\Sigma_{g})$ be it 
\begin{itemize}
 \item a homomorphism $\rho$ representing $[\rho]
 \in \mathrm{Hom}_{0}(\Gamma_{g}, \mathrm{PSL}(2, \rr)) /  \mathrm{PSL}(2, \rr)$; 
 \item or a complex 
structure on $\Sigma_{g}$ 
\end{itemize}
with respect to a (real) parameter $t$
and then analyze the local structure of the corresponding spaces. 
Recall the 
Taylor expansion of a 
smooth function $f$ (on a smooth manifold $M$) around a point $x \in M$. The first order 
derivative at $x$ provides good information of $f$. In the same way, 
certain cohomology groups
provide basic and satisfactory information on deformations of a homomorphism 
$\rho \in \mathrm{Hom}_{0}(\Gamma_{g}, \mathrm{PSL}(2, \rr))$. 
Formally speaking, deformation of a homomorphism 
$\rho \in \mathrm{Hom}_{0}(\Gamma_{g}, \mathrm{PSL}(2, \rr))$ 
has the following meaning: 
we take a curve of maps $\rho_{t}$ where $\rho_{0}=\rho$ is a homomorphism, and ask
for (infinitesimal) conditions which ensure that 
this curve $\rho_{t}$ satisfies the homomorphism condition 
\begin{equation*}
 \rho_{t}(\gamma_{1}\gamma_{2})=\rho_{t}(\gamma_{1})\rho_{t}(\gamma_{2}), \quad 
\forall \gamma_{1}, \gamma_{2} \in \Gamma_{g}.
\end{equation*}
Solving $\frac{d \rho_{t}}{dt}\big|_{t=0}$ 
up to the first order 
determines a $1$-cocycle with values in the 
vector space of Killing vector fields on $\hh$, a.k.a the 
Lie algebra $\mathfrak{g}$ of $\mathrm{PSL}(2, \rr)$. As a result, 
$T_{\rho}\mathrm{Hom}_{0}(\Gamma_{g}, \mathrm{PSL}(2, \rr))$ is nothing but the space of 
$\mathfrak{g}$-valued $1$-cocycles $Z^{1}(\Gamma_{g}; \mathfrak{g}_{\mathrm{Ad}_{\rho}})$. Next, 
by considering ``trivial'' deformations $\rho_{t}$ of $\rho$ given by conjugation via elements 
of $\mathrm{PSL}(2, \rr)$ and solving the above-mentioned homomorphism condition up to the first-order 
determines a $1$-coboundary 
$c \in B^{1}(\Gamma_{g}; \mathfrak{g}_{\mathrm{Ad}_{\rho}})$. Hence, 
$$T_{[\rho]}\mathrm{Hom}_{0}(\Gamma_{g}, \mathrm{PSL}(2, \rr))/\mathrm{PSL}(2, \rr) 
\cong
H^{1}(\Gamma_{g}; \mathfrak{g}_{\mathrm{Ad}_{\rho}}).$$
Therefore, 
$H^{1}(\Gamma_{g}; \mathfrak{g}_{\mathrm{Ad}_{\rho}})$ serves as the cohomological description 
of tangent spaces to the Teichmueller space $\mathscr{T}(\Sigma_{g})$. 
The space of 
infinitesimal deformations of a complex structure on $\Sigma_{g}$ is parametrized 
by the space $\mathrm{HQD}(\Sigma_{g})$ 
of \textit{holomorphic quadratic differentials} on $\Sigma_{g}$ (\cite{Kodaira}, 
\cite{Morrow}, \cite[Chapter 1]{wolpert2}), 
where a holomorphic quadratic differential is a holomorphic 
 section of $Q_{\Sigma_{g}}$, 
 the tensor square of the canonical line bundle $K_{\Sigma_{g}}$ of $\Sigma_{g}$. 
 Hence, the analytic description of tangent spaces to the Teichmueller space
 $\mathscr{T}(\Sigma_{g})$ is given by $\mathrm{HQD}(\Sigma_{g})$. 
 
 So, the main aim of this article is to construct  
explicit maps from $\mathrm{HQD}(\Sigma_{g})$ to 
$H^{1}(\Gamma_{g}; \mathfrak{g}_{\mathrm{Ad}\rho})$ and vice-versa, i.e., 
 \begin{equation}
\label{mainthing}
\xymatrix{ 
\mathrm{HQD}(\Sigma_{g}) \ar[r]^-{?} &
H^{1}(\Gamma_{g}; \mathfrak{g}_{\mathrm{Ad}\rho}) \ar[l]^-{?} 
}
\end{equation}

Now, we can ask ourselves the following question: what recipes are we going to use 
in the 
construction of maps from $\mathrm{HQD}(\Sigma_{g})$ to 
$H^{1}(\Gamma_{g}; \mathfrak{g}_{\mathrm{Ad}\rho})$ and vice-versa?

Since the inception of Teichmueller's theorems, the use of 
   \textit{quasiconformal maps}
   in classical Teichmueller theory is prevalent. 
   However, in this thesis, 
   we don't focus much on quasiconformal maps. We take an 
   unconventional road that \textit{minimizes energy} to connect the above-mentioned  
   descriptions
   of tangent spaces to the Teichmueller 
   space $\mathscr{T}(\Sigma_{g})$. Our essential recipe will be the notion of 
   a \textit{harmonic vector field} on the upper half plane $\hh$ or the 
   Poincar\'{e} disk $\dd$ in constructing maps from $\mathrm{HQD}(\Sigma_{g})$ to 
$H^{1}(\Gamma_{g}; \mathfrak{g}_{\mathrm{Ad}\rho})$ and vice-versa. 

The notion of a \textit{harmonic vector field} on $\hh$ (or on $\dd$) takes inspiration 
from the definition 
(see Definition \ref{harmonicdefn}) of a 
\textit{harmonic map} $\phi: \Sigma_{1} 
\longrightarrow \Sigma_{2}$ 
between Riemann surfaces equipped with conformal metrics. 
Harmonic maps are critical 
points of the energy functional 
$$E(\phi) = \int_{\Sigma_{1}} \| d \phi \|^{2}  d\mu,$$
where $ \| \cdot \|$ is the Hilbert-Schmidt norm  
and $d\mu$ is the measure on $\Sigma_{1}$ determined by the Riemannian metric 
on $\Sigma_{1}$. The integrand is also known as the 
\textit{energy density} (see (\ref{energydensity})). Equivalently, 
harmonic maps 
satisfy the \textit{Euler-Lagrange partial differential equations} associated with 
the energy functional 
(see (\ref{euler})). These PDEs are non-linear 
and elliptic. Harmonic maps exist in the homotopy class of any diffeomorphism when the 
target surface is equipped with a strictly negatively curved metric, and are 
unique (\cite{ells}, \cite{hart}). Harmonic maps are related to 
holomorphic quadratic 
differentials intimately, 
hence play 
an important role in Teichmueller theory. This relation arises from the
fact that 
\begin{displayquote}
 a diffeomorphism $\phi: (\Sigma_{1}, \sigma) 
\longrightarrow (\Sigma_{2}, \rho)$ between two Riemann surfaces equipped with conformal metrics 
is harmonic 
iff the quadratic differential $(\phi^{\ast}\rho)^{(2, 0)}$ 
on the source surface $\Sigma_{1}$
is holomorphic (see Example \ref{harmimplieshol} and 
 \cite[Lemma 1.1]{jost1}). \hfill $(\dagger)$
\end{displayquote}

The use of harmonic maps in Teichmueller theory goes all the way back to 
Gerstenhaber and Rauch's program (\cite{Gerstenhaber}, \cite{Gerstenhaber1}, \cite{Reich}) to prove 
Teichmueller's Theorems (\cite{teichmu}) using harmonic maps. 
In order to state our main results, 
we need to define a harmonic vector field on the upper half plane 
$\hh$ or the Poincar\'{e} disk $\dd$: let $U$ be an open subset of $M$, where 
$M$ is either the upper half plane $\hh$ or the Poincar\'{e} disk $\dd$. 
Let $\{\phi_{t}\}_{t \in [0, \epsilon)}$ be a 
smooth family of smooth  maps
$$\phi_{t} : U \longrightarrow M$$
where $\phi_{0}$ is the inclusion. Then $\xi= \frac{d \phi_{t}}{dt} \vert_{t=0}$ is 
a vector field on $U$.
\begin{maindefn}[Definition \ref{defn12}]\label{maindefn1}
 \normalfont The vector field $\xi$ on $U$ is harmonic if there exists a 
 smooth family of smooth maps
 $\{\phi_{t}: U \longrightarrow M\}_{t \in [0, \epsilon)}$ which satisfies 
 the following:
 \begin{enumerate}
\item $\phi_{0}$ is the inclusion map,
\item $\displaystyle\frac{d \phi_{t}}{dt}\Big\vert_{t=0} = \xi$,
\item $\forall x \in U:~\displaystyle\frac{d}{dt}\Big\vert_{t=0}~\tau(\phi_{t})(x)=0\,$, where 
$\tau$ is the \textit{tension field} (see Definition \ref{tensionfieldharmonic}).
\end{enumerate}
\end{maindefn}
An infinitesimal version of $(\dagger)$ is given by the following: 
\begin{mainprop}[Proposition \ref{thm2}]
\label{firstprop}
 A smooth vector field $\xi$ on $\hh$ or on $\dd$
 is harmonic iff $\big(\mathcal{L}_{\xi}\textbf{g}_{\hh}\big)^{(2, 0)}$ or 
 $\big(\mathcal{L}_{\xi}\textbf{g}_{\dd}\big)^{(2, 0)}$
 is holomorphic. 
\end{mainprop}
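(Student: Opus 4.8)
The plan is to read the proposition as the first-order form of the fact $(\dagger)$, evaluated at $t = 0$ along a family $\{\phi_{t}\}$ realizing $\xi$. Since everything in sight is conformally natural and local, I would first fix a local holomorphic coordinate $z$ in which $\mathbf{g}_{\hh} = \lambda^{2}\,dz\,d\bar z$ with $\lambda = (\mathrm{Im}\,z)^{-1}$ (the Poincar\'{e} disk being handled with the analogous conformal factor), write $\xi = \eta\,\partial_{z} + \bar\eta\,\partial_{\bar z}$ for a smooth complex-valued function $\eta$, and take a family as in the definition of a harmonic vector field (Definition~\ref{maindefn1}), so that $\phi_{t}(z) = z + t\,\eta(z) + O(t^{2})$ in this coordinate. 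One caveat shapes the whole argument: the maps $\phi_{t}$ with $t \neq 0$ need not be harmonic, so I cannot simply differentiate $(\dagger)$ itself; instead I would differentiate, at $t = 0$, the pointwise identity that underlies it.

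On the analytic side, I would use that $\mathcal{L}_{\xi}\mathbf{g}_{\hh} = \frac{d}{dt}\big|_{0}\phi_{t}^{*}\mathbf{g}_{\hh}$, a first derivative at $t = 0$ which depends only on $\phi_{0} = \mathrm{id}$ and $\dot\phi_{0} = \xi$ and not on the higher jets of the family, so that $\big(\mathcal{L}_{\xi}\mathbf{g}_{\hh}\big)^{(2,0)}$ is exactly the first variation $\frac{d}{dt}\big|_{0}\mathrm{Hopf}(\phi_{t})$ of the Hopf differentials $\mathrm{Hopf}(\phi_{t}) = \big(\phi_{t}^{*}\mathbf{g}_{\hh}\big)^{(2,0)}$. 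A short computation of $\mathcal{L}_{\xi}(\lambda^{2}\,dz\,d\bar z)$ via Cartan's formula (equivalently, differentiating $\lambda^{2}(\phi_{t})\,(\phi_{t})_{z}\,\overline{(\phi_{t})_{\bar z}}$) then yields $\big(\mathcal{L}_{\xi}\mathbf{g}_{\hh}\big)^{(2,0)} = \lambda^{2}\,\overline{\eta_{\bar z}}\; dz^{2}$, which is holomorphic iff $\partial_{\bar z}\big(\lambda^{2}\,\overline{\eta_{\bar z}}\big) = 0$, i.e. (dividing by $\lambda^{2}$ and conjugating) iff
\[
\eta_{z\bar z} + 2(\log\lambda)_{z}\,\eta_{\bar z} = 0 .
\]

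On the energy side, I would substitute $\phi_{t}(z) = z + t\,\eta(z) + O(t^{2})$ into the standard coordinate formula for the tension field of a map between conformal surfaces: $\tau(\phi_{t})$ equals, up to a fixed nonzero constant, $\lambda^{-2}\big[(\phi_{t})_{z\bar z} + 2\big((\log\lambda)_{w}\circ\phi_{t}\big)\,(\phi_{t})_{z}\,(\phi_{t})_{\bar z}\big]\,\partial_{w}$ plus its conjugate. The bracket expands as $t\big(\eta_{z\bar z} + 2(\log\lambda)_{z}\,\eta_{\bar z}\big) + O(t^{2})$; in particular $\tau(\phi_{0}) = 0$, which is just the harmonicity of the identity map, and --- crucially --- precisely because $\tau(\phi_{0})$ vanishes, $\frac{d}{dt}\big|_{0}\tau(\phi_{t})$ is a genuine well-defined section of $T\hh$, independent both of the connection used to compare the pullback bundles $\phi_{t}^{*}T\hh$ and of the higher jets of $\{\phi_{t}\}$. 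It is a nonzero multiple of $\big(\eta_{z\bar z} + 2(\log\lambda)_{z}\,\eta_{\bar z}\big)\,\partial_{w}$ plus its conjugate, so $\xi$ is harmonic in the sense of Definition~\ref{maindefn1} exactly when the same equation $\eta_{z\bar z} + 2(\log\lambda)_{z}\,\eta_{\bar z} = 0$ holds; as a by-product this shows that harmonicity of $\xi$ is intrinsic to $\xi$ and independent of the chosen family. Comparing the two sides proves the equivalence on $\hh$, and the argument on $\dd$ is identical.

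Conceptually, the two computations are one and the same: both come from differentiating at $t = 0$, along $\{\phi_{t}\}$, the pointwise Bochner-type identity behind Example~\ref{harmimplieshol},
\[
\partial_{\bar z}\big(\mathrm{Hopf}(\phi)\big) = \lambda^{2}\Big(\overline{\phi_{\bar z}}\,\mathcal{T}(\phi) + \phi_{z}\,\overline{\mathcal{T}(\phi)}\Big), \qquad \mathcal{T}(\phi) := \phi_{z\bar z} + 2\big((\log\lambda)_{w}\circ\phi\big)\,\phi_{z}\phi_{\bar z},
\]
which holds for every smooth $\phi$, not just diffeomorphisms (here $\lambda$ and its derivatives in the first slot are evaluated along $\phi$). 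Since $\mathcal{T}(\phi_{0}) = 0$, $(\phi_{0})_{z} = 1$ and $(\phi_{0})_{\bar z} = 0$, every cross term drops upon differentiation and one is left with $\partial_{\bar z}\big((\mathcal{L}_{\xi}\mathbf{g}_{\hh})^{(2,0)}\big) = \lambda^{2}\,\overline{\tfrac{d}{dt}\big|_{0}\mathcal{T}(\phi_{t})}$, whose left-hand side vanishes iff $(\mathcal{L}_{\xi}\mathbf{g}_{\hh})^{(2,0)}$ is holomorphic and whose right-hand side vanishes iff $\xi$ is harmonic. I expect the only genuinely delicate point to be the bookkeeping on the energy side --- making sense of $\frac{d}{dt}\big|_{0}\tau(\phi_{t})$ as an honest tensor, which hinges on $\tau(\phi_{0}) = 0$, that is, on the identity map being harmonic; granting that, the rest is the elementary Taylor expansion indicated above.
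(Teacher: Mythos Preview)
Your proof is correct and follows the same overall strategy as the paper --- linearize both the tension-field condition and the Hopf differential at $t=0$ along a family $\phi_t = \mathrm{id} + t\xi + O(t^2)$, and observe that both reduce to the single scalar equation $\eta_{z\bar z} + 2(\log\lambda)_z\,\eta_{\bar z} = 0$. The difference lies in the bookkeeping: the paper works in real coordinates $(x,y)$ on $\hh$, writes out the linearized Euler--Lagrange system (\ref{equ7})--(\ref{equ10}) using the explicit Christoffel symbols of $\textbf{g}_{\hh}$, then separately computes $\mathcal{L}_\xi\textbf{g}_{\hh}$ as a $2\times 2$ matrix, extracts its trace-free part, and checks by hand that the Cauchy--Riemann equations for the resulting $f$ coincide with (\ref{equ9})--(\ref{equ10}). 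You instead stay in the complex coordinate and use the conformal tension-field formula $\mathcal{T}(\phi)=\phi_{z\bar z}+2(\log\lambda)_w(\phi)\,\phi_z\phi_{\bar z}$ directly, which collapses both computations to a single line each and makes the role of the underlying identity $\partial_{\bar z}\mathrm{Hopf}(\phi)=\lambda^2(\overline{\phi_{\bar z}}\,\mathcal{T}(\phi)+\phi_z\,\overline{\mathcal{T}(\phi)})$ transparent. Your route is shorter and more conceptual; the paper's has the virtue of producing the explicit coordinate formula $\overline{f(z)}=\tfrac{-8}{(z-\bar z)^2}\partial_{\bar z}\xi$ (your $\lambda^2\overline{\eta_{\bar z}}$ up to a harmless normalization factor), which it needs later for the explicit integral formulae of \cref{harmonicexplicit}.
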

Our first main theorem is based on the above 
Proposition and the fact that a holomorphic 
vector field on $U \subset \hh$ is a harmonic vector field on $U \subset \hh$. 
\begin{mainthmintro}[Theorem \ref{sess}]\label{maintheorem1}
Let $\mathcal{HOL}$ denote the sheaf of holomorphic vector fields on $\hh$, $\mathcal{HARM}$ 
denote the sheaf of harmonic vector fields on $\hh$ and 
$\mathcal{HQD}$ denote the sheaf of holomorphic quadratic differentials on $\hh$. 
Then the following sequence of sheaves
\begin{equation}
\label{maintheoremequation}
 \xymatrix{ 
\mathcal{HOL} \ar[r]^-{\alpha} &
\mathcal{HARM} \ar[r]^-{\beta} &
\mathcal{HQD} 
}
\end{equation}
is a short exact sequence of sheaves on $\hh$. In (\ref{maintheoremequation}), 
$\alpha$ is the inclusion map and $\beta$ is 
 given by the formula in Proposition \ref{firstprop}. 
\end{mainthmintro}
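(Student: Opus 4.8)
The plan is to verify exactness of the sequence \eqref{maintheoremequation} on stalks, since being a monomorphism, an epimorphism, or exact in the middle are all local properties of a sequence of sheaves. The only non-formal ingredient is \cref{firstprop} --- that a smooth vector field $\xi$ is harmonic iff $\big(\mathcal{L}_{\xi}\textbf{g}_{\hh}\big)^{(2,0)}$ is holomorphic --- which I take as a black box; everything else reduces to one coordinate computation and the $\bar\partial$-Poincar\'e lemma.

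\emph{Step 1 (local form of $\beta$).} Let $z$ be the standard coordinate on $\hh$, so that $\textbf{g}_{\hh}=\rho\,dz\,d\bar z$ with $\rho=(\operatorname{Im}z)^{-2}$ nowhere vanishing, and write a (real) vector field in this coordinate as $\xi=f\,\partial_{z}+\bar f\,\partial_{\bar z}$ with $f$ smooth and complex valued, so that $\xi$ is a holomorphic vector field exactly when $f$ is holomorphic. From $\mathcal{L}_{\xi}\textbf{g}_{\hh}=(\xi\rho)\,dz\,d\bar z+\rho\,df\cdot d\bar z+\rho\,dz\cdot d\bar f$ (symmetric products), only the last summand contributes a $(dz)^{2}$-term, and one gets
\begin{equation*}
\beta(\xi)=\big(\mathcal{L}_{\xi}\textbf{g}_{\hh}\big)^{(2,0)}=\rho\,(\partial_{z}\bar f)\,(dz)^{2}=\rho\,\overline{\partial_{\bar z}f}\,(dz)^{2}.
\end{equation*}
Both $\alpha$ (the inclusion) and $\beta$ commute with restriction, the Lie derivative being a local operator, so they are morphisms of sheaves; $\alpha$ is a well-defined map \emph{into} $\mathcal{HARM}$ by the fact that holomorphic vector fields are harmonic, and $\beta$ lands in $\mathcal{HQD}$ by \cref{firstprop}.

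\emph{Step 2 (exactness at $\mathcal{HOL}$ and at $\mathcal{HARM}$).} Exactness at $\mathcal{HOL}$ is immediate: $\alpha$ is the inclusion of a subsheaf, hence injective on every stalk. For the middle term, the formula of Step 1 shows that for a harmonic $\xi$ one has $\beta(\xi)=0$ iff $\partial_{\bar z}f\equiv 0$ (here the nowhere-vanishing of $\rho$ is used), i.e.\ iff $f$ is holomorphic, i.e.\ iff $\xi$ is a holomorphic vector field; and $\beta\circ\alpha=0$ by the same formula. Hence $\ker\beta=\operatorname{im}\alpha$ stalkwise.

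\emph{Step 3 (exactness at $\mathcal{HQD}$).} It remains to show $\beta$ is an epimorphism of sheaves, i.e.\ surjective on stalks, and the clean point is that surjectivity decouples from harmonicity: if $q$ is a germ of holomorphic quadratic differential at $p\in\hh$ and $\xi$ is \emph{any} smooth vector field near $p$ with $\beta(\xi)=q$, then $\big(\mathcal{L}_{\xi}\textbf{g}_{\hh}\big)^{(2,0)}=q$ is holomorphic, so $\xi$ is automatically harmonic by \cref{firstprop}. Thus it suffices to solve $\beta(\xi)=q$ with $\xi$ merely smooth. Writing $q=q(z)\,(dz)^{2}$, Step 1 reduces this to the single scalar equation $\partial_{z}\bar f=(\operatorname{Im}z)^{2}\,q(z)$, equivalently $\partial_{\bar z}f=\overline{(\operatorname{Im}z)^{2}\,q(z)}$, whose right-hand side is smooth; the inhomogeneous Cauchy--Riemann equation is solvable on a small disk $U\ni p$ by the $\bar\partial$-Poincar\'e lemma (or by exhibiting an explicit local primitive). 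The resulting $\xi=f\,\partial_{z}+\bar f\,\partial_{\bar z}$ lies in $\mathcal{HARM}(U)$ and satisfies $\beta(\xi)=q|_{U}$, so $\beta$ is onto the stalk at $p$. Combining Steps 2 and 3, \eqref{maintheoremequation} is a short exact sequence of sheaves; the argument on $\dd$ is identical after transporting by a biholomorphism $\hh\cong\dd$.

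I do not anticipate a genuine obstacle once \cref{firstprop} is in hand: the work sits in (i) the coordinate computation of $\beta$, in particular the bookkeeping of the $(2,0)$-projection and the use of $\rho\neq 0$, and (ii) the observation in Step 3 that the preimage problem for $\beta$ can be solved while ignoring harmonicity, so that surjectivity collapses to the purely local solvability of $\bar\partial u=g$. The only point needing care in the write-up is to fix conventions --- in particular that $\mathcal{HOL}$ denotes real vector fields whose $(1,0)$-part is holomorphic, consistently with $\mathcal{HARM}$ being a sheaf of real vector fields.
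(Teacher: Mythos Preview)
Your proof is correct. Exactness at $\mathcal{HOL}$ and at $\mathcal{HARM}$ you handle exactly as the paper does, via the coordinate formula for $\beta$ together with \cref{firstprop}. The difference is in Step~3: the paper establishes local surjectivity of $\beta$ constructively, either by quoting Wolpert's explicit integral formula
\[
\xi(z)=\Bigg(\overline{\int_{w}^{z}(\bar z-\zeta)^{2}f(\zeta)\,d\zeta}\Bigg)\eta(z)
\]
or by its own formula $\xi_{c}(z)=\big(\int_{\Im(z)}^{c}\iota\zeta^{2}\,\overline{f(\bar z+2\iota\zeta)}\,d\zeta\big)\eta(z)$ built up from the case $f(z)=(z-a)^{n}$. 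You instead invoke the $\bar\partial$-Poincar\'e lemma after making the nice observation that harmonicity of the preimage is automatic once $\beta(\xi)$ is holomorphic, so one only needs a smooth solution of $\partial_{\bar z}f=(\operatorname{Im}z)^{2}\,\overline{q(z)}$. Your route is shorter and more conceptual for the sheaf-theoretic statement as written; the paper's explicit formulas, on the other hand, are not incidental---they are the input for the subsequent global surjectivity and boundary-extension results (Theorem~\ref{maintheorem2}), which your soft argument would not by itself provide.
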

\begin{mainremark}
 \normalfont
 Theorem \ref{maintheorem1} is also valid if we replace $\hh$ with $\dd$. 
\end{mainremark}

\begin{mainremark}
 \normalfont
 (\ref{maintheoremequation}) is related to the following short exact sequence of sheaves 
 in classical Teichmueller theory 
 \begin{equation}
 \label{maintheoremequation3}
 \xymatrix{
  0 \ar[r] & \mathcal{S}_{\mathrm{Hol}}\big(T \mathbb{H}^{2}\big) 
  \ar[r]^-{i} 
  & \mathcal{S} \big(T \mathbb{H}^{2}\big)
  \ar[r]^-{\frac{\partial}{\partial \bar{z}}} & \mathcal{BEL} \ar[r] & 0
  }
 \end{equation}
 where $\mathcal{S}_{\mathrm{Hol}}\big(T \mathbb{H}^{2}\big)$ is the sheaf 
 of holomorphic vector fields on $\hh$, $\mathcal{S} \big(T \mathbb{H}^{2}\big)$ is 
 the sheaf of smooth vector field on $\hh$, and $\mathcal{BEL}$ is the 
 sheaf of \textit{Beltrami differentials} on $\hh$. (\ref{maintheoremequation3}) is 
 a special case of a 
more general construction called the  
\textit{Dolbeault resolution} of the sheaf 
$\mathcal{S}_{\mathrm{Hol}}\big(T \mathbb{H}^{2}\big)$. See  
\cref{genesis} for more details. 
\end{mainremark}

Our next main Theorem is about proving the global surjectivity of the map $\beta$ in 
(\ref{maintheoremequation}) in Theorem \ref{maintheorem1}. 

\begin{mainthmintro} [Theorem \ref{thmglobharmvf} + Theorem \ref{boundary}] \label{maintheorem2}
Let $q=f(z)dz^{2}$ be a 
holomorphic quadratic differential on $\hh$. Suppose that 
$q$ satisfies the following boundedness
conditions 
\begin{enumerate}
 \item $q$ is bounded in the hyperbolic metric $\textbf{g}_{\hh}$, i.e.,
\begin{equation*}
 \Arrowvert q \Arrowvert_{\textbf{g}_{\hh}} = \arrowvert f(z) \arrowvert  
 \Arrowvert dz^{2} \Arrowvert_{\textbf{g}_{\hh}} \leq D,
\end{equation*}
where $\Arrowvert dz^{2} \Arrowvert_{\textbf{g}_{\hh}}= \Im(z)^{2}$ and $D$ is a 
positive real number.
\item The first and second covariant derivative of $q$ w.r.t $\nabla$, the
linear connection on $T^{\ast} \hh \otimes_{\cc} T^{\ast} \hh$,
are bounded in the hyperbolic metric $\textbf{g}_{\hh}$. 
\end{enumerate}
Then there exists a harmonic vector field $\xi^{\mathrm{reg}}$ on $\hh$ 
such that $\beta(\xi^{\mathrm{reg}})=q$, where $\beta$ is introduced in 
Theorem \ref{maintheorem1}. An explicit formula is 
\begin{equation*}
  \xi^{\mathrm{reg}}(z) = \lim_{c \to \infty} \Bigg( \xi_{c}(z) - 
  \bigg( \xi_{c}(\iota) + \frac{\partial \xi_{c}}{\partial z}\bigg|_{z=\iota}\cdot 
  (z-\iota)\bigg) \Bigg),
  \end{equation*}
where 
$$\xi_{c}(z)= \bigg(  \int_{y_{\ast}(z)}^{c} \iota \zeta^{2} 
\overline{f(\bar{z}+2 \iota \zeta)} d \zeta \bigg) \eta(z)$$ and 
$c$ is a positive real number. The harmonic vector field $\xi^{\mathrm{reg}}$
transformed from $\hh$ to the open unit disc $\dd$ by 
the Cayley transform $C$
extends to a continuous vector field, say $\chi$, 
on $\overline{\dd}$ defined as follows: \begin{equation*}
 \label{maintheoremequation1}
 \chi(C(z)) = \begin{cases}
            C_{\ast}(\xi^{\mathrm{reg}}(z)) & \quad z \in \hh \\
            C_{\ast}(\xi^{\mathrm{reg}}(z)) & \quad z \in \partial \hh \setminus \{\infty\} \\
            0                     & \quad z = \{ \infty \}
          \end{cases}
          \end{equation*}
          where $C_{\ast}(\xi^{\mathrm{reg}}(z))$ is 
the pushforward of $\xi^{\mathrm{reg}}(z)$ by the Cayley transform $C$. 
\end{mainthmintro}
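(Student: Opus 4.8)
The plan is to recast the construction as solving a single first-order linear equation on $\hh$, write down an explicit integral primitive for it, and then kill the (only, and affine-type) obstruction to convergence of that primitive by the Taylor subtraction at the basepoint $\iota$ appearing in the statement. \emph{Reduction to a scalar equation.} By Proposition \ref{firstprop}, $\beta(\xi)=\big(\mathcal{L}_{\xi}\textbf{g}_{\hh}\big)^{(2,0)}$ is holomorphic exactly when $\xi$ is harmonic, so it is enough to produce $\xi$ with $\beta(\xi)=q$. Writing $\xi=u\,\partial_{z}+\bar{u}\,\partial_{\bar{z}}$ and computing the Lie derivative of the conformal metric $\textbf{g}_{\hh}=\Im(z)^{-2}|dz|^{2}$ in the coordinate $z$, only the $\partial_{z}\bar{u}$–term survives in the $(2,0)$–part, so that $\beta(\xi)=\Im(z)^{-2}(\partial_{z}\bar{u})\,dz^{2}$ up to a harmless numerical normalization; hence $\beta(\xi)=q=f(z)\,dz^{2}$ is equivalent to the scalar equation
\[
\partial_{z}\bar{u}\;=\;\Im(z)^{2}f(z)\;=\;-\tfrac{1}{4}(z-\bar{z})^{2}f(z),
\]
and, the right-hand side being automatically the coefficient of a holomorphic differential, any solution of this equation will be harmonic by Proposition \ref{firstprop}.

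\emph{Explicit primitive and the cutoff.} Freezing $\bar{z}$ and integrating this equation in $z$ along the vertical ray through $z$ — substituting $w=\bar{z}+2\iota\zeta$, so that $w=z$ at $\zeta=y_{\ast}(z)=\Im(z)$, $w-\bar{z}=2\iota\zeta$, $dw=2\iota\,d\zeta$, and then conjugating — turns the primitive (up to the holomorphic frame $\eta$) into the integral $\int_{y_{\ast}(z)}^{\,\cdot}\iota\zeta^{2}\overline{f(\bar{z}+2\iota\zeta)}\,d\zeta$ of the definition of $\xi_{c}$. One truncates at the finite height $\zeta=c$ because the hypothesis $\|q\|_{\textbf{g}_{\hh}}\le D$ gives $|\iota\zeta^{2}\overline{f(\bar{z}+2\iota\zeta)}|\le D\zeta^{2}/(2\zeta-\Im z)^{2}$, so the integrand is merely bounded, not decaying, in $\zeta$ and the integral to $+\infty$ diverges. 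Since the integrand is holomorphic in $z$ ($f$ being holomorphic), the only $z$–dependence of $\xi_{c}$ that can contribute to $\beta(\xi_{c})$ is that of the lower limit $y_{\ast}(z)=\Im(z)$; evaluating the resulting boundary term, where $\bar{z}+2\iota\,\Im(z)=z$, one finds $\beta(\xi_{c})=q$ for every $c$, so each $\xi_{c}$ is already a harmonic vector field and the cutoff is the sole defect.

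\emph{Regularization.} Put $A(z,c)=\iota\int_{y_{\ast}(z)}^{c}\zeta^{2}\overline{f(\bar{z}+2\iota\zeta)}\,d\zeta$, so $\xi_{c}=A(z,c)\,\eta(z)$. The structural point is that the $c$–dependence — hence the divergence as $c\to\infty$ — is concentrated in the value and the first $z$–derivative of $A$: any derivative of $A$ involving at least one $\partial_{\bar{z}}$ comes purely from the lower limit and is $c$–independent, and every $\partial_{z}^{k}A$ with $k\ge 2$ is an absolutely convergent integral in the limit $c\to\infty$ (its integrand decays like $\zeta^{-k}$ once one invokes the pointwise bound on $f''$ furnished by boundedness of $\nabla^{2}q$), whereas $\partial_{z}A$ diverges only logarithmically and $A$ only linearly. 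By Taylor's formula with integral remainder along a path from $\iota$ to $z$, the combination $A(z,c)-A(\iota,c)-\partial_{z}A(\iota,c)(z-\iota)$ is therefore an integral of $\partial_{z}^{2}A$ and of the $c$–independent mixed derivatives, hence converges in $C^{\infty}_{\mathrm{loc}}$; this limit, times $\eta$, is $\xi^{\mathrm{reg}}$. Finally, the subtracted term $\xi_{c}(\iota)+\frac{\partial\xi_{c}}{\partial z}\big|_{z=\iota}(z-\iota)$ is, in the coordinate $z$, a vector field with coefficient affine in $z$, hence a holomorphic vector field, so $\beta$ of it vanishes by Theorem \ref{maintheorem1}; combined with $\beta(\xi_{c})=q$ and the continuity of the first-order operator $\beta$ under $C^{\infty}_{\mathrm{loc}}$–limits, this yields $\beta(\xi^{\mathrm{reg}})=q$, and $\xi^{\mathrm{reg}}$ is harmonic by Proposition \ref{firstprop}.

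\emph{Boundary behaviour, and the main obstacle.} As $z$ tends to a finite point $x_{0}\in\partial\hh\setminus\{\infty\}=\rr$, the lower limit $\Im(z)\to 0$, but on the range of integration $2\zeta-\Im z$ stays comparable to $\zeta$, so the bounds above still control $A(z,c)$ and its $z$–derivative uniformly near $\rr$; hence $\xi^{\mathrm{reg}}$ extends continuously across $\rr$, and pushing forward by the Cayley transform $C$ (a biholomorphism near $\rr$) gives continuity of $\chi$ on $\dd\cup\big(\partial\dd\setminus\{C(\infty)\}\big)$, i.e.\ the first two cases of the displayed definition of $\chi$. The delicate point is continuity of $\chi$ at $C(\infty)$: since the derivative $C'(z)$ of the Cayley transform vanishes to second order at $\infty$, one must show that $\xi^{\mathrm{reg}}(z)$ grows subquadratically, $\xi^{\mathrm{reg}}(z)=o(|z|^{2})$, as $z\to\infty$ inside $\overline{\hh}$, so that $C_{\ast}\big(\xi^{\mathrm{reg}}(z)\big)\to 0$ and the third case of the formula is continuous. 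I expect this to be the principal obstacle. It requires first translating the coordinate-free hypotheses (1)–(2) into the pointwise decay estimates $|f(z)|\le C\,\Im(z)^{-2}$, $|f'(z)|\le C\,\Im(z)^{-3}$, $|f''(z)|\le C\,\Im(z)^{-4}$ — keeping track of the Christoffel symbols of the connection $\nabla$ on $T^{\ast}\hh\otimes_{\cc}T^{\ast}\hh$ — and then carefully tracking how these survive the substitution $w=\bar{z}+2\iota\zeta$, the differentiations in $z$ and $\bar{z}$, and the path integrals defining $\xi^{\mathrm{reg}}$; this is precisely the point onto which all of the $c$–divergence was concentrated, which is why one is forced to regularize at $\iota$. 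The reduction and the construction of $\xi_{c}$ are routine, and the regularization is conceptual rather than technically hard.
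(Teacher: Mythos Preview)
Your plan is essentially the paper's own, and the parts you do carry out are correct: the scalar reduction via Proposition~\ref{firstprop}, the vertical-ray primitive $\xi_c$, the observation that the integrand is annihilated by $\partial_{\bar z}$ (so only the lower limit contributes there), that $\partial_z^k$ of the integrand decays like $\zeta^{-k}$ for $k\ge 2$, and that the subtracted first-order holomorphic Taylor jet at $\iota$ therefore kills exactly the divergent part while lying in $\ker\beta$. The paper organizes the same convergence argument via the Cauchy criterion and the Lagrange remainder for $f$ rather than your Taylor-with-integral-remainder for $A$, but this is cosmetic.

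The one place where you stop short is the boundary continuity at $C(\infty)$, which you correctly isolate as needing $\xi^{\mathrm{reg}}(z)=o(|z|^2)$. Here the paper does something you have not yet hit upon: it \emph{splits the $\zeta$–integral at the moving height} $h=|z|$. Below height $h$ one estimates $\xi_h(z)$, $\xi_h(\iota)$, and $\partial_z\xi_h|_\iota\cdot(z-\iota)$ \emph{separately}, using only the zeroth- and first-order bounds $|f|\le D/\Im^2$, $|f'|\le K_1/\Im^3$; each of these three pieces is $O(|z|)$, so their pushforward by $C$ is $O(|z|^{-1})$. Above height $h$ one uses the regularized combination and the second-order bound $|f''|\le K_2/\Im^4$ exactly as in the convergence proof, getting a contribution of size $O(|z-\iota|^2/h)=O(|z|)$ again. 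Without this splitting, the naive estimate on the full regularized expression gives only $O(|z|^2)$ (from the factor $|z-\iota|^2$ in the Taylor remainder and an $O(1)$ integral), which is not good enough; the point is that for $\zeta\lesssim|z|$ the Taylor subtraction is \emph{wasteful}, and one does better estimating the three terms individually there. Once you see this trick the remaining bookkeeping is routine.
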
 
\begin{mainremark}
 \normalfont
 We have introduced a simple terminology $\mathrm{reg}$ short 
for ``regularisation'' to characterise our required harmonic vector field.
\end{mainremark}

\begin{mainremark}
 \normalfont
 The global surjectivity of the map $\beta$ in Theorem \ref{maintheorem1}
 is proven independently by 
S. Wolpert in \cite[Section 2]{wolpert}. See the beginning
of \textbf{\cref{harmonicexplicit}} in \textbf{\cref{chapter3}}. 
\end{mainremark}
\smallskip

Theorem \ref{maintheorem2} implies that the coboundary $\delta \chi$ 
$$\chi \longmapsto \big(\gamma \longmapsto
\chi(\gamma) \gamma^{-1} - \chi \big), \quad \forall \gamma \in \Gamma$$
where $\Gamma$ is a discrete cocompact subgroup of $\mathrm{Isom}^{+}(\dd)$, defines  
a $1$-cocycle with values in the vector space 
$\mathrm{HOL}$ of holomorphic vector fields on $\dd$. Note that we view 
$\chi$ as a $0$-cocycle with values in the vector space of harmonic vector 
fields on $\dd$. The following results ensure that $\delta \chi$ 
is a $1$-cocycle with values in the vector space of Killing vector fields on $\dd$:

\begin{mainthmintro}
[Theorem \ref{infinitedimproblem} + Theorem \ref{summarytheo}]
 \label{chap3theorem1}
Given a holomorphic quadratic differential $q=fdz^{2}$ on 
the Poincar\'{e} disk $\dd$ which satisfies the following boundedness conditions:
\begin{enumerate}
 \item $q$ is bounded in the hyperbolic metric on $\dd$, i.e., 
 $$\lVert q \rVert_{\textbf{g}_{\dd}} \leq D, $$ 
 where $D$ is a positive real number. 
 \item The first and the second covariant derivative of $q$ w.r.t 
 the linear connection on $T^{\ast}\dd \otimes_{\cc} T^{\ast} \dd$ are bounded in 
 $\textbf{g}_{\dd}$. 
\end{enumerate}
Then there exists a harmonic vector field $\chi$ on $\dd$ 
which admits an $L^{2}$-extension to the closed unit disk $\overline{\dd}$ 
such that 
$(\mathcal{L}_{\chi}\textbf{g}_{\dd})^{(2, 0)}= q$. 
Moreover, the restriction of that extension to the boundary circle $\ss^{1}$ 
is tangential and $\chi$ is unique upto the addition 
of holomorphic vector fields on $\dd$ which extend tangentially to the boundary 
circle $\ss^{1}$. Also, $\chi$ is 
unique upto the addition of the vector space 
$\mathfrak{g}$ of Killing vector fields on $\dd$. 
\end{mainthmintro}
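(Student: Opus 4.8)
The plan is to deduce the statement over $\dd$ from its counterpart over $\hh$ (Theorem~\ref{maintheorem2}) by transport of structure along the Cayley transform $C\colon\hh\to\dd$, and then to settle uniqueness by left-exactness of global sections in the sheaf sequence of Theorem~\ref{maintheorem1} together with an explicit Fourier computation on $\ss^{1}$. First I would use that $C$ is a biholomorphism and an isometry for the hyperbolic metrics, and that the linear connection $\nabla$ is the one induced by these metrics; hence $\widetilde q := C^{\ast}q$ is a holomorphic quadratic differential on $\hh$ that satisfies exactly the boundedness hypotheses (1)--(2) of Theorem~\ref{maintheorem2}. That theorem then produces a harmonic vector field $\xi^{\mathrm{reg}}$ on $\hh$ with $\beta(\xi^{\mathrm{reg}}) = (\mathcal{L}_{\xi^{\mathrm{reg}}}\textbf{g}_{\hh})^{(2,0)} = \widetilde q$, together with the vector field $\chi$ on $\overline{\dd}$ obtained by pushing $\xi^{\mathrm{reg}}$ forward by $C$ and setting $\chi(C(\infty)) = 0$, which is continuous on $\overline{\dd}$. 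Since $\overline{\dd}$ is compact, $\chi$ has an $L^{2}$-extension and its trace on $\ss^{1}$ is continuous, hence $L^{2}$; naturality of the Lie derivative and of the $(2,0)$-projection under the isometry $C$ gives $(\mathcal{L}_{\chi}\textbf{g}_{\dd})^{(2,0)} = C_{\ast}\widetilde q = q$, which is holomorphic, so by Proposition~\ref{firstprop} the field $\chi$ is itself harmonic on $\dd$. This furnishes existence.

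For the tangential boundary behaviour I would verify, from the explicit integral formula for $\xi^{\mathrm{reg}}$ recorded in Theorem~\ref{maintheorem2}, that the boundary trace of $\xi^{\mathrm{reg}}$ along $\partial\hh \setminus \{\infty\} = \rr$ is tangent to $\rr$ --- equivalently, that the coefficient of $\xi^{\mathrm{reg}}$ relative to $\partial/\partial z$ takes real values on $\rr$. Because $C$ maps $\rr$ onto $\ss^{1} \setminus \{C(\infty)\}$ and its differential carries vectors tangent to $\rr$ to vectors tangent to $\ss^{1}$, the trace of $\chi$ is tangential on $\ss^{1} \setminus \{C(\infty)\}$; at the single remaining point $C(\infty)$ one has $\chi = 0$, which is tangential trivially. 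Hence the $L^{2}$-trace of $\chi$ on $\ss^{1}$ is tangential.

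For uniqueness, suppose $\chi'$ is another harmonic vector field on $\dd$ with an $L^{2}$-extension and $(\mathcal{L}_{\chi'}\textbf{g}_{\dd})^{(2,0)} = q$. Then $v := \chi - \chi'$ satisfies $\beta(v) = 0$, so by left-exactness of global sections in the short exact sequence of Theorem~\ref{maintheorem1} over $\dd$ the field $v$ is holomorphic, and it automatically inherits an $L^{2}$-extension to $\overline{\dd}$; conversely, adding to $\chi$ any holomorphic vector field on $\dd$ with an $L^{2}$-extension preserves harmonicity, the equation and the $L^{2}$-extendability, and adding one whose trace is tangential moreover preserves tangency. Thus $\chi$ is unique up to the space of holomorphic vector fields on $\dd$ that extend tangentially to $\ss^{1}$. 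To identify this space with $\mathfrak{g}$, write such a field as $v(z)\,\partial/\partial z$; the $L^{2}$-trace condition places $v$ in the Hardy space $H^{2}(\dd)$, say $v(z) = \sum_{n \ge 0} a_{n} z^{n}$, and the tangency condition $\mathrm{Re}\big(\bar z\, v(z)\big) = 0$ a.e.\ on $\ss^{1}$ forces, on comparing Fourier coefficients, $a_{n} = 0$ for $n \ge 3$, $a_{1} \in \iota\rr$ and $a_{2} = -\overline{a_{0}}$. These are precisely the quadratic polynomials that generate the infinitesimal $\mathrm{PSU}(1,1)$-action on $\dd$, i.e.\ the Killing fields $\mathfrak{g}$; this yields both forms of the uniqueness statement.

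The genuine obstacle is the boundary regularity packaged into Theorem~\ref{maintheorem2}: the passage from a harmonic vector field on the open disk to one with a (continuous, hence) $L^{2}$ trace on $\ss^{1}$ that is, in addition, tangential there. This is exactly where hypothesis (2) --- the boundedness of the first and second covariant derivatives of $q$ --- is used, and it is also where the point at infinity of $\partial\hh$ must be treated separately when transporting along $C$, via the prescription $\chi(C(\infty)) = 0$. Once this input is in hand, the sheaf-theoretic reduction and the Fourier computation are routine.
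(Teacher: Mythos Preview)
Your existence argument and your uniqueness argument are essentially the paper's: transport along the Cayley transform to invoke Theorem~\ref{maintheorem2}, and then identify holomorphic vector fields on $\dd$ with tangential $L^{2}$-boundary values as $\mathfrak{g}$ by a Fourier/Hardy-space comparison (this is exactly Theorem~\ref{infinitedimproblem}(1)). That part is fine.

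The gap is in your treatment of tangentiality. You propose to read off directly from the integral formula for $\xi^{\mathrm{reg}}$ that its boundary values along $\rr$ are real, hence that $\chi = C_{\ast}\xi^{\mathrm{reg}}$ is tangent to $\ss^{1}$. This is not true in general. For $z\in\rr$ the coefficient of $\xi_{c}(z)$ is
\[
\int_{0}^{c}\iota\,\zeta^{2}\,\overline{f(z+2\iota\zeta)}\,d\zeta,
\]
and there is no symmetry forcing this to be real for an arbitrary holomorphic $f$ satisfying the stated bounds; the holomorphic first-order Taylor correction at $\iota$ that you subtract in the regularisation is itself complex-valued on $\rr$ and does not cancel the imaginary part. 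The paper is explicit about this: immediately before Theorem~\ref{summarytheo} it notes that the $\chi$ coming from the construction ``is not necessarily tangential to the boundary circle $\ss^{1}$.''

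What the paper does instead is use the other half of Theorem~\ref{infinitedimproblem}, namely part (2): every $L^{2}$-vector field on $\ss^{1}$ decomposes as a tangential field plus an element of $\mathcal{H}^{2}$. Writing $\chi|_{\ss^{1}}=\chi_{1}+\chi_{2}$ with $\chi_{1}$ tangential and $\chi_{2}\in\mathcal{H}^{2}$, one extends $\chi_{2}$ holomorphically to $\dd$ and replaces $\chi$ by $\chi-\chi_{2}$; this new field is still harmonic with the same $q$ (holomorphic fields lie in $\ker\beta$), has $L^{2}$-boundary trace $\chi_{1}$, and is now tangential. Your uniqueness computation is then precisely part (1) of the same theorem. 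So the missing ingredient in your plan is not a minor verification but the decomposition $\mathfrak{X}_{\text{tangential}}(\ss^{1})+\mathcal{H}^{2}=$ all $L^{2}$-vector fields on $\ss^{1}$, which is how one \emph{produces} a tangential representative rather than hoping the explicit $\xi^{\mathrm{reg}}$ already is one.
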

\begin{maincoro}[Corollary \ref{summarytheocor}]
\label{chap3coro1}
Let $\Gamma$ denote a subgroup of $\mathrm{Isom}^{+}(\dd)$, where 
$\mathrm{Isom}^{+}(\dd)$ is 
the group of orientation preserving 
isometries of $\dd$. 
 If $q=fdz^{2}$ and $\chi$ are related as in Theorem \ref{chap3theorem1} and if in 
 addition to (1) and (2) in Theorem \ref{chap3theorem1}, 
 $q$ is $\Gamma$-invariant, i.e., 
$$f(\gamma(z))\gamma'(z)^{2}=f(z), \quad \forall \gamma \in \Gamma, z \in \dd,$$
then $\delta \chi$ defined by 
$$ \gamma \longmapsto \chi(\gamma) \gamma'^{-1}-\chi, \quad \forall \gamma \in \Gamma$$
is a $1$-cocycle $c$ for the group $\Gamma$ with coefficients in 
the Lie algebra $\mathfrak{g}$
of $\mathrm{Isom}^{+}(\dd)$ and its cohomology class $[c]$ depends only on $q$. 
\end{maincoro}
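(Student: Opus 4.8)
The plan is to obtain the Corollary as a formal consequence of three already–established facts: the infinitesimal characterisation of harmonicity in Proposition~\ref{firstprop}, the exactness of the sheaf sequence in Theorem~\ref{maintheorem1}, and --- most importantly --- the existence-and-uniqueness clause of Theorem~\ref{chap3theorem1}. The single new observation needed is that the harmonic vector field produced by Theorem~\ref{chap3theorem1} is, modulo $\mathfrak{g}$, equivariant under every isometry of $\dd$ that fixes $q$.

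First I would note that $\delta\chi$ is automatically a group $1$-cocycle with values in the $\Gamma$-module $V$ of \emph{all} smooth vector fields on $\dd$, where $\gamma\in\Gamma$ acts by pullback. Indeed, writing a vector field through its coefficient function, the pullback $\gamma^{\ast}\chi$ has coefficient $(\chi\circ\gamma)\cdot\gamma'^{-1}$, so the formula $\gamma\mapsto\chi(\gamma)\gamma'^{-1}-\chi$ is exactly $\delta\chi(\gamma)=\gamma^{\ast}\chi-\chi$, the coboundary of the $0$-cochain $\chi\in V$, and the cocycle identity then holds by the usual formal computation. Moreover $\mathfrak{g}$ is a $\Gamma$-submodule of $V$: if $v$ is Killing and $\gamma\in\Gamma\subset\mathrm{Isom}^{+}(\dd)$, then by naturality of the Lie derivative $\mathcal{L}_{\gamma^{\ast}v}\textbf{g}_{\dd}=\mathcal{L}_{\gamma^{\ast}v}(\gamma^{\ast}\textbf{g}_{\dd})=\gamma^{\ast}(\mathcal{L}_{v}\textbf{g}_{\dd})=0$. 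Hence it suffices to show that $\delta\chi$ takes values in $\mathfrak{g}$, and then that the resulting class in $H^{1}(\Gamma;\mathfrak{g})$ is independent of the choice of $\chi$.

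For the key step, fix $\gamma\in\Gamma$ and check that $\gamma^{\ast}\chi$ is again a harmonic vector field of the type associated to $q$ by Theorem~\ref{chap3theorem1}. Using $\gamma^{\ast}\textbf{g}_{\dd}=\textbf{g}_{\dd}$, naturality of the Lie derivative, and the fact that pullback by the biholomorphism $\gamma$ preserves the bidegree $(2,0)$, one gets $(\mathcal{L}_{\gamma^{\ast}\chi}\textbf{g}_{\dd})^{(2, 0)}=\gamma^{\ast}\big((\mathcal{L}_{\chi}\textbf{g}_{\dd})^{(2, 0)}\big)=\gamma^{\ast}q$, which is holomorphic (a pullback of a holomorphic quadratic differential by a biholomorphism) and, by the $\Gamma$-invariance $f(\gamma(z))\gamma'(z)^{2}=f(z)$, equals $q$. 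So $\gamma^{\ast}\chi$ is harmonic (Proposition~\ref{firstprop}) and satisfies $(\mathcal{L}_{\gamma^{\ast}\chi}\textbf{g}_{\dd})^{(2, 0)}=q$. It also inherits an $L^{2}$-extension to $\overline{\dd}$ with tangential boundary values, because $\gamma$ is a M\"obius transformation of $\dd$, hence a real-analytic diffeomorphism of $\overline{\dd}$ with bounded Jacobian preserving $\ss^{1}$, so $\gamma^{\ast}$ maps $L^{2}$ sections to $L^{2}$ sections and boundary-tangential vector fields to boundary-tangential ones. Thus $\chi$ and $\gamma^{\ast}\chi$ are both vector fields produced by Theorem~\ref{chap3theorem1} for the same $q$, so its uniqueness clause (uniqueness up to the addition of $\mathfrak{g}$) yields $\delta\chi(\gamma)=\gamma^{\ast}\chi-\chi\in\mathfrak{g}$. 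Alternatively one may argue directly that $\gamma^{\ast}\chi-\chi$ is harmonic with $\beta(\gamma^{\ast}\chi-\chi)=\gamma^{\ast}q-q=0$, hence holomorphic by exactness in Theorem~\ref{maintheorem1}, and that a holomorphic vector field on $\dd$ with tangential $L^{2}$-boundary extension is Killing. Combining with the first paragraph, $\delta\chi=c\in Z^{1}(\Gamma;\mathfrak{g})$.

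Finally, for independence of $[c]$ on $q$ alone: if $\chi_{1},\chi_{2}$ are two vector fields attached to $q$ as in Theorem~\ref{chap3theorem1}, its uniqueness clause gives $v:=\chi_{1}-\chi_{2}\in\mathfrak{g}$, whence $\delta\chi_{1}(\gamma)-\delta\chi_{2}(\gamma)=\gamma^{\ast}v-v\in B^{1}(\Gamma;\mathfrak{g})$ for all $\gamma$, so $[\delta\chi_{1}]=[\delta\chi_{2}]$ in $H^{1}(\Gamma;\mathfrak{g})$. The only point requiring genuine care --- the ``main obstacle'', such as it is --- is verifying that the regularity and boundary hypotheses of Theorem~\ref{chap3theorem1} ($L^{2}$-extendability and tangentiality on $\ss^{1}$) are preserved under pullback by elements of $\Gamma$, so that its uniqueness statement applies verbatim; granting that, the Corollary is a purely formal consequence.
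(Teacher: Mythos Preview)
Your argument is correct and matches the paper's own proof in spirit and substance: both derive $\delta\chi(\gamma)\in\mathfrak{g}$ from the uniqueness clause of Theorem~\ref{chap3theorem1} (after observing that $\gamma^{\ast}\chi$ is again a harmonic vector field with the same associated $q$ and the same boundary behaviour), and both deduce well-definedness of $[c]$ from the fact that two admissible $\chi$'s differ by an element of $\mathfrak{g}$. Your write-up is in fact more explicit than the paper's rather terse proof --- in particular you spell out why $\gamma^{\ast}\chi$ inherits the tangential $L^{2}$-extension, which the paper leaves to the word ``hence'' --- and your alternative route via exactness in Theorem~\ref{maintheorem1} together with the identification $\mathfrak{X}_{\mathrm{tangential}}(\ss^{1})\cap\mathcal{H}^{2}=\mathfrak{g}$ is exactly the argument the paper sketches in the paragraph preceding Theorem~\ref{summarytheo}.
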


\begin{maincoro}[Corollary \ref{onewaymap}]
\label{corosecondlast}
 Let $\Gamma$ be a discrete cocompact subgroup 
 of $\mathrm{Isom}^{+}(\dd)$. Then 
 we have an injective mapping 
$$\varPhi: \mathrm{HQD}(\dd, \Gamma) \longrightarrow H^{1}(\Gamma; \mathfrak{g}) $$
$$q \longmapsto [c],$$
where $\mathrm{HQD}(\dd, \Gamma)$ denotes the 
vector space of $\Gamma$-invariant holomorphic quadratic 
differentials on $\dd$ and $c= \delta \chi$. 
\end{maincoro}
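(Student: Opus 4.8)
The plan is to show that $\varPhi$ is well-defined and injective, using the previous corollaries as black boxes. For well-definedness, given a $\Gamma$-invariant holomorphic quadratic differential $q = f\,dz^2$ satisfying the boundedness hypotheses, Theorem \ref{chap3theorem1} produces a harmonic vector field $\chi$ on $\dd$ with $(\mathcal{L}_\chi \textbf{g}_\dd)^{(2,0)} = q$, and Corollary \ref{chap3coro1} then tells us that $\delta\chi$ is a $1$-cocycle with values in $\mathfrak{g}$ whose cohomology class $[c]$ depends only on $q$. Hence $q \mapsto [c] = [\delta\chi]$ is a well-defined map $\mathrm{HQD}(\dd,\Gamma) \to H^1(\Gamma;\mathfrak{g})$. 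One should also check $\varPhi$ is linear: since $\chi$ depends linearly on $q$ modulo the ambiguity (holomorphic vector fields extending tangentially / Killing fields), and $\delta$ is linear, the induced map on cohomology is linear; this is routine.

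The substantive point is injectivity. Suppose $\varPhi(q) = 0$, i.e.\ $[\delta\chi] = 0$ in $H^1(\Gamma;\mathfrak{g})$. Then $\delta\chi = \delta\eta$ for some Killing vector field $\eta \in \mathfrak{g}$, where here $\delta\eta$ means $\gamma \mapsto \eta(\gamma)\gamma'^{-1} - \eta$ (the action of $\Gamma$ on the $0$-cochains $\mathfrak{g}$, or more precisely on $\mathrm{HARM}$). Therefore $\chi - \eta$ is a $\Gamma$-invariant harmonic vector field on $\dd$: for all $\gamma \in \Gamma$, $(\chi-\eta)(\gamma)\gamma'^{-1} - (\chi-\eta) = 0$. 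Now $\eta$ is a Killing field, so $\mathcal{L}_\eta \textbf{g}_\dd = 0$, hence $(\mathcal{L}_{\chi-\eta}\textbf{g}_\dd)^{(2,0)} = (\mathcal{L}_\chi \textbf{g}_\dd)^{(2,0)} = q$. The idea is to descend $\chi - \eta$ to the compact quotient $\Sigma = \dd/\Gamma$: a $\Gamma$-invariant harmonic vector field on $\dd$ gives a section over $\Sigma$ of the appropriate bundle, and its image under $\beta$ is the descended holomorphic quadratic differential $\bar q$ on $\Sigma$. So it suffices to show this section is forced, together with compactness, to make $q = 0$; equivalently, that the only $\Gamma$-invariant harmonic vector field $\psi$ on $\dd$ with $\beta(\psi)$ descending to a holomorphic quadratic differential on the compact surface $\Sigma$ comes, up to Killing fields, from the zero differential — more simply, that if $\psi$ is $\Gamma$-invariant harmonic then $\beta(\psi) = 0$.

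Here is the mechanism I expect to use. By Theorem \ref{maintheorem1}, the sequence $\mathcal{HOL} \xrightarrow{\alpha} \mathcal{HARM} \xrightarrow{\beta} \mathcal{HQD}$ is short exact on $\dd$; it is $\Gamma$-equivariant, so passing to $\Gamma$-invariants (equivalently, to sheaves on $\Sigma$) one gets a left-exact sequence, and the obstruction to surjectivity of $\beta$ on global sections over the \emph{compact} $\Sigma$ lives in $H^1(\Sigma, \mathcal{HOL}_\Sigma)$, i.e.\ in $H^1(\Gamma;\mathrm{HOL})$. The key classical input is that the holomorphic quadratic differential $\bar q$ on the compact surface $\Sigma$ is \emph{bounded} and that a $\Gamma$-invariant harmonic vector field mapping to it, via the $L^2$-extension and tangential boundary behaviour from Theorem \ref{chap3theorem1}, must — by the uniqueness clause of Theorem \ref{chap3theorem1} (uniqueness up to $\mathfrak{g}$) together with compactness of $\Sigma$ — actually be a Killing field, forcing $q = (\mathcal{L}_\chi\textbf{g}_\dd)^{(2,0)} = 0$. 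Concretely: $\chi - \eta$ is $\Gamma$-invariant; it extends $L^2$ to $\overline{\dd}$ with tangential boundary values; but a nonzero $\Gamma$-invariant such field would descend to a nonzero harmonic vector field on compact $\Sigma$, and integrating $\|\mathcal{L}_{\chi-\eta}\textbf{g}_\dd\|^2$ over $\Sigma$ — a finite integral by invariance — against the harmonicity (the vanishing first variation of energy) forces $(\mathcal{L}_{\chi-\eta}\textbf{g}_\dd)^{(2,0)}$ to be a holomorphic quadratic differential that is simultaneously a Lie derivative of the metric by a globally defined field on a compact surface, hence exact in the relevant sense and therefore zero. Thus $q = 0$, proving $\ker\varPhi = 0$.

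The main obstacle will be making this descent-and-vanishing argument fully rigorous: one has to verify that the $L^2$-extension and tangential-boundary properties granted by Theorem \ref{chap3theorem1} for $\chi$ are preserved after subtracting the Killing field $\eta$ (immediate, since $\eta$ is smooth on $\overline\dd$ and tangential on $\ss^1$ — Killing fields of $\dd$ extend to the boundary circle tangentially), and then that $\Gamma$-invariance plus these boundary properties genuinely force the descended object on $\Sigma$ to be covered by the uniqueness statement. The cleanest route is probably to invoke directly the uniqueness clause of Theorem \ref{chap3theorem1}: $\chi$ is unique up to addition of holomorphic vector fields extending tangentially to $\ss^1$, and also unique up to $\mathfrak{g}$; a $\Gamma$-invariant harmonic field with $\beta = q$ and tangential $L^2$-boundary values that differs from $\chi$ by a Killing field is still "the" solution, and since $0$ is a solution for $q = 0$ while $\chi-\eta$ is $\Gamma$-invariant, one concludes $\beta(\chi - \eta) = q$ can only happen with the solution being trivial when it is $\Gamma$-invariant and $\Sigma$ is compact — because a $\Gamma$-invariant element of the ambiguity space (holomorphic, tangential on $\ss^1$) that is also harmonic must be Killing, and then $q = 0$. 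I would also double-check the cocycle/coboundary bookkeeping: the statement of Corollary \ref{chap3coro1} uses $\gamma \mapsto \chi(\gamma)\gamma'^{-1} - \chi$, and one must confirm that a class is zero in $H^1(\Gamma;\mathfrak{g})$ exactly when $\chi$ differs from a $\Gamma$-invariant harmonic field by an element of $\mathfrak{g}$, which is precisely the coboundary condition for this (affine) action.
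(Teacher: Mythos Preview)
Your setup matches the paper exactly: assume $[c]=0$, write $c=\delta\eta$ for some $\eta\in\mathfrak{g}$, observe that $Y:=\chi-\eta$ is a $\Gamma$-invariant harmonic vector field on $\dd$ with $(\mathcal{L}_Y\textbf{g}_{\dd})^{(2,0)}=q$, and descend $Y$ to a harmonic vector field on the compact surface $\Sigma=\dd/\Gamma$. The divergence comes at the last step, where you need to conclude $q=0$.

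The paper finishes in one line by citing a clean external fact (\cite{dodson}, Proposition~4.2): on a two-dimensional compact orientable Riemannian manifold without boundary, every harmonic vector field is conformal. Hence $Y$ is conformal, so $(\mathcal{L}_Y\textbf{g}_{\dd})^{(2,0)}=0$, i.e.\ $q=0$. Your proposal instead circles around this point with three sketches, none of which is complete as written. The sheaf-cohomology obstruction remark is irrelevant here (you already have a global preimage $Y$; there is no obstruction to lift). The appeal to the uniqueness clause of Theorem~\ref{chap3theorem1} is circular: uniqueness up to $\mathfrak{g}$ tells you nothing about whether a $\Gamma$-invariant solution forces $q=0$, and your sentence ``since $0$ is a solution for $q=0$'' assumes what you want to prove. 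The integration idea is the one that can actually be made to work: $q$ is holomorphic on compact $\Sigma$, and the associated harmonic Beltrami differential $\mu_q$ is $\bar\partial Y$ (this is the content of equation~(\ref{similarwolpertequation})), so $\|q\|_{L^2}^2 \propto \int_\Sigma q\,\mu_q = \int_\Sigma q\,\bar\partial Y = 0$ by integration by parts and $\bar\partial q=0$. But you did not write this pairing down; as stated, ``exact in the relevant sense and therefore zero'' is an assertion, not an argument. Either cite the conformal/harmonic result for compact surfaces as the paper does, or make the $L^2$-pairing computation explicit.
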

\smallskip
 
To construct an inverse of $\varPhi$ in Corollary \ref{corosecondlast}, 
we first 
construct a smooth vector field $\psi$ on $\dd$ such that 
$\delta \psi =c$, where $c$ is a $1$-cocycle $c$ representing 
$[c] \in H^{1}(\Gamma; \mathfrak{g})$. And then, we show that 
$\psi$ admits an $L^{2}$-extension 
to the closed unit disk $\overline{\dd}$ whose restriction 
to the boundary circle $\ss^{1}$ is tangential. 
This construction relies on 
the existence of a $\Gamma$-invariant
partition of unity on $\dd$. See \textbf{\cref{cohomtoanalsection1}}
in \textbf{\cref{cohomtoanal}}.
\begin{mainlemma}[Lemma \ref{partition}]
\label{lemmaintro}
 There exists a smooth function $\varphi$ on $\dd$ such that
 \begin{enumerate}
  \item $0 \leq \varphi \leq 1$.
  \item For each $z \in \dd$, there is a neighborhood $U$ of $z$ and a finite subset 
  $S$ of $\Gamma$ such that $\varphi=0$ on $\gamma(U)$ for every $\gamma \in 
 \Gamma-S$.
 \item $\sum_{\gamma \in \Gamma} \varphi(\gamma(z))=1$ on $\dd$.
 \end{enumerate}
\end{mainlemma}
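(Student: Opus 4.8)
The plan is to realize $\varphi$ as the bump-function ingredient of a $\Gamma$-invariant smooth partition of unity adapted to the properly discontinuous, cocompact action of $\Gamma$ on $\dd$. Concretely: since $\dd/\Gamma$ is compact, I will choose a relatively compact open set $V\subset\dd$ whose $\Gamma$-translates cover $\dd$; I will pick a smooth bump $\psi\ge 0$ with $\operatorname{supp}\psi\subset V$ which is strictly positive on a compact set $K$ that still satisfies $\bigcup_{\gamma}\gamma(K)=\dd$; and I will set $\varphi=\psi/\Psi$ with $\Psi=\sum_{\gamma\in\Gamma}\gamma^{\ast}\psi$. All three required properties then fall out formally, the only real work being the local finiteness of this sum.

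Here are the steps in order. First, invoke cocompactness: the projection $\pi:\dd\to\dd/\Gamma$ is open onto a compact space, so finitely many relatively compact coordinate balls map onto $\dd/\Gamma$; their union, enlarged slightly to a relatively compact open $V$, satisfies $\overline V$ compact and $\bigcup_{\gamma\in\Gamma}\gamma(V)=\dd$, and after shrinking I also obtain a compact $K\subset V$ with $\bigcup_{\gamma}\gamma(K)=\dd$. Second, fix $\psi\in C^{\infty}(\dd)$ with $0\le\psi\le 1$, $\psi\equiv 1$ on a neighborhood of $K$, and $\operatorname{supp}\psi\subset V$. Third, define $\Psi(z)=\sum_{\gamma\in\Gamma}\psi(\gamma z)$; using that $\{\gamma\in\Gamma:\gamma(L)\cap\overline V\ne\emptyset\}$ is finite for every compact $L\subset\dd$, near any point only finitely many summands $\psi\circ\gamma$ are nonzero, so $\Psi$ is a locally finite sum of smooth functions and hence smooth, it is strictly positive (any $z$ equals $\gamma^{-1}w$ with $w\in K$, whence $\psi(\gamma z)=\psi(w)=1$), and it is $\Gamma$-invariant by reindexing. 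Fourth, set $\varphi=\psi/\Psi$: then $0\le\varphi\le\psi\le 1$ gives (1); $\operatorname{supp}\varphi\subset V$ together with finiteness of $\{\gamma:\gamma(U)\cap\overline V\ne\emptyset\}$ for a small enough neighborhood $U$ of a given point gives (2), with $S$ that finite set; and $\sum_{\gamma}\varphi(\gamma z)=\Psi(z)^{-1}\sum_{\gamma}\psi(\gamma z)=1$ by $\Gamma$-invariance of $\Psi$, which is (3).

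The one genuinely substantive point, and the step I would write out in full, is the local finiteness that simultaneously makes $\Psi$ smooth and yields property (2): one must verify that a discrete (in particular, a discrete cocompact) subgroup of $\mathrm{Isom}^{+}(\dd)\cong\mathrm{PSL}(2,\rr)$ acts properly discontinuously on $\dd$ in the strong form $\#\{\gamma:\gamma(L_{1})\cap L_{2}\ne\emptyset\}<\infty$ for all compact $L_{1},L_{2}$. This is classical but it is the hinge of the argument. A minor bookkeeping issue is the direction of translation in (2) versus the summation in (3): one fixes $\Psi(z)=\sum_{\gamma}\psi(\gamma z)$ and notes $\gamma(U)\cap\operatorname{supp}\varphi=\emptyset \Longleftrightarrow U\cap\gamma^{-1}(\operatorname{supp}\varphi)=\emptyset$, so replacing $S$ by $S^{-1}$ if needed matches the statement verbatim. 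No torsion-freeness of $\Gamma$ is used, so the construction applies equally in the orbifold case.
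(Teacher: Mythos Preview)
Your argument is correct. The paper, however, takes a different route: it fixes a finite simply-connected open cover $\{U_i\}$ of the quotient surface $\dd/\Gamma$ with a subordinate partition of unity $\{\alpha_i\}$, lifts each $\alpha_i$ to a \emph{single} chosen sheet $V_i\subset\pi^{-1}(U_i)$ (extending by zero elsewhere), and sets $\varphi=\sum_i\phi_i$. So the paper works downstairs and lifts, whereas you work upstairs directly via the symmetrize-and-normalize trick $\varphi=\psi/\sum_\gamma\gamma^\ast\psi$. Your approach has the advantage that it never invokes the covering-space structure of $\pi$ (no evenly covered neighborhoods, no choice of sheet), relying only on proper discontinuity and cocompactness; in particular, as you note, it goes through verbatim when $\Gamma$ has torsion, while the paper's ``choose a single component of $\pi^{-1}(U_i)$'' step implicitly uses that $\pi$ is an honest covering. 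The paper's approach, on the other hand, makes the link to an ordinary partition of unity on $\dd/\Gamma$ completely transparent and avoids having to verify positivity of the averaged function $\Psi$. One small cosmetic point: your chain $0\le\varphi\le\psi\le 1$ is justified because $\Psi\ge 1$ everywhere (some translate of every point lands in $K$, where $\psi\equiv 1$), which you do establish; it might be worth saying this explicitly at that spot.
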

\begin{mainremark}
  \normalfont
 We suspect that Lemma \ref{lemmaintro} is a simpler version 
 of results on \textit{Kleinian groups} (see \cite{Kra}).
 \end{mainremark}

\begin{mainlemma}[Lemma \ref{continuousvector}]
\label{lemmaintro1}
Given any $[c] \in H^{1}(\Gamma; \mathfrak{g})$ 
we set 
$$\psi(z)= - \sum_{\gamma \in \Gamma} \varphi(\gamma(z)) 
c_{\gamma}(z), \quad z \in \dd,$$
where $\varphi$ is introduced in Lemma \ref{lemmaintro}. 
$\psi$ is a $C^{\infty}$-vector field on $\dd$ such that $\delta \psi = c$. 
\end{mainlemma}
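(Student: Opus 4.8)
The plan is the standard device of averaging a $1$-cocycle against a $\Gamma$-invariant partition of unity; I would establish the three assertions in turn — local finiteness of the series, smoothness of $\psi$, and $\delta\psi=c$ — and none of them should present real difficulty. For local finiteness and smoothness, fix $z_{0}\in\dd$ and apply Lemma~\ref{lemmaintro}(2) to obtain a neighbourhood $U$ of $z_{0}$ and a finite subset $S\subset\Gamma$ such that $\varphi\equiv 0$ on $\gamma(U)$ for every $\gamma\in\Gamma\setminus S$; equivalently, $\varphi(\gamma(w))=0$ whenever $w\in U$ and $\gamma\notin S$. Hence on $U$ only the terms indexed by $\gamma\in S$ survive, so $\psi|_{U}=-\sum_{\gamma\in S}\varphi(\gamma(\cdot))\,c_{\gamma}$ is a finite sum, each term of which is the product of the smooth function $\varphi\circ\gamma$ with the (real-analytic, hence smooth) Killing vector field $c_{\gamma}$. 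Thus $\psi$ restricts to a smooth vector field near every point of $\dd$, and is therefore a well-defined $C^{\infty}$ vector field on $\dd$. (Property~(1) of Lemma~\ref{lemmaintro} is not needed here.)

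Next I would verify $\delta\psi=c$. Fix $\sigma\in\Gamma$. Reading off coefficient functions in the global coordinate $z$ on $\dd$, the vector field $\delta\psi(\sigma)$ is $z\mapsto\psi(\sigma(z))\,\sigma'(z)^{-1}-\psi(z)$, and the $1$-cocycle $c$ obeys, for all $\gamma,\sigma\in\Gamma$ and $z\in\dd$, the cocycle relation in the form $c_{\gamma}(\sigma(z))\,\sigma'(z)^{-1}=c_{\gamma\sigma}(z)-c_{\sigma}(z)$. Substituting this into the expansion of $\psi(\sigma(z))\,\sigma'(z)^{-1}$ gives, for $z\in\dd$,
\begin{align*}
\psi(\sigma(z))\,\sigma'(z)^{-1}
&=-\sum_{\gamma\in\Gamma}\varphi(\gamma\sigma(z))\,c_{\gamma}(\sigma(z))\,\sigma'(z)^{-1}\\
&=-\sum_{\gamma\in\Gamma}\varphi(\gamma\sigma(z))\,c_{\gamma\sigma}(z)
+\Big(\sum_{\gamma\in\Gamma}\varphi(\gamma\sigma(z))\Big)c_{\sigma}(z).
\end{align*}
Re-indexing by the bijection $\gamma\mapsto\gamma\sigma$ of $\Gamma$ converts the first sum into $-\sum_{\gamma}\varphi(\gamma(z))\,c_{\gamma}(z)=\psi(z)$, while the same re-indexing together with Lemma~\ref{lemmaintro}(3) converts the parenthesised sum into $\sum_{\gamma}\varphi(\gamma(z))=1$; both re-indexings are legitimate because the sums are locally finite. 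Hence $\psi(\sigma(z))\,\sigma'(z)^{-1}=\psi(z)+c_{\sigma}(z)$, i.e.\ $\delta\psi(\sigma)=c_{\sigma}$, and since $\sigma\in\Gamma$ was arbitrary, $\delta\psi=c$.

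The only points requiring care — and the closest thing here to an obstacle — are handling the automorphy factor $\sigma'(z)^{-1}$ in the $\Gamma$-action on vector fields correctly, so that the cocycle relation for $c$ is invoked in exactly the right form, and checking that the two re-indexings $\gamma\mapsto\gamma\sigma$ are harmless, which they are once the local finiteness furnished by Lemma~\ref{lemmaintro}(2) is in place. It is also worth remarking that $\psi$ itself is in general merely $C^{\infty}$ and not Killing; it is its coboundary $\delta\psi$ that recovers the $\mathfrak{g}$-valued cocycle $c$.
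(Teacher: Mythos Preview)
Your proof is correct and follows essentially the same approach as the paper: both compute $\psi(\sigma(z))\sigma'(z)^{-1}$ directly, invoke the cocycle relation $c_{\gamma}(\sigma(z))\sigma'(z)^{-1}=c_{\gamma\sigma}(z)-c_{\sigma}(z)$, re-index $\gamma\mapsto\gamma\sigma$, and use the partition-of-unity identity $\sum_{\gamma}\varphi(\gamma(z))=1$. Your write-up is in fact slightly more thorough than the paper's, which simply asserts that $\psi$ is $C^{\infty}$ without spelling out the local-finiteness argument from Lemma~\ref{lemmaintro}(2).
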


\begin{maincoro}[Corollary \ref{shortcoro}]
\label{maincoro2}
 $\psi$ in Lemma \ref{lemmaintro1} admits a unique $L^{2}$-extension 
to the closed unit disk $\overline{\dd}$ whose restriction 
$\psi^{\sharp}$ to the boundary 
circle $\ss^{1}$ is tangential.
\end{maincoro}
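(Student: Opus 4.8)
\emph{Proof proposal.} The plan is to read the boundary behaviour of $\psi$ off the equivariance relation $\delta\psi=c$ of Lemma \ref{lemmaintro1}, using two simplifications. First, in Lemma \ref{lemmaintro} the function $\varphi$ may be taken with compact support in $\dd$: cocompactness of $\Gamma$ furnishes a relatively compact set whose $\Gamma$-translates cover $\dd$, and one normalises a bump on a slight enlargement of it. Second, and more importantly, $\mathcal{L}_{\psi}\textbf{g}_{\dd}$ is \emph{bounded} in the metric $\textbf{g}_{\dd}$, together with its covariant derivatives: if $\Omega\subset\dd$ is a relatively compact fundamental domain for $\Gamma$, then for $\gamma\in\Gamma$ and $z\in\gamma^{-1}(\Omega)$ the relation $\delta\psi=c$ together with $\mathcal{L}_{c_{\gamma}}\textbf{g}_{\dd}=0$ (each $c_{\gamma}$ is a Killing field) gives $(\mathcal{L}_{\psi}\textbf{g}_{\dd})|_{\gamma^{-1}(\Omega)}=(\gamma^{-1})_{\ast}\big((\mathcal{L}_{\psi}\textbf{g}_{\dd})|_{\Omega}\big)$, and the isometric push-forward preserves the pointwise $\textbf{g}_{\dd}$-norm, so $\mathcal{L}_{\psi}\textbf{g}_{\dd}$ is controlled by its sup over $\overline{\Omega}$. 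Thus $\psi$ is a bounded-deformation vector field carrying the cocycle $c$, which is exactly the situation the boundary analysis of \cref{chapter3} is built to handle.

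Next I would prove $\psi\in L^{2}(\dd)$ for Euclidean area, the substance of ``admits an $L^{2}$-extension''. Fix a basepoint $o\in\dd$, choose for each $z$ an element $\gamma_{z}\in\Gamma$ with $\gamma_{z}(z)\in\overline{\Omega}$, and write vector fields as $V=g_{V}(z)\,\partial_{z}$, so that $\delta\psi=c$ reads $g_{\psi}(z)=g_{\psi}(\gamma_{z}(z))/\gamma_{z}'(z)-g_{c_{\gamma_{z}}}(z)$. Splitting $\dd=\bigsqcup_{\gamma}\gamma^{-1}(\Omega)$, changing variables $w=\gamma(z)$, and using $g_{c_{\gamma}}(\gamma^{-1}w)=-(\gamma^{-1})'(w)\,g_{c_{\gamma^{-1}}}(w)$ (a case of $c_{e}=0$), the contribution of $\gamma^{-1}(\Omega)$ is bounded by $\int_{\Omega}|(\gamma^{-1})'(w)|^{4}\big(\sup_{\overline{\Omega}}|g_{\psi}|^{2}+|g_{c_{\gamma^{-1}}}(w)|^{2}\big)\,dA(w)$. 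On $\overline{\Omega}$ one has $\sup_{\overline{\Omega}}|g_{c_{\gamma^{-1}}}|\asymp\|c_{\gamma^{-1}}\|_{\mathfrak{g}}$; a standard growth estimate for the $1$-cocycle $c$ (expand $c$ along a geodesic word for $\gamma$ and bound the appearing $\mathrm{Ad}$-operators, each of norm $\lesssim e^{d(o,\gamma o)}$ since the relevant group elements stay within $O(1)$ of the geodesic $[o,\gamma o]$) gives $\|c_{\gamma}\|_{\mathfrak{g}}\lesssim_{\varepsilon}e^{(1+\varepsilon)d(o,\gamma o)}$ for every $\varepsilon>0$, and $|(\gamma^{-1})'(w)|\asymp e^{-d(o,\gamma o)}$ for $w\in\Omega$. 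Hence the $\gamma$-term is $\lesssim_{\varepsilon}e^{(-2+2\varepsilon)d(o,\gamma o)}$, and since $\Gamma$ is cocompact its Poincar\'{e} series $\sum_{\gamma}e^{-s\,d(o,\gamma o)}$ converges for all $s>1$; taking $\varepsilon<\tfrac12$ gives $\|\psi\|_{L^{2}(\dd)}<\infty$. Running the same bookkeeping on the circles $\{|z|=\rho\}$, together with the boundedness of $\mathcal{L}_{\psi}\textbf{g}_{\dd}$ and the boundary analysis of \cref{chapter3} (in the spirit of Theorems \ref{thmglobharmvf}, \ref{boundary}, \ref{chap3theorem1}), produces $\psi^{\sharp}$ as the $L^{2}(\ss^{1})$-limit of the restrictions $\psi|_{\{|z|=\rho\}}$; uniqueness of this limit is then automatic, and replacing $\psi$ by another solution of $\delta\psi=c$ changes it by a $\Gamma$-invariant vector field, which is bounded in $\textbf{g}_{\dd}$ and hence $O(1-|z|^{2})$ in the $\partial_{z}$-trivialisation, so does not affect $\psi^{\sharp}$.

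Finally, $\psi^{\sharp}$ is tangent to $\ss^{1}$: a Killing field of $\dd$ has the form $g_{c_{\gamma}}(z)=a_{\gamma}z^{2}+b_{\gamma}z-\overline{a_{\gamma}}$ with $a_{\gamma}\in\cc$ and $b_{\gamma}\in i\rr$ --- the infinitesimal form of $\gamma(\ss^{1})=\ss^{1}$ --- so for $z=\rho w$ with $w\in\ss^{1}$ its radial component $\mathrm{Re}\big(\bar w\,g_{c_{\gamma}}(\rho w)\big)=(\rho^{2}-1)\,\mathrm{Re}(a_{\gamma}w)$ vanishes to first order in $1-\rho^{2}$; since the $\gamma_{z}$-translate term in $g_{\psi}$ is $O(1-|z|^{2})$, the radial component of $\psi$ tends to $0$ as $z\to\ss^{1}$, forcing $\psi^{\sharp}$ to be tangential almost everywhere. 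The step I expect to be the real obstacle is the one glossed over in the second paragraph: upgrading $\psi\in L^{2}(\dd)$ to an honest $L^{2}(\ss^{1})$ boundary trace. Applied uniformly to the $\asymp e^{R}$ elements with $d(o,\gamma o)\approx R$, the cocycle bound $\|c_{\gamma}\|_{\mathfrak{g}}\lesssim_{\varepsilon}e^{(1+\varepsilon)d(o,\gamma o)}$ is too lossy on a single circle; one must genuinely use that on a fixed compact region only a small proportion of the $c_{\gamma}$ are large, re-running the change-of-variables estimate circle by circle so the blow-up is absorbed by the decaying factor $|(\gamma^{-1})'|\asymp 1-\rho^{2}$ via $g_{c_{\gamma}}(\gamma^{-1}w)=-(\gamma^{-1})'(w)g_{c_{\gamma^{-1}}}(w)$, and then extracting the radial limit by a Vitali/Fatou argument --- this, or a direct appeal to the boundary-regularity results already established for bounded-deformation vector fields in \cref{chapter3}, is where the bulk of the work lies.
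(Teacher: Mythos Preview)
Your approach is genuinely different from the paper's, and in an interesting way. The paper's argument for Corollary~\ref{shortcoro} is essentially two lines: it takes the harmonic vector field $\chi$ produced by the theory of \cref{chapter3} and \cref{analtocohom} (which already has a tangential $L^{2}$-extension to $\overline{\dd}$ and satisfies $\delta\chi=c$), observes that $\chi-\psi$ is then a $\Gamma$-invariant smooth vector field on $\dd$, hence bounded in the hyperbolic metric, hence $O(1-|z|^{2})$ in the $\partial_{z}$-trivialisation. Consequently $\psi=\chi-(\chi-\psi)$ inherits both the $L^{2}$-boundary extension and the tangentiality of the restriction from $\chi$. No Poincar\'{e} series, no cocycle growth estimates, no Fatou-type argument --- the whole burden has already been discharged by the construction of $\chi$.

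What you are attempting is precisely what the paper records as Open Problem~\ref{openprob1}: a proof of Corollary~\ref{shortcoro} that does \emph{not} route through the harmonic solution. Your first observation, that $\mathcal{L}_{\psi}\textbf{g}_{\dd}$ is $\Gamma$-invariant and hence hyperbolically bounded, is correct and is indeed the natural starting point for such a direct argument. Your Poincar\'{e}-series estimate is plausible for $\psi\in L^{2}(\dd,dA_{\mathrm{eucl}})$, but, as you yourself flag, the passage from $L^{2}(\dd)$ to an honest $L^{2}(\ss^{1})$ trace is the crux, and your sketch does not close that gap: the fallback you mention (``boundary-regularity results already established for bounded-deformation vector fields in \cref{chapter3}'') does not exist in the paper in that generality --- the results there are specifically for \emph{harmonic} vector fields, which is exactly the input you are trying to avoid. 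So as a proof of the corollary your proposal is incomplete at the point you identified; as an attack on the open problem it is a reasonable outline, but the hard step remains.
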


\begin{mainremark}
 \normalfont
 The above-mentioned construction of a vector field on the boundary circle $\ss^{1}$ 
 from a cocycle $c$ representing $[c] \in H^{1}(\Gamma; \mathfrak{g})$ is in 
 the spirit of \textit{universal Teichmueller theory}. See \cite{fletcher}, 
 \cite{gardiner}, 
 \cite{lehto1}, 
 \cite{lehto2}, \cite{mar} for more details. 
\end{mainremark}
For the construction of $\psi$ in Lemma \ref{lemmaintro1}, 
we can either use the $\Gamma$-invariant partition 
 of unity method or the difficult theory of \textbf{\cref{chapter3}} and 
 \textbf{\cref{analtocohom}} which produces a harmonic solution. 
 Lemma \ref{lemmaintro1} 
 is valid for all of these but the construction of an $L^{2}$-extension 
 of $\psi$ to $\overline{\dd}$ 
 relies on the existence of harmonic vector fields. Therefore, 
 it is worth 
 asking the following: 
\begin{mainopen}[Open Problem \ref{openprob1}]
  Is there 
 a more direct way of proving Corollary \ref{maincoro2} which does not 
 take harmonicity into account? 
\end{mainopen}
The final results of this article are based on 
the reincarnation (see \textbf{\cref{invariancepoisson}}) and 
adaptation of the 
\textit{Poisson integral formula}
in the case of continuous tangential vector fields on $\ss^{1}$. 
First, we construct 
a harmonic vector field on the open unit disk $\dd$ from a continuous 
tangential vector field $X$ on $\ss^{1}$. Note that 
a continuous tangential vector field $X$ on $\ss^{1}$ can be written as 
$X=fY$ where $f$ is a
real-valued continuous function on $\ss^{1}$ and $Y$ is the norm $1$ tangential vector 
field on $\ss^{1}$ given by $z \longmapsto \iota z$. 
\begin{mainthmintro}[Theorem \ref{kernelvectorfieldisharmonic}]
\label{maintheorem4}
Let $\mathcal{S}_{C^{0}}(T\ss^{1})$ be the Banach space 
of (tangential) continuous vector fields on 
$\ss^{1}$ and $\mathcal{S}_{C^{0}}(T\dd)$ be the 
space of continuous vector fields on the 
open disk $\dd$. A linear map 
 $$\mathcal{F}: \mathcal{S}_{C^{0}}(T\ss^{1}) \longrightarrow 
\mathcal{S}_{C^{0}}(T\dd)$$
is given by the normalized convolution 
$$\mathcal{F}(X) = f \ast \textbf{K}, $$
where $\textbf{K}$ is the Poisson Kernel vector field given by 
$$\textbf{K}(z) =  
\frac{\iota (1-|z|^{2})^{3}}{|1-\bar{z}|^{2} \cdot (1-\bar{z})^{2}}.$$
 Moreover, $\mathcal{F}(X)$ is a harmonic vector field on the open unit disk $\dd$. 
\end{mainthmintro}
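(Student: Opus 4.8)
The plan is to verify directly that the normalized convolution $\mathcal{F}(X) = f \ast \mathbf{K}$ produces a vector field on $\dd$ which is (a) continuous, (b) genuinely the push-forward of $X$ under the harmonic-extension machinery, and (c) harmonic in the sense of Proposition \ref{firstprop}, i.e.\ $(\mathcal{L}_{\mathcal{F}(X)}\mathbf{g}_{\dd})^{(2,0)}$ is holomorphic on $\dd$. First I would unwind the convolution. Writing $z = re^{\iota\theta}$ and $w = e^{\iota\varphi}\in\ss^1$, the expression $f\ast\mathbf{K}$ means $\mathcal{F}(X)(z) = \frac{1}{2\pi}\int_{-\pi}^{\pi} f(e^{\iota\varphi})\,\mathbf{K}_{\varphi}(z)\,d\varphi$ where $\mathbf{K}_{\varphi}(z)$ is the kernel recentered at the boundary point $e^{\iota\varphi}$; the factor $(1-|z|^2)^3$ together with the two conjugate-linear factors in the denominator is exactly what one gets by transporting the scalar Poisson kernel through the identification of $T\dd$ with a trivial line bundle and weighting by the conformal factor of $\mathbf{g}_{\dd}$. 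I would record the key algebraic fact that $\mathbf{K}(z) = \iota(1-|z|^2)^3 / (|1-\bar z|^2(1-\bar z)^2)$ can be rewritten, after pulling out the positive real scalar $(1-|z|^2)/|1-z|^2$ (the classical Poisson kernel $P(z,1)$), as $\mathbf{K}(z) = P(z,1)^? \cdot (\text{holomorphic-in-}z\text{ vector})$; more precisely the combinatorics should show $\mathbf{K}$ is (scalar Poisson kernel)${}\times{}$(a vector field whose $(2,0)$-Lie-derivative vanishes), which is the mechanism forcing harmonicity.

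The main steps, in order: (1) \emph{Well-definedness and continuity.} Since $f\in C^0(\ss^1)$ is bounded and $\mathbf{K}_{\varphi}(z)$ is, for fixed $z\in\dd$, a continuous (indeed smooth) bounded function of $\varphi$, the integral converges absolutely; continuity of $z\mapsto\mathcal{F}(X)(z)$ on the open disk follows from dominated convergence, the denominator $|1-\bar z w|$ staying bounded away from $0$ on compact subsets of $\dd$. Linearity in $X$ (equivalently in $f$) is immediate. (2) \emph{Reduction to the scalar Poisson integral.} I would express $\mathcal{F}(X)$ in the frame $\eta(z)=\partial/\partial z$: writing $\mathcal{F}(X)(z) = u(z)\,\eta(z)$, the coefficient $u$ is obtained from $f$ by a convolution against a kernel that I claim equals the ordinary Poisson kernel times a fixed holomorphic function of $z$ and an anti-holomorphic function of $w$. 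Collapsing the $w$-integration, $u(z)$ becomes (holomorphic factor)${}\cdot{}$(Poisson extension of a modified boundary datum). (3) \emph{Harmonicity.} By Proposition \ref{firstprop} it suffices to show $(\mathcal{L}_{\mathcal{F}(X)}\mathbf{g}_{\dd})^{(2,0)}$ is holomorphic. Using the explicit formula for the $(2,0)$-part of a Lie derivative of $\mathbf{g}_{\dd}=\lambda(z)^2\,dz\,d\bar z$ in terms of $u$ and its $\bar z$-derivative, I would compute $\partial_{\bar z}$ of that $(2,0)$-part and show it vanishes. The scalar Poisson extension is harmonic for the flat Laplacian, $\partial_z\partial_{\bar z}(\text{Poisson ext.})=0$; the extra holomorphic and conformal factors are designed so that the combined expression for the $(2,0)$-part is $\partial_{\bar z}$-closed. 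This is where the precise exponent $3$ and the two denominator factors in $\mathbf{K}$ earn their keep, so I would present that verification as a short explicit computation rather than wave at it.

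The hard part will be step (3): matching the differential-geometric definition of harmonicity (vanishing of $\partial_{\bar z}$ of the $(2,0)$-Lie-derivative) with the analytic fact (Poisson integrals of boundary data satisfy the flat Laplace equation). The bookkeeping is delicate because $\mathcal{L}_{\xi}\mathbf{g}_{\dd}$ mixes $\xi$, $\bar\partial\xi$, and the conformal factor $\lambda$ of the hyperbolic metric, so I expect to need the identity $\partial_z\log\lambda$ and its interplay with $\partial_{\bar z}u$ to conspire correctly; a useful sanity check along the way is that $\mathcal{F}$ should annihilate the Killing vector fields' boundary values (constant-modulus rotational data) and reproduce, on radial limits, the boundary vector field $X$ itself, which pins down the normalization constant implicit in ``normalized convolution.'' Once the $\partial_{\bar z}$-computation closes, harmonicity of $\mathcal{F}(X)$ and hence the theorem follow.
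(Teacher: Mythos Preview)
Your proposal has a genuine gap at the core mechanism in step~(3). You assert that $\mathbf{K}$ factors as ``(scalar Poisson kernel) $\times$ (a vector field whose $(2,0)$-Lie-derivative vanishes)'' and that this is ``the mechanism forcing harmonicity.'' Neither claim survives inspection. First, the factorization is $\mathbf{K}(z)=\iota(1-z)^{2}\cdot P(z,1)^{3}$, with the \emph{cube} of the scalar Poisson kernel, not the kernel itself. Second, and more seriously, the mechanism is false even in the simplest case: on $\hh$, take a constant (hence holomorphic) vector field $\eta$ and multiply by a real flat-harmonic function $h$. Plugging $\xi=h\eta$ into the harmonicity equations~(2.14)--(2.15) of the paper, the Laplacian terms vanish but the cross terms $\tfrac{2}{y}(\xi^{2}_{x}+\xi^{1}_{y})$ and $\tfrac{2}{y}(\xi^{1}_{x}-\xi^{2}_{y})$ survive and vanish only when $h$ satisfies the Cauchy--Riemann equations. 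So (flat-harmonic scalar)$\times$(holomorphic vector field) is \emph{not} harmonic in the sense of this paper unless the scalar is itself holomorphic. Your reduction to ``Poisson extension of a modified boundary datum'' therefore does not yield the conclusion; the flat $\partial_{z}\partial_{\bar z}$-harmonicity of the scalar kernel is the wrong PDE, and the ``conspire correctly'' passage is precisely where the argument would fail rather than close. (Incidentally, your sanity check is also off: $\mathcal{F}$ sends Killing boundary data to Killing vector fields on $\dd$, it does not annihilate them.)

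The paper's route avoids this direct PDE verification entirely by a symmetry argument. It first checks that $\mathbf{K}$ is harmonic \emph{at the origin only}, via a second-order Taylor expansion of $\mathbf{K}$ at $0$ showing the associated quadratic differential there is $(6\iota-24\iota z)\,dz^{2}$, manifestly holomorphic. It then proves the key equivariance identity $\gamma^{\ast}\mathbf{K}=(\gamma'(1))^{-2}\mathbf{K}$ for every $\gamma$ in the stabilizer $\mathrm{Stab}_{\mathrm{PSU}(1,1)}(1)$ of the boundary point $1$. Since that stabilizer acts transitively on $\dd$ and harmonicity of a vector field is preserved by conformal automorphisms, harmonicity at $0$ propagates to all of $\dd$. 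Harmonicity of $\mathcal{F}(X)=f\ast\mathbf{K}$ then follows because the convolution is a superposition of $\mathrm{SO}(2)$-rotates of $\mathbf{K}$, each harmonic. If you want to salvage a direct approach, the honest task is to verify the second-order equations~(2.14)--(2.15) (transported to $\dd$) for $\mathbf{K}$ at a general point; but the symmetry reduction to a single point is what makes the paper's argument short.
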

\begin{mainlemma}[Lemma \ref{finaltheorem}]
 \label{mainlemma3}
$\mathcal{F}(X)$ and $X$ make up a continuous
 vector field on the the closed unit disk $\overline{\dd}$.  
\end{mainlemma}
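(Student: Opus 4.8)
**Proof proposal for Lemma \ref{mainlemma3} (that $\mathcal{F}(X)$ and $X$ together form a continuous vector field on $\overline{\dd}$).**

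The plan is to show that for every $\zeta_0 \in \ss^1$ one has $\mathcal{F}(X)(z) \to X(\zeta_0)$ as $z \to \zeta_0$ with $z \in \dd$; since $X$ is already continuous on $\ss^1$ and $\mathcal{F}(X)$ is continuous on $\dd$ by Theorem \ref{maintheorem4}, this boundary-limit statement is exactly what is needed to conclude that the glued map is continuous on $\overline{\dd}$. Writing $X = fY$ with $f \in C^0(\ss^1,\rr)$ and $Y(\zeta) = \iota\zeta$, and unwinding the definition $\mathcal{F}(X) = f \ast \textbf{K}$, the first step is to make the convolution completely explicit: express $\mathcal{F}(X)(z)$ as an integral over $\ss^1$ of $f(\zeta)$ against a scalar kernel obtained from $\textbf{K}$ after trivializing $T\dd$ by the global frame $\partial/\partial z$ (or equivalently writing vector fields as $\cc$-valued functions). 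The key algebraic point to verify is that the scalar kernel appearing in front of $f(\zeta)$, once the powers of $(1-|z|^2)$ and $(1-\bar z)$ are reorganized and the measure on $\ss^1$ is accounted for, is a genuine \emph{approximate identity}: it is non-negative (or at least its modulus is controlled by the classical Poisson kernel), it has total mass $1$ for every $z \in \dd$ — this is the reason for the word ``normalized'' in the statement — and its mass concentrates at $\zeta_0$ as $z \to \zeta_0$.

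Concretely, I would carry out the following steps. First, rewrite $\mathcal{F}(X)(z) = \int_{\ss^1} f(\zeta)\, \widetilde{\textbf{K}}(z,\zeta)\, |d\zeta|$ for an explicit $\widetilde{\textbf{K}}$; the factor $(1-|z|^2)^3 / (|1-\bar z|^2 (1-\bar z)^2)$ in $\textbf{K}$ should, after the change from convolution-on-the-circle notation to an honest integral and after incorporating the Jacobian $Y(\zeta)$, collapse to the shape $(1-|z|^2)/|1-\bar\zeta z|^2$ times a unimodular ``rotation'' factor $\theta(z,\zeta)$ with $\theta(\zeta_0,\zeta_0) = \iota\zeta_0$, i.e. $\theta$ tends to the value of $Y$ at the boundary point. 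Second, check the normalization $\int_{\ss^1}\widetilde{\textbf{K}}(z,\zeta)\,|d\zeta| = 1$ (if the kernel is complex, split into the classical real Poisson kernel times the bounded factor $\theta$ and use that $\theta \to$ const.); this is where one confirms $\mathcal{F}$ maps constants to constants, hence $\mathcal{F}(fY)$ with $f\equiv 1$ returns $Y$. Third, run the standard approximate-identity argument: given $\varepsilon > 0$, split $\ss^1$ into a small arc $A_\delta$ around $\zeta_0$ and its complement; on $A_\delta$ use continuity of $f$ at $\zeta_0$ together with $\int_{A_\delta}|\widetilde{\textbf{K}}| \le 1 + o(1)$, and on $\ss^1 \setminus A_\delta$ use the pointwise bound $|\widetilde{\textbf{K}}(z,\zeta)| \lesssim (1-|z|^2)/\delta^2 \to 0$ uniformly, together with $\|f\|_\infty < \infty$. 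Combining the two pieces gives $|\mathcal{F}(X)(z) - X(\zeta_0)| < \varepsilon$ for $z$ near $\zeta_0$.

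The main obstacle I anticipate is bookkeeping rather than conceptual: the kernel $\textbf{K}$ is written with $(1-|z|^2)^3$ and $(1-\bar z)^2$ in a form adapted to proving \emph{harmonicity} in Theorem \ref{maintheorem4}, not in the symmetric form $(1-|z|^2)/|1-\bar\zeta z|^2$ that makes the approximate-identity property transparent; so the real work is the change-of-variables / frame-trivialization that rewrites $f \ast \textbf{K}$ as an integral against a kernel manifestly comparable to the classical Poisson kernel, and tracking the unimodular twist $\theta(z,\zeta)$ carefully enough to see that its boundary value is precisely $Y(\zeta_0) = \iota\zeta_0$ and not some other unit vector. Once the kernel is in that normalized Poisson shape, the limit is the textbook Poisson-integral boundary-value theorem applied componentwise, and the fact that $\mathcal{F}(X)|_\dd$ is continuous (Theorem \ref{maintheorem4}) plus $X|_{\ss^1}$ continuous yields continuity of the combined field on $\overline{\dd}$ by the pasting lemma.
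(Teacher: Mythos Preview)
Your approximate-identity strategy is exactly what the paper does, and the architecture---concentration of the kernel near the target boundary point plus control of the total mass---is right. The one misconception to flag is your expected factorization: the scalar kernel does \emph{not} collapse to the classical Poisson kernel times a unimodular twist. Writing out the convolution gives, for $w\in\ss^{1}$,
\[
\textbf{K}(z\bar w)\;=\;\frac{\iota(1-|z|^{2})^{3}}{|1-\bar z w|^{2}\,(1-\bar z w)^{2}},
\]
whose modulus is $(1-|z|^{2})^{3}/|1-\bar z w|^{4}$, i.e.\ Poisson \emph{squared} times $(1-|z|^{2})$, not Poisson times a unit factor. Your hedge (``or at least its modulus is controlled by the classical Poisson kernel'') does happen to hold, since $(1-|z|^{2})^{2}/|1-\bar z w|^{2}\le(1+|z|)^{2}\le 4$, and that suffices for the off-diagonal estimate in your third step. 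But it means the normalization cannot be read off from the classical Poisson identity: the integral $\tfrac{1}{2\pi}\int_{\ss^{1}}\textbf{K}(z\bar w)\,|dw|$ is genuinely $z$-dependent and has to be computed, and the limit you need is $\iota$ (i.e.\ $Y(1)$), not $1$.

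The paper carries this out by restricting $\textbf{K}$ to the circle $|z|=1-\epsilon$, proving the two estimates $|\textbf{K}_{1-\epsilon}(w)|\le 8/\epsilon$ and $|\textbf{K}_{1-\epsilon}(w)|\le 8\epsilon^{3}/|w-(1-\epsilon)|^{4}$ directly from the cubic formula, and then evaluating $\lim_{\epsilon\to 0}\tfrac{1}{2\pi}\int_{\ss^{1}}\textbf{K}_{1-\epsilon}$ by a residue calculation (the integrand has a third-order pole at $w=1-\epsilon$ inside $\ss^{1}$); the answer is $\iota$, which plays the role of your ``$\theta(\zeta_{0},\zeta_{0})=\iota\zeta_{0}$''. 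With these two ingredients in place of the hoped-for reduction to the textbook Poisson boundary theorem, your small-arc/large-arc splitting goes through verbatim.
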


We adapt Lemma \ref{mainlemma3} in the case of tangential $L^{2}$-vector 
fields on $\ss^{1}$ as follows: 
\begin{maincoro}[Corollary \ref{lastcoro}]
\label{lastcorointro}
 For an $L^{2}$-tangential vector field $X$ on $\ss^{1}$, $X$ is an $L^{2}$-boundary 
 extension of the smooth vector field $\mathcal{F}(X)$ on the open unit disk $\dd$. 
\end{maincoro}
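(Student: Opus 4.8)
The plan is to bootstrap from the continuous case already handled in Lemma \ref{mainlemma3} (Lemma \ref{finaltheorem}) and Theorem \ref{maintheorem4} (Theorem \ref{kernelvectorfieldisharmonic}) by a density argument in $L^2(\ss^1)$. First I would recall that, writing $X = fY$ with $Y(z) = \iota z$ the unit tangential field, the operator $\mathcal{F}$ is realized as the normalized convolution $\mathcal{F}(X) = f \ast \textbf{K}$ against the Poisson-type kernel vector field $\textbf{K}$. The key point is that this is the classical Poisson extension applied componentwise to the scalar coefficient $f$: for $f \in L^2(\ss^1)$ the Poisson integral $P[f]$ is a well-defined harmonic function on $\dd$, depending continuously (indeed real-analytically) on the interior point, and $P[f] \to f$ in the appropriate $L^2$-sense as the radius tends to $1$ — this is the standard $L^2$ boundary-value theory for the Poisson kernel (Fatou-type / Hardy space $h^2$ results). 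So $\mathcal{F}(X)$ is a well-defined smooth vector field on the open disk for every $L^2$-tangential $X$, and harmonicity on $\dd$ follows either by reusing the computation in Theorem \ref{maintheorem4} (the tension-field vanishing is a pointwise interior statement, insensitive to the boundary regularity of $f$) or by noting harmonicity is a closed condition passing to $L^2$-limits of the continuous case.

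Next I would establish the boundary-extension statement itself. Choose a sequence $f_n \in C^0(\ss^1)$ with $f_n \to f$ in $L^2(\ss^1)$, and set $X_n = f_n Y$. By Lemma \ref{mainlemma3}, each $\mathcal{F}(X_n)$ together with $X_n$ forms a continuous vector field on $\overline{\dd}$; in particular the non-tangential (or radial) boundary limit of $\mathcal{F}(X_n)$ is $X_n$ in the strongest possible sense. The linearity of $\mathcal{F}$ and the contractivity of the Poisson operator on $L^p$ norms ($\|P[f](r\,\cdot)\|_{L^2(\ss^1)} \le \|f\|_{L^2(\ss^1)}$ for all $r < 1$) give $\|\mathcal{F}(X)(r\,\cdot) - X_n(r\,\cdot)\|_{L^2(\ss^1)} = \|P[f-f_n](r\,\cdot)\|_{L^2} \le \|f - f_n\|_{L^2}$ uniformly in $r$. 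Combining this uniform-in-$r$ estimate with the continuous-case convergence $\mathcal{F}(X_n)(r\,\cdot) \to X_n$ as $r \to 1$ (which holds in $L^2(\ss^1)$, even in $C^0$), a standard $\varepsilon/3$ argument yields $\mathcal{F}(X)(r\,\cdot) \to X$ in $L^2(\ss^1)$ as $r \to 1^-$. That is precisely the assertion that $X$ is the $L^2$-boundary extension of the smooth vector field $\mathcal{F}(X)$ on $\dd$; tangentiality of the limit is immediate since each approximant $X_n$ is tangential and tangentiality is preserved under $L^2$-limits of sections of $T\ss^1$.

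The main obstacle I anticipate is purely bookkeeping rather than conceptual: one must be careful that the kernel $\textbf{K}$ as written is a \emph{vector-field-valued} object, so the convolution $f \ast \textbf{K}$ mixes the scalar Poisson extension of $f$ with the fixed geometric data of $\textbf{K}$, and I need the identification "$\mathcal{F}(X)$ in polar form = (scalar Poisson extension of $f$) $\times$ (tangential frame field)" to be clean enough that the classical $h^2$ estimates apply verbatim. Concretely this means checking that $\textbf{K}$ is, up to the tangential normalization built into Theorem \ref{maintheorem4}, an honest probability-measure approximation of the identity on $\ss^1$ — which is exactly what makes Lemma \ref{mainlemma3} work in the continuous case — so that the same kernel controls the $L^2$ theory. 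Once that identification is in hand, no new analysis is required; everything reduces to the density of $C^0$ in $L^2$ and the uniform $L^2$-boundedness of the Poisson semigroup, both standard.
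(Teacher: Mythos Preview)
Your approach is correct and reaches the same conclusion, but it is organized differently from the paper's argument. The paper's proof is extremely terse: it simply observes that the estimates in Lemma~\ref{finaltheorem} establish $\lim_{\epsilon\to 0}\textbf{K}_{1-\epsilon}=2\pi\boldsymbol{\delta}$ (i.e.\ the restricted kernels form an approximate identity), and then invokes a standard result on convolution with approximate identities in $L^{2}$ (\cite[Proposition~5.4]{shubin}) to conclude. Your density-plus-uniform-boundedness $\varepsilon/3$ argument is precisely one way of \emph{proving} that standard result, so the two routes are morally the same; yours is more self-contained, the paper's is shorter by outsourcing the analysis.

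One point to tighten: your inequality $\|\mathcal{F}(X)(r\,\cdot)-\mathcal{F}(X_n)(r\,\cdot)\|_{L^{2}}\le \|f-f_n\|_{L^{2}}$ is justified in your write-up by ``contractivity of the Poisson operator,'' but $\textbf{K}$ is \emph{not} the scalar Poisson kernel---it has the extra factor $(1-|z|^{2})^{2}/(1-\bar z)^{2}$---so the clean identification ``$\mathcal{F}(X)$ = scalar Poisson extension $\times$ tangential frame'' that you hope for does not hold literally. What you actually need, and what you correctly identify in your final paragraph, is only that $\textbf{K}_{1-\epsilon}$ has uniformly bounded $L^{1}$ norm, so that Young's inequality gives $\|f\ast\textbf{K}_{1-\epsilon}\|_{L^{2}}\le \|\textbf{K}_{1-\epsilon}\|_{L^{1}}\|f\|_{L^{2}}$ uniformly in $\epsilon$. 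That $L^{1}$ bound follows directly from the pointwise estimates (\ref{estimatedelta}) and (\ref{estimatedelta1}) already established in the proof of Lemma~\ref{finaltheorem}. Once you phrase the uniform bound this way, your argument goes through without needing the scalar-Poisson identification.
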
 
\begin{mainremark}
 \normalfont
 We suspect that Corollary \ref{lastcorointro} is an infinitesimal version of 
 the problem of finding harmonic extensions of quasiconformal maps (from $\ss^{1}$ to 
 itself) to the open unit disk $\dd$ or the upper half plane $\hh$. See \cite{hardt} 
 for more details. 
\end{mainremark}
We have not shown that there exists a \textit{unique} harmonic
extension of a tangential $L^{2}$-vector field $X$ on $\ss^{1}$ to the closed unit
disk $\overline{\dd}$. 
And this brings us to our second open problem: 
\begin{mainopen}[Open Problem \ref{openprob2}]
\label{openprobintro2}
 Given a tangential $L^{2}$-vector field $X$ on the boundary circle $\ss^{1}$, 
 does there exist a unique harmonic extension to the closed unit disk
$\overline{\dd}$? 
\end{mainopen}
From Theorem \ref{maintheorem4} and Corollary \ref{lastcorointro}, we get 
the following result:
\begin{mainthmintro}[Theorem \ref{lastsectiontheo}]
\label{maintheorem5}
Let $\Gamma$ be a discrete cocompact subgroup of $\mathrm{PSU}(1, 1)$. 
 For every cocycle $c$ representing a cohomology class $[c] \in  
 H^{1}(\Gamma; \mathfrak{g})$, there exists a smooth vector field 
 $\psi$ on the open unit disk $\dd$ 
 such that $c= \delta \psi$. 
Moreover, any such $\psi$ admits an 
$L^{2}$-extension to $\overline{\dd}$ whose restriction $\psi^{\sharp}$ to the 
boundary circle $\ss^{1}$ is tangential. 
There exists a homomorphism
\begin{equation*}
\label{mainmap2}
 \begin{split}
  \varPsi: H^{1}(\Gamma; \mathfrak{g}) & \longrightarrow 
  \mathrm{HQD}(\dd, \Gamma) \\
  [c] & \longmapsto 
  \big(\mathcal{L}_{\mathcal{F}(\psi^{\sharp})}\textbf{g}_{\dd} \big)^{(2, 0)},
 \end{split}
\end{equation*}
  where the map $\mathcal{F}$ is introduced in Theorem \ref{maintheorem4}
  and 
  $\mathcal{F}(\psi^{\sharp})$ is a harmonic vector field on the open disk $\dd$.
\end{mainthmintro}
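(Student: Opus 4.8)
The plan is to assemble Theorem~\ref{maintheorem5} from the pieces already established, checking only that the constructions are compatible and well-defined on cohomology classes. First I would recall from Corollary~\ref{chap3coro1}/Corollary~\ref{corosecondlast} and the discussion preceding Lemma~\ref{lemmaintro1} that a cocycle $c$ representing $[c] \in H^{1}(\Gamma;\mathfrak{g})$ can be ``integrated'' to a smooth vector field $\psi$ on $\dd$ with $\delta\psi = c$: indeed, using the $\Gamma$-invariant partition of unity $\varphi$ from Lemma~\ref{lemmaintro} one sets $\psi(z) = -\sum_{\gamma\in\Gamma}\varphi(\gamma(z))c_\gamma(z)$ and Lemma~\ref{lemmaintro1} gives that this is a genuine $C^\infty$ vector field with $\delta\psi = c$. (Alternatively the harmonic construction of \cref{chapter3} produces such a $\psi$; the statement only asserts existence of some $\psi$, so either input suffices.) This establishes the first assertion.

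Next I would invoke Corollary~\ref{maincoro2}: any such $\psi$ admits a unique $L^2$-extension to $\overline{\dd}$ whose restriction $\psi^\sharp$ to $\ss^1$ is tangential. The key point to verify here is that the conclusion of Corollary~\ref{maincoro2} does not depend on which $\psi$ we picked with $\delta\psi = c$ --- two such differ by a $\Gamma$-equivariant (i.e.\ honestly $\Gamma$-invariant, since $\mathfrak{g}$ carries the adjoint action and the difference is a cocycle that is a coboundary of a $0$-cochain, hence $\Gamma$-invariant) smooth vector field on $\dd$, and I would check that such an invariant vector field, being bounded on a fundamental domain and hence globally bounded, also extends in $L^2$ tangentially; this is already inside the proof of Corollary~\ref{maincoro2} or follows by the same argument. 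So $\psi^\sharp \in \mathcal{S}_{C^0}(T\ss^1)$ is well-defined up to the boundary values of such invariant fields.

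Then I would feed $\psi^\sharp$ into the Poisson-type operator $\mathcal{F}$ of Theorem~\ref{maintheorem4}: $\mathcal{F}(\psi^\sharp) = f\ast\mathbf{K}$ is a harmonic vector field on $\dd$, and by Lemma~\ref{mainlemma3}/Corollary~\ref{lastcorointro} the field $\psi^\sharp$ is its $L^2$-boundary extension. Because $\mathcal{F}(\psi^\sharp)$ is harmonic, Proposition~\ref{firstprop} applies and $\big(\mathcal{L}_{\mathcal{F}(\psi^\sharp)}\mathbf{g}_\dd\big)^{(2,0)}$ is a holomorphic quadratic differential on $\dd$. To see it is $\Gamma$-invariant I would use that $\mathcal{F}$ is equivariant: $\mathcal{F}$ is built from convolution with the $\mathrm{PSU}(1,1)$-natural Poisson kernel vector field $\mathbf{K}$, so it intertwines the $\Gamma$-action on tangential boundary fields with the $\Gamma$-action on harmonic fields on $\dd$ --- this equivariance is exactly the ``reincarnation'' / invariance of the Poisson formula referenced as \cref{invariancepoisson}. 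Since $\psi^\sharp$ transforms under $\gamma$ by the cocycle relation $\delta\psi = c$ with $c$ valued in $\mathfrak{g}$ (Killing fields, whose Lie derivative of $\mathbf{g}_\dd$ vanishes), the ambiguity in $\mathcal{F}(\psi^\sharp)$ is precisely by Killing vector fields, which contribute nothing to $(\mathcal{L}_\bullet\mathbf{g}_\dd)^{(2,0)}$; hence the image is a well-defined $\Gamma$-invariant holomorphic quadratic differential depending only on $[c]$. Linearity of all the maps involved ($c\mapsto\psi\mapsto\psi^\sharp\mapsto\mathcal{F}(\psi^\sharp)\mapsto(\mathcal{L}\mathbf{g}_\dd)^{(2,0)}$) gives that $\varPsi$ is a homomorphism.

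The main obstacle I anticipate is the well-definedness on cohomology, i.e.\ tracking all three sources of ambiguity (choice of cocycle $c$ in $[c]$, choice of $\psi$ with $\delta\psi = c$, non-uniqueness of the harmonic field with given boundary values left open in Open Problem~\ref{openprobintro2}) and confirming that every one of them changes $\mathcal{F}(\psi^\sharp)$ only by a Killing vector field or by a holomorphic field extending tangentially to $\ss^1$ --- exactly the ambiguity already controlled in Theorem~\ref{chap3theorem1} --- so that the $(2,0)$-part is unaffected. The cleanest way to handle this is to show the composite $[c]\mapsto\big(\mathcal{L}_{\mathcal{F}(\psi^\sharp)}\mathbf{g}_\dd\big)^{(2,0)}$ agrees with the inverse recipe hinted at after Corollary~\ref{corosecondlast}, and then appeal to the uniqueness clause of Theorem~\ref{chap3theorem1} to pin down the answer independently of the choices; everything else is bookkeeping with equivariance and linearity.
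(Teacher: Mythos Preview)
Your proposal is correct and matches the paper's approach: Theorem~\ref{lastsectiontheo} is explicitly presented there as a summary of \S\ref{cohomtoanalsection1} and \S\ref{cohomtoanalsection2}, so the ``proof'' is precisely the assembly of Lemma~\ref{continuousvector} (your Lemma~\ref{lemmaintro1}), Corollary~\ref{shortcoro} (your Corollary~\ref{maincoro2}), Theorem~\ref{kernelvectorfieldisharmonic} / Proposition~\ref{propcon} / Corollary~\ref{coro4213} (your Theorem~\ref{maintheorem4}), and Corollary~\ref{lastcoro} that you invoke. Your additional care about well-definedness on cohomology classes --- tracking the three ambiguities and using $\mathrm{PSU}(1,1)$-equivariance of $\mathcal{F}$ plus the fact that Killing fields have vanishing $(2,0)$-Lie derivative --- is more explicit than the paper, which defers most of that bookkeeping to the proof sketch of Corollary~\ref{lastsectioncorol} (where it notes $\mathcal{F}$ maps Killing fields to Killing fields and $\delta\mathcal{F}(\psi^\sharp)=c$).
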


\begin{maincoro}[Corollary \ref{lastsectioncorol}]
\label{maincoro5}
$$ \varPhi \circ \varPsi  = \mathrm{Id},$$
where $\varPhi$ is defined in   
Corollary \ref{corosecondlast} and $\varPsi$ is defined in Theorem 
\ref{maintheorem5}. 
\end{maincoro}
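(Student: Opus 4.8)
The plan is to show that the composition $\varPhi \circ \varPsi$ is the identity on $H^{1}(\Gamma; \mathfrak{g})$ by tracking a cohomology class $[c]$ through both constructions and verifying that the quadratic differential produced by $\varPsi$ recovers $[c]$ under $\varPhi$. Starting from $[c] \in H^{1}(\Gamma; \mathfrak{g})$, Theorem~\ref{maintheorem5} produces a smooth vector field $\psi$ on $\dd$ with $\delta\psi = c$, which admits an $L^{2}$-extension with tangential boundary restriction $\psi^{\sharp}$ on $\ss^{1}$. Then $\mathcal{F}(\psi^{\sharp})$ is a harmonic vector field on $\dd$ (by Theorem~\ref{maintheorem4}), and $\varPsi([c]) = q := \bigl(\mathcal{L}_{\mathcal{F}(\psi^{\sharp})}\textbf{g}_{\dd}\bigr)^{(2,0)}$. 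By Proposition~\ref{firstprop} this $q$ is a holomorphic quadratic differential, and I would first check it is $\Gamma$-invariant (so that $q \in \mathrm{HQD}(\dd,\Gamma)$ and $\varPhi$ is applicable): $\Gamma$-invariance of $q$ should follow from the $\Gamma$-equivariance properties of $\mathcal{F}$ together with the cocycle identity $\psi(\gamma) = \chi(\gamma)\gamma'^{-1} - \chi$ pushed to the boundary.

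**Next I would** apply $\varPhi$ to $q$. By Corollary~\ref{corosecondlast}, $\varPhi(q) = [\delta\tilde\chi]$, where $\tilde\chi$ is \emph{the} harmonic vector field on $\dd$ furnished by Theorem~\ref{chap3theorem1} satisfying $(\mathcal{L}_{\tilde\chi}\textbf{g}_{\dd})^{(2,0)} = q$, with the appropriate $L^{2}$-boundary and tangential conditions. The crux is then to identify $\tilde\chi$ with $\mathcal{F}(\psi^{\sharp})$ up to a Killing field. By construction $\mathcal{F}(\psi^{\sharp})$ is harmonic with $(\mathcal{L}_{\mathcal{F}(\psi^{\sharp})}\textbf{g}_{\dd})^{(2,0)} = q$; by Lemma~\ref{mainlemma3}/Corollary~\ref{lastcorointro} it has an $L^{2}$-boundary extension whose restriction to $\ss^{1}$ is tangential (indeed equal to $\psi^{\sharp}$). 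So both $\tilde\chi$ and $\mathcal{F}(\psi^{\sharp})$ satisfy the hypotheses of the uniqueness clause in Theorem~\ref{chap3theorem1}, hence $\mathcal{F}(\psi^{\sharp}) = \tilde\chi + \kappa$ for some Killing field $\kappa \in \mathfrak{g}$. Since adding a Killing field changes the $0$-cochain by an element of $\mathfrak{g}$ and $\delta$ of a constant-in-$\mathfrak{g}$ $0$-cochain is a coboundary, $\delta\mathcal{F}(\psi^{\sharp})$ and $\delta\tilde\chi$ represent the same class.

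**It then remains to** verify $[\delta\mathcal{F}(\psi^{\sharp})] = [c]$. Here I would use that $\mathcal{F}(\psi^{\sharp})$ and $\psi$ have the \emph{same} boundary values $\psi^{\sharp}$ on $\ss^{1}$: the difference $\mathcal{F}(\psi^{\sharp}) - \psi$ is a smooth vector field on $\dd$ whose $L^{2}$-boundary extension vanishes on $\ss^{1}$. Applying $\delta$, we get $\delta\mathcal{F}(\psi^{\sharp}) - \delta\psi = \delta(\mathcal{F}(\psi^{\sharp}) - \psi)$, which for each $\gamma$ is $(\mathcal{F}(\psi^{\sharp}) - \psi)(\gamma)(\gamma')^{-1} - (\mathcal{F}(\psi^{\sharp}) - \psi)$; this is a priori a $\mathfrak{g}$-valued cochain (since $\delta\psi = c$ is a $\mathfrak{g}$-valued cocycle and $\delta\mathcal{F}(\psi^{\sharp})$ lands in $\mathfrak{g}$ by the argument above), and it is $\delta$ of the $0$-cochain $\mathcal{F}(\psi^{\sharp}) - \psi$. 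To conclude it is a \emph{coboundary in the $\mathfrak{g}$-valued complex}, one needs $\mathcal{F}(\psi^{\sharp}) - \psi$ itself to have controlled behaviour — but even without that, $\delta\mathcal{F}(\psi^{\sharp}) = c + \delta(\text{something})$ shows $[\delta\mathcal{F}(\psi^{\sharp})] = [c]$ provided $\mathcal{F}(\psi^{\sharp}) - \psi$ is an admissible $0$-cochain; combined with the previous paragraph, $\varPhi(\varPsi([c])) = [\delta\tilde\chi] = [\delta\mathcal{F}(\psi^{\sharp})] = [c]$.

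**The main obstacle** I anticipate is the bookkeeping around which cochains are ``admissible'' as $0$-cochains in the relevant cohomology — i.e., ensuring that all the vector fields appearing ($\psi$, $\mathcal{F}(\psi^{\sharp})$, $\tilde\chi$, and their differences) lie in the function space for which $\delta$ implements the group cohomology differential and for which the uniqueness statement of Theorem~\ref{chap3theorem1} applies, so that ``unique up to $\mathfrak{g}$'' genuinely forces the two harmonic representatives to differ by a Killing field rather than merely by a holomorphic vector field with tangential boundary extension. Once the two uniqueness clauses of Theorem~\ref{chap3theorem1} are invoked in the right order — first the one up to holomorphic-with-tangential-boundary fields to pin down the lift, then the refinement up to $\mathfrak{g}$ — the identity $\varPhi \circ \varPsi = \mathrm{Id}$ falls out; verifying the boundary-value compatibility $\mathcal{F}(\psi^{\sharp})|_{\ss^{1}} = \psi^{\sharp}$ (Lemma~\ref{mainlemma3} and Corollary~\ref{lastcorointro}) is the technical linchpin that makes the two chains of identifications meet.
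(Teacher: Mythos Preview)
Your overall strategy is sound and ends at the right place, but you take a substantially more circuitous route than the paper does, and the ``main obstacle'' you flag in your last paragraph is precisely where the paper's argument diverges from yours and becomes much cleaner.

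The paper does not compare $\mathcal{F}(\psi^{\sharp})$ with $\psi$ via boundary values, nor does it invoke the uniqueness clause of Theorem~\ref{chap3theorem1} to match $\mathcal{F}(\psi^{\sharp})$ with a separately constructed $\tilde\chi$. Instead it uses directly that $\mathcal{F}$ is $\mathrm{PSU}(1,1)$-equivariant (Proposition~\ref{prop412}) and linear. Since $\Gamma \subset \mathrm{PSU}(1,1)$, equivariance gives $\gamma^{\ast}\mathcal{F}(\psi^{\sharp}) = \mathcal{F}(\gamma^{\ast}\psi^{\sharp})$ for every $\gamma \in \Gamma$, hence
\[
\delta\mathcal{F}(\psi^{\sharp})(\gamma) \;=\; \mathcal{F}\bigl(\delta\psi^{\sharp}(\gamma)\bigr) \;=\; \mathcal{F}\bigl(c^{\sharp}(\gamma)\bigr),
\]
where $c^{\sharp}(\gamma)$ is the restriction of the Killing field $c(\gamma)$ to $\ss^{1}$. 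The paper then observes that $\mathcal{F}$ maps Killing vector fields on $\ss^{1}$ to Killing vector fields on $\dd$ (indeed, back to themselves), so $\mathcal{F}(c^{\sharp}(\gamma)) = c(\gamma)$. This yields $\delta\mathcal{F}(\psi^{\sharp}) = c$ \emph{on the nose}, not merely at the level of cohomology classes, and $\varPhi(\varPsi([c])) = [\delta\mathcal{F}(\psi^{\sharp})] = [c]$ follows immediately.

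This dissolves your obstacle entirely: there is no need to worry about whether $\mathcal{F}(\psi^{\sharp}) - \psi$ is an ``admissible'' $0$-cochain in the $\mathfrak{g}$-complex, because the comparison with $\psi$ never enters. Your detour through $\tilde\chi$ and the uniqueness statement is unnecessary for this corollary; the harmonic vector field $\mathcal{F}(\psi^{\sharp})$ already \emph{is} the $\chi$ that $\varPhi$ would use, by construction. The single structural fact you are missing is that $\mathcal{F}$ commutes with the $\Gamma$-action, which turns a boundary identity into an interior identity without any uniqueness appeal.
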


\subsection*{Organisation of the article} 
The main goals of \textbf{\cref{beginning}} are to 
  gather some necessary results, prove that 
   the Teichmueller space $\mathscr{T}(\Sigma_{g})$ is a $6g-6$ dimensional 
   manifold using techniques from 
  differential topology, and discuss briefly about tangent spaces to 
  the Teichmueller space $\mathscr{T}(\Sigma_{g})$. 
  We have attempted to follow a coherent narrative. The reader who is familiar with these 
  notions can skip \cref{beginning}. \textbf{\cref{chapter3}} is dedicated 
  to establishing the notion of a harmonic vector field on $\hh$ (or on $\dd$) and
  proving Proposition \ref{firstprop}, 
Theorem \ref{maintheorem1}, and Theorem \ref{maintheorem2}. It also discusses 
the main advantages of the method which is used in \textbf{\cref{chapter3}} in proving 
Theorem \ref{maintheorem2} over Scott Wolpert's method. 
In \textbf{\cref{analtocohom}}, we ensure that we get
an explicit map 
from the vector space of $\Gamma$-invariant holomorphic quadratic differentials 
$\mathrm{HQD}(\dd, \Gamma)$ on $\dd$ to $H^{1}(\Gamma; \mathfrak{g})$
by using the theory of 
$L^{2}$-vector fields on $\ss^{1}$, where $\Gamma$ denotes a discrete cocompact subgroup of $\mathrm{PSU}(1, 1)$ and 
$\mathfrak{g}$ denotes the Lie algebra of $\mathrm{PSU}(1, 1)$.
One of the main actors in \textbf{\cref{analtocohom}} is the notion of a 
\textit{tangential $L^{2}$-vector field} on $\ss^{1}$ (see 
Definition \ref{defnoftan} and Example \ref{examoftan}). 
In \textbf{\cref{analtocohom}}, we prove Theorem \ref{chap3theorem1}, Corollary 
\ref{chap3coro1}, and Corollary \ref{corosecondlast}. 
\textbf{\cref{cohomtoanal}} is dedicated to constructing a map in 
the other direction in 
(\ref{mainthing}), i.e., from the cohomological description of tangent spaces 
to the analytic description of tangent spaces to the Teichmueller space 
$\mathscr{T}(\Sigma_{g})$. 
In \textbf{\cref{cohomtoanal}}, we prove Lemma \ref{lemmaintro}, Lemma \ref{lemmaintro1}, 
Corollary \ref{maincoro2}, Theorem \ref{maintheorem4}, Lemma \ref{mainlemma3}, Corollary 
\ref{lastcorointro}, Theorem \ref{maintheorem5}, and Corollary \ref{maincoro5}. 
And, in \textbf{\cref{connectionuniversal}}, we show 
that how we can describe a connection on the \textit{universal Teichmueller curve} using the notion 
of a harmonic vector field on $\dd$ developed in \textbf{\cref{chapter3}}, 
\textbf{\cref{analtocohom}}, and \textbf{\cref{cohomtoanal}}. \cref{genesis} sheds light on 
how (\ref{maintheoremequation}) and (\ref{maintheoremequation3}) relate to each other. 
 
\subsection*{Acknowledgements}
This article is based on the author Ph.D. thesis work. The author
is immensely indebted to Michael Weiss for his constant support and encouragement. 
The author would also like to thank Scott Wolpert for many enlightening correspondences 
and for discussing some of his work that has been used in this article. 
The author thesis work was supported 
 by the Alexander von Humboldt Professorship of Michael Weiss (2012-2017) and 
the Ada Lovelace Research Fellowship provided by the Deutsche Forschungsgemeinschaft 
under Germany's Excellence Strategy EXC 2044-390685587,
Mathematics M\"unster: Dynamics-Geometry-Structure. 
\section{Preliminaries}
\label{beginning} 
   \subsection{Some facts from hyperbolic geometry}
   \label{factshyperbolic}
    The upper half plane $\hh$ with the metric 
   $\textbf{g}_{\hh} = \frac{dx^{2}+dy^{2}}{y^{2}}$ and the Poincar\'{e} disk $\dd$ with the 
   metric $\textbf{g}_{\dd}= \frac{4dx^{2}+dy^{2}}{(1-(x^{2}+y^{2}))^{2}}$ are the common 
   models for the hyperbolic plane. Semicircles and half lines orthogonal to $\rr$ are the geodesics in the upper half plane model
$\hh$. In the Poincar\'{e} disk model $\dd$, if two points $z_{1}$ and $z_{2}$
are on the same diameter 
then the geodesic from $z_{1}$ to $z_{2}$ is the Euclidean line segment 
joining them, otherwise the geodesic is 
the arc of circle, orthogonal to 
$\mathbb{S}^{1}$. Both $\hh$ and $\dd$ have curvature $-1$ 
w.r.t $\textbf{g}_{\hh}$ and $\textbf{g}_{\dd}$. 
Both $\textbf{g}_{\hh}$ and $\textbf{g}_{\dd}$ are 
invariant under 
$$\mathrm{Aut}(\hh) =\{f \in \mathrm{Aut}(\overline{\cc})| f(\hh)=\hh \},$$ 
where $\mathrm{Aut}(\overline{\cc})$ is the automorphism group of the Riemann sphere 
$\overline{\cc}$, 
and 
$$\mathrm{Aut}(\dd) = \{f \in \mathrm{Aut}(\overline{\cc})| f(\dd)=\dd \}.$$ 
Note that
$\mathrm{Aut}(\hh) \cong \mathrm{PSL}(2, \mathbb{R}) \cong
\mathrm{Isom}^{+}(\hh)$, 
where $\mathrm{Isom}^{+}(\hh)$ is the group of 
orientation preserving isometries of $\hh$. Every element of
$\mathrm{Isom}^{+}(\hh)$ has a form
 $\gamma(z)=\frac{az+b}{cz+d},$
 where $a, b, c, d \in \mathbb{R}$ with $ad-bc=1$. We classify 
 elements of $\mathrm{PSL}(2, \rr)$ based 
 on an extremal problem on hyperbolic translation length as follows: 
 for every $\gamma \in \mathrm{PSL}(2, \rr)$ except the identity element, set 
 $$\alpha(\gamma) = \inf_{z \in \hh} d_{\hh}(z, \gamma(z)),$$ 
where $d_{\hh}(-, -)$ denotes 
 the hyperbolic distance, then 
$\gamma$ is 
 \textit{elliptic} if $\alpha(\gamma)=0$ and 
 there exists a point $z \in \hh$ with $\alpha(\gamma)= d_{\hh}(z, \gamma(z))$. 
 In other words, $z$ is a 
 fixed point of $\gamma$; 
$\gamma$ is \textit{parabolic} if $\alpha(\gamma)=0$ 
 but there exists no point $z \in \hh$ with $\alpha(\gamma)= d_{\hh}(z, \gamma(z))$;  
$\gamma$ is
 \textit{hyperbolic} if $\alpha(\gamma) > 0$ and there 
 exists a point $z \in \hh$ with $\alpha(\gamma)= d_{\hh}(z, \gamma(z))$. 
 Since $\hh$ is isometric to $\dd$, normal forms of above elements 
 are given as follows: 
 any elliptic element is conjugate to a rotation 
 $z \longmapsto \lambda z$ in $\mathrm{Aut}(\dd)$, 
 for some $\lambda$ with $|\lambda|=1$; 
 any parabolic element is 
 conjugate to either $z \longmapsto z+1$ or to $z \longmapsto z-1$ in $\mathrm{Aut}(\hh)$, 
 and these maps are not conjugate to each other; 
 any hyperbolic element is conjugate to $z \longmapsto \lambda z$ in $\mathrm{Aut}(\hh)$, 
 where $\lambda > 1$. 
Since elements of $\mathrm{PSL}(2, \rr)$ have matrix representations, 
 they are also classified by $\mathtt{trace}$, i.e., for a non-identity $\gamma \in 
 \mathrm{PSL}(2, \rr)$ the following holds: 
$\gamma$ is parabolic iff 
 $\mathtt{trace}^{2}(\gamma)=4$; $\gamma$ is elliptic iff 
 $0 \leqq \mathtt{trace}^{2}(\gamma) < 4$; $\gamma$ is hyperbolic iff 
 $\mathtt{trace}^{2}(\gamma)>4$. 
\cite{Beardon} and \cite{Iverson} are great references to absorb 
different flavours of hyperbolic geometry.
\subsection{The Teichm\"{u}ller space, a kaleidoscopic view}
\subsubsection{Classical definition}
\label{classical}
We choose a basepoint $x_{0} \in \Sigma_{g}$. 
The fundamental group 
$\pi_{1}(\Sigma_{g}, x_{0})$ is generated by the homotopy classes
$[a_{1}], [b_{1}], \ldots, [a_{g}], [b_{g}]$ induced from 
simple closed curves $a_{1}, b_{1}$, $\ldots, a_{g}, b_{g}$ with base point $x_{0}$ satisfying 
the following relation: 
$$[[a_{1}], [b_{1}]] \cdots [[a_{g}], [b_{g}]] = 1,$$ where $1$ is the 
unit element. 
We denote the fundamental group $\pi_{1}(\Sigma_{g}, x_{0})$ by 
$\Gamma_{g}$. By abuse of notation, we denote the generators of $\Gamma_{g}$ by 
$a_{1}, b_{1}, \ldots, a_{g}, b_{g}$ satisfying the fundamental relation 
$[a_{1}, b_{1}] \cdots [a_{g}, b_{g}] = 1$. 
From the Uniformization theorem, 
   $\Gamma_{g}$ is isomorphic to a discrete cocompact subgroup of 
    $\mathrm{PSL}(2, \rr)$. Before giving the classical definition of the Teichmueller space, 
    we describe elements of $\Gamma_{g}$. 
    \begin{prop}[\cite{katok}]
\label{hyperbolicelements}
 Every non-identity element of 
 $\Gamma_{g}$ is hyperbolic. 
\end{prop}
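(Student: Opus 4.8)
The plan is to show that every non-identity element $\gamma \in \Gamma_g$ is hyperbolic by ruling out the elliptic and parabolic cases. Since $\Gamma_g$ is (isomorphic to) a discrete cocompact subgroup of $\mathrm{PSL}(2,\rr)$ acting on $\hh$ with quotient the closed surface $\Sigma_g$ of genus $g\geq 2$, I would use the classification of isometries recalled in \S\ref{factshyperbolic} together with two structural facts: the action is \emph{free} (it is a deck-transformation action of $\pi_1$ on the universal cover), and it is \emph{cocompact}.

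First I would eliminate elliptic elements. An elliptic $\gamma$ fixes a point $z\in\hh$, i.e.\ $\gamma(z)=z$ with $\gamma\neq\mathrm{id}$; but then the action of $\Gamma_g$ on $\hh$ is not free, contradicting that $\Gamma_g$ acts as the group of deck transformations of the universal covering $\hh \to \hh/\Gamma_g = \Sigma_g$. (Equivalently: a nontrivial stabilizer would force the quotient to have an orbifold point, whereas $\Sigma_g$ is a genuine smooth surface.) So no non-identity element is elliptic.

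Next I would eliminate parabolic elements, and this is the step I expect to be the main obstacle, since it is where cocompactness — rather than just freeness or discreteness — is essential. Suppose $\gamma\in\Gamma_g$ is parabolic. After conjugating in $\mathrm{PSL}(2,\rr)$ we may assume $\gamma(z)=z+1$, which fixes $\infty\in\partial\hh$ and no point of $\hh$, and for which $\alpha(\gamma)=\inf_{z\in\hh} d_\hh(z,\gamma(z))=0$ but the infimum is not attained. The key computation is that $d_\hh(z, z+1)\to 0$ as $\Im(z)\to\infty$: explicitly, along the vertical line $z = iy$ one has $d_\hh(iy, iy+1)\to 0$ as $y\to\infty$. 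Hence in the quotient $\Sigma_g=\hh/\Gamma_g$ the images of the points $iy$ lie on a closed loop whose length tends to $0$; letting $y\to\infty$ produces a sequence of points in $\Sigma_g$ with injectivity radius tending to $0$. Since $\Sigma_g$ is compact, its injectivity radius is bounded below by a positive constant (the systole is positive), a contradiction. Therefore no non-identity element is parabolic.

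Combining the two exclusions with the trichotomy (elliptic / parabolic / hyperbolic) for non-identity elements of $\mathrm{PSL}(2,\rr)$ recalled above, every non-identity element of $\Gamma_g$ must be hyperbolic. Alternatively, for the parabolic exclusion one can argue via the thick-thin decomposition / Margulis lemma: a parabolic element would generate a cusp in $\hh/\Gamma_g$, and a cusp has infinite area and is noncompact, again contradicting compactness of $\Sigma_g$; I would present the elementary injectivity-radius argument as the main line and mention this as a remark, since it is the cleanest self-contained route given only the hyperbolic-geometry facts collected in \S\ref{factshyperbolic}.
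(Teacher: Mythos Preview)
Your proposal is correct and follows essentially the same approach as the paper. Both proofs eliminate elliptic elements via freeness of the deck-transformation action, and for the parabolic case both conjugate to $z\mapsto z+1$, observe that the projected segments from $\iota y$ to $\iota y+1$ give non-trivial closed curves on $\Sigma_g$ of hyperbolic length tending to $0$ as $y\to\infty$, and derive a contradiction with compactness; the only cosmetic difference is that the paper phrases the contradiction as ``length $\to 0$ forces the curve to be null-homotopic'' (implicitly invoking the positive systole of a compact hyperbolic surface), whereas you phrase it as ``injectivity radius $\to 0$''.
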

\begin{proof}
 We prove the proposition by contradiction. Assume that $\gamma \in 
 \Gamma_{g}-\{1\}$ 
 is either parabolic or elliptic. Note that $\Gamma_{g}$ acts freely on $\hh$ and hence 
 cannot have elliptic elements. Now, assume
 that $\gamma \in \Gamma_{g}-\{\ 1 \}$ is a
parabolic element. Since every parabolic element of $\mathrm{PSL}(2, \rr)$ is conjugate in 
$\mathrm{PSL}(2, \rr)$ to either $z \longmapsto z+1$ or $z \longmapsto z-1$ 
(see \textbf{\cref{factshyperbolic}}), 
we work with $\gamma(z)=z+1$ for the rest of the proof. 
Let $a$ be a positive real number. 
Let us
denote the image of the segment joining $\iota a$ to $\gamma(\iota a)$ by 
the projection map $p: \hh \longrightarrow \hh/ \Gamma_{g}$ 
by $C_{a}$. We note that $C_{a}$ is a closed curve.  
See Figure \ref{pehlachitra} below. 
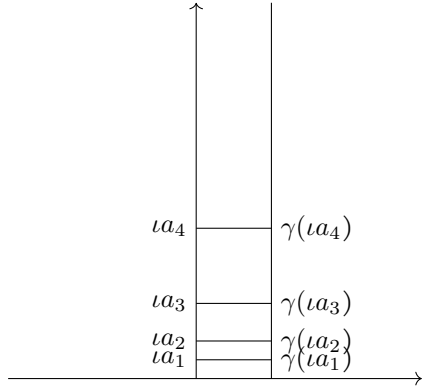
\begin{figure}[H]
\begin{center}
  \begin{tikzpicture}
\draw  [->](-2.5,0) -- (3,0);
 \draw [->](0,0) -- (0, 5);
\draw (1,0) -- (1,5);
 \draw (0,0.5)node[left]{$\iota a_{2}$} -- (1,0.5) node[right]{$\gamma(\iota a_{2}$)};
 \draw (0,1)node[left]{$\iota a_{3}$} -- (1,1)node[right]{$\gamma(\iota a_{3}$)};
 \draw (0,2)node[left]{$\iota a_{4}$} -- (1,2)node[right]{$\gamma(\iota a_{4}$)} ;
 \draw (0,0.25) node[left]{$\iota a_{1}$} -- (1, 0.25)node[right]{$\gamma(\iota a_{1}$)};
\end{tikzpicture}
\end{center}
\caption{Line segments joining $\iota a_{i}$ to $\gamma(\iota a_{i})$}
\label{pehlachitra}
\end{figure}
Recall the Poincar\'{e} metric on $\hh$ induces a 
hyperbolic metric on the compact surface $\hh / \Gamma_{g}$. 
Let $l(C_{a})$ be the hyperbolic length of $C_{a}$ w.r.t to a  
hyperbolic metric on $\hh / \Gamma_{g}$. We have one-to-one correspondence between the 
free homotopy classes of closed curves on the compact surface $\hh / \Gamma_{g}$ and 
the set of conjugacy classes in the fundamental group $\pi_{1}(\hh / \Gamma_{g})$. So 
we could view $C_{a}$ as an element of the 
fundamental group $\pi_{1}(\hh / \Gamma_{g})$. $C_{a}$ is null-homotopic because 
for a sequence of positive numbers 
$\{ a_{i}\}_{i=1}^{\infty}$, $l(C_{a_{i}}) \longrightarrow 0$ as 
$i \longrightarrow \infty$. In order to get a contradiction we have to show that 
$C_{a}$ is not null-homotopic as an element of the fundamental group 
$\pi_{1}(\hh / \Gamma_{g}) \simeq  \Gamma_{g}$. It's obvious because we started with a non-identity element 
$\gamma \in \Gamma_{g}$. \hfill \qedsymbol
\end{proof}
\begin{lemma}[\cite{hub}, \cite{imayoshi}]
\label{fuchsianlemma}
Let $\gamma_{1}, \gamma_{2} \in \mathrm{PSL}(2, \rr) -\{ 1 \}$ be hyperbolic, where 
$1$ denotes the identity element of $\mathrm{PSL}(2, \rr)$.  
Let $\mathrm{Fix}(\gamma_{1})$ and 
$\mathrm{Fix}(\gamma_{2})$ be the set of fixed points of $\gamma_{1}$ and $\gamma_{2}$, 
where the set of 
fixed points of an element $\gamma \in \mathrm{PSL}(2, \rr)-\{ 1 \}$ is the 
set of all $z \in \rr \cup \{\infty \}$ satisfying $\gamma(z)=z$. Then
$\gamma_{1}$ and $\gamma_{2}$ commute 
iff they have atleast one common fixed point, i.e., 
$$z \in \mathrm{Fix}(\gamma_{1}) \cap \mathrm{Fix}(\gamma_{2}) \neq \emptyset.$$
 
\end{lemma}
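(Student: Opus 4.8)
My plan is to argue up to conjugation in $\mathrm{PSL}(2,\rr)$. Conjugation preserves $\mathtt{trace}^{2}$ — hence the class of hyperbolic elements — carries the relation ``$\gamma_{1}\gamma_{2}=\gamma_{2}\gamma_{1}$'' to itself, and transforms fixed-point sets by $\mathrm{Fix}(g\gamma g^{-1})=g(\mathrm{Fix}(\gamma))$; so I may freely normalise. Recall (see \cref{factshyperbolic}) that a hyperbolic element has exactly two fixed points, both in $\rr\cup\{\infty\}$, and after conjugation becomes $z\mapsto\lambda z$ with $\lambda>0$, $\lambda\neq 1$, whose fixed-point set is $\{0,\infty\}$. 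I then treat the two implications separately.

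For the implication ``commute $\Rightarrow$ common fixed point'': assume $\gamma_{1}\gamma_{2}=\gamma_{2}\gamma_{1}$. For $p\in\mathrm{Fix}(\gamma_{1})$ one has $\gamma_{1}(\gamma_{2}(p))=\gamma_{2}(\gamma_{1}(p))=\gamma_{2}(p)$, so $\gamma_{2}$ permutes the two-element set $\mathrm{Fix}(\gamma_{1})$. If $\gamma_{2}$ does not fix both of these points it interchanges them; normalising $\mathrm{Fix}(\gamma_{1})=\{0,\infty\}$ and $\gamma_{1}:z\mapsto\lambda z$, such a $\gamma_{2}$ has the form $z\mapsto c/z$, and a one-line computation gives $\gamma_{2}\gamma_{1}\gamma_{2}^{-1}:z\mapsto z/\lambda=\gamma_{1}^{-1}$; combined with $\gamma_{2}\gamma_{1}\gamma_{2}^{-1}=\gamma_{1}$ this forces $\gamma_{1}^{2}=1$, impossible for a hyperbolic element. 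Hence $\gamma_{2}$ fixes both points of $\mathrm{Fix}(\gamma_{1})$, so $\gamma_{1}$ and $\gamma_{2}$ share a fixed point (indeed $\mathrm{Fix}(\gamma_{1})=\mathrm{Fix}(\gamma_{2})$).

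For the converse: suppose $\gamma_{1}$ and $\gamma_{2}$ share a fixed point, and conjugate it to $\infty$. Then each $\gamma_{i}$ is affine, $\gamma_{i}:z\mapsto a_{i}z+b_{i}$ with $a_{i}>0$, and $a_{i}\neq 1$ since $\gamma_{i}$ is hyperbolic, so $\gamma_{i}$ has a second fixed point $q_{i}=b_{i}/(1-a_{i})$. Computing, $\gamma_{1}\gamma_{2}:z\mapsto a_{1}a_{2}z+(a_{1}b_{2}+b_{1})$ and $\gamma_{2}\gamma_{1}:z\mapsto a_{1}a_{2}z+(a_{2}b_{1}+b_{2})$, so $\gamma_{1}\gamma_{2}=\gamma_{2}\gamma_{1}$ iff $a_{1}b_{2}+b_{1}=a_{2}b_{1}+b_{2}$, i.e.\ iff $q_{1}=q_{2}$ — precisely when $\gamma_{1}$ and $\gamma_{2}$ also share their second fixed point. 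The statement therefore reduces to the claim that a hyperbolic pair sharing one fixed point shares both, and this is where discreteness of the ambient group enters: normalising further so that $\gamma_{1}:z\mapsto\lambda z$ with $\lambda>1$, if $b_{2}\neq 0$ then the elements $\gamma_{1}^{n}\gamma_{2}\gamma_{1}^{-n}:z\mapsto a_{2}z+\lambda^{n}b_{2}$ are pairwise distinct and converge, as $n\to-\infty$, to the non-identity element $z\mapsto a_{2}z$, contradicting discreteness of the Fuchsian group containing $\gamma_{1}$ and $\gamma_{2}$ (the only context in which the lemma is applied here, e.g.\ $\Gamma_{g}$ by \cref{hyperbolicelements}). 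Once $\mathrm{Fix}(\gamma_{1})=\mathrm{Fix}(\gamma_{2})=\{0,\infty\}$ both maps are $z\mapsto a_{i}z$ and visibly commute.

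The affine bookkeeping in the last step is routine; the genuinely load-bearing point — and, as the pair $z\mapsto 2z$, $z\mapsto 2z+1$ (which share $\infty$ yet do not commute) shows, one that cannot simply be dropped — is the passage from a single common fixed point to equality of the two fixed-point sets, which rests on discreteness. I would therefore make explicit the standing assumption that $\gamma_{1},\gamma_{2}$ lie in a discrete subgroup of $\mathrm{PSL}(2,\rr)$ and organise the write-up around the three items above.
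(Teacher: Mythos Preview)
The paper does not supply its own proof of this lemma; it is stated with citations to \cite{hub} and \cite{imayoshi} and then used in Remark~\ref{cyclic}. So your write-up is being compared against the standard arguments in those references rather than anything in the paper itself.

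Your proof of the direction ``commute $\Rightarrow$ common fixed point'' is correct and standard. More importantly, you have correctly identified a defect in the \emph{statement} of the lemma as printed: the converse is false in $\mathrm{PSL}(2,\rr)$ without an additional hypothesis, and your pair $z\mapsto 2z$, $z\mapsto 2z+1$ (both hyperbolic, sharing $\infty$, not commuting) demonstrates this cleanly. The correct version --- and what one actually finds in the cited sources --- is either (i) two hyperbolic elements commute iff $\mathrm{Fix}(\gamma_{1})=\mathrm{Fix}(\gamma_{2})$, or (ii) for hyperbolic elements of a \emph{discrete} subgroup, sharing one fixed point forces sharing both, hence commuting. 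Your argument via $\gamma_{1}^{n}\gamma_{2}\gamma_{1}^{-n}:z\mapsto a_{2}z+\lambda^{n}b_{2}$ accumulating at a non-identity element is exactly the standard proof of (ii).

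Since the only use of the lemma in the paper (Remark~\ref{cyclic}) concerns elements of the discrete cocompact group $\Gamma_{g}$, your proposed repair --- making discreteness an explicit standing hypothesis --- is the right one, and with it your proof is complete.
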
 

\begin{remark}
\normalfont
 \label{cyclic}
We denote the centralizer of $\Gamma_{g}$ in $\mathrm{PSL}(2, \rr)$ by 
$C_{\Gamma_{g}} \mathrm{PSL}(2, \rr)$. From Lemma \ref{fuchsianlemma}, it is 
easy to see that 
$C_{\Gamma_{g}} \mathrm{PSL}(2, \rr)$ is trivial. Here is an argument: 
 from Lemma \ref{fuchsianlemma}, $\gamma_{1}$ and $\gamma_{2}$ are noncommuting iff 
 $\mathrm{Fix}(\gamma_{1}) \cap \mathrm{Fix}(\gamma_{2}) = \emptyset$. Now, let's assume that 
 $\gamma \in \mathrm{PSL}(2, \rr)$ commutes with $\gamma_{1}$ and $\gamma_{2}$. Then 
 $\gamma$ fixes the axis of $\gamma_{1}$ and $\gamma_{2}$, since 
 $\gamma(\mathrm{Ax}\gamma_{i}) = \mathrm{Ax}\gamma \gamma_{i} \gamma^{-1} = 
 \mathrm{Ax} \gamma_{i}$, for $i=1, 2$. Thus $\gamma$ maps $\mathrm{Fix}(\gamma_{i}), i=1, 2$ 
 to itself. However we cannot conclude that 
 $\gamma(z)= z$, $z \in \mathrm{Fix}(\gamma_{i}), i=1, 2$. We have two possibilities: 
 \begin{enumerate}
  \item $\gamma$ is hyperbolic with the same axis as of $\gamma_{1}$ 
  and $\gamma_{2}$. 
  \item $\gamma$ is elliptic of order $2$, i.e., $\gamma$ interchanges the fixed points of 
  $\gamma_{1}$ and $\gamma_{2}$. And $\gamma \gamma_{i} \gamma^{-1} = \gamma_{i}^{-1}$.
 \end{enumerate}
We can exclude both the possibilities because according to (1), $\gamma$ has $4$ 
fixed points, hence a contradiction. And from (2), 
$\gamma \notin C_{\Gamma_{g}} \mathrm{PSL}(2, \rr)$.
Hence, 
 $C_{\Gamma_{g}} \mathrm{PSL}(2, \rr)$ is trivial. 
 
\end{remark}

\begin{defn}
\label{classicaldefn}
 \normalfont The Teichm\"{u}ller space of $\Sigma_{g}$ is defined as
 the space of equivalence classes of \textit{marked hyperbolic surfaces}. 
 By a \textit{marked hyperbolic surface} we mean a pair $(S, \phi)$ where 
 $S$ is a hyperbolic surface and
 $\phi: \Sigma_{g} \longrightarrow S$ is an orientation preserving diffeomorphism. 
 Equivalence relation is defined as follows:
 $$ (S, \phi) \sim (S', \psi),$$
 if there exists an isometry $h: S \longrightarrow S'$ such that $\psi$ is 
 isotopic to $h \circ \phi$. We denote the Teichm\"{u}ller space of $\Sigma_{g}$ 
 by $\mathscr{T}(\Sigma_{g})$.
\end{defn}
\begin{remark}
\label{glitchtopology}
 \normalfont
 Note that there is a glitch in the above definition as we have not introduced 
 a topology on the Teichm\"{u}ller space $\mathscr{T}(\Sigma_{g})$. There 
 is a notion of the \textit{Teichmueller metric} which gives a topology on 
 $\mathscr{T}(\Sigma_{g})$. 
 See \cite{farbmar} and \cite{imayoshi} 
 for a complete understanding. 
\end{remark}

\subsubsection{$\mathscr{T}(\Sigma_{g})$ as a representation variety}
Let $\Gamma$ be a finitely generated group and $G$ be a connected Lie group. 
The most interesting case for us is when
$\Gamma= \Gamma_{g}$
and $G= \mathrm{Isom}^{+}(\mathbb{H}^{2}) \cong \mathrm{PSL}(2, \mathbb{R})$. 
Let $\mathrm{Hom}(\Gamma, G)$ 
denote the space of all homomorphisms $\Gamma \longrightarrow G$ 
with the compact-open topology. 
Note that $G$ can be described 
as a closed subgroup of $\mathrm{GL}(k, \rr)$ for some large $k$. 
Therefore, we can think of $G$
as a real algebraic subgroup of $\mathrm{GL}(k, \rr)$.
The space $\mathrm{Hom}(\Gamma, G)$ has 
the structure of an algebraic variety. 
 The representation
variety $\mathrm{Hom}(\Gamma, G)$
is isomorphic to an algebraic subvariety (in $G^{n}$). 
The isomorphism type of the variety $\mathrm{Hom}(\Gamma, G)$ 
does not depend on the 
choice of the presentation of $\Gamma$ (see \cite{KM99}, \cite{LM85}). 
Note that the spaces 
$\mathrm{Hom}(\Gamma, G)$ are not generally manifolds. 
The natural symmetries of the space
$\mathrm{Hom}(\Gamma, G)$ come from the action of 
$\mathrm{Aut}(\Gamma) \times \mathrm{Aut}(G)$ 
where the action is described as:
if $\gamma \in \mathrm{Aut}(\Gamma)$ and $\alpha \in \mathrm{Aut}(G)$, 
then $\rho^{(\gamma, \alpha)} \in \mathrm{Hom}(\Gamma, G)$ is defined as:
$$\rho^{(\gamma, \alpha)}(x)= (\alpha \circ \rho \circ \gamma^{-1})(x). $$
We will be mainly concerned with the quotient space of $\mathrm{Hom}(\Gamma, G)$ 
by $\mathrm{Inn}(G)$ 
which will be denoted by $\mathrm{Hom}(\Gamma, G)/G$. Note
that $\mathrm{Inn}(G)$ does not act freely on $\mathrm{Hom}(\Gamma, G)$ in some cases. 
The isotropy group 
of a point $\rho \in \mathrm{Hom}(\Gamma, G)$ is the centralizer $C_{G}(\rho)$ in 
$\mathrm{Inn}(G)$ and $\mathrm{Inn}(G)$ acts freely on $\mathrm{Hom}(\Gamma, G)$ if 
$C_{G}(\rho)$ is trivial for all $\rho \in  \mathrm{Hom}(\Gamma, G)$. 
In the case of our interest, i.e., when $\Gamma = \Gamma_{g}$ and
 $G= \mathrm{PSL}(2, \rr)$, we overcome this pathology (see Remark \ref{cyclic}).
The quotient space $\mathrm{Hom}(\Gamma, G)/G$ is 
 not generally a Hausdorff space unless $G$ is a 
 compact Lie group.  
 \begin{defn}
  \label{subspacesofrep}
  \normalfont 
  \begin{equation*}
 \begin{split}
  \mathrm{Hom}_{\mathrm{DF}}(\Gamma, G) & := \{ \rho \in \mathrm{Hom}(\Gamma, G) | 
  \rho \hspace{2pt} \text{is injective with discrete image}\}, \\
  \mathrm{Hom}_{0}(\Gamma, G) & := \{ \rho \in \mathrm{Hom}_{\mathrm{DF}}(\Gamma, G) | 
  G/ \rho(\Gamma)\hspace{2pt}\text{is compact}\}.
 \end{split}
\end{equation*}
 \end{defn}
 \begin{remark}
 \label{weilremark}
  \normalfont
  It is clear that $\mathrm{Hom}_{0}(\Gamma, G) \subset \mathrm{Hom}_{\mathrm{DF}}(\Gamma, G) 
 \subset \mathrm{Hom}(\Gamma, G)$.
 $\mathrm{Hom}_{0}(\Gamma, G)$ is an open subset of 
 $\mathrm{Hom}(\Gamma, G)$ \cite{Weil60}, \cite{Weil62}. 
 \end{remark}
\begin{defn}
 \label{teichrep}
  \normalfont
  The Teichmueller space $\mathscr{T}(\Sigma_{g})$ of $\Sigma_{g}$ is (also) 
  defined as the quotient space 
  $$\mathrm{Hom}_{0}(\Gamma_{g}, \mathrm{PSL}(2, \rr))/ \mathrm{PSL}(2, \rr),$$ where 
  $\mathrm{Hom}_{0}(\Gamma_{g}, \mathrm{PSL}(2, \rr))$ is defined in Definition 
  \ref{subspacesofrep}. 
 \end{defn}
 The above definition will be the main definition of the Teichmueller 
 space in this thesis. Now, we prove the following general fact using techniques 
from differential topology:
\begin{prop}
\label{teichmanifold}
$\mathrm{Hom}_{0}(\Gamma_{g}, \mathrm{PSL}(2, \rr))/ \mathrm{PSL}(2, \rr)$ has a preferred 
structure of smooth manifold 
 of dimension $6g-6$.
\end{prop}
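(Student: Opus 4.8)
The plan is to realize $\mathrm{Hom}_0(\Gamma_g, \mathrm{PSL}(2,\mathbb{R}))$ as a smooth submanifold of $\mathrm{PSL}(2,\mathbb{R})^{2g}$ near a fixed representation $\rho$, and then to show that $\mathrm{PSL}(2,\mathbb{R})$ acts freely, properly and smoothly on it, so that the quotient is a smooth manifold whose dimension is computed by subtracting $\dim \mathrm{PSL}(2,\mathbb{R})=3$ from the dimension of the submanifold. First I would use the standard presentation $\Gamma_g=\langle a_1,b_1,\dots,a_g,b_g \mid \prod_i[a_i,b_i]=1\rangle$ to identify $\mathrm{Hom}(\Gamma_g,\mathrm{PSL}(2,\mathbb{R}))$ with the real algebraic set $\{(A_1,B_1,\dots,A_g,B_g)\in \mathrm{PSL}(2,\mathbb{R})^{2g} : \prod_i[A_i,B_i]=\mathrm{Id}\}$, i.e.\ the fiber $R^{-1}(\mathrm{Id})$ of the relator map $R\colon \mathrm{PSL}(2,\mathbb{R})^{2g}\to \mathrm{PSL}(2,\mathbb{R})$. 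By Remark \ref{weilremark}, $\mathrm{Hom}_0(\Gamma_g,\mathrm{PSL}(2,\mathbb{R}))$ is open in this set, so it suffices to prove that $R$ is a submersion at every point of $\mathrm{Hom}_0$; then $\mathrm{Hom}_0$ is a smooth manifold of dimension $2g\cdot 3 - 3 = 6g-3$.

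The submersivity at $\rho\in\mathrm{Hom}_0$ is the technical heart, and this is where I expect the main obstacle to lie. The differential $d R_\rho$ can be identified, via a standard Fox-calculus computation, with the coboundary map $Z^1(\Gamma_g;\mathfrak{g}_{\mathrm{Ad}\rho})\to$ (a twisted analogue describing the failure of the relator), whose cokernel is governed by $H^2(\Gamma_g;\mathfrak{g}_{\mathrm{Ad}\rho})$. Since $\Sigma_g$ is a closed oriented aspherical surface, $\Gamma_g$ is a Poincar\'e duality group of dimension $2$, so $H^2(\Gamma_g;\mathfrak{g}_{\mathrm{Ad}\rho})\cong H^0(\Gamma_g;\mathfrak{g}_{\mathrm{Ad}\rho})^\ast$, and the latter is the space of $\mathrm{Ad}\rho$-invariant vectors in $\mathfrak{g}$. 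Because $\rho(\Gamma_g)$ is a cocompact Fuchsian group, its centralizer in $\mathrm{PSL}(2,\mathbb{R})$ is trivial (Remark \ref{cyclic}), hence this invariant subspace is $0$; therefore $dR_\rho$ is surjective and $R$ is a submersion along $\mathrm{Hom}_0$. (Equivalently one can quote Weil's rigidity/deformation results from \cite{Weil60,Weil62}, but the Poincar\'e-duality argument is self-contained given the excerpt.) It also follows that $\dim Z^1 = 6g-3$ and, subtracting the $3$-dimensional space $B^1$ of coboundaries, $\dim H^1(\Gamma_g;\mathfrak{g}_{\mathrm{Ad}\rho}) = 6g-6$, which matches and cross-checks the final dimension count.

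Next I would handle the group action. $\mathrm{PSL}(2,\mathbb{R})$ acts on $\mathrm{Hom}_0$ by post-conjugation, $(g\cdot\rho)(\gamma)=g\rho(\gamma)g^{-1}$; this action is smooth since it is the restriction of the smooth conjugation action on $\mathrm{PSL}(2,\mathbb{R})^{2g}$. The action is \emph{free} precisely because the stabilizer of $\rho$ is the centralizer $C_{\mathrm{PSL}(2,\mathbb{R})}(\rho(\Gamma_g))$, which is trivial by Remark \ref{cyclic}. For \emph{properness} I would argue that a cocompact Fuchsian group contains two hyperbolic elements $\gamma_1,\gamma_2$ with disjoint fixed-point sets on $\partial\mathbb{H}^2$ (using Lemma \ref{fuchsianlemma} and Proposition \ref{hyperbolicelements}), so that for $\rho$ in a neighborhood the attracting/repelling fixed points of $\rho(\gamma_1),\rho(\gamma_2)$ give four distinct boundary points varying continuously with $\rho$; a sequence $g_n$ with $g_n\cdot\rho_n\to\rho_n'$ and $\rho_n,\rho_n'$ in a compact set then permutes bounded configurations of four distinct points, forcing $\{g_n\}$ to stay in a compact subset of $\mathrm{PSL}(2,\mathbb{R})$.

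Finally, with a smooth free proper action of the Lie group $\mathrm{PSL}(2,\mathbb{R})$ on the manifold $\mathrm{Hom}_0(\Gamma_g,\mathrm{PSL}(2,\mathbb{R}))$, the quotient manifold theorem gives that $\mathrm{Hom}_0(\Gamma_g,\mathrm{PSL}(2,\mathbb{R}))/\mathrm{PSL}(2,\mathbb{R})$ is a smooth manifold and the projection is a smooth submersion (indeed a principal bundle, cf.\ \cite{EE69}); its dimension is $(6g-3)-3 = 6g-6$. This structure is ``preferred'' in the sense that it is the unique one making the projection a submersion, so it does not depend on the chosen presentation of $\Gamma_g$ (the isomorphism type of the representation variety being presentation-independent, as already noted). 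I would remark that Hausdorffness of the quotient, not automatic for representation varieties in general, follows here from properness of the action.
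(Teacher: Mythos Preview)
Your proof is correct and follows the same two-step architecture as the paper (show $\mathrm{Hom}_0$ is a $(6g-3)$-manifold via the relator map $R$, then pass to the quotient), but both steps are executed differently. For submersivity of $R$, the paper does not invoke cohomology: it restricts to the first pair $(A_1,B_1)$, computes $dR$ by hand, and shows that $(u,v)\mapsto (\mathrm{Ad}B_1)u-u+v-(\mathrm{Ad}A_1)v$ is onto $\mathfrak{g}$ by observing that the image of $u\mapsto (\mathrm{Ad}B_1)u-u$ is the $2$-plane Killing-orthogonal to the infinitesimal generator of $B_1$ (similarly for $A_1$), and these two planes span $\mathfrak{g}$ since $A_1,B_1$ do not commute. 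Your Poincar\'e-duality argument ($\mathrm{coker}\,dR_\rho\cong H^2(\Gamma_g;\mathfrak{g}_{\mathrm{Ad}\rho})\cong H^0(\Gamma_g;\mathfrak{g}_{\mathrm{Ad}\rho})^\ast=0$) is cleaner and generalizes to other semisimple $G$, at the cost of importing the Fox-calculus identification of the cokernel. For the quotient, the paper avoids the abstract quotient-manifold theorem altogether: it builds an explicit global slice via the $\mathrm{PSL}(2,\rr)$-equivariant map $\psi_1\colon \mathrm{Hom}_0\to \mathrm{Conf}_3(\partial\hh)\cong \mathrm{PSL}(2,\rr)$ sending $\rho$ to the attracting/repelling fixed points of $\rho(a_1)$ and the attracting fixed point of $\rho(b_1)$; then $Z=\psi_1^{-1}(1)$ is a codimension-$3$ transversal and the action gives a diffeomorphism $\mathrm{PSL}(2,\rr)\times Z\cong \mathrm{Hom}_0$. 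Your properness argument via four boundary fixed points is a close cousin of this three-point map, but the paper's version buys an explicit global chart for the quotient rather than an existence statement.
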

\begin{proof}
 We prove the statement in the following steps:
 \smallskip
\paragraph{\textbf{Step I:}}
Here we prove that $\mathrm{Hom}_{0}(\Gamma_{g}, \mathrm{PSL}(2, \rr))$ is a smooth manifold 
of dimension $6g-3$. 
Since a homomorphism $\rho: \Gamma_{g} \longrightarrow \mathrm{PSL}(2, \rr)$ is 
determined by choosing the $2g$ images $\rho(a_{i}), \rho(b_{i}), 1 \leq i \leq 
g$, there is a natural inclusion of 
$\mathrm{Hom}(\Gamma_{g},  \mathrm{PSL}(2, \rr))$ 
into the direct product $\mathrm{PSL}(2, \rr)^{2g}$ of $2g$ copies of 
$\mathrm{PSL}(2, \rr)$. Consider the following map
$$R: \mathrm{PSL}(2, \rr)^{2g} \longrightarrow \mathrm{PSL}(2, \rr) $$
given by 
\begin{equation}
\label{commutatormap}
R(A_{1}, B_{1}, \ldots, A_{g}, B_{g}) = A_{1}B_{1}A_{1}^{-1}B_{1}^{-1} \cdots 
A_{g}B_{g}A_{g}^{-1}B_{g}^{-1}.
\end{equation}
\textit{Claim:} We assume that $A_{1}$ and $B_{1}$ 
are noncommuting hyperbolic elements. Then the differential of $R$ at 
$(A_{1}, B_{1}, \ldots, A_{g}, B_{g}) 
\in \mathrm{PSL}(2, \rr)^{2g}$ is surjective. 
\smallskip
\newline
\textit{Proof of the Claim:}
By precomposing the map $R$ given in (\ref{commutatormap}) with the map 
$$ \mathrm{PSL}(2, \rr) \times \mathrm{PSL}(2, \rr) \longrightarrow \mathrm{PSL}(2, \rr)^{2g}$$
$(A, B) \longmapsto (A, B, 1, \ldots, 1) $, 
we get a map 
$$ \mathrm{PSL}(2, \rr) \times \mathrm{PSL}(2, \rr)  \longrightarrow
\mathrm{PSL}(2, \rr)$$ given by 
\begin{equation}
 \label{reduced}
 (A, B) \longmapsto ABA^{-1}B^{-1}.
\end{equation}
We denote this composite map by $R$ as well. 
Therefore, 
proving the above-mentioned claim amounts to proving the following statement: 
Let $\mathfrak{g}$ denote the 
Lie algebra of 
$\mathrm{PSL}(2, \rr)$. 
If $A$ and $B$ are noncommuting hyperbolic elements, then the differential of 
the map $R$ given in (\ref{reduced})
\begin{equation}
 \label{differentialata}
 dR(A, B): T_{(A, B)} (\mathrm{PSL}(2, \rr) \times \mathrm{PSL}(2, \rr))
 \longrightarrow T_{R(A, B)} \mathrm{PSL}(2, \rr)
\end{equation}
 is surjective. For the calculation of the differential $dR(A, B)$ we can replace 
$\mathrm{PSL}(2, \rr)$ with 
$\mathrm{SL}(2, \rr)$. 
A simple calculation shows that 
$$T_{A}\mathrm{SL}(2, \rr) =  A \cdot \mathfrak{gl}(2, \rr),$$
where $\mathfrak{gl}(2, \rr)$ is the Lie algebra of $\mathrm{SL}(2, \rr)$, equivalently, 
the tangent space at the identity. From this discussion on tangent spaces, we can write
(\ref{differentialata}) as
\begin{equation}
 \label{final}
 dR(A, B): A \mathfrak{gl}(2, \rr) \times B \mathfrak{gl}(2, \rr) 
\longrightarrow R(A, B) \mathfrak{gl}(2, \rr).
\end{equation}
 Now, we prove the surjectivity of the map given by (\ref{final}). 
 First, we calculate the differential of $R$ 
 at $(A, B)$. Let $u, v \in \mathfrak{gl}(2, \rr)$. For $t \rightarrow 0$, we have
\begin{equation*}
 \label{differentialapprox}
 \resizebox{0.95\hsize}{!}{$
 \begin{split}
  R(A\exp tu, B\exp tv) - R(A, B) 
                   & \approx A(I+tu)B(I+tv)(I-tu)A^{-1}(I-tv)B^{-1}
                   -ABA^{-1}B^{-1} \\
                   & \approx (A+Atu)(B+Btv)(A^{-1}-tuA^{-1})
                   (B^{-1}-tvB^{-1}) - ABA^{-1}B^{-1} \\ 
                   & \approx (AB+ABtv+AtuB) 
                   (A^{-1}B^{-1}-A^{-1}tvB^{-1}-tuA^{-1}B^{-1}) \\
                   & \quad - ABA^{-1}B^{-1} \\ 
                   & \approx ABA^{-1}B^{-1} - ABA^{-1} 
                   tv B^{-1}
                   - ABtuA^{-1}B^{-1} + ABtvA^{-1}B^{-1} \\
                   & \quad + 
                   AtuBA^{-1}B^{-1} - ABA^{-1}B^{-1} \\ 
                   & \approx - ABA^{-1} tv B^{-1}
                   - ABtuA^{-1}B^{-1} + ABtvA^{-1}B^{-1} + 
                   AtuBA^{-1}B^{-1} \\ 
                   & \approx AB \big(-A^{-1}tvA-tu+tv+B^{-1}tuB\big) 
                   A^{-1}B^{-1}. \\
 \end{split}
$}
\end{equation*}
Recall that the adjoint representation $\mathrm{Ad}$  
of $\mathrm{SL}(2, \rr)$ on $\mathfrak{gl}(2, \rr)$ is defined by 
\begin{equation*}
 \label{adjointrepresen}
(\mathrm{Ad}A)w:= A^{-1}wA, \quad w \in \mathfrak{gl}(2, \rr).  
\end{equation*}
Therefore, 
 the differential $dR(A, B): A \mathfrak{gl}(2, \rr) \times B \mathfrak{gl}(2, \rr)
\longrightarrow R(A, B) \mathfrak{gl}(2, \rr)$ is given by 
 the following: 
 \begin{equation*}
  \label{differentialcom}
  (Au, Bv) \longmapsto AB\big((\mathrm{Ad}B)u-u + v- (\mathrm{Ad}A)v 
  \big) A^{-1}B^{-1}, \quad u, v \in \mathfrak{gl}(2, \rr). 
 \end{equation*}
It is enough to show that the map $\mathfrak{gl}(2, \rr) \times \mathfrak{gl}(2, \rr) \longrightarrow 
\mathfrak{gl}(2, \rr)$ given by
 \begin{equation}
  \label{differentialcom2}
  (u, v) \longmapsto (\mathrm{Ad}B)u-u + v- (\mathrm{Ad}A)v, \quad u, v \in 
  \mathfrak{gl}(2, \rr)
 \end{equation}
 is surjective. 
 \smallskip
\newline
 \textit{Proof of surjectivity of the map given in (\ref{differentialcom2}):} Note 
 that $\mathrm{SL}(2, \rr)$ preserves a nondegenerate bilinear form on its Lie
 algebra $\mathfrak{gl}(2, \rr)$. Moreover, $\mathrm{PSL}(2, \rr)$ embeds into the isometry 
 group of the Killing form on $\mathfrak{gl}(2, \rr)$. So, we think of $B$ as 
 an element of 
 one parameter subgroup generated by $b \in \mathfrak{gl}(2, \rr)$ of the 
 isometry group of the Killing form on $\mathfrak{gl}(2, \rr)$. The image of the linear map 
 $u \longmapsto (\mathrm{Ad}B)u-u$ from $\mathfrak{gl}(2, \rr)$ to itself is precisely 
 the $2$-dimensional subspace of $\mathfrak{gl}(2, \rr)$ which is perpendicular (in the sense of the 
 Killing form) 
 to $b$. Similarly, the image of the linear map 
 $v \longmapsto v - (\mathrm{Ad}A)v$ from $\mathfrak{gl}(2, \rr)$ to itself is precisely 
 the $2$-dimensional subspace of $\mathfrak{gl}(2, \rr)$ which is 
 perpendicular (in the sense of Killing form) 
 to $a \in \mathfrak{gl}(2, \rr)$.  
 Since we have chosen $A$ and $B$ such that they are noncommuting hyperbolic elements,
 $a$ and $b$ are 
 linearly independent in $\mathfrak{gl}(2, \rr)$. The reader can also verify these 
 two statements in 
 coordinates, i.e., by making choices for $B$ (and $A$ respectively), $u$ 
 (and $v$ respectively) and plugging these into $u \longmapsto (\mathrm{Ad}B)u-u$ and 
   $v \longmapsto v - (\mathrm{Ad}A)v$.
 Therefore, 
 the map $\mathfrak{gl}(2, \rr) \times \mathfrak{gl}(2, \rr) \longrightarrow \mathfrak{gl}(2, \rr)$ given in 
 (\ref{differentialcom2}) 
 is surjective. 
 \smallskip
\newline
 We denote the subset of $\mathrm{PSL}(2, \rr)^{2g}$ consisting of 
 elements $A_{1}, B_{1}, \ldots, A_{g}, B_{g}$ such that 
 $A_{1}, B_{1}$ are noncommuting hyperbolic elements by $W$. 
 Since $W$ is open in $\mathrm{PSL}(2, \rr)^{2g}$, hence $W$ is a manifold of dimension $6g$. 
 From the above-mentioned claim, 
 $1$ is a regular value of the restriction map 
 $R|_{W}: W \longrightarrow \mathrm{PSL}(2, \rr)$. In fact, 
 every value of the map $R|_{W}$ is a regular value. 
 Hence, 
 $R|_{W}^{-1}(1)$ is a submanifold of $W$ of dimension $6g-3$. Note that 
 $R|_{W}^{-1}(1)$ is nothing but $\mathrm{Hom}(\Gamma_{g}, \mathrm{PSL}(2, \rr)) \cap W$.  
 From Remark \ref{weilremark}, we know that 
 $\mathrm{Hom}_{0}(\Gamma_{g}, \mathrm{PSL}(2, \rr))$ is an open subset of 
 $\mathrm{Hom}(\Gamma_{g}, \mathrm{PSL}(2, \rr))$, therefore, 
 $\mathrm{Hom}_{0}(\Gamma_{g}, \mathrm{PSL}(2, \rr))$ is a $6g-3$ dimensional 
 smooth manifold. 
 \smallskip
 \paragraph{\textbf{Step II:}} In this step, we study the action of $\mathrm{PSL}(2, \rr)$ on 
$\mathrm{Hom}_{0}(\Gamma_{g}, \mathrm{PSL}(2, \rr))$. Given $g \in 
\mathrm{PSL}(2, \rr)$ and $\rho \in \mathrm{Hom}_{0}(\Gamma_{g}, \mathrm{PSL}(2, \rr))$, we define 
$\rho^{g}: \Gamma_{g}\longrightarrow \mathrm{PSL}(2, \rr)$ by setting 
\begin{equation}
 \label{freeaction}
 \rho^{g}(\gamma) = 
g \rho(\gamma) g^{-1}, \quad \forall \gamma \in \Gamma_{g}. 
\end{equation}
The map $(g, \rho) \longmapsto \rho^{g}$ is a continuous action of 
$\mathrm{PSL}(2, \rr)$ on
$\mathrm{Hom}_{0}(\Gamma_{g}, \mathrm{PSL}(2, \rr))$. 
We want to show that the 
action is free and that the orbit space of this action is again a smooth manifold. 
Consider the following map
$$\psi_{1}: \mathrm{Hom}_{0}(\Gamma_{g}, \mathrm{PSL}(2, \rr)) \longrightarrow 
 \mathrm{Conf}_{3}(\partial \hh),$$
 $$ \rho \longmapsto (z_{1}, z_{2}, z_{3})$$
 where $\mathrm{Conf}_{3}(\partial \hh)$ is the space of ordered configurations of distinct
 $3$ points in the boundary $\partial \hh$. In the above map, $z_{1}$, $z_{2}$ 
 are \textit{attractive} and \textit{repelling} 
  fixed points of $A_{1}$, i.e., 
 $$\lim_{n \rightarrow \infty} A_{1}^{n}(z)=z_{1}, \forall z \in \hh, \quad 
 \lim_{n \rightarrow -\infty} A_{1}^{n}(z)=z_{2}, \forall z \in \hh, $$
 and $z_{3}$ is the attractive fixed point of 
 $B_{1}$. Moreover, the group $\mathrm{PSL}(2, \rr)$ acts 
sharply transitively on ordered 
triples in $\partial \hh$, 
we can also think of $\psi_{1}$ 
 as a map 
  $$
  \psi_{1}: \mathrm{Hom}_{0}(\Gamma_{g}, \mathrm{PSL}(2, \rr)) 
  \longrightarrow \mathrm{PSL}(2, \rr).$$
  Note that we have identified
  $\mathrm{PSL}(2, \rr)$ with $\mathrm{Conf}_{3}(\partial \hh)$ by the map 
  $g \longmapsto g \cdot (0, 1, \infty)$. Observe that $\psi_{1}$ is a $\mathrm{PSL}(2, \rr)$-equivariant map, i.e.,
  $$\psi_{1}(g \cdot \rho) = g \cdot \psi_{1}(\rho), \quad \forall g \in \mathrm{PSL}(2, \rr), $$
  where the action on the L.H.S is by 
  conjugation and the action on the R.H.S is by left-multiplication. In other words, 
  if we change $\rho$ 
  by conjugating it by an element $g \in \mathrm{PSL}(2, \rr)$, the three distinct 
  points $z_{1}, z_{2}, z_{3}$ in $\partial \hh$ are also transformed by the same element $g \in 
  \mathrm{PSL}(2, \rr)$. The only thing we have to show now is 
  that $\psi_{1}$ is differentiable. Here is an 
 argument: 
 $\mathrm{Hom}_{0}(\Gamma_{g}, \mathrm{PSL}(2, \rr))$ is also a closed subset of 
  $\mathrm{PSL}(2, \rr)^{2g}$. Now, $\psi_{1}$ extends to a small open neighborhood 
 $$\mathscr{U} \subset \mathrm{Hom}_{0}(\Gamma_{g}, \mathrm{PSL}(2, \rr))$$
 in $\mathrm{PSL}(2, \rr)^{2g}$. We know that an element $\rho \in 
 \mathrm{Hom}_{0}(\Gamma_{g}, \mathrm{PSL}(2, \rr))$ is determined by hyperbolic elements
 $(A_{1}, B_{1}, \ldots, A_{g}, B_{g}) 
 \in \mathrm{PSL}(2, \rr)^{2g}$ satisfying the relation
 $$[A_{1}, B_{1}] \cdots [A_{g}, B_{g}]=1.$$ Since 
 the set of hyperbolic elements form  an 
 open subset
 of $\mathrm{PSL}(2, \rr)$ (see \textbf{\cref{factshyperbolic}}), 
 then an open neighborhood
 $\mathscr{U} \subseteq \mathrm{PSL}(2, \rr)^{2g}$ of 
 $ \mathrm{Hom}_{0}(\Gamma_{g}, \mathrm{PSL}(2, \rr))$ also
 contains hyperbolic elements $A'_{1}, B'_{1}$, $\ldots, A'_{g}, B'_{g}$ which may not 
 satisfy $[A'_{1}, B'_{1}] \cdots [A'_{g}, B'_{g}]=1$. 
 The upshot is $\psi_{1}$ is smooth because it is the 
 restriction of a map defined on an open neighborhood $\mathscr{U}$ of 
 $\mathrm{Hom}_{0}(\Gamma_{g}, \mathrm{PSL}(2, \rr))$ which is obviously smooth. 
 $\mathrm{PSL}(2, \rr)$-equivariance of $\psi_{1}$ makes immediately clear that 
  $\psi_{1}$ is everywhere regular. 
  Therefore, $\psi_{1}^{-1}(1)$ is a submanifold of codimension 3 of
  $\mathrm{Hom}_{0}(\Gamma_{g}, \mathrm{PSL}(2, \rr))$. We denote $\psi_{1}^{-1}(1)$ 
  by $Z$. 
 Tying it all together, the action of $\mathrm{PSL}(2, \rr)$ on 
 $\mathrm{Hom}_{0}(\Gamma_{g}, \mathrm{PSL}(2, \rr))$ admits 
 a transversal, i.e., there exists a submanifold $Z$ of 
 $\mathrm{Hom}_{0}(\Gamma_{g}, \mathrm{PSL}(2, \rr))$ of 
 codimension $3$ such that the action of $\mathrm{PSL}(2, \rr)$ 
 gives us a diffeomorphism 
 $$\psi_{2}: \mathrm{PSL}(2,\rr) \times Z \longrightarrow 
 \mathrm{Hom}_{0}(\Gamma_{g}, \mathrm{PSL}(2, \rr))$$
 $$\psi_{2}(g, z) = gzg^{-1}.$$
 Therefore, the orbit space 
 $\mathrm{Hom}_{0}(\Gamma_{g}, \mathrm{PSL}(2, \rr))/ \mathrm{PSL}(2, \rr)$ is diffeomorphic 
 to $Z$. \hfill \qedsymbol 
\end{proof}
\begin{remark}
 \normalfont
 Note that a different choice of generators for $\Gamma_{g}$ will give the same 
 structure of smooth manifold 
 on $\mathrm{Hom}_{0}(\Gamma_{g}, \mathrm{PSL}(2, \rr))/ \mathrm{PSL}(2, \rr)$. 
\end{remark}

\begin{remark}
 \normalfont
 We were only made aware of Earle and Eells' paper \cite{EE69}, where 
 they only give a sketch proof of Step 1 at the end of this thesis research. 
 Many thanks to Johannes Ebert. 
 \end{remark}

 \subsection{Tangent spaces to the Teichm\"{u}ller space $\mathscr{T}(\Sigma_{g})$}
\label{tangentteichmuellerspace}
 \subsubsection{Cohomological description}
\label{cohomo}
Let $\Gamma$ be a finitely generated group and $G$ be a connected Lie group with 
Lie algebra $\mathfrak{g}$. 
We can obtain a (linear) action
of $\Gamma$ on $\mathfrak{g}$ by fixing a homomorphism $\rho_{0}:\Gamma \longrightarrow G$ and composing $\rho_{0}$ with the adjoint
representation of $G$ and hence make $\mathfrak{g}$ a $k \Gamma$-module where
$k=\rr$ or $\cc$. We denote $\mathfrak{g}$ with the above-mentioned $\Gamma$-module structure 
by $\mathfrak{g}_{\mathrm{Ad}\rho_{0}}$. A map $c: \Gamma \longrightarrow \mathfrak{g}$ is called a \textit{1-cocycle} if
\begin{equation}
\label{cocycle}
 c(\gamma_{1} \gamma_{2})=c(\gamma_{1})+\mathrm{Ad}(\rho_{0}(\gamma_{1}))c(\gamma_{2}), \quad \forall \gamma_{1}, \gamma_{2} \in \Gamma.
\end{equation}
$c$ is a \textit{1-coboundary} if it has the following form
\begin{equation}
\label{coboundary}
 c(\gamma)= u-\mathrm{Ad}(\rho_{0}(\gamma))u
\end{equation}
for some $u \in \mathfrak{g}$. The (real vector) space of 1-cocycles is denoted by 
$Z^{1}(\Gamma; \mathfrak{g}_{\mathrm{Ad}\rho_{0}})$ 
and the (real vector) space of 1-coboundaries is denoted by
$B^{1}(\Gamma; \mathfrak{g}_{\mathrm{Ad}\rho_{0}})$. Their quotient is the group cohomology
\begin{equation*}
 H^{1}(\Gamma; \mathfrak{g}_{\mathrm{Ad}\rho_{0}})= Z^{1}(\Gamma; \mathfrak{g}_{\mathrm{Ad}\rho_{0}})/ 
 B^{1}(\Gamma; \mathfrak{g}_{\mathrm{Ad}\rho_{0}}).
\end{equation*}
When $\Gamma=\pi_{1}(M)$ for a topological space $M$, $H^{1}(\Gamma; \mathfrak{g}_{\mathrm{Ad}\rho_{0}})$ 
can be identified with $H^{1}(M; \mathfrak{g}_{\mathrm{Ad}\rho_{0}})$, 
the first cohomology of $M$ with coefficients in the local system given 
by $\mathfrak{g}_{\mathrm{Ad}\rho_{0}}$. 
For more details on group cohomology, the reader is referred to \cite{brown}. 
We are interested in the case when $\Gamma = 
\Gamma_{g}$, $G= \mathrm{PSL}(2, \rr)$, and $\mathfrak{g}$ is the Lie algebra of $\mathrm{PSL}(2, \rr)$. 
\begin{prop}[\protect{\cite[Theorem 2.6]{LM85}, \cite[Chapter VI]{raghu}}]
 \label{tangent1}
$$T_{[\rho_{0}]} \mathrm{Hom}_{0}(\Gamma_{g}, \mathrm{PSL}(2, \rr))/ \mathrm{PSL}(2, \rr)
\cong H^{1}(\Gamma_{g}; \mathfrak{g}_{\mathrm{Ad}\rho_{0}}).$$
\end{prop}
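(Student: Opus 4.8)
The plan is to obtain the isomorphism as the composite of two linear identifications together with the slice construction already available. First I would identify $T_{\rho_{0}}\mathrm{Hom}_{0}(\Gamma_{g},\mathrm{PSL}(2,\rr))$ with the space of $1$-cocycles $Z^{1}(\Gamma_{g};\mathfrak{g}_{\mathrm{Ad}\rho_{0}})$; next I would identify the tangent space at $\rho_{0}$ of the $\mathrm{PSL}(2,\rr)$-orbit through $\rho_{0}$ with the space of $1$-coboundaries $B^{1}(\Gamma_{g};\mathfrak{g}_{\mathrm{Ad}\rho_{0}})$; and finally, using the transversal $Z$ produced in the proof of Proposition~\ref{teichmanifold}, I would pass to the quotient and conclude $T_{[\rho_{0}]}\big(\mathrm{Hom}_{0}(\Gamma_{g},\mathrm{PSL}(2,\rr))/\mathrm{PSL}(2,\rr)\big)\cong Z^{1}/B^{1}=H^{1}(\Gamma_{g};\mathfrak{g}_{\mathrm{Ad}\rho_{0}})$.

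For the first identification, let $\rho_{t}$ be a smooth path in $\mathrm{Hom}_{0}(\Gamma_{g},\mathrm{PSL}(2,\rr))$ starting at $\rho_{0}$, and set $c(\gamma)=\frac{d}{dt}\big|_{t=0}\big(\rho_{t}(\gamma)\rho_{0}(\gamma)^{-1}\big)\in\mathfrak{g}$, i.e. I differentiate in the right trivialization of $T\mathrm{PSL}(2,\rr)$. Differentiating the homomorphism relation $\rho_{t}(\gamma_{1}\gamma_{2})=\rho_{t}(\gamma_{1})\rho_{t}(\gamma_{2})$ at $t=0$ and reorganizing via $\rho_{0}(\gamma_{1})\,(\,\cdot\,)\,\rho_{0}(\gamma_{1})^{-1}$ gives exactly the cocycle relation~(\ref{cocycle}), so the velocity of any path is an element of $Z^{1}(\Gamma_{g};\mathfrak{g}_{\mathrm{Ad}\rho_{0}})$, and the resulting linear map $T_{\rho_{0}}\mathrm{Hom}_{0}\to Z^{1}$ is injective because a cocycle is determined by its values on the generators $a_{1},b_{1},\dots,a_{g},b_{g}$, which are just the components of the velocity in $T_{\rho_{0}}\mathrm{PSL}(2,\rr)^{2g}=\mathfrak{g}^{2g}$. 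Surjectivity is the substantive point: near $\rho_{0}$ the space $\mathrm{Hom}(\Gamma_{g},\mathrm{PSL}(2,\rr))$ is the fiber $R^{-1}(1)$ of the relator map~(\ref{commutatormap}) restricted to the open set where $A_{1},B_{1}$ are noncommuting hyperbolic (the set $W$ of the proof of Proposition~\ref{teichmanifold}), and $\mathrm{Hom}_{0}$ is open in it by Remark~\ref{weilremark}; computing $dR_{\rho_{0}}$ in the right trivialization — a Fox free-derivative computation for the single relator $[a_{1},b_{1}]\cdots[a_{g},b_{g}]$, of the same flavour as the infinitesimal expansion already carried out in that proof — identifies $\ker dR_{\rho_{0}}$ with precisely the image of the restriction-to-generators map $Z^{1}\to\mathfrak{g}^{2g}$, hence with $Z^{1}$. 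Thus $T_{\rho_{0}}\mathrm{Hom}_{0}(\Gamma_{g},\mathrm{PSL}(2,\rr))\cong Z^{1}(\Gamma_{g};\mathfrak{g}_{\mathrm{Ad}\rho_{0}})$; as a consistency check both sides have dimension $6g-3$ (Proposition~\ref{teichmanifold}, and $\dim Z^{1}=\dim B^{1}+\dim H^{1}=3+(6g-6)=6g-3$).

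For the second identification, consider the orbit map $o_{\rho_{0}}:\mathrm{PSL}(2,\rr)\to\mathrm{Hom}_{0}(\Gamma_{g},\mathrm{PSL}(2,\rr))$, $g\mapsto\rho_{0}^{g}=g\rho_{0}g^{-1}$ from~(\ref{freeaction}). Differentiating $t\mapsto\exp(tu)\rho_{0}(\gamma)\exp(-tu)$ at $t=0$ in the right trivialization gives $\frac{d}{dt}\big|_{0}\big(\exp(tu)\rho_{0}(\gamma)\exp(-tu)\rho_{0}(\gamma)^{-1}\big)=u-\mathrm{Ad}(\rho_{0}(\gamma))u$, so $d(o_{\rho_{0}})_{1}$ is exactly the assignment $u\mapsto c_{u}$ of~(\ref{coboundary}) and its image is $B^{1}(\Gamma_{g};\mathfrak{g}_{\mathrm{Ad}\rho_{0}})$; its kernel is the Lie algebra of the centralizer of $\rho_{0}(\Gamma_{g})$ in $\mathrm{PSL}(2,\rr)$, which is trivial by Remark~\ref{cyclic}, so the orbit is an immersed $3$-dimensional submanifold with $T_{\rho_{0}}(\text{orbit})=B^{1}$. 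Finally, Proposition~\ref{teichmanifold} provides a transversal $Z$ with $\mathrm{PSL}(2,\rr)\times Z\to\mathrm{Hom}_{0}(\Gamma_{g},\mathrm{PSL}(2,\rr))$ a diffeomorphism, which at $\rho_{0}$ yields the direct sum $T_{\rho_{0}}\mathrm{Hom}_{0}=T_{\rho_{0}}(\text{orbit})\oplus T_{\rho_{0}}Z=B^{1}\oplus T_{\rho_{0}}Z$; since $Z$ projects diffeomorphically onto the quotient, $T_{[\rho_{0}]}\big(\mathrm{Hom}_{0}(\Gamma_{g},\mathrm{PSL}(2,\rr))/\mathrm{PSL}(2,\rr)\big)\cong T_{\rho_{0}}Z\cong Z^{1}/B^{1}=H^{1}(\Gamma_{g};\mathfrak{g}_{\mathrm{Ad}\rho_{0}})$.

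The main obstacle I expect is the bookkeeping in the surjectivity step of the first identification: one must show that $\ker dR_{\rho_{0}}$ is not merely of the right dimension but is literally the cocycle space, with the adjoint twist sitting on the correct side, so that the resulting isomorphism is the standard (Weil) one and not an accidental coincidence of dimensions. A secondary but routine point is checking that the orbit is genuinely transverse to $Z$ at $\rho_{0}$; this is exactly what the construction of $Z$ as the zero locus of the everywhere-regular equivariant map $\psi_{1}$ in the proof of Proposition~\ref{teichmanifold} guarantees, together with $\dim B^{1}=3$.
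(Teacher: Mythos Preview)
Your argument is correct and follows the same Weil-style infinitesimal deformation computation as the paper: both differentiate the homomorphism condition to land in $Z^{1}$ and differentiate the conjugation action to identify the orbit tangent with $B^{1}$. The one substantive difference is in how surjectivity is established. The paper simply invokes the dimension formula $\dim H^{1}(\Gamma_{g};\mathfrak{g}_{\mathrm{Ad}\rho_{0}})=6g-6$ from \cite{cohgol} and matches it against Proposition~\ref{teichmanifold}, treating the equality $T_{\rho_{0}}\mathrm{Hom}_{0}\cong Z^{1}$ as a consequence of equal dimensions. You instead argue directly that $\ker dR_{\rho_{0}}=Z^{1}$ via the Fox-derivative interpretation of the cocycle condition on the single relator, and then pass to the quotient using the explicit transversal $Z$ from Proposition~\ref{teichmanifold}. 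Your route is more self-contained (no external citation for the dimension of $Z^{1}$) and makes the isomorphism canonical rather than merely numerical; the paper's route is shorter but outsources the key count. Either is fine, and your caution about verifying that $\ker dR_{\rho_{0}}$ is literally the cocycle space (with the adjoint on the correct side) rather than just something of the right dimension is exactly the point that distinguishes the two arguments.
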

\begin{proof}
We construct a linear map 
$$\Psi: T_{[\rho_{0}]} \mathrm{Hom}_{0}(\Gamma_{g}, \mathrm{PSL}(2, \rr))/ \mathrm{PSL}(2, \rr)
\longrightarrow 
 H^{1}(\Gamma_{g}; \mathfrak{g}_{\mathrm{Ad}\rho_{0}})$$ as follows: 
 to the first order, a curve of maps 
 $(\rho_{t})_{t \in [0, \epsilon)}$ 
 in $\mathrm{Hom}_{0}(\Gamma_{g}, \mathrm{PSL}(2, \rr))$ 
 through the point $\rho_{0}$
 depending smoothly
 on the real parameter $t$ is described as: 
 $$\rho_{t}(\gamma)= \exp\big(tc(\gamma)+O(t^{2})\big) \rho_{0}(\gamma), \quad \forall \gamma \in 
 \Gamma_{g}.$$ 
 The infinitesimal condition for $\rho_{t}$ to be a homomorphism is given as: 
 \begin{equation*}
  \begin{split}
   \rho_{t}(\gamma_{1} \gamma_{2}) & = (e+tc_{\gamma_{1} \gamma_{2}}+O(t^{2})) \rho_{0}(\gamma_{1} \gamma_{2}) \\
                                   & = \rho_{0}(\gamma_{1} \gamma_{2}) + t c_{\gamma_{1} \gamma_{2}} \rho_{0}(\gamma_{1} \gamma_{2}) + O(t^{2}) \\
                                   & = \big(\rho_{0}(\gamma_{1})+t c_{\gamma_{1}}\rho_{0}(\gamma_{1}) \big)\big(\rho_{0}(\gamma_{2})+t c_{\gamma_{2}}\rho_{0}(\gamma_{2})\big)+O(t^{2})\\
                                   & = \rho_{0}(\gamma_{1} \gamma_{2}) + t \big(\rho_{0}(\gamma_{1}) c_{\gamma_{2}} \rho_{0}(\gamma_{2}) + c_{\gamma_{1}} \rho_{0}(\gamma_{1})\rho_{0}(\gamma_{2})\big) +O(t^{2})\\
                                   & = \rho_{0}(\gamma_{1} \gamma_{2}) + t \big( \rho_{0}(\gamma_{1}) c_{\gamma_{2}} + c_{\gamma_{1}}\rho_{0}(\gamma_{1})     \big) \rho_{0}(\gamma_{2}) +O(t^{2})\\
                                   & = \rho_{0}(\gamma_{1} \gamma_{2}) + t \big( \rho_{0}(\gamma_{1}) c_{\gamma_{2}} \rho_{0}(\gamma_{1})^{-1} \rho_{0}(\gamma_{1}) + c_{\gamma_{1}}\rho_{0}(\gamma_{1})     \big) \rho_{0}(\gamma_{2}) +O(t^{2})\\
                                   & = \rho_{0}(\gamma_{1} \gamma_{2}) + t \big( \rho_{0}
                                   (\gamma_{1}) c_{\gamma_{2}} \rho_{0}(\gamma_{1})^{-1}  
                                   + c_{\gamma_{1}}     \big) \rho_{0}(\gamma_{1}) 
                                   \rho_{0}(\gamma_{2}) +O(t^{2})\\
                                   & = \rho_{0}(\gamma_{1} \gamma_{2}) + t \big(
                      \mathrm{Ad}(\rho_{0}(\gamma_{1})) c_{\gamma_{2}}+ c_{\gamma_{1}}\big)
                      \rho_{0}(\gamma_{1} \gamma_{2}) + O(t^{2}). 
  \end{split}
\end{equation*}
From the above equation, notice that
$$ c_{\gamma_{1} \gamma_{2}}= \mathrm{Ad}(\rho_{0}(\gamma_{1})) c_{\gamma_{2}}+ c_{\gamma_{1}}.$$
 Therefore, we define $\Psi \big(\frac{d}{dt} \rho_{t}|_{t=0}\big)$ 
 to be the cocycle $c \in Z^{1}(\Gamma_{g}; \mathfrak{g}_{\mathrm{Ad}\rho_{0}})$. 
 We show next that $\Psi$ is injective. Suppose that the cocycle $c$ determined 
 by $\rho_{t}$ is a coboundary, i.e., $c(\gamma)= u-\mathrm{Ad}(\rho_{0}(\gamma))u$ for 
 some $u \in \mathfrak{g}$ (see (\ref{coboundary})). Then the 
 curve $\rho_{t}(\gamma)= g_{t} \rho_{0}(\gamma)g_{t}^{-1}$ 
 induced by a path $g_{t}=e+t u+O(t^{2}), u \in \mathfrak{g}$
 is tangent at $t=0$ to the orbit $\mathrm{PSL}(2, \rr)\rho_{0}$ for all $\gamma \in 
 \Gamma_{g}$. 
 Moreover, $\Psi$ is surjective because of the fact that 
 $\mathrm{dim} H^{1}(\Gamma_{g}; \mathfrak{g}_{\mathrm{Ad}\rho_{0}})= 6g-6$. 
 The fact follows from a non-trivial result (see \cite{cohgol}) 
 that given a connected Lie group $G$ and 
 $\rho_{0} \in \mathrm{Hom}(\Gamma_{g}, G)$, 
 $$\mathrm{dim}Z^{1}(\Gamma_{g}; \mathfrak{g}_{\mathrm{Ad}\rho_{0}}) = 
 (2g-1) \mathrm{dim} G+\mathrm{dim}C_{G}(\rho(\Gamma_{g})),$$ where 
 $C_{G}(\rho(\Gamma_{g}))$ denotes the the centralizer of $\rho(\Gamma_{g})$ in $G$ and 
 $$\mathrm{dim}B^{1}(\Gamma_{g}; \mathfrak{g}_{\mathrm{Ad}\rho_{0}}) = \mathrm{dim}G - 
 \mathrm{dim}C_{G}(\rho(\Gamma_{g})).$$
For the case of our interest, i.e., when $G= \mathrm{PSL}(2, \rr)$, 
$C_{G}(\rho(\Gamma_{g}))$ is trivial (see Remark \ref{cyclic}). Therefore, 
$$\mathrm{dim}Z^{1}(\Gamma_{g}; \mathfrak{g}_{\mathrm{Ad}\rho_{0}}) = 
 (2g-1) \mathrm{dim} G = 6g-3, \quad 
 \mathrm{dim}B^{1}(\Gamma_{g}; \mathfrak{g}_{\mathrm{Ad}\rho_{0}}) = \mathrm{dim}G =3.$$ 
 \hfill \qedsymbol
 \end{proof}
\begin{remark}
 \normalfont
 Note that $\rho_{0} \in \mathrm{Hom}_{0}(\Gamma_{g}, \mathrm{PSL}(2, \rr))$ 
 can be lifted to a homomorphism $\widetilde{\rho_{0}}:\Gamma_{g} \longrightarrow \mathrm{SL}(2, \rr)$ 
 because the Euler class $e(\rho_{0})$ of the oriented $\ss^{1}$-bundle associated to $\rho_{0}$ equals 
 twice the Euler number of $\rr^{2}$-bundle associated to $\widetilde{\rho_{0}}$, i.e., 
 $e(\rho_{0})=2g-2$. See \cite[Appendix C]{milsta} for more details. As a result, 
 in the above proof, the expressions of $\rho_{t}(\gamma_{1}\gamma_{2})$ and 
 $g_{t}$ are justified. 
\end{remark}
\subsubsection{Analytic description: Holomorphic quadratic differentials}
\label{thespaceofhqd}
Let $K_{\Sigma_{g}}$ be the canonical line bundle, that is, the line bundle over 
$\Sigma_{g}$ such 
that the fiber $K_{x}$ over any point
$x \in \Sigma_{g}$ is the complex cotangent space $T_{x}^{\ast}\Sigma_{g}$ to 
$\Sigma_{g}$ at $x$. Let 
$Q_{\Sigma_{g}}$ be the tensor square of the canonical line bundle 
$K_{\Sigma_{g}}$. The bundle $Q_{\Sigma_{g}}$ and its sections
provide a glimpse into one of the important aspects of the Teichmueller theory. 
\begin{defn}
\label{defnofhqd}
\normalfont
 A holomorphic quadratic differential on $\Sigma_{g}$ is a holomorphic 
 section of $Q_{\Sigma_{g}}$.
\end{defn}
We will denote a holomorphic quadratic differential on $\Sigma_{g}$ by $q$. Locally, 
$q$ on $\Sigma_{g}$ as 
specified in any atlas $\{(U_{i}, z_{i})\}$ can be 
described as $f_{i}(z_{i})dz_{i}^{2}$, where each $f_{i}$ is a holomorphic function
on $U_{i}$ of $\Sigma_{g}$ and $dz_{i}^{2}:= dz_{i} \otimes dz_{i}$ is a 
section of $Q_{\Sigma_{g}}$. Let's denote the space of holomorphic quadratic differentials on $\Sigma_{g}$
by $\mathrm{HQD}(\Sigma_{g})$. Since $K_{\Sigma_{g}}$ has degree $2g-2$, 
the Riemann-Roch formula (see \cite{farkaskra}) implies 
that 
$$\mathrm{dim}(\mathrm{HQD}(\Sigma_{g}))=\mathrm{deg}(Q_{\Sigma_{g}})-g+1=3g-3.$$ 
Note that the bundle $Q_{\Sigma_{g}}$ appears 
in a splitting of the bundle $S^{2}(T\Sigma_{g})$ of (real) symmetric bilinear 
forms on $T\Sigma_{g}$. This splitting is described as follows:
one summand is the 
$1$-dimensional real vector subbundle spanned by the 
everywhere nonzero section of the hyperbolic metric $\textbf{g}$ on $\Sigma_{g}$. 
The other summand is the image of the 
bundle of quadratic differentials under the following embedding:
\begin{equation}
\label{quadmap}
 \psi: \mathrm{hom}_{\mathbb{C}} (T\Sigma_{g} \otimes_{\mathbb{C}} T\Sigma_{g}, \mathbb{C}) 
\longrightarrow S^{2}(T\Sigma_{g}) 
\end{equation}
where $\psi(q)$ is the real part of $q$, 
viewed as a (family) of symmetric $\mathbb{R}$-bilinear forms. 
This subbundle is the
\textit{trace-free} summand by definition. It is a 
2-dimensional subbundle of a 3-dimensional (real) 
vector bundle which comes with a structure of 1-dimensional 
complex 
vector bundle. We illustrate the above splitting as follows:
\begin{example}
 \normalfont
Let $U$ be an open subset of $\mathbb{C}$ with the complex structure induced from $\cc$. 
Then $TU$ is identified
with a trivial bundle $\mathbb{C} \times U \longrightarrow U$ and therefore, $\mathrm{hom}_{\cc}(TU \otimes_{\cc} TU)$ is also identified with a trivial bundle 
$\cc \times U \longrightarrow U$. Therefore, quadratic differentials on $U$ whether holomorphic or not, are identified with complex valued functions on $U$.
For such a function $f$, we get
$$ \psi(f)(z)= \begin{bmatrix}
                \Re(f(z)) & -\Im(f(z)) \\
                -\Im(f(z)) & - \Re(f(z))
               \end{bmatrix},
$$
where $\psi$ is the map given in (\ref{quadmap}). 
This is very easy to check. The preferred ordered basis of $T_{z}U \cong \cc$ as a 
real vector 
space is $\{1, \iota\}$. If $f(z)=x+y \iota$ then $\Re(1 \cdot f(z) \cdot 1)=x$, 
 $\Re(\iota \cdot f(z) \cdot \iota)=-x$, $\Re(1 \cdot f(z) \cdot \iota )=-y$.
\end{example}
From the above discussion, it follows automatically that a holomorphic 
quadratic differential 
$q$ on $\Sigma_{g}$ gives a one parameter family 
$\{g(t)\}_{t \in [0, \epsilon)}$ 
of 
deformations of $\textbf{g}$ on $\Sigma_{g}$ such that $g(0)=\textbf{g}$ and 
$\frac{d g(t)}{dt}\big|_{t=0}=\psi(q)$. In other words, for $t$ close to $0$,
$g(t) = \textbf{g} + t \psi(q)$. 
We view $g(t)$ as a curve in the space $\mathcal{M}$
of Riemannian metrics on $\Sigma_{g}$. Recall that a Riemannian metric on $\Sigma_{g}$ determines
an almost complex structure $J$ on $\Sigma_{g}$ which further determines a complex 
structure on $\Sigma_{g}$. This is due to the Korn-Lichtenstein theorem. 
Consequently, we get a one parameter family of 
complex curves 
$\{\Sigma_{g}^{t}\}_{t \in [0, \epsilon)}$. From the 
``Uniformization theorem'', each of these complex curves 
in the family 
$\{\Sigma_{g}^{t}\}_{t \in [0, \epsilon)}$ has a preferred hyperbolic metric. 
Hence, we view $\{\Sigma_{g}^{t}\}_{t \in [0, \epsilon)}$ 
as a smooth curve in the Teichmueller space 
$\mathrm{Hom}_{0}(\Gamma_{g}, \mathrm{PSL}(2, \rr))/ \mathrm{PSL}(2, \rr)$ such that 
$$\frac{d \Sigma_{g}^{t} }{dt} \bigg|_{t=0} \in 
T_{[\rho_{0}]} \mathrm{Hom}_{0}(\Gamma_{g}, \mathrm{PSL}(2, \rr))/ 
\mathrm{PSL}(2, \rr).$$ 
In summary, we have a linear map from $\mathrm{HQD}(\Sigma_{g})$ to 
$T_{[\rho_{0}]} \mathrm{Hom}_{0}(\Gamma_{g}, \mathrm{PSL}(2, \rr))/ \mathrm{PSL}(2, \rr)$. 
The injectivity of this linear map follows from \cite{sampson}, \cite{wolf}. 
Furthermore, 
this map is a bijective linear map because the dimension of $\mathrm{HQD}(\Sigma_{g})$ and 
$T_{[\rho_{0}]} \mathrm{Hom}_{0}(\Gamma_{g}, \mathrm{PSL}(2, \rr))/ \mathrm{PSL}(2, \rr)$ 
(as real vector spaces) is same.

\section{Explicit expressions of harmonic vector fields on $\hh$}
\label{chapter3}
\subsection{Harmonic maps}
\label{section13}
\textit{Conventions:}
All manifolds are finite dimensional, connected, 
and Riemannian of class $C^{\infty}$, unless otherwise stated. 
All vector bundles and their sections 
are smooth, unless otherwise specified. 
Now we review some basic notions from the theory of harmonic maps. 
We make an effort to do our computations coordinate free first and then in 
coordinates. 
The reader is referred to
the textbook \cite{jost1} for proofs and much more details on harmonic maps. Other 
references on harmonic maps include \cite{eellslemaire}, 
\cite{lemaire4}, \cite{yau}, and \cite{jcwood}. 
\smallskip
\newline
Let $(M,g)$ and $(N, h)$ be $m$ and $n$ dimensional manifolds with the Levi-Civita 
connections $\nabla^{g}$ and $\nabla^{h}$, respectively. Let 
$\phi: M \longrightarrow N$ be a smooth map. The differential 
$$d \phi \in \Gamma(M,T^{\ast}M \otimes \phi^{-1}TN)$$ can be viewed as a $\phi^{-1}(TN)$-valued 1-form 
on $M$, i.e., $d \phi \in \mathscr{A}^{1}(\phi^{-1}(TN))$. Before we define the notion of a \textit{harmonic map}, observe the following:
\begin{enumerate}
 \item There exists a unique connection, $\phi^{-1}\nabla^{h}$, induced by $\phi$ on $\phi^{-1}(TN)$. Note that $\phi^{-1}(TN)$ is a vector bundle on $M$
 defined by $\phi$.
 \item The bundle $T^{\ast} M \otimes \phi^{-1} TN$ has 
 a connection $\nabla$, naturally induced by $\nabla^{g}$ and $\phi^{-1}\nabla^{h}$.
\end{enumerate}
\begin{defn}
\normalfont
$\nabla d \phi \in \Gamma(M, \otimes^{2} T^{\ast}M \otimes \phi^{-1}TN)$ is called the 
\textit{second fundamental form of $\phi$}.
 
\end{defn}
\begin{defn} \label{tensionfieldharmonic}
\normalfont
$\mathrm{Trace}(\nabla d \phi) \in \Gamma(M, \phi^{-1}TN)$ is called the \textit{tension field} of $\phi$. It is usually denoted by $\tau(\phi)$.
\end{defn}

\begin{defn}
\label{totgeo}
\normalfont
 $\phi$ is said to be \textit{totally geodesic} if $\nabla d \phi =0$.
\end{defn}
\begin{defn}
\label{harmonicdefn}
\normalfont
 $\phi$ is said to be \textit{harmonic} if
\begin{equation}
\label{equ2}
\tau(\phi)=0.
\end{equation}
We call $\tau$ the
\textit{Eells-Sampson Laplacian}.
\end{defn}
\textit{In co-ordinate form:} 
By taking coordinate charts, 
the second fundamental form of $\phi$ at $x= (x^{1},\ldots, x^{m}) \in U \subset M$ can be represented as:
\begin{equation} \label{trace}
 (\nabla d \phi)^{\alpha}_{ij}(x)=
 \frac{\partial^{2} \phi^{\alpha}}{\partial x^{i} \partial x^{j}}(x) -
 \Gamma^{k}_{ij} \frac{\partial \phi^{\alpha}}{\partial x^{k}}(x) + 
  \Upsilon^{\alpha}_{\beta \gamma}(\phi(x)) 
  \frac{\partial \phi^{\beta}}{\partial x^{i}}(x) 
  \frac{\partial \phi^{\gamma}}{\partial x^{j}} (x), 
\end{equation}
where $\Gamma^{k}_{ij}$, $\Upsilon^{\alpha}_{\beta \gamma}$ denote 
the Christoffel symbols of $\nabla^{g}$ and $\nabla^{h}$. Note that we have used the 
Einstein-Summation convention in (\ref{trace}).
In coordinate charts,
\begin{equation*}
 \tau^{\alpha}(\phi)(x)= g^{ij}\big(\nabla(d \phi)^{\alpha}_{\ij}(x)\big),
\end{equation*}
where $g^{ij}$ denotes the inverse of the metric tensor $g_{ij}$. 
(\ref{equ2}) in co-ordinate 
form can be expressed as: 
\begin{equation}
\label{euler}
\sum_{i,j=1}^{m} g^{ij} 
 \Bigg( \frac{\partial^{2} \phi^{\alpha}}{\partial x^{i} \partial x^{j}} - 
 \sum_{k=1}^{m}
 \Gamma^{k}_{ij}(x) \frac{\partial \phi^{\alpha}}{\partial x^{k}} + 
 \sum_{\beta, \gamma=1}^{n}
  \Upsilon^{\alpha}_{\beta \gamma}(\phi(x)) \frac{\partial \phi^{\beta}}{\partial x^{i}}
  \frac{\partial \phi^{\gamma}}{\partial x^{j}} \Bigg)=0, \quad 1 \leq \alpha \leq n.
\end{equation}
Note that in (\ref{euler}), the term 
$$\sum_{i,j=1}^{m} g^{ij} \bigg(\frac{\partial^{2} 
\phi^{\alpha}}{\partial x^{i} \partial x^{j}} - \sum_{k=1}^{m}
\Gamma^{k}_{ij} \frac{\partial \phi^{\alpha}}{\partial x^{k}}\bigg)$$
is the Laplace-Beltrami operator on $M$, a contribution of $\nabla^{g}$ in $T^{\ast}M$ 
and the other term 
$$\sum_{i,j=1}^{m} g^{ij} \bigg(\sum_{\beta, \gamma=1}^{n}
  \Upsilon^{\alpha}_{\beta \gamma}(\phi(x)) 
  \frac{\partial \phi^{\beta}}{\partial x^{i}}
  \frac{\partial \phi^{\gamma}}{\partial x^{j}} \bigg)$$
which is 
a non-linear term containing the Christoffel symbols of $\nabla^{h}$
is a contribution of $\phi^{-1}\nabla^{h}$ in $\phi^{-1}TN$.
(\ref{euler}) is the \textit{Euler-Lagrange equation} for the \textit{energy} $E$ of $\phi$ which can be defined under some conditions, 
for example when $M$ is compact, as:
\begin{equation*}
E(\phi)= \int_{M} e(\phi) d \mu_{g},
\end{equation*}
where $d \mu_{g}$ denotes the measure on $M$ induced by $g$ and 
$e(\phi)$ is the \textit{energy density} of $\phi$. The energy density $e(\phi)$ 
of $\phi$ is 
defined by 
$$e(\phi)(x)= \frac{1}{2} \lVert d\phi(x)\rVert^{2} = 
\frac{1}{2} \mathrm{trace}(\phi^{\ast}h)(x),$$
where $ \lVert d\phi(x)\rVert$ is the Hilbert-Schmidt norm of the differential map 
$$d\phi(x): T_{x}M \longrightarrow T_{\phi(x)}N.$$ 
The energy density $e(\phi)$ of $\phi$ has the following expression in local coordinates
\begin{equation}\label{energydensity}
e(\phi)= \frac{1}{2} g^{ij}(x) h_{\beta \gamma}(\phi) 
\frac{d \phi^{\beta}}{d x^{i}} \frac{d \phi^{\gamma}}{d x^{j}}, \quad x \in M.
\end{equation}
When $M$ is compact, we can define $\phi$ to be a harmonic 
map if it's a critical point of $E$.



\begin{remark}
\normalfont 
Harmonic maps are
critical points of the energy functional and hence should not be seen as 
energy minimizers. Below we give the formulation of the 
energy extremal problem in the case of harmonic maps: 
\smallskip
\\Given a smooth map $\phi: (M,g) \longrightarrow (N, h)$, let
\begin{equation*}
E^{\ast}[\phi]=\mathrm{inf} \{ E(\phi'): \phi'= \mathrm{smooth}, \phi' 
\hspace{2pt} \text{is homotopic to} \hspace{2pt} \phi \}
 \end{equation*}
A smooth map $\phi$ such that $E(\phi) = E^{\ast}[\phi]$ is called an energy minimizer. 
For the existence and the uniqueness of energy minimizers when  
the target manifold is equipped with a strictly negatively curved metric, 
see \cite{ells}, \cite{hart}. 
\end{remark}
Now, if we have two complex manifolds $\Sigma_{1}$ and $\Sigma_{2}$ for $M$ and $N$, and on these manifolds, we have conformal metrics,
$$\sigma(z)^{2} dz d \bar{z}= \sigma(z)^{2}(dx^{2}+dy^{2}) \quad (z=x+ \iota y)$$
and
$$\rho(u)^{2} du d \bar{u}= \rho(u)^{2}(du_{1}^{2}+du_{2}^{2}) \quad (u=u_{1}+ \iota u_{2}) $$
then the Laplace-Beltrami operator on 
$\Sigma_{1}$ is given by 
$\frac{1}{\sigma(z)^{2}}\frac{\partial}{\partial z} 
\frac{\partial}{\partial \bar{z}}$. According to J. Jost (see \cite[Chapter 1]{jost1}), 
(\ref{euler}) in these coordinates takes the form
\begin{equation}
\label{equ3}
 \frac{1}{\sigma(z)^{2}}\phi_{z \bar{z}}+  \frac{1}{\sigma(z)^{2}} \frac{2 \rho_{\phi}}{\rho} \phi_{z} \phi_{\bar{z}}=0,
\end{equation}
where a subscript denotes a partial derivative and $\rho_{\phi}$ denotes 
the Wirtinger derivative of $\rho$ at the point $\phi(z)$. 
Therefore, a conformal map between 
Riemann surfaces with conformal metrics is harmonic. 
From (\ref{equ3}), we can see that in the case of a smooth map
$\phi: (\Sigma_{1}, \sigma(z)^{2} dz d \bar{z}) \longrightarrow (\Sigma_{2}, \rho(u)^{2} du d \bar{u}) $
between Riemann surfaces with conformal Riemannian metrics, the Riemannian metric on $\Sigma_{1}$ is not needed to decide
whether $\phi$ is harmonic but the Riemannian metric on $\Sigma_{2}$ matters. More generally, it is also true for a smooth map from a
Riemann surface to a Riemannian manifold. In summary, we see harmonic maps as a very efficient tool to compare the Riemannian metric structure
of $\Sigma_{2}$ to the
conformal structure of $\Sigma_{1}$. Next we discuss some basic examples of harmonic maps.
\begin{example}
\normalfont
Totally geodesic maps are harmonic. Clear from Definition (\ref{totgeo}).
\end{example}

\begin{example}
 \normalfont
The identity map $(M, g) \longrightarrow (M, g)$ is harmonic.  
\end{example}

\begin{example}
 \normalfont
 Let $M=\mathbb{S}^{1}$ and $N$ is compact without boundary, then every homotopy class of 
 maps of $M$ into $N$ contains a closed geodesic, 
 hence a harmonic map. 
\end{example}
To discuss the next two examples we will make a small investment in algebra which 
will lead us to 
consider natural quantities. Recall the definition of an almost complex
structure on $\Sigma_{g}$ from the introduction. 
Extending an almost complex structure 
$J: T\Sigma_{g} \longrightarrow T\Sigma_{g}$
on $\Sigma_{g}$ to the complexified tangent bundle 
$(T\Sigma_{g})^{c}:=T\Sigma_{g} \otimes_{\rr} \cc$
amounts to having a decomposition of the complexified tangent space $(T_{x}\Sigma_{g})^{c}$ 
at each $x \in \Sigma_{g}$
into $(T_{x}\Sigma_{g})^{(1,0)}$ and 
$(T_{x}\Sigma_{g})^{(0, 1)}$ corresponding to eigenvalues $\iota$ 
and $-\iota$. That is, 
$$(T_{x}\Sigma_{g})^{(1,0)} =\{v \in (T_{x}\Sigma_{g})^{c} | Jv=\iota v \}, \quad  
(T_{x}\Sigma_{g})^{(0, 1)} = \{ v \in (T_{x}\Sigma_{g})^{c} | Jv=-\iota v\}.$$
$(T_{x}\Sigma_{g})^{(1,0)}$ and $(T_{x}\Sigma_{g})^{(0, 1)}$ are called 
holomorphic and antiholomorphic tangent spaces, spanned by 
\begin{equation*}
  \frac{\partial}{\partial z} = \frac{1}{2} \bigg( \frac{\partial}{\partial x} - 
  \iota \frac{\partial}{\partial y} \bigg), \quad   \frac{\partial}{\partial \bar{z}}  = \frac{1}{2} \bigg( \frac{\partial}{\partial x} 
   + \iota \frac{\partial}{\partial y} \bigg),
\end{equation*}
where $z=x+\iota y$. In a similar fashion, we can complexify the dual tangent bundle 
$T^{\ast}\Sigma_{g}$ and again, for every $x \in \Sigma_{g}$, 
we can decompose $(T_{x}^{\ast}\Sigma_{g})^{c}$ into its 
$\pm \iota$ eigenspaces - $(T_{x}^{\ast}\Sigma_{g})^{(1,0)}$ and 
$(T_{x}^{\ast}\Sigma_{g})^{(0, 1)}$. $(T_{x}^{\ast}\Sigma_{g})^{(1,0)}$ and 
$(T_{x}^{\ast}\Sigma_{g})^{(0, 1)}$ are 
spanned by 
$$dz=dx+\iota dy, \quad d \bar{z}=dx-\iota dy$$ respectively. 
Using the above decompositions, we can then decompose 
complexified tensor bundles and hence sections of tensor bundles. 
Most importantly, we will consider
a symmetric tensor $s$ in the complexification of the bundle $T^{\ast} \Sigma_{g} 
\otimes T^{\ast} \Sigma_{g}$. Note that $s$ can be written in terms of $dz^{2}:= 
dz \otimes dz$, $d\bar{z}^{2}:= d\bar{z} \otimes d\bar{z}$, and $|dz^{2}|:= 
\frac{1}{2} (dz \otimes d\bar{z} + d\bar{z} \otimes dz)$. 
Tensors that have only 
$(2, 0)$ part can be written locally as $fdz^{2}$ for some locally defined complex valued 
function $f$ and are famously known as 
quadratic differentials (see \textbf{\cref{thespaceofhqd}}).

\begin{example}
\label{harmexamplenice}
 \normalfont
 When $M=\Omega \subset \mathbb{R}^{n}$ and $N=\mathbb{R}$, then the harmonic map 
equations are the harmonic function equations, i.e.,
 $$ \Delta \phi =0.$$
 If $M$ is a surface with a complex structure and $N=\mathbb{R}$, then in the complex language the Laplace equation
 can be written as:
 $$ 4\frac{\partial}{\partial \bar{z}} \frac{\partial}{\partial z}\phi = 0.$$
 Let's try to observe something really important by rewriting the above equation as follows:
 \begin{equation}
 \label{equ4}
  \frac{\partial}{\partial \bar{z}} \bigg(\frac{\partial}{\partial z}\phi \bigg) = 0.
 \end{equation}
 We can also write (\ref{equ4}) in more fancy way as follows:
 $$ \bar{\partial} \big((d \phi)^{(1,0)}\big)=0,$$
 where the object in the parentheses is a ``holomorphic object'' (if and only if the equation holds).
 In other words, tied to the harmonicity of a map $\phi$ on a surface (with a complex structure) is a 
 ``holomorphic object'' which is a
 holomorphic 1-form in the present case. 
\end{example}

\begin{example}
\label{harmimplieshol}
 \normalfont
 A diffeomorphism $\phi: (\Sigma_{1}, \sigma(z)^{2} dz d \bar{z}) \longrightarrow 
 (\Sigma_{2}, \rho(u)^{2} du d \bar{u})$ is harmonic iff 
 $(2, 0)$-part of the pullback metric $\phi^{\ast}(\rho(u)^{2} du d \bar{u})$ is holomorphic. 
 This can be seen as follows: we denote the conformal metric 
 $\rho(u)^{2} du d \bar{u}$ on $\Sigma_{2}$ by $h$. 
 The pullback of $h$ by $\phi$ has the following local expression:
 \begin{equation}
 \label{equ5}
  \begin{split}
      \phi^{\ast} (h) & = 
      (\phi^{\ast} (h))^{(2, 0)} + 
      (\phi^{\ast} (h))^{(1, 1)} + 
      (\phi^{\ast} (h))^{(0, 2)} \\
      & =   \langle \phi_{\ast} \partial_{z}, 
      \phi_{\ast} \partial_{z}
      \rangle_{h} dz^{2} +  
   \big(\Arrowvert \phi_{\ast} \partial_{z} \Arrowvert^{2}_{h} + 
   \Arrowvert \phi_{\ast} \partial_{\bar{z}} \Arrowvert^{2}_{h} 
   \big)\sigma^{2}(z) dz d \bar{z} + 
   \langle \phi_{\ast} \partial_{\bar{z}}, \phi_{\ast} \partial_{\bar{z}} \rangle_
   {h} d\bar{z}^{2}.
     \end{split}
\end{equation}
Note that in the first equality we used the complex eigenspace decomposition 
$$\phi^{\ast} (h) = 
      (\phi^{\ast} (h))^{(2, 0)} + 
      (\phi^{\ast} (h))^{(1, 1)} + 
      (\phi^{\ast} (h))^{(0, 2)}$$ under the action of $J$ on $T\Sigma_{g}$. Also, 
      \begin{equation}
      \label{simplified1}
      \begin{split}
      \langle \phi_{\ast} \partial_{z}, 
      \phi_{\ast} \partial_{z}
      \rangle_{h} dz^{2} & = h \Big(\frac{\partial \phi}{\partial z},  
      \frac{\partial \phi}{\partial z}\Big) dz^{2} \\
      & =  
  \bigg(  h \Big(\frac{\partial \phi}{\partial x},  
      \frac{\partial \phi}{\partial x}\Big) - h \Big(\frac{\partial \phi}{\partial y},  
      \frac{\partial \phi}{\partial y}\Big) - 2 \iota 
      h \Big(\frac{\partial \phi}{\partial x},  
      \frac{\partial \phi}{\partial y}\Big) \bigg)dz^{2} \\
      & = \big( |\phi_{x}|^{2} - |\phi_{y}|^{2} - 2 \iota h(\phi_{x}, \phi_{y}) \big) dz^{2} \\
      & = 4 \rho^{2} \phi_{z} \bar{\phi}_{z} dz^{2}
      \end{split}
      \end{equation} 
      and 
      \begin{equation}
      \label{simplified2}
       \big(\Arrowvert \phi_{\ast} \partial_{z} \Arrowvert^{2}_{h} + 
   \Arrowvert \phi_{\ast} \partial_{\bar{z}} \Arrowvert^{2}_{h} 
   \big)\sigma^{2}(z)= e(\phi),
      \end{equation}
   the energy density of $\phi$, expressed locally in (\ref{energydensity}). 
Now, (\ref{equ5}) has the following form using the simplified expressions in 
(\ref{simplified1}) and
(\ref{simplified2})
\begin{equation*}
 \label{pullbackquad}
 \phi^{\ast} (h) = 4 \rho^{2} \phi_{z} \bar{\phi}_{z} dz^{2} + e(\phi) dz d \bar{z} + 
                                         \overline{4 \rho^{2} \phi_{z} \bar{\phi}_{z} d z^{2}}
\end{equation*}
Now, from \cite[Lemma 1.1]{jost1}, it is easy to see that 
\begin{equation*}
 \label{pullbackquad1}
 \begin{split}
  \partial_{\bar{z}}((\phi^{\ast} (h))^{(2, 0)}) & =  
  \partial_{\bar{z}}(4 \rho^{2} \phi_{z} \bar{\phi}_{z} dz^{2} ) \\
  & = \rho^{2} \big(\bar{\phi}_{z} \tilde{\tau}(\phi) + \phi_{z} \overline{\tilde{\tau}(\phi)}  \big), 
 \end{split}
\end{equation*}
where $\tilde{\tau}(\phi)= \phi_{z \bar{z}}+  \frac{2 \rho_{\phi}}{\rho} \phi_{z} \phi_{\bar{z}}$.
Therefore, $\partial_{\bar{z}}((\phi^{\ast} (h))^{(2, 0)})=0$ when 
$\phi$ is harmonic, i.e., when
$\tau(\phi)=0$ (see (\ref{equ3})) and hence $\tilde{\tau}(\phi)=0$. 
We denote $(\phi^{\ast} (h))^{(2, 0)}$ by $q$. Conversely, if
$q$ is holomorphic, i.e., 
$$\bar{\phi}_{z} \tau(\phi) + \phi_{z} \overline{\tau(\phi)} =0 $$
and if $\tau(\phi) \neq 0$ at a point $p \in \Sigma_{1}$, 
this would imply 
$|\phi_{z}|=|\bar{\phi}_{z}|=|\phi_{\bar{z}}|$ and hence 
the Jacobian at $p$ is zero which contradicts the fact 
that the Jacobian is non zero everywhere since $\phi$ is a diffeomorphism. 
Furthermore, $q \equiv 0$ iff $\phi$ is conformal. 
\end{example}
\subsection{The notion of a harmonic vector field}
\label{chapter3section2}
We introduce the notion of a \textit{harmonic vector field} on a Riemannian manifold $M$ which 
is regarded as the infinitesimal generator of local harmonic 
diffeomorphisms. Note that some sources use the term \textit{harmonic vector field} 
to mean
vector fields which have harmonic associated 1-form \cite{yano}
and vector fields as sections of the tangent 
bundle with \textit{lift metrics} \cite{nauhaud}.
Let $U$ be an open subset of a Riemannian manifold $M$. Let 
$\{\phi_{t}\}_{t \in [0, \epsilon)}$ be a 
smooth family of smooth  maps
$$\phi_{t} : U \longrightarrow M$$
where $\phi_{0}$ is the inclusion. Then $\xi= \frac{d \phi_{t}}{dt} \vert_{t=0}$ is 
a vector field on $U$.

\begin{defn}[Harmonic vector field]
\label{defn12}
 \normalfont The vector field $\xi$ on $U$ is harmonic if there exists a 
 smooth family of smooth maps
 $\{\phi_{t}: U \longrightarrow M\}_{t \in [0, \epsilon)}$ which satisfies 
 the following:
 \begin{enumerate}
\item $\phi_{0}$ is the inclusion map,
\item $\displaystyle\frac{d \phi_{t}}{dt}\Big\vert_{t=0} = \xi$,
\item $\forall x \in U:~\displaystyle\frac{d}{dt}\Big\vert_{t=0}~\tau(\phi_{t})(x)=0\,.$ 
\end{enumerate}
\end{defn}
\begin{remark}
 \normalfont
  Given $\xi$ we can always find the family 
  $\{ \phi_{t} \}_{t \in [0, \epsilon]}$ satisfying (1) and (2) in 
  Definition \ref{defn12}.
\end{remark}
\begin{remark}
 \normalfont
 The choice of $\{\phi_{t}\}_{t \in [0, \epsilon)}$ is unimportant 
  in (3) in Definition \ref{defn12}.
\end{remark}
Here $\tau$ is the \textit{Eells-Sampson Laplacian} which has been introduced in 
(\ref{equ2}). 
Condition 3 in Definition \ref{defn12} can be expressed in co-ordinate 
form as:
\begin{equation}
\label{newharm}
\frac{d}{dt}\bigg|_{t=0}\Bigg(\sum_{i,j=1}^{m} g^{ij}(x)
 \Bigg( \frac{\partial^{2} \phi_{t}^{\alpha}}{\partial x^{i} \partial x^{j}} - 
 \sum_{k=1}^{m}
 \Gamma^{k}_{ij}(x) \frac{\partial \phi_{t}^{\alpha}}{\partial x^{k}} + \sum_{\beta, \gamma=1}^{m}
  \Gamma^{\alpha}_{\beta \gamma}(\phi_{t}(x)) \frac{\partial \phi_{t}^{\beta}}{\partial x^{i}}
  \frac{\partial \phi_{t}^{\gamma}}{\partial x^{j}} \Bigg) \Bigg)=0, \quad 1 \leq \alpha \leq m.
\end{equation}
Now, for each $1 \leq i \leq m$, 
$\nabla_{t} \frac{\partial \phi_{t}}{\partial x^{i}} = \nabla_{i} 
\frac{\partial \phi_{t}}{\partial t}$. Therefore, (\ref{newharm}) becomes
\begin{equation*}
\resizebox{0.97\hsize}{!}{$
\begin{split}
 \sum_{i,j=1}^{m} g^{ij}(x)
 \Bigg( \frac{\partial^{2}}{\partial x^{i} \partial x^{j}} 
 \bigg(\frac{d \phi_{t}^{\alpha}}{dt}\bigg|_{t=0}\bigg) - 
 \sum_{k=1}^{m}
 \Gamma^{k}_{ij}(x) \frac{\partial}{\partial x^{k}} 
 \bigg(\frac{d \phi_{t}^{\alpha}}{dt}\bigg|_{t=0}\bigg) 
 + \sum_{\beta, \gamma=1}^{m} 
 \frac{d}{dt}\bigg|_{t=0} \bigg( \Gamma^{\alpha}_{\beta \gamma}(\phi_{t}(x))
  \frac{\partial \phi_{t}^{\beta}}{\partial x^{i}}
  \frac{\partial \phi_{t}^{\gamma}}{\partial x^{j}} 
 \bigg)\Bigg)=0, 
  \end{split}
  $}
\end{equation*}
where $ 1 \leq \alpha \leq m$.
Since $\xi^{\alpha}= \frac{d \phi_{t}^{\alpha}}{dt}\big|_{t=0}$, we have 
\begin{equation}
\label{newharm1}
\begin{split}
 \sum_{i,j=1}^{m} g^{ij}(x)
 \Bigg( \frac{\partial^{2} \xi^{\alpha}}{\partial x^{i} \partial x^{j}}  - 
 \sum_{k=1}^{m}
 \Gamma^{k}_{ij}(x) \frac{\partial \xi^{\alpha}}{\partial x^{k}} 
 + \sum_{\beta, \gamma=1}^{m} \bigg( \big(\Gamma^{\alpha}_{\beta \gamma}\big)'(\phi_{0}(x)) \cdot \xi \bigg)
 \frac{\partial \phi_{0}^{\beta}}{\partial x^{i}} 
  \frac{\partial \phi_{0}^{\gamma}}{\partial x^{j}} \\
  +
  \Gamma^{\alpha}_{\beta \gamma}(\phi_{0}(x))
  \bigg(
  \frac{\partial \xi^{\beta}}{\partial x^{i}} 
  \frac{\partial \phi_{0}^{\gamma}}{\partial x^{j}} + 
  \frac{\partial \phi_{0}^{\beta}}{\partial x^{i}} 
  \frac{\partial \xi^{\gamma}}{\partial x^{j}} 
   \bigg)\Bigg)=0,
   \end{split}
\end{equation}
where $1 \leq \alpha \leq m$ and $\big(\Gamma^{\alpha}_{\beta \gamma}\big)'$ denotes the derivative of 
$\Gamma^{\alpha}_{\beta \gamma}$. Since $\phi_{0}: U \longrightarrow M$ is the inclusion map, 
we rewrite (\ref{newharm1}):
\begin{equation}
\label{newharm2}
\sum_{i,j=1}^{m} g^{ij}(x)
 \Bigg( \frac{\partial^{2} \xi^{\alpha}}{\partial x^{i} \partial x^{j}}  - 
 \sum_{k=1}^{m}
 \Gamma^{k}_{ij}(x) \frac{\partial \xi^{\alpha}}{\partial x^{k}} 
 + \sum_{\beta, \gamma=1}^{m} \bigg( \big(\Gamma^{\alpha}_{\beta \gamma}\big)'(x) \cdot \xi \bigg)
 \delta_{\beta i} \delta_{\gamma j}
  +
  \Gamma^{\alpha}_{\beta \gamma}(x)
  \bigg(
  \frac{\partial \xi^{\beta}}{\partial x^{i}} 
  \delta_{\gamma j} + 
  \delta_{\beta i}
  \frac{\partial \xi^{\gamma}}{\partial x^{j}} 
   \bigg)\Bigg)=0,
\end{equation}
where $1 \leq \alpha \leq m$, $\frac{\partial \phi_{0}^{\gamma}}{\partial x^{j}}= 
\delta_{\gamma j}$ and $\frac{\partial \phi_{0}^{\beta}}{\partial x^{i}}= 
\delta_{\beta i}$. 
\bigskip
\newline
We now assume that $M$ is $\hh$ with the standard hyperbolic metric $\textbf{g}_{\hh}$, 
coordinatized as an open subset of $\cc$. 
Rewriting (\ref{newharm2}), we get
\begin{equation}
\label{equ7}
\begin{split}
 \sum_{i,j=1}^{2} \textbf{g}_{\hh}^{ij}(x) \bigg( \frac{\partial^{2} \xi^{\alpha}}{\partial x^{i} \partial x^{j}} 
- \sum_{k=1}^{2} \Gamma^{k}_{ij}(x) \frac{\partial \xi^{\alpha}}{\partial x^{k}} 
+ \sum_{\beta, \gamma=1}^{2} \bigg( \big(\Gamma^{\alpha}_{\beta \gamma}\big)'(x) \cdot \xi \bigg)
 \delta_{\beta i} \delta_{\gamma j} \\ 
+
\sum_{\beta, \gamma=1}^{2} \Gamma^{\alpha}_{\beta \gamma}(x)\bigg( 
\frac{\partial \xi^{\beta}}{\partial x^{i}} \delta_{\gamma j}+\delta_{\beta i} 
\frac{\partial \xi^{\gamma}}{\partial x^{j}}\bigg) \bigg)=0, 
\end{split}
\end{equation}
where $1 \leq \alpha \leq 2$. 
The Christoffel symbols $\Gamma_{11}^{1}$, $\Gamma_{22}^{1}$, $\Gamma_{12}^{2}$ and $\Gamma_{21}^{2}$ 
for $\textbf{g}_{\hh}$ vanish. Also $g_{\hh}^{11}=g_{\hh}^{22}=y^{2}$ and 
$g_{\hh}^{12}=g_{\hh}^{21}=0$.
Hence (\ref{equ7}) simplifies to:
\begin{equation}
\label{equ8}
\begin{split}
y^{2}\frac{\partial^{2} \xi^{\alpha}}{\partial^{2} x} + 
y^{2}\frac{\partial^{2} \xi^{\alpha}}{\partial^{2} y}-\bigg(y^{2} \Gamma^{2}_{11} 
\frac{\partial \xi^{\alpha}}{\partial y}+ y^{2}
\Gamma^{2}_{22} \frac{\partial \xi^{\alpha}}{\partial y}\bigg) + 
y^{2} \big(\big(\Gamma^{\alpha}_{11}\big)'(x) \cdot \xi \big)+ 
y^{2} \big( \big(\Gamma^{\alpha}_{22}\big)'(x) \cdot \xi \big) \\
+ y^{2}\bigg( \Gamma^{\alpha}_{11}\bigg(\frac{\partial \xi^{1}}{\partial x}+
\frac{\partial \xi^{1}}{\partial x}\bigg)+
\Gamma^{\alpha}_{12}\bigg(0+\frac{\partial \xi^{2}}{\partial x}\bigg)+
\Gamma^{\alpha}_{21}\bigg(\frac{\partial \xi^{2}}{\partial x}+0\bigg) \bigg) \\
+ y^{2} \bigg( \Gamma^{\alpha}_{12}\bigg(\frac{\partial \xi^{1}}{\partial y}+0\bigg)+
\Gamma^{\alpha}_{21}\bigg(0+\frac{\partial \xi^{1}}{\partial y}\bigg)+
\Gamma^{\alpha}_{22}\bigg(\frac{\partial \xi^{2}}{\partial y}+
\frac{\partial \xi^{2}}{\partial y}\bigg) \bigg)=0; 
\quad 1 \leq \alpha \leq 2.
\end{split}
\end{equation}
The other Christoffel symbols for $\textbf{g}_{\hh}$ are given as follows:
$$\Gamma^{1}_{12}=\Gamma^{1}_{21}= \Gamma^{2}_{22}= -\frac{1}{y}, \quad \Gamma^{2}_{11}= \frac{1}{y}. $$
Substituting these values into (\ref{equ8}), we obtain the following two 
equations which describe the conditions for $\xi$ to be a \textit{harmonic vector field} on $U$:
\begin{equation}
\label{equ9}
\xi^{1}_{xx}+\xi^{1}_{yy}- \frac{2}{y} ( \xi^{2}_{x}+\xi^{1}_{y})=0
\end{equation}
\begin{equation}
\label{equ10}
\xi^{2}_{xx}+\xi^{2}_{yy}+ \frac{2}{y} (\xi^{1}_{x}-\xi^{2}_{y})=0
\end{equation}
If the flow $(\phi_t)$ and the vector field $\xi$ are related as above, 
then we can describe $\phi_{t}$ up to the first order in terms of $\xi$ using 
the standard coordinates in $\mathbb{H}^{2}\subset\cc$:
\begin{equation*}
\phi_{t}(p) \approx p+t \xi(p)
\end{equation*}
(for $p\in U$ and sufficiently small $t$). 
We define a family of Riemannian metrics on $U$ as follows:
\begin{equation}
\label{equ11}
t \longmapsto \rho_{t} = \phi_{t}^{\ast} \textbf{g}_{\hh}
\end{equation}
More precisely the map in (\ref{equ11}) can be viewed as
\begin{equation}
\label{equ12}
t  \longmapsto (D \phi_{t} : T_{p} U \longrightarrow T_{\phi_{t}(p)} \mathbb{H}^{2})^{\ast} \textbf{g}_{\hh}
\end{equation}
(\ref{equ12}) to the \textit{first order} can be expressed as follows:
\begin{equation*}
t \longmapsto (\text{Id}+t\cdot d\xi : T_{p} U \longrightarrow T_{\phi_{t}(p)} 
\mathbb{H}^{2})^{\ast} \textbf{g}_{\hh},
\end{equation*}
where $d\xi$ is the derivative of $\xi$ (where $\xi$ is viewed as a smooth map 
$\cc \longrightarrow \cc$) at $p$, and it is an $\rr$-linear map.
Using the \textit{first order} approximation, $\rho_{t}$
is given as:
\begin{equation*}
\begin{split}
\rho_{t} & \approx (\text{Id}+t\cdot d\xi)^{T} (\textbf{g}_{\hh}+t \textbf{g}'_{\hh}(\xi)
(\text{Id}+t\cdot d\xi) \\
         & \approx \textbf{g}_{\hh} + t\cdot d\xi^{T} \textbf{g}_{\hh} + 
         t \textbf{g}'_{\hh}(\xi) + t\cdot d\xi \textbf{g}_{\hh} \\
         & \approx \textbf{g}_{\hh} + (t\cdot d\xi^{T} +t\cdot d\xi) \textbf{g}_{\hh} + 
         t \textbf{g}'_{\hh}(\xi)
\end{split}
\end{equation*}
In the above expression, $d\xi^{T}$ denotes the transpose of $d\xi$ when 
written in the local coordinates.
Calculating 
\[ \frac{d \rho_{t}}{dt} \Big\vert_{t=0} \]
gives us a section of $S^{2}(TU)$, the vector bundle of 
(real) symmetric bilinear forms on $TU$ and this is denoted by 
$\mathcal{L}_{\xi}\textbf{g}_{\hh}$, the Lie 
derivative of $\textbf{g}_{\hh}$ w.r.t $\xi$. Therefore, 
\begin{equation}
\label{equ13}
 \mathcal{L}_{\xi}\textbf{g}_{\hh} = (d\xi^{T} +d\xi) \textbf{g}_{\hh} 
 + \textbf{g}'_{\hh}(\xi)
\end{equation}
in our preferred coordinates. 
Now, to obtain a local expression for 
$\mathcal{L}_{\xi}\textbf{g}_{\hh} \in \Gamma(S^{2}(TU))$, 
we represent $d\xi$ by the following matrix 
\begin{equation*}
 d\xi=
 \begin{bmatrix}
  \xi^{1}_{x} & \xi^{1}_{y} \\
  \xi^{2}_{x} & \xi^{2}_{y}
 \end{bmatrix}
\end{equation*}
Using the above expression for $d\xi$, the right-hand side of (\ref{equ13}) can be represented as
\begin{equation*}
\begin{aligned}
 \mathcal{L}_{\xi}\textbf{g}_{\hh}  & =  \frac{1}{y^{2}}\begin{bmatrix}
                      2\xi^{1}_{x} & \xi^{1}_{y}+\xi^{2}_{x} \\
                      \xi^{2}_{x}+\xi^{1}_{y} & 2\xi^{2}_{y}
                      \end{bmatrix}+
                      \begin{bmatrix}
                     \frac{-2}{(\xi^{2})^{3}} & 0 \\
                     0 & \frac{-2}{(\xi^{2})^{3}}
                     \end{bmatrix} \\
                 & =  \underbrace{\frac{1}{y^{2}}\begin{bmatrix}
                      \xi^{1}_{x} - \xi^{2}_{y} & \xi^{1}_{y} +\xi^{2}_{x} \\
                      \xi^{1}_{y} +\xi^{2}_{x} &   \xi^{2}_{y} - \xi^{1}_{x}
                     \end{bmatrix}}_{\mathrm{TF}} +
                     \frac{1}{y^{2}}\begin{bmatrix}
                      \xi^{1}_{x}+\xi^{2}_{y} & 0 \\
                      0 & \xi^{1}_{x}+\xi^{2}_{y}
                     \end{bmatrix} +
                     \begin{bmatrix}
                     \frac{-2}{(\xi^{2})^{3}} & 0 \\
                     0 & \frac{-2}{(\xi^{2})^{3}}
                     \end{bmatrix}
\end{aligned}
\end{equation*}
Recall from \textbf{\cref{thespaceofhqd}} that
the bundle $S^{2}(TU)$ of (real) symmetric bilinear 
forms on $TU$ splits into
1-dimensional real vector subbundle spanned by the 
everywhere nonzero section $\textbf{g}_{\hh}$ and the image of
 the embedding (recall (\ref{quadmap}))
$$ \psi: \mathrm{hom}_{\mathbb{C}} (TU \otimes_{\mathbb{C}} TU, \mathbb{C}) 
\longrightarrow S^{2}(TU),$$
where $\psi(q)$ is the real part of $q=fdz^{2}$. In particular,
$\psi\big((\mathcal{L}_{\xi}\textbf{g}_{\hh})^{(2, 0)}\big)$ 
is the real part of 
$(\mathcal{L}_{\xi}\textbf{g}_{\hh})^{(2, 0)}$. 
Using the above splitting it is straightforward to check that the 
trace-free component of $\mathcal{L}_{\xi}\textbf{g}_{\hh}$ 
is $\psi\big((\mathcal{L}_{\xi}\textbf{g}_{\hh})^{(2, 0)}\big)=\psi(fdz^2)$ 
where $f(z) = \mathrm{TF}_{11}- \iota \mathrm{TF}_{12}$. Notice that 
\begin{equation}
 \label{similarwolpertequation}
 \overline{f(z)} =  \mathrm{TF}_{11}+ \iota \mathrm{TF}_{12}=
\frac{2}{y^{2}} \frac{\partial \xi}{\partial \bar{z}} = \frac{-8}{(z-\bar{z})^{2}} \frac{\partial \xi}{\partial \bar{z}}.
\end{equation}
Furthermore, (\ref{similarwolpertequation}) is equivalent (upto to a constant factor) 
to the following \textit{potential equation} (see Appendix \ref{genesis})
described by S. Wolpert in his paper \cite{wolpert}
 \begin{equation}
\label{wolpertequation}
\overline{f(z)} = \frac{1}{(z-\bar{z})^{2}} \frac{\partial \xi}{\partial \bar{z}}.
\end{equation}
Moreover, (\ref{equ9}) and (\ref{equ10}) are precisely the conditions that the 
corresponding quadratic differential $(\mathcal{L}_{\xi}\textbf{g}_{\hh})^{(2, 0)}$ 
is holomorphic, i.e., $f$ is holomorphic. Therefore,
we can summarize our discussion as follows: 
\begin{prop}
\label{thm2}
$\xi$ is a harmonic vector field on $U$ iff the quadratic differential 
$(\mathcal{L}_{\xi}\textbf{g}_{\hh})^{(2, 0)}$ associated with 
it is holomorphic. In the standard coordinates, $(\mathcal{L}_{\xi}\textbf{g}_{\hh})^{(2, 0)} = fdz^{2}$ where 
$\overline{f(z)} = \frac{-8}{(z-\bar{z})^{2}} \frac{\partial \xi}{\partial \bar{z}}$.
\end{prop}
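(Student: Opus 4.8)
The plan is to read the proposition off the coordinate computation already performed above. Recall from (\ref{equ13}) and the matrix displayed just after it that the trace-free summand of $\mathcal{L}_{\xi}\textbf{g}_{\hh}$ is
\[
\mathrm{TF}=\frac{1}{y^{2}}\begin{bmatrix}\xi^{1}_{x}-\xi^{2}_{y} & \xi^{1}_{y}+\xi^{2}_{x}\\ \xi^{1}_{y}+\xi^{2}_{x} & \xi^{2}_{y}-\xi^{1}_{x}\end{bmatrix},
\]
while the local form of $\psi(f\,dz^{2})$ obtained in the Example of \cref{thespaceofhqd} has $(1,1)$-entry $\Re f$ and $(1,2)$-entry $-\Im f$. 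Matching these identifies $(\mathcal{L}_{\xi}\textbf{g}_{\hh})^{(2,0)}=f\,dz^{2}$ with $f=\mathrm{TF}_{11}-\iota\,\mathrm{TF}_{12}=\frac{1}{y^{2}}\big((\xi^{1}_{x}-\xi^{2}_{y})-\iota(\xi^{1}_{y}+\xi^{2}_{x})\big)$. Conjugating, and using $\partial_{\bar z}\xi=\frac12\big((\xi^{1}_{x}-\xi^{2}_{y})+\iota(\xi^{1}_{y}+\xi^{2}_{x})\big)$ for $\xi=\xi^{1}+\iota\,\xi^{2}$ together with $(z-\bar z)^{2}=-4y^{2}$, gives $\overline{f(z)}=\frac{2}{y^{2}}\,\partial_{\bar z}\xi=\frac{-8}{(z-\bar z)^{2}}\,\partial_{\bar z}\xi$, which is exactly the formula (\ref{similarwolpertequation}) quoted in the statement.

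It then remains to prove the equivalence: $f$ is holomorphic, i.e. $\partial_{\bar z}f=\frac12(f_{x}+\iota\,f_{y})=0$, if and only if (\ref{equ9}) and (\ref{equ10}) hold. Separating real and imaginary parts, $\partial_{\bar z}f=0$ is equivalent to the pair $\partial_{x}\mathrm{TF}_{11}+\partial_{y}\mathrm{TF}_{12}=0$ and $\partial_{y}\mathrm{TF}_{11}-\partial_{x}\mathrm{TF}_{12}=0$. Substituting the two entries of $\mathrm{TF}$ and differentiating---keeping in mind that the conformal factor $y^{-2}$ has no $x$-dependence, so only $\partial_{y}$ produces the extra term $-2y^{-3}(\cdots)$, and that the mixed derivatives $\xi^{1}_{xy}$ and $\xi^{2}_{xy}$ cancel---one finds
\[
\partial_{x}\mathrm{TF}_{11}+\partial_{y}\mathrm{TF}_{12}=\frac{1}{y^{2}}\Big(\xi^{1}_{xx}+\xi^{1}_{yy}-\tfrac{2}{y}(\xi^{1}_{y}+\xi^{2}_{x})\Big),\qquad \partial_{y}\mathrm{TF}_{11}-\partial_{x}\mathrm{TF}_{12}=-\frac{1}{y^{2}}\Big(\xi^{2}_{xx}+\xi^{2}_{yy}+\tfrac{2}{y}(\xi^{1}_{x}-\xi^{2}_{y})\Big).
\]
Since $y^{-2}$ is nowhere zero on $\hh$, the left-hand sides vanish precisely when (\ref{equ9}) and (\ref{equ10}) do, which settles both implications simultaneously; this is what the discussion above amounts to.

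The only genuine work is this last bookkeeping step, and the sole subtlety is the position dependence of $y^{-2}$ (it enters only through $\partial_{y}$); I do not anticipate any real obstacle. As an alternative that avoids coordinates, one could apply the identity of \cref{harmimplieshol} (equivalently \cite[Lemma~1.1]{jost1}) to an admissible family $\{\phi_{t}\}$ as in Definition~\ref{defn12}: $\partial_{\bar z}\big((\phi_{t}^{\ast}\textbf{g}_{\hh})^{(2,0)}\big)$ is a universal expression in $(\phi_{t})_{z}$, $\overline{(\phi_{t})_{z}}$ and $\tilde\tau(\phi_{t})$ which vanishes to first order at $t=0$ because $\phi_{0}=\mathrm{id}$ is harmonic; differentiating at $t=0$ then shows $\partial_{\bar z}\big((\mathcal{L}_{\xi}\textbf{g}_{\hh})^{(2,0)}\big)$ is a nonvanishing multiple of $\overline{\tfrac{d}{dt}\big|_{0}\tilde\tau(\phi_{t})}$, so holomorphicity of the $(2,0)$-part is equivalent to Condition~(3) of Definition~\ref{defn12}. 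Either way the argument is verbatim the same on $\dd$ with $\textbf{g}_{\dd}$ in place of $\textbf{g}_{\hh}$.
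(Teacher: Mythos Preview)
Your proposal is correct and follows essentially the same approach as the paper: the proposition is stated there as a summary of the preceding coordinate computation, and the paper simply asserts that ``(\ref{equ9}) and (\ref{equ10}) are precisely the conditions that the corresponding quadratic differential $(\mathcal{L}_{\xi}\textbf{g}_{\hh})^{(2, 0)}$ is holomorphic'' without spelling it out. You supply exactly that missing verification by computing $\partial_{x}\mathrm{TF}_{11}+\partial_{y}\mathrm{TF}_{12}$ and $\partial_{y}\mathrm{TF}_{11}-\partial_{x}\mathrm{TF}_{12}$, which is the honest bookkeeping the paper leaves to the reader; your alternative coordinate-free sketch via differentiating the identity of Example~\ref{harmimplieshol} is also in the spirit of the paper's own Remark that the proposition is an infinitesimal version of \cite[Lemma~1.1]{jost1}.
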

\begin{remark}
 \normalfont Proposition \ref{thm2} is an infinitesimal version of Lemma 1.1 in 
 \cite{jost1} and Example \ref{harmimplieshol}. 
 In fact the statement in \cite{jost1} is more general since it applies to harmonic maps between oriented 
  2-dimensional Riemannian manifolds.
\end{remark}

\begin{coro}
\label{rem1}
Every holomorphic vector field on $U$ is harmonic. 
\end{coro}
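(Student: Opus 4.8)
The plan is to deduce this immediately from Proposition \ref{thm2}, which reduces harmonicity of a vector field $\xi$ on $U$ to holomorphicity of the associated quadratic differential $(\mathcal{L}_{\xi}\textbf{g}_{\hh})^{(2,0)} = f\,dz^{2}$. First I would fix conventions: writing a real vector field on $U \subset \cc$ in its complex form $\xi = \xi(z)\frac{\partial}{\partial z} + \overline{\xi(z)}\frac{\partial}{\partial \bar z}$ (so that the complex-valued function representing $\xi$ is $\xi^{1} + \iota\,\xi^{2}$ in the notation of the preceding computation), the vector field $\xi$ is \emph{holomorphic} precisely when $\frac{\partial \xi}{\partial \bar z} = 0$. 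But by formula (\ref{similarwolpertequation}) we have $\overline{f(z)} = \frac{-8}{(z-\bar z)^{2}} \frac{\partial \xi}{\partial \bar z}$, so the condition $\frac{\partial \xi}{\partial \bar z} = 0$ forces $f \equiv 0$, and the zero quadratic differential is trivially holomorphic. Hence Proposition \ref{thm2} yields that $\xi$ is harmonic. Since everything here is local and pointwise, there is no domain issue to worry about.

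Alternatively — and this is the more geometric route — I would argue directly from Definition \ref{defn12} using the local flow. A holomorphic vector field generates, near each point of $U$ and for $t$ in a small interval, a smooth family of maps $\phi_{t}$ into $\hh$ (with $\phi_{0}$ the inclusion and $\frac{d}{dt}\big|_{t=0}\phi_{t} = \xi$) which are \emph{holomorphic} in the space variable, because solutions of the holomorphic ordinary differential equation $\dot z = \xi(z)$ depend holomorphically on the initial condition. Now a holomorphic map between open subsets of $\cc$ equipped with conformal metrics is harmonic: this is exactly the observation following (\ref{equ3}), since $\phi_{\bar z} = 0$ makes the tension field vanish identically. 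Therefore $\tau(\phi_{t}) \equiv 0$ for every small $t$, and in particular $\frac{d}{dt}\big|_{t=0}\tau(\phi_{t}) = 0$, so $\xi$ satisfies Definition \ref{defn12}.

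I expect no real obstacle here; the statement is a formal consequence of the material already developed, and in fact it is used as the starting observation for Theorem \ref{maintheorem1}. The only point requiring a little care is bookkeeping with conventions in the first approach — making sure that "holomorphic vector field" is correctly translated into the condition $\partial \xi/\partial \bar z = 0$ on the complex-valued function representing $\xi$; one can double-check consistency by verifying directly that the Cauchy--Riemann equations for $\xi^{1} + \iota \xi^{2}$, together with harmonicity of $\xi^{1}$ and $\xi^{2}$, imply the pair of equations (\ref{equ9}) and (\ref{equ10}). In the second approach the only thing to justify explicitly is the holomorphic dependence of the flow on initial data and the fact that one may have to pass to a smaller neighbourhood so that $\phi_{t}$ stays inside $\hh$; both are standard, and the pointwise nature of condition (3) in Definition \ref{defn12} lets one patch the local conclusions together.
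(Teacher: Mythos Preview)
Your first approach is essentially the paper's own proof: the paper observes that for holomorphic $\xi$ the Lie derivative $\mathcal{L}_{\xi}\textbf{g}_{\hh}$ is diagonal (a scalar multiple of the identity, by Cauchy--Riemann), so its trace-free part $(\mathcal{L}_{\xi}\textbf{g}_{\hh})^{(2,0)}$ vanishes, which is exactly your computation $\partial\xi/\partial\bar z=0\Rightarrow f\equiv 0$ via (\ref{similarwolpertequation}), followed by Proposition~\ref{thm2}.

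Your second approach---generating a family $\phi_t$ of holomorphic maps from the flow of $\xi$ and invoking that holomorphic maps between conformal surfaces are harmonic, so $\tau(\phi_t)\equiv 0$---is a genuinely different, more geometric route that the paper does not take. It has the advantage of bypassing the coordinate computation entirely and connecting directly to Definition~\ref{defn12}; the paper's route, by contrast, stays within the algebraic framework already set up for Proposition~\ref{thm2} and requires no discussion of flows or their domains. Both are correct and short; the paper's version is marginally cleaner here because the machinery (the explicit form of TF and equation (\ref{similarwolpertequation})) has just been computed.
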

\begin{proof}
Let $\xi$ be a holomorphic vector field on $U$. Then 
$\mathcal{L}_{\xi}\textbf{g}_{\hh}$ in (\ref{equ13}) has diagonal form and 
therefore $(\mathcal{L}_{\xi}\textbf{g}_{\hh})^{(2, 0)}$, the trace-free part of 
$\mathcal{L}_{\xi}\textbf{g}_{\hh}$, is zero. 
 \hfill \qedsymbol
\end{proof}
\subsubsection{Constructing harmonic vector fields on $U \subseteq \hh$}
\label{harmonicexplicit}
\begin{theorem} \label{sess}
Let $\mathcal{HOL}$ denote the sheaf of holomorphic vector fields on $\hh$, $\mathcal{HARM}$ denote the sheaf of harmonic vector fields on $\hh$ and 
$\mathcal{HQD}$ denote the sheaf of holomorphic quadratic differentials on $\hh$. 
Then the following sequence of sheaves
\begin{equation}
\label{shortexact}
\xymatrix{ 
\mathcal{HOL} \ar[r]^-{\alpha} &
\mathcal{HARM} \ar[r]^-{\beta} &
\mathcal{HQD} 
}
\end{equation}
is a short exact sequence of sheaves on $\hh$. In (\ref{shortexact}), $\alpha$ 
is the inclusion map and $\beta$ is given by the formula in Proposition \ref{thm2}. 
\end{theorem}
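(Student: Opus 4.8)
The plan is to establish exactness of (\ref{shortexact}) stalkwise, which, since $\alpha$ is a plain inclusion (hence injective on sections over every open set), reduces to three points: (a) that $\alpha$ and $\beta$ are honest morphisms of sheaves on $\hh$; (b) that $\ker\beta=\operatorname{im}\alpha$ as subsheaves of $\mathcal{HARM}$; and (c) that $\beta$ is locally surjective. Of these, (a) and (b) will be immediate consequences of Proposition~\ref{thm2} and Corollary~\ref{rem1}; the real content is (c), and it reduces to the classical local solvability of the inhomogeneous Cauchy--Riemann equation on a disc.

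First I would dispose of (a) and (b). Harmonicity of a vector field is the local condition of Proposition~\ref{thm2} (equivalently the pair of PDEs (\ref{equ9})--(\ref{equ10})), so $\mathcal{HARM}$ is genuinely a sheaf; by Corollary~\ref{rem1} every holomorphic vector field on an open $U\subseteq\hh$ is harmonic, so $\alpha$ is a well-defined inclusion of sheaves. Proposition~\ref{thm2} also says precisely that $\xi\in\mathcal{HARM}(U)$ implies $(\mathcal{L}_{\xi}\textbf{g}_{\hh})^{(2,0)}\in\mathcal{HQD}(U)$, so $\beta(\xi):=(\mathcal{L}_{\xi}\textbf{g}_{\hh})^{(2,0)}$ is a well-defined $\cc$-linear map $\mathcal{HARM}(U)\to\mathcal{HQD}(U)$ commuting with restriction, as the Lie derivative and the $(2,0)$-projection are local operators. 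Since $\alpha$ is injective, the image presheaf $U\mapsto\alpha(\mathcal{HOL}(U))$ is already a sheaf, so exactness at $\mathcal{HARM}$ is the assertion that $\ker(\beta\colon\mathcal{HARM}(U)\to\mathcal{HQD}(U))=\alpha(\mathcal{HOL}(U))$ for every $U$. One inclusion is just Corollary~\ref{rem1}: for holomorphic $\xi$, $\mathcal{L}_{\xi}\textbf{g}_{\hh}$ is diagonal in the standard coordinates, so its trace-free part $\beta(\xi)$ vanishes. Conversely, if $\xi$ is harmonic on $U$ and $\beta(\xi)=f\,dz^{2}=0$, then the formula $\overline{f(z)}=\frac{-8}{(z-\bar z)^{2}}\frac{\partial\xi}{\partial\bar z}$ from Proposition~\ref{thm2} forces $\frac{\partial\xi}{\partial\bar z}\equiv0$ on $U$, i.e. $\xi$ is holomorphic.

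The hard part will be (c), local surjectivity of $\beta$. Fix $p\in\hh$ and a holomorphic quadratic differential $q=f\,dz^{2}$ defined near $p$. By Proposition~\ref{thm2}, a harmonic $\xi$ with $\beta(\xi)=q$ is exactly a solution of
\begin{equation*}
\frac{\partial\xi}{\partial\bar z}=-\frac{(z-\bar z)^{2}}{8}\,\overline{f(z)}=:g(z),
\end{equation*}
and $g$ is smooth near $p$ because $f$ is holomorphic. I would pick a coordinate disc $D\ni p$ on which $g$ is defined, a smaller disc $D'\Subset D$, and a cut-off $\chi\in C_{c}^{\infty}(D)$ with $\chi\equiv1$ on $D'$; then the Cauchy transform
\begin{equation*}
\xi(z)=-\frac{1}{\pi}\int_{\cc}\frac{\chi(\zeta)\,g(\zeta)}{\zeta-z}\,d\lambda(\zeta),
\end{equation*}
with $d\lambda$ the Lebesgue measure on $\cc$, is smooth on all of $\cc$ and satisfies $\frac{\partial\xi}{\partial\bar z}=\chi g=g$ on $D'$. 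Hence on $D'$ one has $(\mathcal{L}_{\xi}\textbf{g}_{\hh})^{(2,0)}=f\,dz^{2}=q$, which is holomorphic, so $\xi|_{D'}$ is harmonic by Proposition~\ref{thm2} and $\beta(\xi|_{D'})=q|_{D'}$. As such discs $D'$ form a neighborhood basis of every point of $\hh$, $\beta$ is a surjective morphism of sheaves, and combined with (a) and (b) this yields exactness of (\ref{shortexact}). I would close by stressing that this is only the local, sheaf-theoretic statement: producing a globally defined, suitably regularised harmonic vector field on all of $\hh$ that realizes a \emph{bounded} holomorphic quadratic differential is the genuinely delicate Theorem~\ref{maintheorem2}, and is not needed for exactness of (\ref{shortexact}).
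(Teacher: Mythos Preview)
Your proof is correct. Parts (a) and (b) match the paper's argument exactly: exactness at $\mathcal{HARM}$ comes directly from Proposition~\ref{thm2} and Corollary~\ref{rem1}, and the paper spends no more words on it than you do.

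For (c), local surjectivity of $\beta$, you take a genuinely different route. The paper does not appeal to the general $\bar\partial$-lemma via the Cauchy transform; instead it writes down an \emph{explicit} preimage, either Wolpert's formula
\[
\xi(z)=\Bigg(\overline{\int_{w}^{z}(\bar z-\zeta)^{2}f(\zeta)\,d\zeta}\Bigg)\eta(z)
\]
or, in a second proof, the author's own integral along the vertical hyperbolic line,
\[
\xi_{\Im(a)}(z)=\Bigg(\int_{\Im(z)}^{\Im(a)}\iota\zeta^{2}\,\overline{f(\bar z+2\iota\zeta)}\,d\zeta\Bigg)\eta(z),
\]
and checks directly that these solve the potential equation. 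Your Cauchy--Pompeiu argument is cleaner and entirely self-contained for the sheaf statement; the paper's explicit formulas are less economical here but are the whole point downstream, since the vertical-line integral is precisely what gets regularised in Theorem~\ref{thmglobharmvf} to produce a global harmonic vector field with controlled boundary behaviour. Your closing remark already anticipates this distinction correctly.
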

Before we prove Theorem \ref{sess}, we discuss the following result 
by S. Wolpert 
\cite[Section 2]{wolpert}: let $\eta$ be the vector field on $\hh$ given by $\eta(z)=(1,0)$ everywhere. 
Given a holomorphic quadratic differential 
$q=f(z)dz^{2}$ on $\hh$, there exists a global solution $\xi$ 
of the potential 
equation $\frac{\partial \xi}{\partial \bar{z}} = (z-\bar{z})^{2} \overline{f(z)}$ 
(see (\ref{wolpertequation})) and an explicit formula 
for $\xi$ is given as: 
\begin{equation}
 \label{wolpertformula}
 \xi(z)= \Bigg( \overline{\int_{w}^{z} (\bar{z}-\zeta)^{2} f(\zeta) d\zeta} \Bigg) \eta(z), 
\end{equation}
where $w \in \hh$ is fixed and $\zeta, z \in \hh$. The formula for $\xi$ in 
(\ref{wolpertformula}) is path independent since the integrand is holomorphic. 
\bigskip
\newline
\paragraph{\textit{Proof of Theorem \ref{sess}:}}
Exactness at the term $\mathcal{HARM}$ in (\ref{shortexact}) follows 
from Theorem \ref{thm2} and Corollary \ref{rem1}. Now, Let $q=f(z)dz^{2}$ be defined in a 
neighborhood $V$ of $w \in \hh$, where 
 $w \in \hh$ is fixed. To prove the local surjectivity 
 of $\beta$ we have to get a solution for a harmonic vector field $\xi$ whose associated 
 holomorphic quadratic differential is $q$ in
 a possibly smaller neighborhood $U \subset V$ of $w$. It is clear that 
 (\ref{wolpertformula}) gives the required solution for $\xi$ upto a constant factor. 
 \hfill \qedsymbol

\begin{coro}
\label{coroharmonic}
 If a sequence of harmonic vector fields defined on an open set $U$ in $\hh$ 
 converges 
 uniformly on compact subsets of $U$, and if all of them determine the 
 same holomorphic quadratic differential $q$ on $U$, then the limit vector field 
 is again harmonic and still determines the same holomorphic quadratic 
 differential $q$ on $U$. 
\end{coro}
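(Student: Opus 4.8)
The plan is to reduce everything to Weierstrass's theorem on locally uniform limits of holomorphic functions, via Proposition \ref{thm2}. Write $\xi_{n}$ for the vector fields of the sequence and $\xi$ for their limit, and recall from Proposition \ref{thm2} that $\beta(\zeta) = (\mathcal{L}_{\zeta}\textbf{g}_{\hh})^{(2,0)} = f_{\zeta}\,dz^{2}$ is governed by the \emph{first-order} relation $\overline{f_{\zeta}(z)} = \tfrac{2}{y^{2}}\,\partial_{\bar z}\zeta$ (using $(z-\bar z)^{2} = -4y^{2}$); in particular $\zeta \mapsto \beta(\zeta)$ is $\mathbb{R}$-linear. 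Since $\beta(\xi_{n}) = q$ for all $n$, linearity gives $\beta(\xi_{n} - \xi_{1}) = q - q = 0$, and hence, by the formula just quoted, $\partial_{\bar z}(\xi_{n} - \xi_{1}) \equiv 0$ on $U$; that is, each difference $\xi_{n} - \xi_{1}$ is a holomorphic vector field on $U$ (this is also exactness at $\mathcal{HARM}$ in Theorem \ref{sess}).

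First I would establish smoothness of the limit. Regarding vector fields on $U \subset \cc$ as $\cc$-valued functions, $\{\xi_{n} - \xi_{1}\}_{n}$ is a sequence of holomorphic functions on $U$ converging uniformly on compact subsets of $U$ to $\xi - \xi_{1}$ (since $\xi_{n} \to \xi$ does and $\xi_{1}$ is fixed). By Weierstrass's theorem the limit $\xi - \xi_{1}$ is holomorphic on $U$, hence $C^{\infty}$; and since $\xi_{1}$ is $C^{\infty}$ — harmonic vector fields are smooth by Definition \ref{defn12} — the limit $\xi = \xi_{1} + (\xi - \xi_{1})$ is $C^{\infty}$ as well, so that $(\mathcal{L}_{\xi}\textbf{g}_{\hh})^{(2,0)}$ is defined and Proposition \ref{thm2} applies to $\xi$.

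It then remains to identify $\beta(\xi)$. Because $\xi - \xi_{1}$ is holomorphic, $\partial_{\bar z}(\xi - \xi_{1}) = 0$, so $\partial_{\bar z}\xi = \partial_{\bar z}\xi_{1}$ on $U$; by the first-order formula above, $\beta(\xi)$ and $\beta(\xi_{1})$ have the same local expression, namely $q$. (Equivalently, Cauchy's estimates give $\partial_{\bar z}(\xi_{n} - \xi_{1}) \to \partial_{\bar z}(\xi - \xi_{1})$ locally uniformly, and the left-hand side is identically $0$.) Since $q$ is holomorphic by hypothesis, Proposition \ref{thm2} shows that $\xi$ is a harmonic vector field on $U$ with $\beta(\xi) = q$, which is the assertion.

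The substantive point — in fact the only one — is the regularity of the limit: a priori $\xi$ is merely a uniform-on-compacts limit of smooth fields. This is handled for free by the observation that the differences $\xi_{n} - \xi_{1}$ all solve the \emph{same} (here homogeneous) Cauchy--Riemann equation, so that Weierstrass's theorem simultaneously yields smoothness of $\xi$ and convergence of the first derivatives that enter $\beta$. An alternative route would be interior elliptic estimates for the linear elliptic system (\ref{equ9})--(\ref{equ10}) satisfied by a harmonic vector field, but the holomorphic reformulation above is more economical.
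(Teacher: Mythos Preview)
Your argument is correct and is precisely the intended one: the paper states the corollary without proof, immediately after the short exact sequence of Theorem \ref{sess}, because exactness at $\mathcal{HARM}$ makes the differences $\xi_{n}-\xi_{1}$ holomorphic and Weierstrass's theorem does the rest. Your write-up simply spells out what the paper leaves implicit.
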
 
We will now describe a more pedestrian approach to finding harmonic vector fields with 
prescribed holomorphic quadratic differential. This has certain advantages over Wolpert's 
formula, as we will see in \textbf{\cref{extendharmonic}}. 
First, we give an explicit expression for a 
harmonic vector field on $U \subset \hh$ 
whose associated
holomorphic quadratic differential $q$ is given. 
\begin{lemma}
\label{lemma2}
Let $U$ be an open subset of $\mathbb{H}^{2}$ with the usual hyperbolic metric. 
Let $\eta$ be the vector field on $U$ given by $\eta(z)=(1,0)$ everywhere:
vectors parallel to the real axis, pointing left to right, of euclidean length 1. 
Let $f$ be a holomorphic function on $U$. 
The quadratic differential $q$ 
associated to the vector field $\xi=y^{n}f \eta$
is represented as:
\begin{equation}
\label{equ14}
 q=-n \iota y^{n-3} \overline{f} dz^{2}, \quad n \geq 3.
\end{equation}
\end{lemma}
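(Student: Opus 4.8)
The plan is to read off (\ref{equ14}) directly from Proposition \ref{thm2}, since the passage from a smooth vector field to its associated quadratic differential is already given there in closed form. Recall that for a smooth vector field $\xi$ on $U$, viewed as a map $U \to \cc$, Proposition \ref{thm2} (in the form of (\ref{similarwolpertequation})) gives $(\mathcal{L}_{\xi}\textbf{g}_{\hh})^{(2,0)} = F\,dz^{2}$ with $\overline{F(z)} = \frac{2}{y^{2}}\,\frac{\partial \xi}{\partial \bar{z}}$. So the whole proof amounts to computing $\partial\xi/\partial\bar{z}$ for $\xi = y^{n}f\eta$ and then conjugating.

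First I would rewrite $\xi$ as a function: since $\eta$ is the constant vector field corresponding to the complex number $1$, the vector field $\xi = y^{n}f\eta$ is the map $z \mapsto y^{n}f(z)$. Writing the real coordinate as $y = \Im(z) = \frac{z-\bar{z}}{2\iota}$ and using that $f$ is holomorphic, so $\partial f/\partial \bar{z} = 0$, I would compute
\[
\frac{\partial \xi}{\partial \bar{z}} \;=\; f(z)\,\frac{\partial}{\partial \bar{z}}\left(\frac{z-\bar{z}}{2\iota}\right)^{n} \;=\; f(z)\cdot n\left(\frac{z-\bar{z}}{2\iota}\right)^{n-1}\cdot\frac{-1}{2\iota} \;=\; \frac{n\iota}{2}\,y^{n-1}f(z),
\]
using $1/\iota = -\iota$. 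Substituting this into the formula of Proposition \ref{thm2},
\[
\overline{F(z)} \;=\; \frac{2}{y^{2}}\cdot \frac{n\iota}{2}\,y^{n-1}f(z) \;=\; n\iota\, y^{n-3}f(z),
\]
and taking complex conjugates (with $n$ and $y$ real) gives $F(z) = -n\iota\,y^{n-3}\overline{f(z)}$. Hence $q = (\mathcal{L}_{\xi}\textbf{g}_{\hh})^{(2,0)} = -n\iota\,y^{n-3}\overline{f}\,dz^{2}$, which is (\ref{equ14}).

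I do not expect any genuine obstacle; the argument is a short direct computation. The only steps needing care are the bookkeeping of the complex conjugate in the last line and the factors of $\iota$ coming from the substitution $y = (z-\bar{z})/(2\iota)$. I would also note that the hypothesis $n \geq 3$ is not actually used in establishing (\ref{equ14})---the computation works for every integer $n$---but it is the regime in which $\xi = y^{n}f\eta$ and the power $y^{n-3}$ are well behaved near the boundary $\partial\hh$, which is what the later constructions of harmonic vector fields with prescribed holomorphic quadratic differential will rely on.
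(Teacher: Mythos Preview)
Your proof is correct and follows essentially the same approach as the paper: both apply the formula from Proposition~\ref{thm2} (equivalently~(\ref{similarwolpertequation})) by computing $\partial\xi/\partial\bar z$ for $\xi=y^{n}f$ and conjugating. The only cosmetic difference is that the paper writes the computation in the $(z-\bar z)$ form and carries it out explicitly only for $n=3$, remarking that this suffices, whereas you work with the $y$-variable and handle general $n$ directly.
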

\begin{proof}
We use the recipe in Proposition \ref{thm2} to prove 
the Lemma. And it suffices to prove for $n=3$. From (\ref{similarwolpertequation}), we have
\begin{equation*}
 \begin{split}
  \frac{\partial \xi}{\partial \bar{z}} & = \frac{\partial }{\partial \bar{z}} (y^{3}f) =
  \frac{\partial }{\partial \bar{z}} \bigg( \frac{(z- \bar{z})^{3}}{-8 \iota} f \bigg) =
  \frac{3 (z-\bar{z})^{2}}{8 \iota} f = \frac{3 \iota (z-\bar{z})^{2}}{-8} f = 
  \frac{(z-\bar{z})^{2}}{-8} (\overline{-3\iota \bar{f}}), 
 \end{split}
\end{equation*}
so that $q= -3\iota\bar{f}dz^{2}$. \hfill \qedsymbol
\end{proof}
\bigskip
Using Lemma \ref{lemma2}, we can find an explicit expression for a harmonic vector 
field $\xi$ on $\hh$ whose associated holomorphic
quadratic differential is 
$$q= z^{n} dz^{2} (n \geq 0)$$ using the obvious expression 
$z^{n} = (\overline{z}+2 \iota y)^{n} = \sum_{k=0}^{n} 
\binom{n}{k} (2 \iota y)^{n-k} \bar{z}^{k}$.
\begin{lemma}
\label{lemma3}
An explicit expression for a 
harmonic vector field $\xi$ on $ \hh$ whose associated holomorphic 
quadratic differential is $q= f(z) dz^{2}$, where
$f(z) = z^{n}$, for some $n \geq 0$ (a holomorphic function on $ \hh$), is given as:
\begin{equation*}
\begin{split}
 \xi(z) & = \Bigg( \sum_{k=0}^{n} \binom{n}{k} \frac{(-2)(-2 \iota)^{n-k-1}}{n-k+3} y^{n-k+3}z^{k} \Bigg) \eta(z) \\
        & = \Bigg( \sum_{k=0}^{n} \binom{n}{k} (-2)(-2 \iota)^{n-k-1} \Big(\int_{0}^{y} \zeta^{n-k+2} d \zeta \Big) z^{k} \Bigg) \eta(z) \\
        & = \Bigg( \int_{0}^{y} \bigg( \sum_{k=0}^{n} \binom{n}{k} (-2)(-2 \iota)^{n-k-1} \zeta^{n-k+2} z^{k} \bigg) d \zeta \Bigg) \eta(z) \\
        & = \Bigg( \int_{0}^{\Im(z)} -\iota \zeta^{2} (z-2 \iota \zeta)^{n} d \zeta \Bigg) \eta(z)
\end{split}
\end{equation*}
\end{lemma}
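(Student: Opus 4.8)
The plan is to realize $\xi$ as a finite superposition of the elementary solutions provided by Lemma \ref{lemma2}. Write $y=\Im(z)$; since $z-\bar z=2\iota y$ we have $z=\bar z+2\iota y$, and hence
\[
z^{n}=(\bar z+2\iota y)^{n}=\sum_{k=0}^{n}\binom{n}{k}(2\iota y)^{n-k}\bar z^{k},
\]
the expansion recorded immediately before the statement. Each summand is a constant multiple of $y^{n-k}\bar z^{k}$, i.e.\ of the shape $y^{m-3}\overline{g}$ with $m:=n-k+3\ge 3$ and $g(z)=z^{k}$ holomorphic on $\hh$, so Lemma \ref{lemma2} (formula (\ref{equ14})) tells us precisely which vector field produces it: $y^{m}g\,\eta$ has associated holomorphic quadratic differential $-m\iota\,y^{m-3}\overline{g}\,dz^{2}$.

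Concretely, for each $0\le k\le n$ I would set $\xi_{k}:=c_{k}\,y^{n-k+3}z^{k}\,\eta$ and apply Lemma \ref{lemma2} with $f=c_{k}z^{k}$ and $m=n-k+3$, obtaining as its associated quadratic differential $-(n-k+3)\iota\,y^{n-k}\,\overline{c_{k}}\,\bar z^{k}\,dz^{2}$. I then pick $c_{k}$ so that this equals $\binom{n}{k}(2\iota y)^{n-k}\bar z^{k}\,dz^{2}$, i.e.\ so that $-(n-k+3)\iota\,\overline{c_{k}}=\binom{n}{k}(2\iota)^{n-k}$; conjugating and simplifying the powers of $\iota$ yields $c_{k}=\binom{n}{k}\frac{(-2)(-2\iota)^{n-k-1}}{n-k+3}$, which is exactly the coefficient in the first displayed line of the statement. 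Because $\xi\mapsto(\mathcal{L}_{\xi}\textbf{g}_{\hh})^{(2,0)}$ is additive (visible from the formula $\overline{f(z)}=\frac{-8}{(z-\bar z)^{2}}\,\partial\xi/\partial\bar z$ of Proposition \ref{thm2}), the vector field $\xi:=\sum_{k=0}^{n}\xi_{k}$ has associated quadratic differential $\sum_{k}\binom{n}{k}(2\iota y)^{n-k}\bar z^{k}\,dz^{2}=z^{n}\,dz^{2}$, which is holomorphic, so $\xi$ is harmonic by Proposition \ref{thm2}. The remaining three displayed expressions are then obtained by bookkeeping only: write $\frac{1}{n-k+3}y^{n-k+3}=\int_{0}^{y}\zeta^{n-k+2}\,d\zeta$, move the finite sum inside the integral, and recognize the integrand as $-\iota\zeta^{2}\sum_{k}\binom{n}{k}(-2\iota\zeta)^{n-k}z^{k}=-\iota\zeta^{2}(z-2\iota\zeta)^{n}$ by the binomial theorem.

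As an independent check --- and in fact a self-contained alternative to the whole superposition argument --- I would verify the closed form directly. Writing $\xi(z)=g(z)\,\eta$ with $g(z)=\int_{0}^{\Im(z)}-\iota\zeta^{2}(z-2\iota\zeta)^{n}\,d\zeta$, the integrand is holomorphic in $z$, so the Wirtinger derivative $\partial g/\partial\bar z$ picks up a contribution only from the variable upper limit; using $\partial(\Im z)/\partial\bar z=\iota/2$ and the identity $z-2\iota\,\Im(z)=\bar z$ one gets $\partial g/\partial\bar z=-\iota(\Im z)^{2}\bar z^{n}\cdot(\iota/2)=\tfrac12 y^{2}\bar z^{n}$, whence the formula of Proposition \ref{thm2} gives $\overline{f(z)}=\frac{-8}{(z-\bar z)^{2}}\cdot\tfrac12 y^{2}\bar z^{n}=\bar z^{n}$, i.e.\ $f(z)=z^{n}$; holomorphy of $f$ re-certifies harmonicity of $\xi$, and since the integral is an ordinary polynomial integral over a bounded interval, $\xi$ is well defined and smooth on all of $\hh$. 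I expect no analytic obstacle whatsoever here; the only thing demanding genuine care is the bookkeeping --- the assignment $\xi\mapsto(\mathcal{L}_{\xi}\textbf{g}_{\hh})^{(2,0)}$ is \emph{conjugate}-$\cc$-linear rather than $\cc$-linear, so the constants $c_{k}$ must be conjugated before being read off from Lemma \ref{lemma2}, and one must track the powers $\iota^{n-k}$, $(-2)^{n-k}$ and $(2\iota)^{n-k}$ carefully so that all four displayed forms of $\xi$ genuinely coincide.
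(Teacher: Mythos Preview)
Your proposal is correct and follows exactly the route the paper takes: expand $z^{n}=(\bar z+2\iota y)^{n}$ binomially, invoke Lemma~\ref{lemma2} termwise to find the coefficients $c_{k}$, and then repackage the resulting sum as the single integral $\int_{0}^{\Im(z)}-\iota\zeta^{2}(z-2\iota\zeta)^{n}\,d\zeta$. Your additional direct verification via Proposition~\ref{thm2} (computing $\partial g/\partial\bar z$ by Leibniz at the upper limit) is a welcome sanity check that the paper does not spell out, but the core argument is the same.
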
 

\begin{lemma}
\label{lemma4}
 An explicit expression for a harmonic vector field $\xi$ on $U \subset \hh$ whose 
 associated holomorphic quadratic differential is $q= f(z) dz^{2}$, where
$f(z) = (z-a)^{n} (n \geq 0)$ is a holomorphic function on $U \subset \hh$ and $a \in \hh$ fixed, is given as:
\begin{equation}
\label{explicit}
\begin{split}
 \xi(z) & = \Bigg( \sum_{k=0}^{n} \binom{n}{k} (-\bar{a})^{n-k} \bigg( \int_{0}^{\Im(z)} -\iota \zeta^{2} (z-2 \iota \zeta)^{k} d \zeta \bigg) \Bigg) \eta(z) \\
        & = \Bigg( \int_{0}^{\Im(z)} -\iota \zeta^{2} \bigg( \sum_{k=0}^{n} \binom{n}{k} (-\bar{a})^{n-k} (z-2 \iota \zeta)^{k} \bigg) d \zeta \Bigg) \eta(z) \\
        & = \Bigg(  \int_{0}^{\Im(z)} -\iota \zeta^{2} \big(z-\bar{a}-2 \iota \zeta \big)^{n} d \zeta \Bigg) \eta(z) \\
        & = \Bigg( \int_{0}^{\Im(z)} -\iota \zeta^{2} \overline{f(\bar{z}+2 \iota \zeta)} d \zeta \Bigg) \eta(z).
\end{split}
\end{equation}
\end{lemma}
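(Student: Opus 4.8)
The plan is to deduce the Lemma from Lemma~\ref{lemma3} together with the binomial theorem, exploiting the (conjugate\nobreakdash-)linearity of the correspondence $f \mapsto \xi$ supplied by Proposition~\ref{thm2}.

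First I would record the linearity principle. By Proposition~\ref{thm2}, a smooth vector field $\xi$ on $U$ has associated holomorphic quadratic differential $q = f(z)\,dz^{2}$ precisely when $\frac{\partial \xi}{\partial \bar z} = \frac{(z-\bar z)^{2}}{-8}\,\overline{f(z)}$ with $f$ holomorphic, and by the equations (\ref{equ9})--(\ref{equ10}) the harmonic vector fields on $U$ form a $\cc$-vector space. Since the right-hand side of this potential equation depends conjugate-$\cc$-linearly on $f$, it follows that if $\xi_{k}$ is a harmonic vector field with associated quadratic differential $z^{k}\,dz^{2}$, then for scalars $c_{k}\in\cc$ the combination $\sum_{k}\bar c_{k}\,\xi_{k}$ is a harmonic vector field whose associated quadratic differential is $\big(\sum_{k} c_{k} z^{k}\big)dz^{2}$.

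Next I would apply this with the binomial expansion $f(z) = (z-a)^{n} = \sum_{k=0}^{n}\binom{n}{k}(-a)^{n-k}z^{k}$, so that $c_{k} = \binom{n}{k}(-a)^{n-k}$ and $\bar c_{k} = \binom{n}{k}(-\bar a)^{n-k}$. By Lemma~\ref{lemma3} one may take $\xi_{k} = \Big(\int_{0}^{\Im(z)} -\iota\zeta^{2}(z-2\iota\zeta)^{k}\,d\zeta\Big)\eta(z)$; hence $\xi = \sum_{k=0}^{n}\binom{n}{k}(-\bar a)^{n-k}\xi_{k}$ is harmonic with associated quadratic differential $q$, and this is exactly the first displayed line of (\ref{explicit}). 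The remaining three equalities are routine bookkeeping: interchanging the finite sum with the integral gives the second line; the binomial theorem applied under the integral sign, $\sum_{k=0}^{n}\binom{n}{k}(-\bar a)^{n-k}(z-2\iota\zeta)^{k} = \big(z - \bar a - 2\iota\zeta\big)^{n}$, gives the third line; and since $\zeta$ is real on the interval of integration, $\overline{f(\bar z + 2\iota\zeta)} = \overline{(\bar z + 2\iota\zeta - a)^{n}} = (z - 2\iota\zeta - \bar a)^{n}$, which gives the fourth. One should also remark that for each fixed real $\zeta$ the integrand is a polynomial in $z$, so $\xi$ is smooth on all of $\hh$ and the formula makes sense on any $U\subseteq\hh$.

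I do not expect a genuine obstacle here; the only points requiring care are that the assignment $f\mapsto\xi$ is conjugate-linear rather than $\cc$-linear (which is precisely why $-a$ turns into $-\bar a$ in the formula) and the reality of the integration variable $\zeta$ used in the last equality. As an alternative, fully self-contained verification, one may instead check the final formula directly from Proposition~\ref{thm2} by differentiating under the integral via Leibniz's rule: with $G(z,\bar z) = \int_{0}^{\Im(z)} -\iota\zeta^{2}(z-\bar a - 2\iota\zeta)^{n}\,d\zeta$ the integrand is independent of $\bar z$, so only the boundary term survives, $\frac{\partial G}{\partial \bar z} = -\iota\,\Im(z)^{2}\big(z - \bar a - 2\iota\,\Im(z)\big)^{n}\cdot\frac{\partial \Im(z)}{\partial \bar z}$, and using $z - 2\iota\,\Im(z) = \bar z$ and $\frac{\partial\Im(z)}{\partial\bar z} = \tfrac{\iota}{2}$ this equals $\tfrac12 \Im(z)^{2}(\bar z - \bar a)^{n} = \frac{(z-\bar z)^{2}}{-8}\,\overline{(z-a)^{n}}$, which by Proposition~\ref{thm2} is exactly the condition that the associated holomorphic quadratic differential of $\xi = G\eta$ is $q = (z-a)^{n}dz^{2}$.
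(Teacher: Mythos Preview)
Your proposal is correct and follows essentially the same route as the paper: the displayed chain of equalities in the statement of Lemma~\ref{lemma4} \emph{is} the paper's proof, obtained by expanding $(z-a)^{n}$ binomially, invoking Lemma~\ref{lemma3} term by term, and using the conjugate-linearity of the assignment $f\mapsto\xi$ coming from Proposition~\ref{thm2}. Your alternative direct verification via Leibniz's rule is a pleasant self-contained check that the paper does not spell out.
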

\bigskip
\paragraph{\textit{Another Proof of Theorem \ref{sess}:}}
Exactness at the term $\mathcal{HARM}$ in (\ref{shortexact}) follows 
from Theorem \ref{thm2} and Corollary \ref{rem1}. Let $q=f(z)dz^{2}$ be defined in a 
neighborhood $V$ of $a \in \hh$, where 
 $a \in \hh$ is fixed. To prove the local surjectivity 
 of $\beta$ we have to get a solution for a harmonic vector field whose associated 
 holomophic quadratic differential is $q$ in
 a possibly smaller neighborhood $U \subset V$ of $a$. Note that we can't use 
 the expression in (\ref{explicit}). As $\zeta$ runs from $0$ to $\Im(z)$, 
 $f(\bar{z}+2 \iota \zeta)$ does not even make sense when $\zeta=0$. 
 We try the following
 \begin{equation}
 \label{explicit1}
  \xi_{c}(z)= \Bigg(  \int_{c}^{\Im(z)} -\iota \zeta^{2} \overline{f(\bar{z}+2 \iota \zeta)} 
  d \zeta \Bigg) \eta(z),
 \end{equation}
where $c$ is any positive real number. 
\begin{figure}[h]
\centering
 \includegraphics[height=4cm]{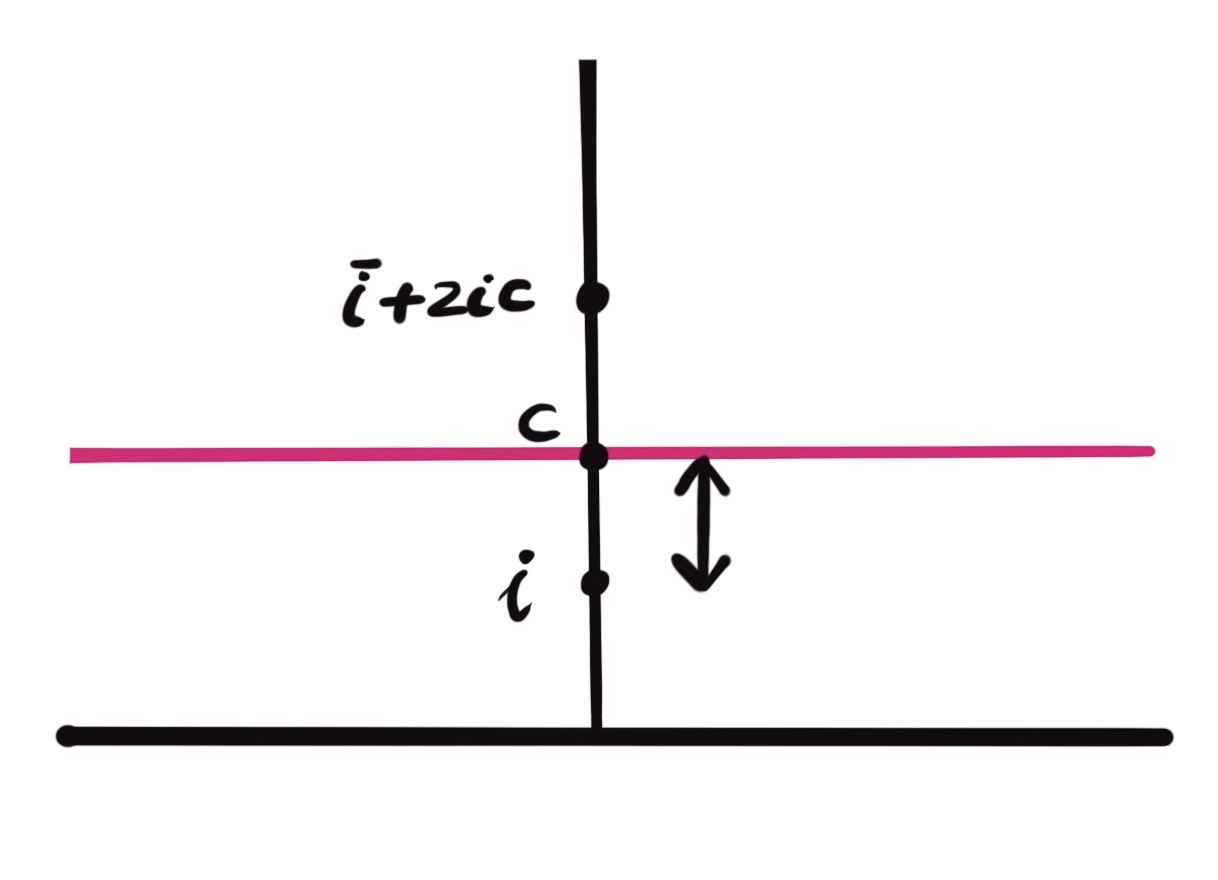}
 \caption{The expression for $\xi_{c}(\iota)$ defined along the 
 hyperbolic line joining $\iota$ and $c$}
 \label{pic6}
\end{figure}
But there is a caveat:
as $\zeta$ runs from $c$ to $\Im(z)$, $f(\bar{z}+2 \iota \zeta)$ may not be defined on 
the upper half plane 
since we are assuming that $f$ is defined only on $V \subset \hh$. By 
making the best 
possible choice of $c$ which is $\Im(a)$ in this case, 
we get the required solution as 
follows
\begin{equation}
\label{rightsol}
 \xi_{\Im(a)}(z)= \Bigg(  \int_{\Im(z)}^{\Im(a)} \iota \zeta^{2} \overline{f(\bar{z}+2 \iota \zeta)} d \zeta \Bigg) \eta(z),
\end{equation}
defined on
$$U=\{ z \in V | \bar{z}+2 \iota t \in V \hspace{2pt} \mathrm{for} \hspace{1pt} 
\mathrm{all} \hspace{2pt} t \in [\Im(z), \Im(a)] \}.$$
Evaluating the expression in (\ref{rightsol}) at $a$, we get
\begin{equation*}
\begin{split}
 \xi_{\Im(a)}(a) & = \Bigg( \int_{\Im(a)}^{\Im(a)} \iota \zeta^{2} 
 \overline{f(\bar{a}+2 \iota \zeta)} d \zeta \Bigg) \eta(a) \\
                 & = 0. 
 \end{split}
 \end{equation*}\hfill \qedsymbol
\begin{remark}
\label{wecanextend}
\normalfont
Let $q$ be a quadratic differential which is defined 
everywhere on $\hh$ and is bounded in the hyperbolic metric $\textbf{g}_{\hh}$, i.e.,
 $$\Arrowvert q \Arrowvert_{\textbf{g}_{\hh}} = \arrowvert f(z) \arrowvert  
 \Arrowvert dz^{2} \Arrowvert_{\textbf{g}_{\hh}} \leq D,$$
where $\Arrowvert dz^{2} \Arrowvert_{\textbf{g}_{\hh}}= \Im(z)^{2}$ and 
$D$ is a positive real number. Note that $\xi_{c}$ in (\ref{explicit1}) has a 
continuous extension on $\rr$. 
In other words, for $z$ such that 
$\Im(z)=0$, we define
\begin{equation}
 \label{limitimprop}
 \xi_{c}(z) = \lim_{\epsilon \rightarrow 0}\Bigg(  \int_{\epsilon}^{c} \iota \zeta^{2} 
\overline{f(z+2 \iota \zeta)} 
  d \zeta \Bigg) \eta(z).
\end{equation}
 To prove that the above limit exists, we use 
 the Cauchy criterion of convergence of improper integrals, 
 \begin{equation*}
  \begin{split}
   \bigg | \int_{\epsilon_{1}}^{\epsilon_{2}} \iota \zeta^{2} 
   \overline{f(\bar{z}+2 \iota \zeta)} d \zeta \bigg| & \leq 
   \int_{\epsilon_{1}}^{\epsilon_{2}}  \zeta^{2} \frac{D}{4 \zeta^{2}} d\zeta \\
   & = \frac{D}{4} (\epsilon_{2} -\epsilon_{1}).
   \end{split}
\end{equation*}
From the above estimate, it is clear that the limit in (\ref{limitimprop}) exists. 
 \end{remark}
\begin{theorem} \label{thmglobharmvf} Let $q=f(z)dz^{2}$ be a 
holomorphic quadratic differential on $\hh$. Suppose that 
$q$ satisfies the following boundedness
conditions 
\begin{enumerate}
 \item $q$ is bounded in the hyperbolic metric $\textbf{g}_{\hh}$, i.e.
\begin{equation}
\label{equ25}
 \Arrowvert q \Arrowvert_{\textbf{g}_{\hh}} = \arrowvert f(z) \arrowvert  
 \Arrowvert dz^{2} \Arrowvert_{\textbf{g}_{\hh}} \leq D,
\end{equation}
where $\Arrowvert dz^{2} \Arrowvert_{\textbf{g}_{\hh}}= \Im(z)^{2}$ and $D$ is a positive real number.
\item The first and second covariant derivative of $q$ w.r.t the linear connection $\nabla$ 
on $T^{\ast} \hh \otimes_{\cc} T^{\ast} \hh$,
are bounded in the hyperbolic metric $\textbf{g}_{\hh}$. 
\end{enumerate}
Then there exists a harmonic vector field $\xi^{\mathrm{reg}}$ on $\hh$ such 
that $\beta(\xi^{\mathrm{reg}})=q$, where $\beta$ is introduced in 
Theorem \ref{sess}. An explicit formula is 
\begin{equation}
\label{equ62}
  \xi^{\mathrm{reg}}(z) = \lim_{c \to \infty} \Bigg( \xi_{c}(z) - 
  \bigg( \xi_{c}(\iota) + \frac{\partial \xi_{c}}{\partial z}\bigg|_{z=\iota}\cdot 
  (z-\iota)\bigg) \Bigg),
  \end{equation}
where 
$$\xi_{c}(z)= \Bigg(  \int_{\Im(z)}^{c} \iota \zeta^{2} 
\overline{f(\bar{z}+2 \iota \zeta)} d \zeta \Bigg) \eta(z)$$ and 
$c$ is a positive real number.
\end{theorem}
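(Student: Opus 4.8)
The plan is to realize $\xi^{\mathrm{reg}}$ as a locally uniform limit of explicit harmonic vector fields, all having associated holomorphic quadratic differential $q$, and then to conclude with Corollary \ref{coroharmonic}. First I would set up the family: for $c>0$ let $\xi_c(z)=\bigl(\int_{\Im(z)}^{c}\iota\zeta^{2}\,\overline{f(\bar z+2\iota\zeta)}\,d\zeta\bigr)\eta(z)$, which is defined whenever $\Im(z)\le c$, hence (for each fixed $z$) once $c$ is large. The essential observation is that $\overline{f(\bar z+2\iota\zeta)}=\tilde f(z-2\iota\zeta)$, where $\tilde f(w):=\overline{f(\bar w)}$ is holomorphic on the lower half-plane, so the coefficient of $\eta$ in $\xi_c$ depends \emph{holomorphically} on $z$ inside the integral. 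A short computation of its Wirtinger derivative $\partial_{\bar z}$ — the integrand contributes nothing, while differentiating the lower endpoint $\Im(z)$ and using $\bar z+2\iota\,\Im(z)=z$ gives $\partial_{\bar z}\xi_c=-\tfrac{(z-\bar z)^{2}}{8}\,\overline{f(z)}$ — shows by Proposition \ref{thm2} that the quadratic differential associated with $\xi_c$ is exactly $q$; since $q$ is holomorphic, each $\xi_c$ is harmonic (this is in the spirit of Lemma \ref{lemma4} and the second proof of Theorem \ref{sess}). Then I would pass to the regularized field $\xi_c^{\mathrm{reg}}(z)=\xi_c(z)-\bigl(\xi_c(\iota)+\tfrac{\partial\xi_c}{\partial z}\big|_{z=\iota}(z-\iota)\bigr)$: the subtracted term is a holomorphic vector field (affine in $z$ in the $\eta$-trivialization), hence harmonic with vanishing quadratic differential by Corollary \ref{rem1}, and by linearity of $\xi\mapsto(\mathcal{L}_{\xi}\textbf{g}_{\hh})^{(2,0)}$ the field $\xi_c^{\mathrm{reg}}$ is still harmonic with associated differential $q$.

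The heart of the argument is to prove that $(\xi_c^{\mathrm{reg}})$ is uniformly Cauchy on compact subsets of $\hh$ as $c\to\infty$. For $c_1>c_2>0$ the difference $\xi_{c_1}(z)-\xi_{c_2}(z)=\bigl(\int_{c_2}^{c_1}\iota\zeta^{2}\tilde f(z-2\iota\zeta)\,d\zeta\bigr)\eta(z)=:h_{c_1,c_2}(z)\,\eta(z)$ is a holomorphic vector field, and subtracting from $h_{c_1,c_2}$ its degree-$1$ Taylor polynomial at $\iota$ yields
\begin{equation*}
 \xi_{c_1}^{\mathrm{reg}}(z)-\xi_{c_2}^{\mathrm{reg}}(z)=\Bigl(\int_{\iota}^{z}(z-w)\,h_{c_1,c_2}''(w)\,dw\Bigr)\eta(z),
\end{equation*}
the integral taken along the segment from $\iota$ to $z$, which lies in the convex domain $\hh$. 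So everything reduces to a uniform bound on $h_{c_1,c_2}''$.

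Differentiating twice under the integral sign, $h_{c_1,c_2}''(w)=\int_{c_2}^{c_1}\iota\zeta^{2}\tilde f''(w-2\iota\zeta)\,d\zeta$, with $|\tilde f''(w-2\iota\zeta)|=|f''(\bar w+2\iota\zeta)|$. Here I would invoke hypothesis (2): expressing the first covariant derivative of $q$ as $\nabla q=(\partial_{z}f-\tfrac{2\iota}{\Im(z)}f)\,dz^{3}$ (its $(0,1)$-part vanishing since $q$ is holomorphic) and using $\|dz^{3}\|_{\textbf{g}_{\hh}}=\Im(z)^{3}$, its boundedness together with (1) gives $|f'(z)|\le C\,\Im(z)^{-3}$, and likewise boundedness of the second covariant derivative gives $|f''(z)|\le C\,\Im(z)^{-4}$. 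Hence, for a compact $K\subset\hh$ and all sufficiently large $c_2<c_1$ (large enough that $\{w-2\iota\zeta:\zeta\in[c_2,c_1]\}$ stays in the lower half-plane for $w$ in a fixed compact neighborhood $K'$ of $K\cup\{\iota\}$), one has $2\zeta-\Im(w)\ge\zeta$ along the path, so
\begin{equation*}
 \sup_{w\in K'}|h_{c_1,c_2}''(w)|\ \le\ C\int_{c_2}^{c_1}\frac{\zeta^{2}}{\zeta^{4}}\,d\zeta\ \le\ \frac{C}{c_2};
\end{equation*}
combined with the boundedness of $|z-\iota|^{2}$ over $K$, this forces $\sup_{z\in K}|\xi_{c_1}^{\mathrm{reg}}(z)-\xi_{c_2}^{\mathrm{reg}}(z)|\to 0$ as $c_2\to\infty$. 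Therefore $(\xi_c^{\mathrm{reg}})$ converges locally uniformly on $\hh$ to a vector field $\xi^{\mathrm{reg}}$, which is precisely the one in formula (\ref{equ62}).

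Finally, since each $\xi_c^{\mathrm{reg}}$ is harmonic with associated quadratic differential $q$ and the convergence is uniform on compact sets, Corollary \ref{coroharmonic} gives that $\xi^{\mathrm{reg}}$ is harmonic and $\beta(\xi^{\mathrm{reg}})=q$, which is the assertion. The step I expect to be the main obstacle is this uniform estimate on $h_{c_1,c_2}''$: one must carefully translate ``the first and second covariant derivatives of $q$ are bounded in $\textbf{g}_{\hh}$'' into the honest pointwise decay $|f'|\lesssim\Im(z)^{-3}$ and $|f''|\lesssim\Im(z)^{-4}$ (tracking the Chern-connection Christoffel terms of $K^{\otimes 2}$), and then control the $\zeta$-integral uniformly in $z$ over compact sets while keeping the path $\{w-2\iota\zeta\}$ inside the domain of $\tilde f$. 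Subtracting the affine Taylor polynomial at $\iota$ is exactly what upgrades the merely bounded behavior of $\xi_c$ to the $c$-independent, integrable decay of the increments, which is what makes the limit exist.
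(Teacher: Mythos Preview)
Your proof is correct and follows essentially the same strategy as the paper's: both subtract the first-order holomorphic Taylor jet at $\iota$, translate the covariant-derivative hypotheses into the pointwise bound $|f''|\lesssim\Im(z)^{-4}$, estimate the Cauchy differences via the second-order Taylor remainder, and conclude with Corollary~\ref{coroharmonic}. Your packaging is slightly cleaner --- recognizing $h_{c_1,c_2}$ as holomorphic and using the integral remainder $\int_\iota^z(z-w)h''(w)\,dw$ lets you avoid the paper's case split on $\Im(z)\gtrless1$ and its explicit Leibniz boundary-term bookkeeping --- but the argument is the same in substance.
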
 

\begin{remark}
 \normalfont
 We have introduced a simple terminology $\mathrm{reg}$ short 
for ``regularisation'' to characterise our required harmonic vector field. 
\end{remark}

\begin{remark}
\normalfont
 The boundedness conditions on $q$ in the above theorem are satisfied if 
 $q$ is invariant under the action of a discrete cocompact subgroup 
 $\Gamma$ of $\mathrm{PSL}(2, \rr)$, i.e.,
 $$ f(\gamma(z)) \gamma'(z)^{2}=f(z), \quad z \in \hh, \quad \forall \gamma \in \Gamma.$$
\end{remark}

\begin{remark}
\label{conditioncov1}
 \normalfont
 In Theorem \ref{thmglobharmvf}, $\nabla$ is a first order linear differential operator
\begin{equation*}
\label{equ91}
 \mathscr{A}^{0}(\hh, T^{\ast}\hh \otimes_{\cc} T^{\ast}\hh) \longrightarrow \mathscr{A}^{1}(\hh, T^{\ast}\hh \otimes_{\cc} T^{\ast}\hh),
\end{equation*}
where on the L.H.S. we have sections of the vector bundle $T^{\ast}\hh \otimes_{\cc} T^{\ast}\hh \longrightarrow \hh$ and on the R.H.S we have
the space of $T^{\ast}\hh \otimes_{\cc} T^{\ast}\hh$-valued 1-forms, i.e., sections of the vector bundle
$\mathrm{\mathbf{hom}}(T\hh, T^{\ast}\hh \otimes_{\cc} T^{\ast}\hh).$ 
Recall that the Levi-Civita connection $\nabla$ of the hyperbolic plane
can be extended complex linearly to the complexification of the tangent and cotangent bundles - 
$(T\hh)^{c}$ and $(T^{\ast}\hh)^{c}$ - 
of the plane and their tensor products, and then decomposed as 
\begin{equation*}
 \nabla = \nabla_{\frac{\partial}{\partial z}} \oplus \nabla_{\frac{\partial}{\partial \bar{z}}}.
\end{equation*}
Recall the discussion just before Example \ref{harmexamplenice}. We view 
$\frac{\partial}{\partial z}$ and $\frac{\partial}{\partial \bar{z}}$ as sections of the 
complexified tangent bundle $(T\hh)^{c}$, and $dz$ and $d \bar{z}$ as sections of the complexified 
cotangent bundle $(T^{\ast}\hh)^{c}$. Furthermore, 
$dz\big( \frac{\partial}{\partial z}\big) = 1$ and 
$dz \big( \frac{\partial}{\partial \bar{z}}\big) =0$. 
For example, applied to functions $f: \hh \longrightarrow \cc$, 
we have $\nabla_{\frac{\partial}{\partial z}} f = f_{z} dz$ and 
$\nabla_{\frac{\partial}{\partial \bar{z}}} f = f_{\bar{z}} d \bar{z}$. 
Now, for the hyperbolic plane with the hyperbolic metric 
$\textbf{g}_{\hh}=\rho^{2}dz d\bar{z}$, where $\rho(z)=1 / \Im(z)$,
 we get the following: 
 \begin{equation}
 \label{attempt2}
 \begin{split}
 \nabla \frac{\partial}{\partial z} = 
 \frac{2 \rho_{z}}{\rho} dz \otimes \frac{\partial}{\partial z}, \quad 
 \nabla dz = dz \otimes \nabla_{\frac{\partial}{\partial z}} dz = 
 - \frac{2 \rho_{z}}{\rho} dz \otimes dz \\
 \nabla_{ \frac{\partial}{\partial \bar{z}}}  \frac{\partial}{\partial z}=0, \quad 
 \nabla_{\frac{\partial}{\partial z}} \frac{\partial}{\partial z} = 
 \frac{2 \rho_{z}}{\rho} \frac{\partial}{\partial z}. 
 \end{split}
\end{equation}
Equations in (\ref{attempt2}) are taken from \cite{mlee}. 
To get boundedness conditions on $f_{z}$ and $f_{zz}$ from boundedness conditions on $q$ and on 
the first and second covariant derivative of 
$q=fdz^{2}$, i.e.,
\begin{equation}
\begin{split}
 \lVert q \lVert_{\textbf{g}_{\hh}} & \leq D \\
 \lVert \nabla q \lVert_{\textbf{g}_{\hh}} & \leq D_{1} \\
 \lVert \nabla^{2} q \lVert_{\textbf{g}_{\hh}} & \leq D_{2},
\end{split}
\end{equation}
$D_{1}$ and $D_{2}$ are positive real numbers, we need to compute $\nabla q$ and $\nabla^{2} q$. 
Consider the first covariant derivative of $q$ w.r.t $\nabla$:
\begin{equation}
\label{firstcovariant}
 \begin{split}
  \nabla f dz^{2} & = \nabla (f) dz^{2} + f \nabla(dz \otimes dz) \\
                  & = f_{z} dz^{3} + f_{\bar{z}} d \bar{z} \otimes dz^{2} + 
                  f (\nabla dz \otimes dz + dz \otimes \nabla dz) \\
                  & = f_{z} dz^{3} + 0 + f \cdot -\frac{4 \rho_{z}}{\rho} dz^{3},
 \end{split}
\end{equation}
where the last equality follows from (\ref{attempt2}) and the fact that 
$f$ is a holomorphic function. 
From (\ref{firstcovariant}), we have 
$$ \lVert f_{z} dz^{3} + f \cdot -
\frac{4 \rho_{z}}{\rho} dz^{3} \rVert_{\textbf{g}_{\hh}} \leq D_{1}$$
which implies
\begin{equation}
\label{firstcov}
 |f_{z}| \leq \frac{K_{1}}{\Im(z)^{3}},
\end{equation}
where $K_{1}$ is a positive constant that depends upon the bounds for $f$. 
Now, using (\ref{attempt2}) and (\ref{firstcovariant}) consider the 
second covariant derivative of $q$ w.r.t $\nabla$: 
\begin{equation}
\label{secondcov}
 \begin{aligned}
\nabla \big( f_{z} dz^{3} + f \cdot -\frac{4 \rho_{z}}{\rho} dz^{3}\big) & = 
  \nabla f_{z} dz^{3} + f_{z} \nabla(dz \otimes dz \otimes dz) + \nabla f \cdot 
  -\frac{4 \rho_{z}}{\rho} dz^{3} \\
  & \qquad  - f \cdot \nabla \bigg(\frac{4 \rho_{z}}{\rho} \bigg) dz^{3} 
  + f \cdot -\frac{4 \rho_{z}}{\rho} \nabla(dz \otimes dz \otimes dz) \\
  & = f_{zz}dz^{4}+f_{z\bar{z}}d\bar{z} \otimes dz^{3}+f_{z} \cdot -
  \frac{6 \rho_{z}}{\rho} dz^{4} + f_{z} \cdot -\frac{4 \rho_{z}}{\rho} dz^{4} \\
  & \qquad  + f_{\bar{z}} 
  \cdot -\frac{4 \rho_{z}}{\rho} d \bar{z} \otimes dz^{3} 
   -f \cdot \rho^{2} dz^{4} + f \cdot \frac{4 \rho_{z}}{\rho} \cdot 
  \frac{6 \rho_{z}}{\rho} dz^{4}  \\
  & = f_{zz}dz^{4} + 0 + f_{z} \cdot -\frac{6 \rho_{z}}{\rho} dz^{4} + 
  f_{z} \cdot -\frac{4 \rho_{z}}{\rho} dz^{4} + 0 \\
 & \qquad -f \cdot \rho^{2} dz^{4} + f \cdot \frac{24 \rho_{z}^{2}}{\rho^{2}} dz^{4} \\
 & = f_{zz}dz^{4} + f_{z} \cdot -\frac{10 \rho_{z}}{\rho} dz^{4}
  -f \cdot \rho^{2} dz^{4} + f \cdot \frac{24 \rho_{z}^{2}}{\rho^{2}} dz^{4}.
\end{aligned}
\end{equation}
From (\ref{secondcov}), the second covariant derivative of $q$ being bounded in the 
hyperbolic metric implies the following: 
\begin{equation}
\label{secondcov1}
 |f_{zz}| \leq \frac{K_{2}}{\Im(z)^{4}},
\end{equation}
where $K_{2}$ is a positive constant that depends upon the bounds for $f$ and $f_{z}$.  
\end{remark}
Before we begin with the proof of Theorem \ref{thmglobharmvf} which 
establishes the global surjectivity of $\beta$ in Theorem \ref{sess}, 
we discuss the following abortive attempts to get a (global) harmonic 
vector field on the whole upper half plane $\hh$.
\begin{remark}
\label{firstattempt}
\normalfont
Assume that $q$ is bounded in the hyperbolic metric, i.e.
\begin{equation*}
 \Arrowvert q \Arrowvert_{\textbf{g}_{\hh}} = \arrowvert f(z) \arrowvert  
 \Arrowvert dz^{2} \Arrowvert_{\textbf{g}_{\hh}} \leq D,
\end{equation*}
where $D$ is a positive real number.
We try to define 
\begin{equation}
\label{equ19}
\xi(z)= \Bigg(  \int_{\Im(z)}^{\infty} \iota \zeta^{2} \overline{f(\bar{z}+2 \iota \zeta)} d \zeta \Bigg) \eta(z) 
= \lim_{c \to \infty} \Bigg( \int_{\Im(z)}^{c} \iota \zeta^{2} \overline{f(\bar{z}+2 \iota \zeta)} d \zeta \Bigg) \eta(z),
\end{equation}
hoping that the above limit exists. In this case, we say 
that the improper integral in (\ref{equ19}) \textit{converges} and 
its value is that of the limit.
From the above mentioned boundedness condition on $q$ we get the following
\begin{equation}
\label{equ26}
 \arrowvert f(z) \arrowvert \leq \frac{D}{\Im(z)^{2}}, \forall z \in \hh.
\end{equation}
From the Cauchy criterion of convergence of improper integrals, 
the improper integral
$$\int_{\Im(z)}^{\infty} \iota \zeta^{2} \overline{f(\bar{z}+2 \iota \zeta)} d \zeta$$
in (\ref{equ19}) converges iff for every $\epsilon > 0$ there is a
$K \geq \Im(z)$ so that for all $A, B \geq K$ we have 
$$\bigg \arrowvert \int_{A}^{B} \iota \zeta^{2} \overline{f(\bar{z}+2 \iota \zeta)} d \zeta \bigg \arrowvert < \epsilon.$$
Using (\ref{equ26}), we have
\begin{equation}
\label{inequalities1}
 \begin{split}
  \bigg \arrowvert \int_{A}^{B} \iota \zeta^{2} \overline{f(\bar{z}+2 \iota \zeta)} d \zeta \bigg \arrowvert & \leq \int_{A}^{B}  \zeta^{2} \big \arrowvert \overline{f(\bar{z}+2 \iota \zeta)} \big \arrowvert d \zeta  \\
               & \leq  \int_{A}^{B} \frac{D \zeta^{2}}{(2 \zeta - \Im(z))^{2}} d \zeta 
 \end{split}
\end{equation}
Now, we assume that $A \geq \Im(z)$. Then the denomiator $(2 \zeta - \Im(z))^{2}$ in the second inequality 
in (\ref{inequalities1}) is atleast as big as $\zeta^{2}$. Rewriting (\ref{inequalities1}), we get 
\begin{equation*}
 \begin{split}
  \bigg \arrowvert \int_{A}^{B} \iota \zeta^{2} \overline{f(\bar{z}+2 \iota \zeta)} 
  d \zeta \bigg \arrowvert & \leq \int_{A}^{B} \frac{D \zeta^{2}}{ \zeta^{2}} d \zeta \\
               & = \int_{A}^{B} D d \zeta \\
               & = D(B-A).
 \end{split}
\end{equation*}
From the above estimate, there is no conclusion that limit in (\ref{equ19}) exists.
\end{remark}


\begin{remark}
\normalfont
Assume that both $q$ and its first covariant derivative w.r.t $\nabla$ 
are 
bounded in the hyperbolic metric $\textbf{g}_{\hh}$. From Remark 
\ref{conditioncov1} and (\ref{firstcov}), 
the covariant derivative of $q$ (w.r.t $\nabla$)
being bounded in the hyperbolic metric $\textbf{g}_{\hh}$ implies the following:
\begin{equation}
 \label{equ27}
 \arrowvert f_{z} \arrowvert \leq \frac{K_{1}}{\Im(z)^{3}},
\end{equation}
where $f_{z}$ denotes the first 
complex derivative of $f$, $f$ being a holomorphic function on $\hh$.
 We try to define 
\begin{equation}
\label{equ20}
  \xi(z)=\lim_{c \to \infty} (\xi_{c}(z)-\xi_{c}(\iota))
\end{equation}
hoping that the above limit exists. We view $\xi_{c}(\iota)$ as 
the zeroth order Taylor approximation of 
$\xi_{c}(z)$ at $z=\iota$. Moreover, 
$\xi_{c}(\iota)$ is a constant vector field, hence a holomorphic vector field, depending on $c$. Note that the expression in (\ref{equ20}) 
resembles the idea of Weierstrass in constructing the Weierstrass 
$\mathcal{P}$-function. Naively speaking, we want to compare the integral along a 
vertical hyperbolic line $\mathcal{L}_{1}$ joining some point $z$ to $\bar{z}+2 \iota c$ 
with the integral along a vertical hyperbolic line $\mathcal{L}_{2}$ joining $\iota$ to $(2c-1)\iota$. 
Infact, $\mathcal{L}_{1}$ and $\mathcal{L}_{2}$ are asymptotic lines in the 
hyperbolic plane $\hh$. 
Let's first spell out the expression $\xi_{c}(z)-\xi_{c}(\iota)$ on the R.H.S of 
(\ref{equ20}). 
\paragraph{Case I: $2c \geq 1 \geq \Im(z)$}
\begin{align}
   \xi_{c}(z)-\xi_{c}(\iota) & = \bigg( \int_{\Im(z)}^{c} \iota \zeta^{2} \overline{f(\bar{z}+2 \iota \zeta)}d \zeta-\int_{1}^{c} \iota \zeta^{2}\overline{f(\bar{\iota}+2 \iota \zeta)}d \zeta \bigg)  \eta(z) \nonumber \\
                             & = \bigg( \int_{\Im(z)}^{1} \iota \zeta^{2} \overline{f(\bar{z}+2 \iota \zeta)}d \zeta + \int_{1}^{c} \iota \zeta^{2}\overline{f(\bar{z}+2 \iota \zeta)} d \zeta -\int_{1}^{c} \iota \zeta^{2}\overline{f(\bar{\iota}+2 \iota \zeta)} d \zeta \bigg)  \eta(z) \nonumber  \\
                             & = \bigg( \underbrace{\int_{1}^{c} \iota \zeta^{2} \bigg( \overline{f(\bar{z}+2 \iota \zeta) - f(\bar{\iota}+2 \iota \zeta)} \bigg)d \zeta}_{I_{c}}- \int_{1}^{\Im(z)} \iota \zeta^{2}\overline{f(\bar{z}+2 \iota \zeta)} d \zeta \bigg) \eta(z) \label{esti}
  \end{align}
  \paragraph{Case II: $2c \geq \Im(z) \geq 1$}
  \begin{align}
   \xi_{c}(z)-\xi_{c}(\iota) & =  \bigg( \int_{\Im(z)}^{c} \iota \zeta^{2} \overline{f(\bar{z}+2 \iota \zeta)}d \zeta-\int_{1}^{c} \iota \zeta^{2}\overline{f(\bar{\iota}+2 \iota \zeta)}d \zeta \bigg)  \eta(z) \nonumber \\
                               & = \bigg( \int_{\Im(z)}^{c} \iota \zeta^{2} \overline{f(\bar{z}+2 \iota \zeta)}d \zeta-\int_{1}^{\Im(z)} \iota \zeta^{2}\overline{f(\bar{\iota}+2 \iota \zeta)} d \zeta -\int_{\Im(z)}^{c} \iota \zeta^{2}\overline{f(\bar{\iota}+2 \iota \zeta)} d \zeta \bigg)  \eta(z) \nonumber  \\
                               & = \bigg( \underbrace{\int_{\Im(z)}^{c} \iota \zeta^{2} \bigg( \overline{f(\bar{z}+2 \iota \zeta) - f(\bar{\iota}+2 \iota \zeta)} \bigg)d \zeta}_{II_{c}}- \int_{1}^{\Im(z)} \iota \zeta^{2}\overline{f(\bar{\iota}+2 \iota \zeta)} d \zeta \bigg) \eta(z) \label{equ21}
  \end{align}
  Since $ \int_{1}^{\Im(z)} \iota \zeta^{2}\overline{f(\bar{z}+2 \iota \zeta)} d \zeta$ and 
$ \int_{1}^{\Im(z)} \iota \zeta^{2}\overline{f(\bar{\iota}+2 \iota \zeta)} d \zeta$
  on the R.H.S of (\ref{esti}) and (\ref{equ21}) are independent
 of $c$ we only work with $I_{c}$ and $II_{c}$ to determine whether the limit in 
 (\ref{equ20}) exists or not.  Now if $A, B \geq c$, we have
 \begin{equation}
  \label{attemptnumber3}
  I_{B}-I_{A} = \int_{A}^{B} \iota \zeta^{2} \bigg( \overline{f(\bar{z}+2 \iota \zeta) 
  - f(\bar{\iota}+2 \iota \zeta)} \bigg)d \zeta, 
 \end{equation}
and 
\begin{equation}
\label{attemptnumber3a} 
 II_{B}-II_{A} = \int_{A}^{B} \iota \zeta^{2} \bigg( \overline{f(\bar{z}+2 \iota \zeta) 
 - f(\bar{\iota}+2 \iota \zeta)} \bigg)d \zeta.
\end{equation} 
Using (\ref{equ27}), we have the following estimate for (\ref{attemptnumber3}) and 
(\ref{attemptnumber3a})
\begin{equation}
   \label{inequalities2}
   \begin{aligned}
\bigg \arrowvert \int_{A}^{B} \iota \zeta^{2} \bigg( \overline{f(\bar{z}+2 \iota \zeta) 
  - f(\bar{\iota}+2 \iota \zeta)} \bigg)
d \zeta \bigg \arrowvert  
  & \leq \int_{A}^{B} \zeta^{2} \cdot \frac{K_{1}}{(2 \zeta - \Im(z))^{3}}
  \cdot |\bar{z} - \bar{\iota} | d \zeta, 
   \end{aligned}
\end{equation}
where the inequality in (\ref{inequalities2}) follows from (\ref{equ27}). 
Now, we assume that $A \geq \Im(z)$. 
Then the denomiator $(2 \zeta - \Im(z))^{3}$ in the inequality 
in (\ref{inequalities2}) is atleast as big as $\zeta^{3}$. Rewriting (\ref{inequalities2}), we get 
\begin{equation*}
   \begin{aligned}
\bigg \arrowvert \int_{A}^{B} \iota \zeta^{2} \bigg( \overline{f(\bar{z}+2 \iota \zeta) 
  - f(\bar{\iota}+2 \iota \zeta)} \bigg)
d \zeta \bigg \arrowvert  & \leq  |\bar{z} - \bar{\iota} | \int_{A}^{B} \zeta^{2} \cdot \frac{K_{1}}{ \zeta^{3}} d \zeta \\
  & = |\bar{z} - \bar{\iota} | \cdot K_{1}\log \bigg(\frac{B}{A} \bigg).
\end{aligned}
\end{equation*}
Observe that the attempt in (\ref{equ20}) is much better than the 
attempt in (\ref{equ19}). 
But it does not serve our purpose. 
\end{remark}
\paragraph{\textit{Proof of Theorem \ref{thmglobharmvf}}:}
 Recall (\ref{secondcov1}). To begin with we note that the second 
 boundedness condition on $q$ can be translated as follows:
 \begin{equation}
\label{equ63}
 \arrowvert f_{zz} \arrowvert \leq \frac{K_{2}}{\Im(z)^{4}}, 
 \quad \forall z \in \hh,
\end{equation}
where $f_{zz}$ denote the second complex 
derivative of $f$, 
$f$ being a holomorphic function on $\hh$. To prove that 
$\xi^{\mathrm{reg}}(z)$ converges we use the Cauchy criterion of 
convergence of improper integrals which has been stated in Remark \ref{firstattempt}.
We notice that 
$$\xi_{c}(\iota)+\frac{\partial \xi_{c}}{\partial z}\bigg|_{z=\iota} \cdot (z- \iota)$$
in (\ref{equ62}) is the \textit{holomorphic part} of the first order Taylor
approximation of $\xi_{c}(z)$ at $z=\iota$. Let's denote it by $T^{\mathrm{hol}}_{1, \iota}(\xi_{c}(z))$. 
Also, $\frac{\partial \xi_{c}}{\partial z}\big|_{z=\iota}$ is nothing complicated but a complex number because
$\xi'_{c}(z)|_{z=\iota}$ as an $\rr$-linear map from $\cc$ to $\cc$ can be written uniquely 
as a sum of a $\cc$-linear map and a $\cc$-conjugate linear map. 
Let's denote the integrand $ \iota \zeta^{2} \overline{f(\bar{z}+2 \iota \zeta)}$ in 
the expression of $\xi_{c}(z)$ by $F(\zeta, z)$.
As both $F(\zeta, z)$ and its partial derivatives are continuous in
$\zeta$ and $z$, we can express $\xi'_{c}(z)|_{z=\iota}$ using 
the Leibniz rule as follows:
\begin{equation}
\label{equ71}
\resizebox{1.06\hsize}{!}{$
\begin{aligned}
 \xi'_{c}(z)|_{z=\iota} & = \Bigg(- \iota \Im(z)^{2} \overline{f(\bar{z}+2 \iota \Im(z))} \cdot \Im'(z) 
                              + \int_{\Im(z)}^{c} \iota \zeta^{2} \bigg(\frac{\partial}{\partial z}  \overline{f(\bar{z}+2 \iota \zeta) } dz + \frac{\partial}{\partial \bar{z}}\overline{f(\bar{z}+2 \iota \zeta) } d\bar{z}\bigg)d \zeta\Bigg) \bigg|_{z=\iota} \\
                        & = \Bigg(- \iota \Im(z)^{2} \overline{f(z)} \cdot \Im'(z) +
                                    \int_{\Im(z)}^{c} \iota \zeta^{2} \frac{\partial}{\partial z}  \overline{f(\bar{z}+2 \iota \zeta)} d \zeta\Bigg) \bigg|_{z=\iota} \\
                        & = \underbrace{- \iota \overline{f(\iota)} \cdot \Im'(z)|_{z=\iota}}_{K} +
                                   \Bigg( \int_{\Im(z)}^{c} \iota \zeta^{2} \frac{\partial}{\partial z}  \overline{f(\bar{z}+2 \iota \zeta)} d \zeta\Bigg) \bigg|_{z=\iota}           
\end{aligned}
$}
\end{equation}
where the second equality in (\ref{equ71}) follows from the fact that $f$ is 
a holomorphic function, hence we get
\begin{equation*}
 \frac{\partial}{\partial \bar{z}}  \overline{f(\bar{z}+2 \iota \zeta)} =\overline{ \Bigg( \frac{\partial}{\partial z} f(\bar{z}+2 \iota \zeta) \Bigg)} = 0.
\end{equation*}
Note that we have omitted $dz$ in $\frac{\partial}{\partial z}  \overline{f(\bar{z}+2 \iota \zeta) } dz$ 
because $dz$ as a linear map can be viewed as the $2 \times 2$ identity matrix.
Since the summand $\iota \overline{f(\iota)} \cdot \Im'(z)|_{z=\iota}$ in (\ref{equ71}) 
does not depend on $c$, therefore it does not hurt to drop it in the expression of 
$T^{\mathrm{hol}}_{1, \iota}(\xi_{c}(z))$ for convergence investigation. We will denote 
the corrected term by $\Psi_{c}(z)$. 
Using (\ref{equ71}), $\Psi_{c}(z)$ can be written as:
\begin{equation}
\label{equ72}
 \Psi_{c}(z) = \Bigg( \int_{1}^{c} \iota \zeta^{2} \bigg( \overline{f(\bar{\iota}+2 \iota \zeta) } + \bigg( \frac{\partial}{\partial z}  \overline{f(\bar{z}
                        +2 \iota \zeta) } \bigg) \bigg|_{z=\iota} \cdot (z-\iota) \bigg)d \zeta \Bigg) \eta(z).
\end{equation}
Then 
\begin{equation}
\label{newxi}
 \xi^{\mathrm{reg}}(z) = \lim_{c \rightarrow \infty}\big(\xi_{c}(z)-\Psi_{c}(z) - K \big).
\end{equation}
Let's first spell out the expression $\xi_{c}(z) - \Psi_{c}(z)$. 
\smallskip
\paragraph{\textbf{Case I:} $2c \geq 1 \geq \Im(z)$}
\begin{equation}
\label{anotheresti}
\begin{aligned}
\xi_{c}(z) - \Psi_{c}(z)  & = \Bigg( \int_{\Im(z)}^{c} \iota \zeta^{2} 
   \overline{f(\bar{z}+2 \iota \zeta)}d \zeta   \\
 & \qquad  -\int_{1}^{c} \iota \zeta^{2} \bigg( \overline{f(\bar{\iota}+2 \iota \zeta) } 
   + \bigg( \frac{\partial}{\partial z}  \overline{f(\bar{z}
+2 \iota \zeta) } \bigg) \bigg|_{z=\iota} \cdot (z-\iota) \bigg)d \zeta \Bigg) 
\eta(z)  \\
       & = \Bigg( \int_{\Im(z)}^{1} \iota \zeta^{2} 
       \overline{f(\bar{z}+2 \iota \zeta)}d \zeta + \int_{1}^{c} 
       \iota \zeta^{2}\overline{f(\bar{z}+2 \iota \zeta)} d \zeta  \\
    & \qquad  - \int_{1}^{c} \iota \zeta^{2} \bigg( \overline{f(\bar{\iota}+
    2 \iota \zeta) } 
      + \bigg( \frac{\partial}{\partial z}  \overline{f(\bar{z}
    +2 \iota \zeta) } \bigg) \bigg|_{z=\iota} \cdot (z-\iota) \bigg)d \zeta  
    \Bigg) \eta(z)  \\
  & = \Bigg( \underbrace{\int_{1}^{c} \iota \zeta^{2} \bigg( 
  \overline{f(\bar{z}+2 \iota \zeta) - f(\bar{\iota}+2 \iota \zeta)} - 
  \bigg( \frac{\partial}{\partial z}  \overline{f(\bar{z}  +2 \iota \zeta) } 
  \bigg) \bigg|_{z=\iota} \cdot (z-\iota) \bigg)d \zeta}_{I_{c}} \\
                      & \qquad      - \int_{1}^{\Im(z)} \iota 
                                   \zeta^{2}\overline{f(\bar{z}+2 \iota \zeta)} 
                                   d \zeta \Bigg) \eta(z). 
\end{aligned} 
  \end{equation}
  \paragraph{\textbf{Case II:} $2c \geq \Im(z) \geq  1$}
  \begin{equation}
  \label{anotherequ21}
   \begin{aligned}
   \xi_{c}(z)-\Psi_{c}(z) & =  \Bigg( \int_{\Im(z)}^{c} \iota \zeta^{2} 
   \overline{f(\bar{z}+2 \iota \zeta)}d \zeta \\
       & \qquad -\int_{1}^{\Im(z)} \iota \zeta^{2} 
       \bigg( \overline{f(\bar{\iota}+2 \iota \zeta) } 
       + \bigg( \frac{\partial}{\partial z}  \overline{f(\bar{z}
        +2 \iota \zeta) } \bigg) \bigg|_{z=\iota} \cdot (z-\iota) \bigg)d \zeta \\
              & \qquad -\int_{\Im(z)}^{c} \iota \zeta^{2} 
              \bigg( \overline{f(\bar{\iota}+2 \iota \zeta) } + 
              \bigg( \frac{\partial}{\partial z}  \overline{f(\bar{z}
                        +2 \iota \zeta) } \bigg) 
                        \bigg|_{z=\iota} \cdot (z-\iota) \bigg)d \zeta  \Bigg) 
                        \eta(z) \\
  & = \Bigg( \underbrace{\int_{\Im(z)}^{c} \iota \zeta^{2} 
  \bigg( \overline{f(\bar{z}+2 \iota \zeta) - f(\bar{\iota}+2 \iota \zeta)} - 
  \bigg( \frac{\partial}{\partial z}  \overline{f(\bar{z}
         +2 \iota \zeta) } \bigg) \bigg|_{z=\iota} \cdot (z-\iota) 
         \bigg)d \zeta}_{II_{c}}  \\ 
                        & \qquad - \int_{1}^{\Im(z)} \iota \zeta^{2} \bigg( \overline{f(\bar{\iota}+2 \iota \zeta) } + \bigg( \frac{\partial}{\partial z}  \overline{f(\bar{z}
                        +2 \iota \zeta) } \bigg) \bigg|_{z=\iota} 
                        \cdot (z-\iota) \bigg)d \zeta \Bigg) \eta(z). 
  \end{aligned}
  \end{equation}
Since the integrals 
$$\int_{1}^{\Im(z)} \iota \zeta^{2}\overline{f(\bar{z}+2 \iota \zeta)} d \zeta$$
and 
$$\int_{1}^{\Im(z)} \iota \zeta^{2} \bigg( \overline{f(\bar{\iota}+2 \iota \zeta) } + \bigg( \frac{\partial}{\partial z}  \overline{f(\bar{z}
                        +2 \iota \zeta) } \bigg) \bigg|_{z=\iota} \cdot (z-\iota) \bigg)d \zeta$$
                        in R.H.S of (\ref{anotheresti}) and (\ref{anotherequ21}) 
                        are independent of $c$, we work with $I_{c}$ and $II_{c}$ in (\ref{anotheresti}) and (\ref{anotherequ21}) to prove the convergence of $\xi^{\mathrm{reg}}$.
                        Now if $A, B \geq c$, we have
 \begin{equation*}
  I_{B}-I_{A} = \int_{A}^{B} \iota \zeta^{2} \bigg( \overline{f(\bar{z}+2 \iota \zeta) 
  - f(\bar{\iota}+2 \iota \zeta)} - \bigg( \frac{\partial}{\partial z}  \overline{f(\bar{z}
                                   +2 \iota \zeta) } \bigg) \bigg|_{z=\iota} 
                                   \cdot (z-\iota) \bigg)d \zeta, 
 \end{equation*}
and 
\begin{equation*}
 II_{B}-II_{A} = \int_{A}^{B} \iota \zeta^{2} \bigg( \overline{f(\bar{z}+2 \iota \zeta) 
 - f(\bar{\iota}+2 \iota \zeta)} - \bigg( \frac{\partial}{\partial z}  \overline{f(\bar{z}
                        +2 \iota \zeta) } \bigg) \bigg|_{z=\iota} \cdot (z-\iota) 
                        \bigg)d \zeta.
\end{equation*}
Using the Remainder Estimation Theorem for $f$, we have
\begin{equation}
 \label{inequalities3}
  \arrowvert I_{B}-I_{A} \arrowvert = \arrowvert II_{B}-II_{A} \arrowvert  \leq  
                                     \int_{A}^{B} \zeta^{2} \cdot 
                                    \mathrm{max}_{w}\arrowvert f^{(2)}(w) 
                                    \arrowvert 
                                    \cdot |(\bar{z}+2 \iota \zeta)-
                                    (\bar{\iota}+2 \iota \zeta)|^{2}  
                                    d \zeta, 
\end{equation} 
where $w$ is varying on the line segment connecting $\bar{z}+2 \iota \zeta$ 
and $\bar{\iota}+2 \iota \zeta$. We assume $A, B > \Im(z)$. Using (\ref{equ63}), 
we rewrite (\ref{inequalities3}) as follows:
\begin{equation}
\label{inequalities4}
 \begin{aligned}
   \arrowvert I_{B}-I_{A} \arrowvert = \arrowvert II_{B}-II_{A} \arrowvert  
   & \leq \int_{A}^{B} \zeta^{2} \cdot 
                                    \mathrm{max}_{w}\frac{K_{2}}{(\Im(w))^{4}} 
                                    \cdot |\bar{z}-\bar{\iota}|^{2} 
                                    d \zeta \\
                                    & \leq |\bar{z}-\bar{\iota}|^{2} 
                                    \int_{A}^{B} \zeta^{2} \cdot \frac{K_{2}}{(2 \zeta- \Im(z))^{4}} d \zeta 
 \end{aligned}
\end{equation}
 Also, the denomiator $(2 \zeta- \Im(z))^{4}$ is atleast as big as 
$\zeta^{4}$. As a result (\ref{inequalities4}) has the following form: 
\begin{equation}
\label{inequalities5}
 \begin{aligned}
  \arrowvert I_{B}-I_{A} \arrowvert = \arrowvert II_{B}-II_{A} \arrowvert  
   & \leq |\bar{z}-\bar{\iota}|^{2}
                                    \int_{A}^{B} \zeta^{2} \cdot \frac{K_{2}} {\zeta^{4}} d \zeta \\
                                    & = |\bar{z}-\bar{\iota}|^{2} \cdot 
                                    \frac{K_{2}}{4} \bigg(-\frac{1}{B}+\frac{1}{A}\bigg).
 \end{aligned}
 \end{equation}
 These estimates show that $\xi^{\mathrm{reg}}$ is a well defined vector field. But they also show that 
$\xi^{\mathrm{reg}}$ is locally a uniform limit of harmonic vector fields which determine 
the same holomorphic quadratic differential. Therefore, $\xi^{\mathrm{reg}}$ is a harmonic vector field 
by Corollary \ref{coroharmonic}. \hfill \qedsymbol
\subsection{Extending harmonic vector fields on $\hh$ to the boundary circle $\mathbb{S}^{1}$}
\label{extendharmonic}
We refer to
the extended real axis $\overline{\rr} := \rr \cup \{\infty\}$ as the 
boundary at infinity of $\hh$.
We are using the unit disc model so that we have a well defined notion of the 
tangent space at the point 
$\{\infty\} \in \partial \hh$ as there is a natural 1-1 correspondence between 
$\partial \dd$ and $\partial \hh$. The starting point is to compare the length 
of a vector $v \in T_{z} \hh$ 
for some $z \in \hh$ (measured
in the Euclidean metric) with the
length of the pushforward of $v$ (measured in the Euclidean metric) by a 
conformal map between $\hh$ and $\dd$. 
Consider the Cayley transformation
 \begin{equation}
 \label{cayley}
  C(z)= \frac{z-\iota}{z+\iota}
 \end{equation}
 mapping the upper half plane model of $\hh$ to the unit disc model $\dd$ of $\hh$. 
We have
 \begin{equation}
  \label{endeq}
  |dC_{z} (v)| = \frac{| v |}{|z|^{2}}, \quad \forall v \in T_{z}\hh.
 \end{equation}
\begin{theorem}
\label{boundary}
The harmonic vector field $\xi^{\mathrm{reg}}$ in Theorem \ref{thmglobharmvf}, 
transformed from $\hh$ to the open unit disc $\dd \subset\cc$ by 
the Cayley transform $C$ given by (\ref{cayley}) 
extends to a continuous vector field, say $\chi$, 
on $\overline{\dd}$ defined as follows:
\begin{equation}
 \label{extendedvf}
 \chi(C(z)) = \begin{cases}
            C_{\ast}(\xi^{\mathrm{reg}}(z)) & \quad z \in \hh \\
            C_{\ast}(\xi^{\mathrm{reg}}(z)) & \quad z \in \partial \hh \setminus \{\infty\} \\
            0                     & \quad z = \{ \infty \}
          \end{cases}
          \end{equation}
          where $C_{\ast}(\xi^{\mathrm{reg}}(z))$ is 
the pushforward of $\xi^{\mathrm{reg}}(z)$ by the Cayley transform $C$. 
\end{theorem}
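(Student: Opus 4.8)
The plan is to treat the boundary circle $\partial\dd$ in two parts---the points $C(z_{0})$ with $z_{0}\in\partial\hh\setminus\{\infty\}$, where together with the interior $\dd$ there is nothing new to prove, and the distinguished point $1=C(\infty)$---and then patch. It is convenient to record that $\xi^{\mathrm{reg}}=\phi^{\mathrm{reg}}\,\eta$ for a scalar complex function $\phi^{\mathrm{reg}}$ on $\hh$: with $\phi_{c}(z)=\int_{\Im(z)}^{c}\iota\zeta^{2}\overline{f(\bar z+2\iota\zeta)}\,d\zeta$ one has, from (\ref{equ62})--(\ref{newxi}), $\phi^{\mathrm{reg}}(z)=\lim_{c\to\infty}\big(\phi_{c}(z)-\phi_{c}(\iota)-\phi_{c}'(\iota)\,(z-\iota)\big)$ modulo an affine term $\alpha+\beta z$; that affine term is harmless for our purposes because $C_{\ast}\big((\alpha+\beta z)\eta\big)=\frac{2\iota(\alpha+\beta z)}{(z+\iota)^{2}}\to 0$ as $z\to\infty$. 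Since $\eta$ has Euclidean length one, (\ref{endeq}) shows that the Euclidean length of $C_{\ast}\xi^{\mathrm{reg}}(z)$ is comparable to $|\phi^{\mathrm{reg}}(z)|/|z|^{2}$.

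First I would show that the limit defining $\xi^{\mathrm{reg}}$ in (\ref{equ62}) converges \emph{uniformly on compact subsets of $\overline{\hh}\setminus\{\infty\}$}, which reduces continuity of $\chi$ on $\overline{\dd}\setminus\{1\}$ to the fact that $C$ is a diffeomorphism there. This is essentially a rereading of the Cauchy-criterion estimates (\ref{inequalities3})--(\ref{inequalities5}): $\Im(z)$ enters those denominators only through factors $(2\zeta-\Im(z))^{-4}$, and on the relevant integration range $\zeta\ge\Im(z)$ one has $2\zeta-\Im(z)\ge\zeta$, so all the bounds persist with constants that stay bounded as $\Im(z)\to 0$ on a compact set avoiding $\infty$. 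The remaining $c$-independent pieces in (\ref{anotheresti})--(\ref{anotherequ21}) are proper integrals with continuous integrands---near $\zeta=0$ the integrand is bounded by $D/4$ thanks to (\ref{equ25})---hence they extend continuously up to $\partial\hh\setminus\{\infty\}$. Thus $\xi^{\mathrm{reg}}$ is continuous on $\overline{\hh}\setminus\{\infty\}$, and since $C$ restricts to a diffeomorphism onto $\overline{\dd}\setminus\{1\}$, the field $\chi$ given by the first two lines of (\ref{extendedvf}) is continuous on $\overline{\dd}\setminus\{1\}$ and equals the continuous extension of $C_{\ast}\xi^{\mathrm{reg}}$ there.

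The substantive point is continuity of $\chi$ at $1=C(\infty)$, i.e.\ $\phi^{\mathrm{reg}}(z)=o(|z|^{2})$ as $z\to\infty$ in $\overline{\hh}$. Here I would separate the \emph{vertical} escape $\Im(z)\to\infty$ from the \emph{horizontal} escape, in which $\Im(z)$ stays bounded while $|\Re(z)|\to\infty$. The vertical case is handled by the estimates already in hand: running the Case II computation, the second-order remainder integrand is $\le\frac{1}{2}K_{2}|z-\iota|^{2}\,\zeta^{2}(2\zeta-\Im(z))^{-4}$ with $\int_{\Im(z)}^{\infty}\zeta^{2}(2\zeta-\Im(z))^{-4}\,d\zeta\le\int_{\Im(z)}^{\infty}\zeta^{-2}\,d\zeta=\Im(z)^{-1}$, while the leftover $c$-independent integrals contribute $O(\Im(z))+O(|z-\iota|\log\Im(z))$; dividing by $|z|^{2}\ge\Im(z)^{2}$ gives a quantity that is $O(\Im(z)^{-1})+O(\log\Im(z)/|z|)\to 0$. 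The horizontal case is the crux: for $z=x+\iota\epsilon$ with $|x|\to\infty$ and $0<\epsilon\le 1$ the naive bound from the second-order Taylor remainder of $G_{\zeta}(w):=\overline{f(\bar w+2\iota\zeta)}$ at $\iota$ only gives $|\phi^{\mathrm{reg}}(z)|\le C|z-\iota|^{2}=O(|z|^{2})$, which is not enough, and one must extract the extra decay latent in the regularisation---i.e.\ in having subtracted the holomorphic $1$-jet at $\iota$, which here plays the role of a ``subtracted dispersion relation.'' Concretely: for each $\zeta\ge 1$, $G_{\zeta}$ is holomorphic on the half-plane $\{\Im(w)<2\zeta\}$ with $|G_{\zeta}(w)|\le D\,(2\zeta-\Im(w))^{-2}$ and $|G_{\zeta}'(w)|\le K_{1}(2\zeta-\Im(w))^{-3}$ by (\ref{equ25}) and (\ref{firstcov}); representing the remainder $G_{\zeta}(z)-G_{\zeta}(\iota)-G_{\zeta}'(\iota)(z-\iota)$ as a Cauchy integral over a contour that encircles both $\iota$ and $z$ while staying uniformly inside $\{\Im(w)<2\zeta\}$, the factor $|z-\iota|^{2}$ is absorbed by $|w-\iota|^{2}$ on the portion of the contour near $z$, and after integrating in $\zeta$ one is left with a bound of the shape $O(|z|\log|z|)=o(|z|^{2})$. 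Keeping this contour inside the domain of holomorphy for every $\zeta$, and tracking the convergence of the resulting $\zeta$-integral, is where I expect the main difficulty to lie.

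Granting these estimates, $\phi^{\mathrm{reg}}(z)=o(|z|^{2})$ along every sequence $z\to\infty$ in $\overline{\hh}$, so $C_{\ast}\xi^{\mathrm{reg}}(z)\to 0$ by (\ref{endeq}); combined with the second paragraph (continuity on $\overline{\dd}\setminus\{1\}$, with boundary values matching the middle line of (\ref{extendedvf})) this shows that the field $\chi$ of (\ref{extendedvf}) is a well-defined continuous vector field on all of $\overline{\dd}$, which is the assertion.
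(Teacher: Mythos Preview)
Your treatment of continuity on $\overline{\dd}\setminus\{1\}$ is fine and matches the paper's (implicit) argument via Remark~\ref{wecanextend}. The real issue is the point $1=C(\infty)$, and here your proposal takes an unnecessarily hard route: the vertical/horizontal case split is not needed, and the Cauchy--integral contour argument you sketch for the horizontal case is both delicate (as you acknowledge) and avoidable.

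The paper's device is simply to split the $\zeta$--integral at the \emph{moving} height $h=|z_{n}|$, writing $\xi^{\mathrm{reg}}(z_{n})=\xi_{1}^{\mathrm{reg}}(z_{n})+\xi_{2}^{\mathrm{reg}}(z_{n})$ where $\xi_{1}^{\mathrm{reg}}$ carries the integrals over $[\,\Im(z_{n}),h\,]$ (resp.\ $[1,h]$) and $\xi_{2}^{\mathrm{reg}}$ the tail $[h,\infty)$. On the low part one \emph{does not} invoke the Taylor remainder at all: each of the three pieces $\xi_{h}(z_{n})$, $\xi_{h}(\iota)$, $\tfrac{\partial\xi_{h}}{\partial z}\big|_{z=\iota}\cdot(z_{n}-\iota)$ is bounded separately using only (\ref{equ26}) and (\ref{firstcov}), and each comes out $O(|z_{n}|)$ because the integrand is uniformly bounded (by $D/4$, resp.\ $K_{1}/8$) and the interval has length $\le h=|z_{n}|$. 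On the high part one uses the second--order remainder estimate (\ref{inequalities5}) exactly as in the proof of Theorem~\ref{thmglobharmvf}, which gives $|\xi_{2}^{\mathrm{reg}}(z_{n})|\le \tfrac{K_{2}}{4}\,|z_{n}-\iota|^{2}\cdot h^{-1}=O(|z_{n}|)$. Thus $|\xi^{\mathrm{reg}}(z_{n})|=O(|z_{n}|)$ uniformly in the direction of escape, and (\ref{endeq}) finishes. The point is that the moving split converts the troublesome $O(|z|^{2})$ coming from the Taylor remainder over a \emph{fixed} tail into $O(|z|^{2})/h=O(|z|)$, while the low part---where the Taylor remainder would be useless anyway---is handled by crude term--by--term bounds. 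This replaces your contour argument entirely.
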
 
Before we prove Theorem \ref{boundary}, we discuss the one and only 
disadvantage of Wolpert's formula (\ref{wolpertformula}) in the following remark:

\begin{remark}
 \normalfont
 Recall Wolpert's global solution $\xi$ (see (\ref{wolpertformula})) for the potential equation 
 (\ref{wolpertequation}). Given
 that $q=fdz^{2}$ is bounded in the hyperbolic metric $\textbf{g}_{\hh}$, i.e., 
$|f(z)| \leq \frac{D}{\Im(z)^{2}}$ where $D$ is a positive constant, 
$\xi$ extends to the real line $\rr$. This can be seen as follows: 
$f$ is not defined for $z$ such that $\Im(z)=0$. So the integral in (\ref{wolpertformula}) 
is an improper integral, so for $z$ such that $\Im(z)=0$, we define 
\begin{equation*}
 \label{estimatewol}
 \xi(z)= \lim_{\epsilon \rightarrow 0} \Bigg( 
\Bigg( \overline{\int_{w}^{z+\iota \epsilon} 
(\overline{z+\iota \epsilon}-\zeta)^{2} f(\zeta) d\zeta} 
\Bigg) \eta(z) \Bigg).
\end{equation*}
The above limit exists, as can be seen by taking $w$ to be $\iota$ and 
using 
$|f(z)| \leq \frac{D}{\Im(z)^{2}}$. We have no reason to believe that $\xi$
extends to the point $\{\infty \}$ in the boundary $\rr \cup \{\infty \}$. Here 
is an argument: for the sake of convenience, we choose the line segment 
from 
$w=\iota$ to $z=c\iota$
as the 
path of integration in the expression of $\xi$, where $c>1$ is a positive real number. Then,
$$\xi(c \iota) = \overline{\int_{\iota}^{c\iota} (\overline{c\iota}-\zeta)^{2} 
f(\zeta) d \zeta}.$$
Therefore, 
\begin{equation*}
 \begin{split}
  |\xi(c \iota)| & \leq D\int_{\iota}^{c\iota} 
\frac{|\overline{c\iota}-\zeta|^{2}}{\Im(\zeta)^{2}} d\zeta \\
& = D\int_{\iota}^{c\iota} 
\frac{|c\iota-\zeta|^{2}}{\Im(\zeta)^{2}} d\zeta \\
& \leq D \cdot |c\iota - \iota| \cdot \mathrm{max}_{\zeta} \frac{|c\iota-\zeta|^{2}}{\Im(\zeta)^{2}}, 
 \end{split}
\end{equation*}
where $\zeta$ is varying on the line segment from $\iota$ to $c\iota$. From 
the above estimate, it is clear that $\xi$ is $O(|z|^{3})$ at 
the 
point $\{ \infty \}$ in the boundary $\rr \cup \{\infty \}$.
\end{remark}
\bigskip
\paragraph{\textit{Proof of Theorem \ref{boundary}:}}
 Recall Remark \ref{wecanextend}. For $z$ such that
$\Im(z)=0$, 
the definition of $\xi^{\mathrm{reg}}$ makes perfectly good sense because the convergence 
of the improper integral in the expression of $\xi^{\mathrm{reg}}$ for $z$ such that 
$\Im(z)=0$ follows from the conditions given in (\ref{equ26}), (\ref{equ27}), 
and (\ref{equ63}). 
Now, we claim that for a sequence $\{z_{n}\}$ of points in $\hh$ such 
that $|z_{n}| \rightarrow \infty$, 
where $|\cdot|$ denotes the absolute value 
\begin{equation}
 \label{approachinfinity}
 \lim_{|z_{n}| \rightarrow \infty} | C_{\ast}(\xi^{\mathrm{reg}}(z_{n})) | =0,
\end{equation}
where 
$|C_{\ast}(\xi^{\mathrm{reg}}(z))|$ denotes the 
length of the pushforward of $\xi^{\mathrm{reg}}(z)$ measured in the Euclidean 
metric. Using (\ref{endeq}), we rewrite (\ref{approachinfinity}) as follows
\begin{equation}
 \label{approachinfinity1}
 \lim_{|z_{n}| \rightarrow \infty} \frac{| \xi^{\mathrm{reg}}(z_{n})|}
 {|z_{n}|^{2}} =0.
\end{equation}
The main idea is to split the integral 
$\int_{0}^{c} \iota \zeta^{2} \overline{f(z_{n}+2 \iota \zeta)} d \zeta$
at height $h$ such that $h = |z_{n}|$ and 
estimate the resulting integrals in different ways. 
Using (\ref{equ71}), (\ref{equ72}), and (\ref{newxi}), our expression for $\xi^{\mathrm{reg}}(z_{n})$ takes the following form:
\begin{equation*}
\label{equ120}
 \xi^{\mathrm{reg}}(z_{n})  =  \underbrace{\xi_{h}(z_{n}) - \bigg( \xi_{h}(\iota) +
\frac{\partial \xi_{h}}{\partial z}\bigg|_{z=\iota} \cdot 
(z_{n}-\iota)\bigg) }_{\xi_{1}^{\mathrm{reg}}(z_{n})} 
                                      +  
                                      \underbrace{\lim_{c \to \infty} 
                                      \big( \xi_{h, c}(z_{n}) - \Psi_{h, c}(z_{n}) -K
\big)}_{\xi_{2}^{\mathrm{reg}}(z_{n})},
\end{equation*}
where
\begin{equation*}
\begin{split}
 \xi_{h}(z_{n}) = \bigg( \int_{0}^{h} \iota \zeta^{2} 
 \overline{f(z_{n}+2 \iota \zeta)} d \zeta \bigg) \eta(z_{n}), & \quad
 \xi_{h, c}(z_{n}) = \bigg( \int_{h}^{c} \iota \zeta^{2} 
 \overline{f(z_{n}+2 \iota \zeta)} d \zeta \bigg) \eta(z_{n}), \\  
 \xi_{h}(\iota) = \bigg( \int_{1}^{h} \iota \zeta^{2} 
 \overline{f(\bar{\iota}+2 \iota \zeta)} d \zeta \bigg) \eta(\iota), & \quad
 \frac{\partial \xi_{h}}{\partial z}\bigg|_{z=\iota} = \bigg( \int_{1}^{h} \iota \zeta^{2} 
 \bigg( \frac{\partial}{\partial z}\overline{f(z_{n}+2 \iota \zeta)} \bigg)\bigg|_{z=\iota}
 d \zeta \bigg) \eta(\iota),
 \end{split}
\end{equation*}
\begin{equation*}
 \Psi_{h, c}(z_{n}) = \Bigg( \int_{h}^{c} \iota \zeta^{2} \bigg( \overline{f(\bar{\iota}+2 \iota \zeta) } + \bigg( \frac{\partial}{\partial z}  \overline{f(z_{n}
                        +2 \iota \zeta) } \bigg) \bigg|_{z=\iota} \cdot (z_{n}-\iota) \bigg)d \zeta \Bigg) \eta(z_{n}).
\end{equation*}
Note that we have treated $h=|z_{n}|$ as a constant independent of $z_{n}$.  
Using (\ref{firstcov}), (\ref{secondcov1}), and (\ref{equ26}), 
each individual term - $\xi_{h}(z_{n})$, $\xi_{h}(\iota)$ and 
$\frac{\partial \xi_{h}}{\partial z}|_{z=\iota} \cdot (z_{n}-\iota)$ - in
the expression of $\xi_{1}^{\mathrm{reg}}(z_{n})$ satisfies the 
following inequalities when estimated in the Poincare metric $\textbf{g}_{\hh}$:
\begin{equation*}
\label{equ129}
\begin{aligned}
 \arrowvert \xi_{h}(z_{n}) \arrowvert & \leq \frac{D}{4} |z_{n}|, \\
 \arrowvert \xi_{h}(\iota) \arrowvert & \leq \frac{D}{4} |z_{n}|, \\
 \bigg \arrowvert \frac{\partial \xi_{h}}{\partial z}\big|_{z=\iota} \cdot (z_{n}-\iota)  \bigg 
 \arrowvert &  \leq  \frac{K_{1}}{8} |z_{n}|.
\end{aligned}
\end{equation*}
At this point (\ref{endeq}) comes in handy and show us 
immediately that $C_{\ast}(\xi_{1}^{\mathrm{reg}}(z_{n})) \rightarrow 0$ 
as 
$|z_{n}| \rightarrow \infty$. 
From 
the estimate given in (\ref{inequalities5}) in the proof of Theorem \ref{thmglobharmvf} 
we have 
$C_{\ast}(\xi_{2}^{\mathrm{reg}}(z_{n})) \rightarrow 0$ as $|z_{n}| \rightarrow \infty$. 
\hfill \qedsymbol

\section{Going from the analytic description to the cohomological description}
\label{analtocohom}
\subsection{Vector fields on $\dd$ and $\ss^{1}$}
\label{vectorfieldsonthecircle} 
We will denote the Hilbert space of measurable functions 
$f$ on $\ss^{1}$ such that 
$$\int_{\ss^{1}} | f(x)|^{2} dx < + \infty $$
modulo the equivalence relation of almost-everywhere equality by $L^{2}(\ss^{1})$.
We are not going to prove the 
completeness of $L^{2}(\ss^{1})$. The main idea to prove completeness of 
$L^{2}(\ss^{1})$ is that 
a Cauchy sequence of 
$L^{2}$-functions has 
a subsequence that converges pointwise off a set of measure $0$. 
There is a different definition of $L^{2}(\ss^{1})$, namely
the completion of 
$C^{0}(\ss^{1})$, 
the space of continuous $\cc$-valued functions on $\ss^{1}$, with respect to the norm 
\begin{equation}
\label{l2norm}
 \lVert f \rVert:= \frac{1}{\sqrt{2 \pi}} \bigg(\int_{\ss^{1}} |f(z)|^{2} dz \bigg)^{1/2}
\end{equation}

The Fourier basis elements 
are the exponential functions $\psi_{k}(z):=z^{k}$ for $z \in \ss^{1}$.  
The exponential functions $\{\psi_{k}|k \in \zz\}$ form an 
orthonormal set in $L^{2}(\ss^{1})$. 
But it's not clear immediately that they form an orthonormal Hilbert basis (see \cite{beals}). 
From orthonormality, 
the \textit{Fourier coefficients} $a_{k} \in \cc$ of $f$ are the 
inner products 
\begin{equation*}
 a_{k}= \langle f, \psi_{k} \rangle = \frac{1}{2 \pi}\int_{\ss^{1}} 
f(z) \overline{\psi_{k}(z)} dz.
\end{equation*}

The Fourier expansion of $f \in L^{2}(\ss^{1})$ is 
$$f(z)= \sum_{k \in \zz} a_{k} \psi_{k}(z)$$
where the equality means convergence of the partial sums to $f$ in the $L^{2}$-norm, 
or 
$$\lim_{N \rightarrow \infty} \frac{1}{\sqrt{2 \pi}} \int_{\ss^{1}} \bigg|\sum_{k=-N}^{N} a_{k} \psi_{k}(z)-f(z)\bigg|^{2} dz=0.$$

The convenient algebraic property of $\psi_{k}$ is that the basis is 
multiplicative. And multiplication of functions corresponds to the 
\textit{convolution of Fourier series;} this is actually obvious in our context since 
\begin{equation}
 \label{convolution}
 \psi_{k} \cdot \psi_{l} = \psi_{k+l}.
\end{equation} 

From now on we will denote $L^{2}(\ss^{1})$ by 
$\mathcal{H}$.
There is 
an orthogonal sum splitting $\mathcal{H}= \mathcal{H}^{1} \oplus \mathcal{H}^{2}$ 
where $ \mathcal{H}^{1}$ is the closure of the span of $\{\psi_{k} | k <0 \}$ 
and consequently, 
$ \mathcal{H}^{2}$ is the closure of the span of $\{\psi_{k} | k \geq 0 \}$. 
An element of 
$ \mathcal{H}^{2}$, say 
$$f:= \sum_{k \geq 0} a_{k} f_{k}$$
has a canonical extension to a function (in the $L^{2}$-sense) defined on the unit 
disk $\dd$ in $\cc$ by the 
formula 
$$ z \longmapsto \sum_{k \geq 0} a_{k} z^{k}.$$ 

This is in fact a convergent power series in the open unit disk $\dd$, so defines 
a holomorphic 
function on the open unit disk $\dd$ in $\cc$. So we should see $ \mathcal{H}^{2}$ 
as the linear subspace 
of $\mathcal{H}$ consisting of those $L^{2}$-functions on $\ss^{1}$ which extend 
holomorphically to the 
open unit disk $\dd$ in $\cc$. Equivalently, think of $ \mathcal{H}^{2}$ as the 
complex vector space of 
$L^{2}$-vector fields on $\ss^{1}$ 
which extend holomorphically to the 
open unit disk, i.e.
$$\mathcal{H}^{2}= \{X: \ss^{1} \longrightarrow \rr^{2} | X \hspace{1pt} \mathrm{is} 
\hspace{1pt} L^{2}, 
X(z) \in T_{z} 
\rr^{2} \cong \rr^{2} \cong \cc, \forall z \in \ss^{1}\},$$
where the norm on $X$ is taken in the sense of (\ref{l2norm}). 

\begin{remark}
\label{l2cont}
\normalfont
A smooth or continuous vector field $X$ on the open unit disk $\dd$
has an \textit{$L^{2}$-extension} to the closed disk $\overline{\dd}$ if the 
following holds:
for every $\epsilon > 0$, 
we get a continuous vector field $X_{\epsilon}$ on $\ss^{1}_{1-\epsilon}$, 
a circle of radius $1-\epsilon$ (which can be identified
canonically with $\ss^{1}$ by streching), by restricting $X$ to 
$\ss^{1}_{1-\epsilon}$. 
Now, letting $\epsilon \rightarrow 0$, we get 
a sequence $\{X_{\epsilon}\}$ in the Hilbert space of $L^{2}$-vector fields on the boundary circle 
$\ss^{1}$. 
And if $\{X_{\epsilon}\}$ converges to 
a $L^{2}$-vector field on the boundary circle $\ss^{1}$, then $X$ has an $L^{2}$-
extension to the closed disk $\overline{\dd}$. 
 \end{remark}
\begin{defn}
\label{defnoftan}
 \normalfont
 A vector field on $\ss^{1}$ with values in $\rr^{2}$ or $\cc$ is called \textit{tangential} if 
 it makes $\ss^{1}$ flows into itself. 
\end{defn}
We denote the space of 
 tangential vector fields on $\ss^{1}$ by $\mathfrak{X}_{\textrm{tangential}}(\ss^{1})$. 
 It is a real vector space. 
 To get more insight, consider the following 
 example: 
 \begin{example}
 \label{examoftan}
  \normalfont
  Consider the following complex-valued vector field on $\ss^{1}$: 
  $$X(x, y)= -y \frac{\partial}{\partial x}+x \frac{\partial}{\partial y}.$$
  In complex coordinates, we express $X$ as $X(z) = \iota z$. 
  It is clear that $X$ is a tangential vector field on $\ss^{1}$ since 
  $$\sigma(t, (x,y)) = 
  (x \cos t- y \sin t, x \sin t+y \cos t)$$ is a flow generated by $X$ and the flow through $(x, y)$ is 
  a circle whose centre is at origin. Clearly, $\sigma(t, (x,y))=(x, y)$ 
  if $t=2n \pi, n \in \zz$. 
 See L.H.S of Figure \ref{pic26}.
  \begin{figure}[h]
\centering
 \includegraphics[height=5cm]{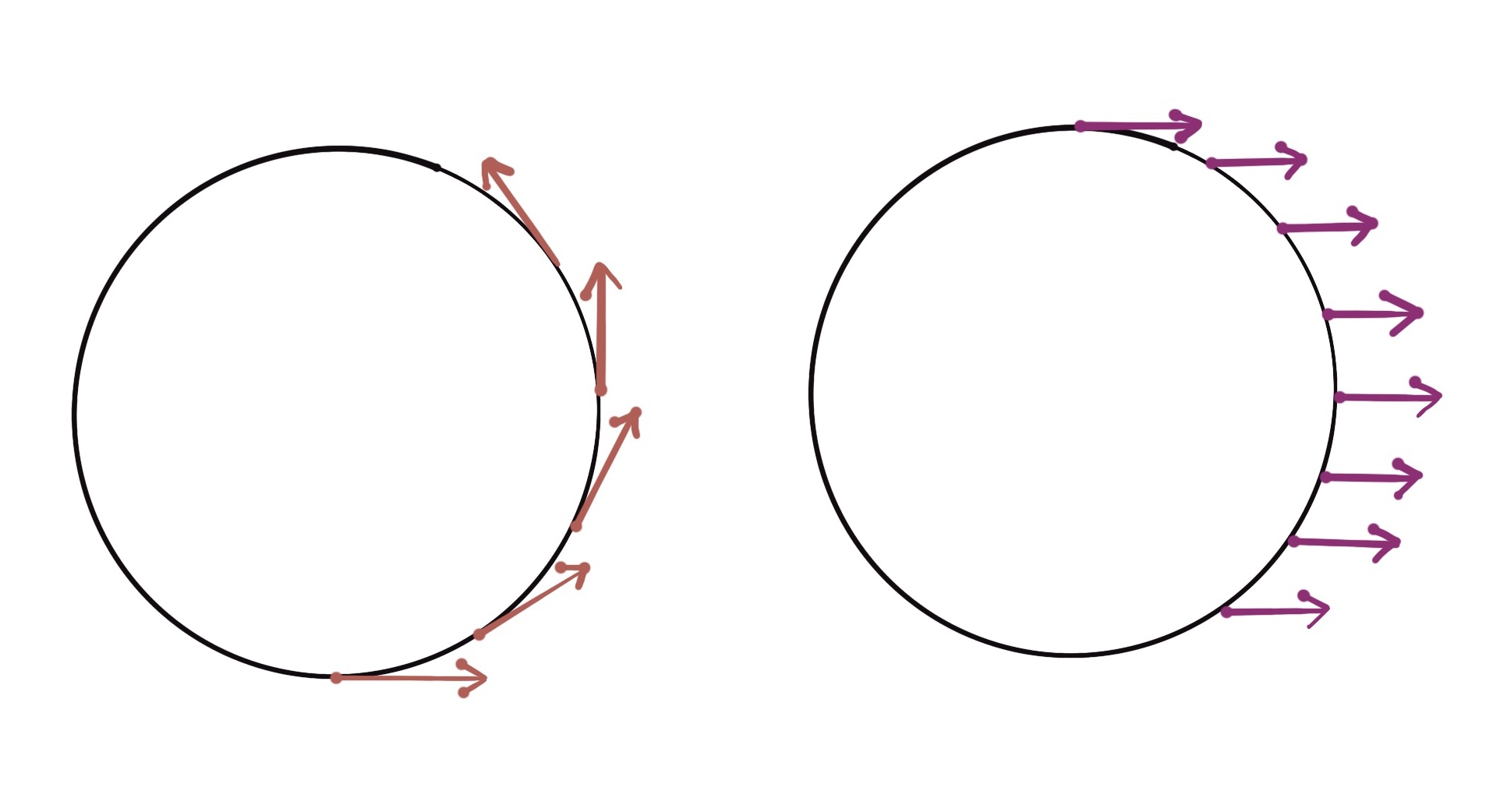}
 \caption{An example of a tangential vector field on $\ss^{1}$}
 \label{pic26}
\end{figure}
 \end{example}
Note that the above example is only one solution of tangential vector fields on $\ss^{1}$. 
But we 
get all other solutions by multiplying $X$ in Example \ref{examoftan} with any real valued function on $\ss^{1}$. Note 
that vector fields can be multiplied with functions. For simplicity, 
we think of 
multiplication of $L^{2}$-vector fields on $\ss^{1}$ with real valued functions on $\ss^{1}$
as multiplication of functions with 
functions. 
\smallskip

Recall that we have expressed an $L^{2}$-function $f$ on $\ss^{1}$ with values 
in $\cc$
as $\sum_{k \in \zz}a_{k} \psi_{k}$. It's a routine exercise in 
Fourier analysis to show that $f$ is real 
valued iff $a_{k}= \overline{a_{-k}}$ for all $k$. Therefore the 
corresponding (real) Fourier 
expansion of $f$ is 
$$f(x)= \frac{1}{2}a'_{0} + \sum_{k=1}^{\infty} a'_{k} \cos(kx) + b'_{k} \sin(kx),$$
where $a'_{k}=a_{k}+a_{-k}$ and $b'_{k}= \iota(a_{k}-a_{-k})$. 
So, the real-valued functions 
$$\{1, \cos(kx), \sin(kx) | k=1, 2, 3, \ldots  \}$$
also form an orthogonal basis of the space $\mathcal{H}$, since 
$$\cos(kx)= \frac{\exp(\iota k x) +  \exp(-\iota k x)}{2} 
= \frac{z^{k}+z^{-k}}{2},$$
$$\sin(kx)= \frac{\exp(\iota k x) - \exp(-\iota k x)}{2 \iota} 
= \frac{z^{k}-z^{-k}}{2 \iota}.$$

Using (\ref{convolution}), i.e., the fact that the Fourier transform of the product 
of functions is the convolution of 
the Fourier transforms, we have the following real Hilbert basis of 
$\mathfrak{X}_{\textrm{tangential}}(\ss^{1})$:
\begin{equation}
\label{realbasis}
 \bigg \{\iota z, \frac{\iota z^{1+k}+\iota z^{1-k}}{2}, \frac{z^{1+k}-z^{1-k}}{2} 
\bigg| k=1, 2, 3, \ldots \bigg \}.
\end{equation}

Also, Killing vector fields on $\dd$ are the infinitesimal 
generators of isometries of 
 $\dd$, hence Killing vector fields on $\dd$ are tangential 
 vector fields on $\ss^{1}$. 
We will denote the three dimensional real vector space whose elements are Killing 
vector fields  
 on $\ss^{1}$ 
by $\mathfrak{X}_{\textrm{Killing}}(\ss^{1})$. 
 \begin{theorem}
\label{infinitedimproblem}
We have
 \begin{enumerate}
  \item $\mathfrak{X}_{\textrm{tangential}}(\ss^{1}) \cap  \mathcal{H}^{2} = 
 \mathfrak{X}_{\textrm{Killing}}(\ss^{1}).$
 \item $\mathfrak{X}_{\textrm{tangential}}(\ss^{1}) +  \mathcal{H}^{2}$ is the 
 vector space of 
 all $L^{2}$-vector fields on $\ss^{1}$.
 \end{enumerate}
\end{theorem}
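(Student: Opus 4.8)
The plan is to translate all three spaces into conditions on Fourier coefficients and then argue by bookkeeping of indices. Writing an $L^{2}$-vector field on $\ss^{1}$ as $X(z)=\sum_{n\in\zz}c_{n}z^{n}$ with $\sum_{n}|c_{n}|^{2}<\infty$, the subspace $\mathcal{H}^{2}$ is by definition $\{X:\ c_{n}=0 \text{ for } n<0\}$. For the tangential fields, recall that $X$ is tangential exactly when $X(z)=f(z)\,\iota z$ for a real-valued $f\in L^{2}(\ss^{1})$; since $\bar{z}=z^{-1}$ on $\ss^{1}$, reality of $f$ is equivalent to the pointwise identity $X(z)+z^{2}\,\overline{X(z)}=0$, which in Fourier coefficients reads $c_{n}=-\overline{c_{2-n}}$ for every $n$. (Equivalently, $\mathfrak{X}_{\textrm{tangential}}(\ss^{1})$ is the fixed subspace of the real-linear involution $X\mapsto -z^{2}\overline{X}$ on $\mathcal{H}$; this is precisely the index pairing visible in the real Hilbert basis (\ref{realbasis}).) Finally, the Killing fields on $\dd$, being the infinitesimal generators of $\mathrm{PSU}(1,1)$ acting on $\dd$, are the holomorphic polynomial vector fields $v(z)=c_{0}+c_{1}z+c_{2}z^{2}$ with $c_{1}\in\iota\rr$ and $c_{2}=-\overline{c_{0}}$, a $3$-real-dimensional space. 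I would begin the write-up by recording these three descriptions, the middle one being the only nontrivial point.

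For part (1), I would take $X=\sum_{n}c_{n}z^{n}$ in $\mathfrak{X}_{\textrm{tangential}}(\ss^{1})\cap\mathcal{H}^{2}$ and note that for $n\geq 3$ one has $2-n<0$, hence $c_{2-n}=0$ (membership in $\mathcal{H}^{2}$), hence $c_{n}=-\overline{c_{2-n}}=0$. Thus $X=c_{0}+c_{1}z+c_{2}z^{2}$, and the tangential relations at $n=1$ and $n=0$ force $c_{1}\in\iota\rr$ and $c_{2}=-\overline{c_{0}}$ (the relation at $n=2$ being then automatic), so $X\in\mathfrak{X}_{\textrm{Killing}}(\ss^{1})$. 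Conversely a Killing field is a quadratic polynomial, hence lies in $\mathcal{H}^{2}$, and it is tangential because isometries of $\dd$ map $\ss^{1}$ to itself (or directly, because its coefficients satisfy $c_{n}=-\overline{c_{2-n}}$ for all $n$). This gives the first equality.

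For part (2), given an arbitrary $X=\sum_{n}c_{n}z^{n}\in L^{2}$ I would exhibit the splitting explicitly: set $T=\sum_{n}t_{n}z^{n}$ with $t_{n}=c_{n}$ for $n<0$, $t_{n}=-\overline{c_{2-n}}$ for $n>2$, and $t_{0}=t_{1}=t_{2}=0$. A routine check of the relations $t_{n}=-\overline{t_{2-n}}$ in the three ranges $n<0$, $n>2$, $n\in\{0,1,2\}$ shows $T$ is tangential, and $\lVert T\rVert^{2}=2\sum_{n<0}|c_{n}|^{2}<\infty$, so $T\in\mathfrak{X}_{\textrm{tangential}}(\ss^{1})$. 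Putting $H=X-T$, one has $h_{n}=0$ for $n<0$ and $\sum_{n>2}|h_{n}|^{2}=\sum_{n>2}|c_{n}+\overline{c_{2-n}}|^{2}\leq 2\sum_{n>2}\bigl(|c_{n}|^{2}+|c_{2-n}|^{2}\bigr)<\infty$, so $H\in\mathcal{H}^{2}$; hence $X=T+H$ lies in $\mathfrak{X}_{\textrm{tangential}}(\ss^{1})+\mathcal{H}^{2}$.

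I do not anticipate a real obstacle: once the Fourier characterization of tangentiality is established, both parts are index bookkeeping, and together they exhibit the Hilbert space of all $L^{2}$-vector fields on $\ss^{1}$ as the (non-direct) sum of the two closed subspaces $\mathfrak{X}_{\textrm{tangential}}(\ss^{1})$ and $\mathcal{H}^{2}$, overlapping in the $3$-dimensional space $\mathfrak{X}_{\textrm{Killing}}(\ss^{1})$. The only step needing slight care is the elementary Fourier lemma that the closed span of the basis (\ref{realbasis}) is exactly the space of $L^{2}$-fields of the form $f\cdot\iota z$ with $f$ real, i.e. the reformulation of reality of $f$ as the symmetry $a_{-k}=\overline{a_{k}}$ of its coefficients.
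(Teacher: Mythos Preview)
Your proposal is correct and follows essentially the same strategy as the paper: both reduce the problem to bookkeeping of Fourier coefficients. The only difference is packaging---the paper works with the real Hilbert bases $\{z^{k},\iota z^{k}\}$ and the explicit tangential basis (\ref{realbasis}) and compares coefficients between the two expansions, whereas you encode tangentiality once and for all as the involution symmetry $c_{n}=-\overline{c_{2-n}}$ (equivalently, $X=-z^{2}\overline{X}$), which makes the index chase in both parts a bit cleaner. Your explicit splitting $X=T+H$ in part~(2) is likewise the same construction as the paper's, just written in complex rather than real coordinates.
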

\begin{proof}
 \normalfont
 (1). As any complex vector space has an underlying real vector space so the 
real Hilbert basis of the 
space $\mathcal{H}^{2}$ is given as 
$$\{z^{k}, \iota z^{k} | k \geq 0 \}.$$
The basis for $\mathfrak{X}_{\textrm{tangential}}(\ss^{1})$ is given by 
(\ref{realbasis}). 
Assume $X \in \mathfrak{X}_{\textrm{tangential}}(\ss^{1}) \cap  \mathcal{H}^{2}$. 
Then $X = \sum_{k \geq 0}a_{k} z^{k} + b_{k} i z^{k}$ and 
$X = a'_{0}\iota z + \sum_{k \geq 1} a'_{k} \frac{\iota z^{1+k}+\iota z^{1-k}}{2} + 
b'_{k} \frac{z^{1+k}-z^{1-k}}{2}$. Since 
$$\sum_{k \geq 0}a_{k} z^{k} + b_{k} i z^{k} =  a'_{0}\iota z + \sum_{k \geq 1} a'_{k} \frac{\iota z^{1+k}+\iota z^{1-k}}{2} + 
b'_{k} \frac{z^{1+k}-z^{1-k}}{2},$$
comparing the coefficients of $z^{k}$ and $\iota z^{k}$ in each expression, 
we obtain $a'_{0}=b_{1}$, 
$b_{2}=b_{0}=\frac{a'_{1}}{2}, a_{2}= \frac{b'_{1}}{2} = -a_{0}$, and all other 
coefficients are zero. Therefore, $X$ is a linear combination with real coefficients 
of 
$\iota z$, $\frac{z^{2}-1}{2}$, and $\frac{\iota z^{2}+\iota}{2}$ 
 Note that 
 $\iota z$, $\frac{z^{2}-1}{2}$, and $\frac{\iota z^{2}+\iota}{2}$ are linearly 
 independent. 
 Hence the vector space 
 $\mathfrak{X}_{\textrm{tangential}}(\ss^{1}) \cap  \mathcal{H}^{2}$ is a 
 $3$-dimensional space which is nothing but 
 $\mathfrak{X}_{\textrm{Killing}}(\ss^{1}).$ 
\bigskip
\newline
(2) A real Hilbert basis of the space of $L^{2}$-vector fields on $\ss^{1}$ is 
given by $\{z^{k}, \iota z^{k} | k \in \zz \}$. Then it is very easy to see that 
\begin{equation*}
\begin{aligned}
 X(z) - \Bigg( \Bigg( \sum_{k \in \{2, 3, \ldots\}} 
 b_{1-k}\big(\iota z^{1+k}+\iota z^{1-k}\big)\Bigg) - 
 \sum_{k \in \{2, 3, \ldots\}} a_{1-k}\big(z^{1+k}-z^{1-k}\big)
 \Bigg)\\
 \qquad = 
 a_{0}+b_{0}\iota + a_{1}z+b_{1} \iota z + a_{2} z^{2}+b_{2} \iota z^{2} + 
 (a_{3}+a_{-1})z^{3}+(b_{3}-b_{-1})\iota z^{3}+ \cdots,
\end{aligned}
\end{equation*}
where $X(z)= \sum_{k \in \zz} a_{k}z^{k} + b_{k} \iota z^{k}$, $z \in \ss^{1}$. 
Therefore, 
$X = X_{1} + X_{2}$, where $X_{1} \in \mathfrak{X}_{\textrm{tangential}}(\ss^{1})$ 
and $X_{2} \in \mathcal{H}^{2}$. 
\hfill \qedsymbol
\end{proof}
\bigskip
Before we state conclusions of this section we introduce 
some notions and conventions: 
\begin{enumerate}
 \item Let $\mathfrak{M}$ be a $\Gamma$-module, where 
 $\Gamma$ is a subgroup of 
 $\mathrm{PSU}(1, 1)$. A map 
 $c: \Gamma \longrightarrow \mathfrak{M}$ is called a cocycle
 if 
 \begin{equation*}
  \label{group1}
  c_{\gamma_{1} \circ \gamma_{2}}=  \gamma_{2}^{\ast}c_{\gamma_{1}} + c_{\gamma_{2}}, 
  \gamma_{1}, \gamma_{2} \in \Gamma,
 \end{equation*}
 $c_{\gamma}$ stands for $c(\gamma)$, $\ast$ denotes the action of $\Gamma$ on 
 $\mathfrak{M}$. If $m \in \mathfrak{M}$, its \textit{coboundary} 
 $\delta m$ is the cocycle
 \begin{equation}
  \label{group2}
  \gamma \longmapsto \gamma^{\ast}m - m, \gamma \in \Gamma. 
 \end{equation}
The first \textit{cohomology group} $H^{1}(\Gamma; \mathfrak{M})$ is the quotient 
$Z^{1}(\Gamma; \mathfrak{M}) / B^{1}(\Gamma; \mathfrak{M})$. 
\item The most important cases of $\mathfrak{M}$ from the viewpoint of this thesis 
are 
\begin{enumerate}
 \item $\mathcal{S}^{\infty}(T\dd)$, 
the vector space of smooth vector 
fields on $\dd$. $\Gamma$ acts on $\mathcal{S}^{\infty}(T\dd)$ in the following manner
\begin{equation}
\label{actiononvector}
  \gamma^{\ast}F = F(\gamma) \gamma'^{-1}, \quad \gamma \in \Gamma, F \in 
\mathcal{S}^{\infty}(T\dd).
\end{equation}
\item $\mathrm{HOL}$, the vector space of holomorphic vector fields on $\dd$. 
$\Gamma$
acts on $\mathrm{HOL}$ in the same manner as in (\ref{actiononvector}). 
\item $\mathfrak{g}$, the vector space of Killing vector fields on $\dd$. 
Note that we 
have already 
dealt with this case in \textbf{\cref{cohomo}} in
\textbf{\cref{tangentteichmuellerspace}}. 
\end{enumerate}
\item Note that 
\begin{equation*}
 \label{group3}
 \mathfrak{g} \subset \mathrm{HOL} \subset \mathcal{S}^{\infty}(T\dd).
\end{equation*}
\end{enumerate}
Recall \textbf{\cref{chapter3section2}} in \textbf{\cref{chapter3}}. 
Given a holomorphic 
quadratic differential $q$ on $\dd$ which satisfies boundedness conditions, namely, 
$q$ is bounded in the hyperbolic metric $\textbf{g}_{\dd}$ of $\dd$, and 
the first and the second covariant derivative of $q$ w.r.t the linear connection 
on $T^{\ast} \dd \otimes_{\cc} T^{\ast}\dd$ are bounded in $\textbf{g}_{\dd}$, 
we obtain a harmonic vector field $\chi$ on $\dd$ that extends continuously on 
the boundary circle $\ss^{1}$ such that 
$(\mathcal{L}_{\chi}\textbf{g}_{\dd})^{(2, 0)}= q$. 
Note that $\chi$ is not necessarily 
tangential to the boundary 
circle $\ss^{1}$. We will denote the restriction of $\chi$ to $\ss^{1}$ by
$\chi|_{\ss^{1}}$. Using Theorem \ref{infinitedimproblem} (2), we can write 
$\chi|_{\ss^{1}}$ as $\chi_{1} +\chi_{2}$, where $\chi_{1} 
\in \mathfrak{X}_{\textrm{tangential}}(\ss^{1})$ and $\chi_{2} \in \mathcal{H}^{2}$. 
Since $\chi$ is a harmonic vector field on $\dd$ whose associated holomorphic 
quadratic differential is $q$, then the holomorphic quadratic differential 
associated with the vector field $\chi_{1}$ is the same $q$. Because the  
holomorphic quadratic differential associated with $\chi_{2}$ is zero. 
Notice that
in the expression of $\chi_{1}=\chi-\chi_{2}$ we are working with 
the holomorphic extension of $\chi_{2}$ 
to the open unit disk $\dd$. 
Now, the coboundary of $\chi$, i.e.,
$$\delta \chi(\gamma) = \chi(\gamma) \gamma'^{-1} - \chi, \quad \forall \gamma \in \Gamma$$
is a cocycle with values in $\mathrm{HOL}$ because of the $\Gamma$-invariance of $q$. 
But our goal is to get a cocycle with values in 
$ \mathfrak{g}$, where $\mathfrak{g}$ is the Lie algebra 
of $\mathrm{Isom}^{+}(\dd)$. 
Using 
Theorem \ref{infinitedimproblem} (1), we can easily see that for every 
$\gamma \in \Gamma$,  
$\delta (\chi_{1})(\gamma) \in \mathfrak{X}_{\textrm{tangential}}(\ss^{1}) 
\cap  \mathcal{H}^{2}$ 
and therefore we get a cocycle in 
$\mathfrak{X}_{\textrm{Killing}}(\ss^{1}) \cong \mathfrak{g}$. 
We summarize our discussion as
\begin{theorem}
\label{summarytheo}
Given a holomorphic quadratic differential $q=fdz^{2}$ on 
the Poincar\'{e} disk $\dd$ which satisfies the following boundedness conditions:
\begin{enumerate}
 \item $q$ is bounded in the hyperbolic metric on $\dd$, i.e., 
 $$\lVert q \rVert_{\textbf{g}_{\dd}} \leq D, $$ 
 where $D$ is a positive real number. 
 \item The first and the second covariant derivative of $q$ w.r.t 
 the linear connection on $T^{\ast}\dd \otimes_{\cc} T^{\ast} \dd$ are bounded in 
 $\textbf{g}_{\dd}$. 
\end{enumerate}
Then there exists a harmonic vector field $\chi$ on $\dd$ 
which $L^{2}$-extends to the closed disk $\overline{\dd}$ such that 
$(\mathcal{L}_{\chi}\textbf{g}_{\dd})^{(2, 0)}= q$. 
Moreover, the restriction of that extension to the boundary circle $\ss^{1}$ 
is tangential and $\chi$ is unique upto the addition 
of holomorphic vector fields on $\dd$ which extend tangentially to the boundary 
circle $\ss^{1}$. From Theorem \ref{infinitedimproblem} (1), $\chi$ is 
unique upto the addition of the vector space 
$\mathfrak{g}$ of Killing vector fields on $\dd$. 
\end{theorem}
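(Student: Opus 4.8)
The plan is to assemble the statement from Theorem \ref{thmglobharmvf}, Theorem \ref{boundary} and Theorem \ref{infinitedimproblem}, transporting everything between $\hh$ and $\dd$ along the Cayley transform $C$ of (\ref{cayley}), which is a hyperbolic isometry. First I would pull $q$ back to the holomorphic quadratic differential $\tilde q = C^{\ast}q$ on $\hh$. Since the pointwise norm of a quadratic differential and the norms of its first and second covariant derivatives with respect to the hyperbolic metric are isometry–invariant, $\tilde q$ inherits the two boundedness hypotheses, so Theorem \ref{thmglobharmvf} produces a harmonic vector field $\xi^{\mathrm{reg}}$ on $\hh$ with $\beta(\xi^{\mathrm{reg}}) = \tilde q$, and Theorem \ref{boundary} shows that its pushforward $C_{\ast}\xi^{\mathrm{reg}}$ extends to a continuous vector field $\tilde\chi$ on $\overline{\dd}$, taking the value $0$ at the image of $\infty$. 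Because $C$ is an isometry, $\tilde\chi$ is harmonic on $\dd$ and, chasing the natural identification of quadratic differentials, $(\mathcal{L}_{\tilde\chi}\textbf{g}_{\dd})^{(2,0)} = q$. Continuity of $\tilde\chi$ on the compact set $\overline{\dd}$ also yields, in the sense of Remark \ref{l2cont}, an $L^{2}$-extension of $\tilde\chi$ with boundary value $\tilde\chi|_{\ss^{1}}$.

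Next I would correct the boundary value to make it tangential. As $\tilde\chi|_{\ss^{1}}$ is continuous, hence $L^{2}$, Theorem \ref{infinitedimproblem} (2) lets me write $\tilde\chi|_{\ss^{1}} = \chi_{1} + \chi_{2}$ with $\chi_{1} \in \mathfrak{X}_{\textrm{tangential}}(\ss^{1})$ and $\chi_{2} \in \mathcal{H}^{2}$. I then set $\chi := \tilde\chi - \chi_{2}$, where $\chi_{2}$ now denotes its canonical holomorphic extension to the open disk. A holomorphic vector field contributes nothing to the $(2,0)$-part of the Lie derivative (the computation in the proof of Corollary \ref{rem1}), so $(\mathcal{L}_{\chi}\textbf{g}_{\dd})^{(2,0)} = q$ remains holomorphic and $\chi$ is harmonic by Proposition \ref{thm2}. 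Moreover $\chi$ still $L^{2}$-extends to $\overline{\dd}$: $\tilde\chi$ does, and the holomorphic extension of $\chi_{2} = \sum_{k\geq 0} a_{k} z^{k} \in \mathcal{H}^{2}$ $L^{2}$-extends because its restrictions to the circles $\ss^{1}_{1-\epsilon}$ are the dilations $z\mapsto \chi_{2}(rz)$ with $\lVert \chi_{2}(r\,\cdot) - \chi_{2}\rVert^{2} = \sum_{k\geq 0} |a_{k}|^{2}(1-r^{k})^{2} \to 0$ as $r\to 1$. The boundary value of $\chi$ is $\tilde\chi|_{\ss^{1}} - \chi_{2}|_{\ss^{1}} = \chi_{1}$, which is tangential; this $\chi$ is the vector field asserted by the theorem.

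For uniqueness, let $\chi'$ be another harmonic vector field on $\dd$ with an $L^{2}$-extension whose boundary value is tangential and with $(\mathcal{L}_{\chi'}\textbf{g}_{\dd})^{(2,0)} = q$. Then $\eta := \chi - \chi'$ is harmonic with $(\mathcal{L}_{\eta}\textbf{g}_{\dd})^{(2,0)} = 0$, so by the formula in Proposition \ref{thm2} (transported to $\dd$ via $C$) we get $\partial\eta/\partial\bar z = 0$, i.e. $\eta$ is a holomorphic vector field on $\dd$. It $L^{2}$-extends (difference of two that do), so its boundary value lies in $\mathcal{H}^{2}$ and is tangential (difference of two tangential fields), hence lies in $\mathfrak{X}_{\textrm{tangential}}(\ss^{1}) \cap \mathcal{H}^{2} = \mathfrak{X}_{\textrm{Killing}}(\ss^{1})$ by Theorem \ref{infinitedimproblem} (1); since a holomorphic function on $\dd$ with $L^{2}$ boundary value is determined by that value, $\eta$ is itself a Killing vector field. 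Conversely, adding to $\chi$ any holomorphic vector field that extends tangentially to $\ss^{1}$ — equivalently, by Theorem \ref{infinitedimproblem} (1), any Killing field — preserves harmonicity, the $(2,0)$-part, and tangentiality of the boundary value. This yields both uniqueness statements.

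The genuinely substantial inputs are Theorems \ref{thmglobharmvf}--\ref{boundary} (existence of the regularized harmonic vector field and its continuity up to $\overline{\dd}$, including at the image of $\infty$) and the Fourier-analytic identity $\mathcal{H}^{2} \cap \mathfrak{X}_{\textrm{tangential}}(\ss^{1}) = \mathfrak{g}$ of Theorem \ref{infinitedimproblem} (1); once those are granted, the only mild obstacle is the bookkeeping that the splitting $\tilde\chi|_{\ss^{1}} = \chi_{1}+\chi_{2}$ and the subtraction of the holomorphic part are compatible both with harmonicity (so that $\chi$ still realizes $q$) and with the $L^{2}$-extension property, which is exactly what the paragraphs above check.
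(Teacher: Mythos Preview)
Your proposal is correct and follows essentially the same route as the paper: obtain the continuous-to-$\overline{\dd}$ harmonic vector field from Theorems \ref{thmglobharmvf} and \ref{boundary} (transported via the Cayley map), split its boundary value using Theorem \ref{infinitedimproblem} (2), subtract the holomorphic $\mathcal{H}^{2}$-component to make the boundary tangential, and deduce uniqueness from Theorem \ref{infinitedimproblem} (1). Your write-up is in fact more careful than the paper's discussion preceding Theorem \ref{summarytheo}, in that you explicitly verify the $L^{2}$-extension of the holomorphic correction $\chi_{2}$ and spell out why the difference $\chi-\chi'$ must be Killing.
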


\begin{coro}
\label{summarytheocor}
Let $\Gamma$ denote a subgroup of $\mathrm{Isom}^{+}(\dd)$ where $\mathrm{Isom}^{+}(\dd)$ is 
the group of orientation preserving 
isometries of $\dd$. 
 If $q=fdz^{2}$ and $\chi$ are related as in Theorem \ref{summarytheo} and if in 
 addition to (1) and (2) in Theorem \ref{summarytheo}, 
 $q$ is $\Gamma$-invariant, i.e., 
$$f(\gamma(z))\gamma'(z)^{2}=f(z), \quad \forall \gamma \in \Gamma, z \in \dd,$$
then $\delta \chi$ defined by 
$$ \gamma \longmapsto \chi(\gamma) \gamma'^{-1}-\chi, \quad \forall \gamma \in \Gamma$$
is a $1$-cocycle $c$ with coefficients in the $\Gamma$-module $\mathfrak{g}$ 
- the Lie algebra 
of $\mathrm{Isom}^{+}(\dd)$ and its cohomology class $[c]$ depends only on $q$. 
\end{coro}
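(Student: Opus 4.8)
The plan is to verify the cocycle identity directly and then to show independence of the cohomology class on the choices made, using Theorem \ref{summarytheo} as a black box. First I would fix $q = fdz^2$ satisfying (1) and (2), which is $\Gamma$-invariant. By Theorem \ref{summarytheo} there is a harmonic vector field $\chi$ on $\dd$ with $L^2$-boundary extension, with tangential boundary restriction, and with $(\mathcal{L}_\chi \textbf{g}_\dd)^{(2,0)} = q$. For $\gamma \in \Gamma$ set $c_\gamma := \chi(\gamma)\gamma'^{-1} - \chi = \gamma^\ast \chi - \chi$ in the notation of (\ref{actiononvector}) and (\ref{group2}), i.e.\ $c = \delta\chi$ where $\chi$ is viewed as a $0$-cochain with values in $\mathcal{S}^\infty(T\dd)$. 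The first task is to check that $c_\gamma$ actually lands in $\mathfrak{g}$. For this I would argue as in the paragraph preceding Theorem \ref{summarytheo}: since $q$ is $\Gamma$-invariant and $\beta(\gamma^\ast\chi) = \gamma^\ast(\beta \chi) = \gamma^\ast q = q = \beta\chi$ (naturality of the operator $\xi \mapsto (\mathcal{L}_\xi \textbf{g}_\dd)^{(2,0)}$ under pullback by isometries), the difference $c_\gamma = \gamma^\ast\chi - \chi$ has vanishing associated quadratic differential, hence $c_\gamma \in \mathrm{HOL}$ by Theorem \ref{maintheorem1} (the sheaf sequence). Moreover $\chi$ and $\gamma^\ast\chi$ both have $L^2$-boundary extensions which are tangential to $\ss^1$ (the tangentiality of $\gamma^\ast\chi$ follows because $\gamma$ preserves $\ss^1$ and acts on boundary vector fields), so $c_\gamma$ extends $L^2$ to $\overline{\dd}$ with tangential boundary value; by Theorem \ref{infinitedimproblem}(1), $\mathfrak{X}_{\textrm{tangential}}(\ss^1) \cap \mathcal{H}^2 = \mathfrak{X}_{\textrm{Killing}}(\ss^1) \cong \mathfrak{g}$, so $c_\gamma \in \mathfrak{g}$.

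Next I would verify the cocycle identity. This is purely formal: since $\delta\delta = 0$ for group cochains, $c = \delta\chi$ automatically satisfies $c_{\gamma_1\gamma_2} = \gamma_2^\ast c_{\gamma_1} + c_{\gamma_2}$ (equivalently $c_{\gamma_1\gamma_2} = \mathrm{Ad}(\rho_0(\gamma_1))c_{\gamma_2} + c_{\gamma_1}$ after matching conventions with (\ref{cocycle})); one only needs that the $\Gamma$-action on $\mathfrak{g} \subset \mathcal{S}^\infty(T\dd)$ given by (\ref{actiononvector}) agrees with the adjoint action, which is standard (the action on Killing fields by pullback under isometries \emph{is} $\mathrm{Ad}$ of the corresponding element of $\mathrm{Isom}^+(\dd)$). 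So $c \in Z^1(\Gamma;\mathfrak{g})$.

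Finally I would prove that $[c] \in H^1(\Gamma;\mathfrak{g})$ depends only on $q$. By the uniqueness clause of Theorem \ref{summarytheo}, any two harmonic vector fields $\chi, \chi'$ solving $(\mathcal{L}_\cdot \textbf{g}_\dd)^{(2,0)} = q$ with $L^2$-tangential boundary extension differ by an element $k \in \mathfrak{g}$, i.e.\ $\chi' = \chi + k$. Then $\delta\chi' - \delta\chi = \delta k$, and $\delta k$ is by definition a coboundary in $B^1(\Gamma;\mathfrak{g})$, so $[\delta\chi'] = [\delta\chi]$ in $H^1(\Gamma;\mathfrak{g})$. Hence $[c]$ is well-defined as a function of $q$ alone. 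I expect the main obstacle to be the bookkeeping in the first task — carefully justifying that $\gamma^\ast\chi$ really has a tangential $L^2$-boundary extension and that the operator $\xi \mapsto (\mathcal{L}_\xi\textbf{g}_\dd)^{(2,0)}$ is natural with respect to pullback by elements of $\Gamma$ — since everything else is either quoted from Theorem \ref{summarytheo} and Theorem \ref{infinitedimproblem} or is the formal nilpotence of the coboundary operator; matching the three slightly different cocycle sign conventions appearing in the paper ((\ref{cocycle}), (\ref{group2}), and the statement) also requires a little care but is routine.
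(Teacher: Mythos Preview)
Your proposal is correct and follows essentially the same route as the paper's proof: show $c_\gamma \in \mathfrak{g}$ by observing that $\gamma^\ast\chi$ is another harmonic solution for the same $\Gamma$-invariant $q$ with tangential $L^2$-boundary extension, so the difference is holomorphic with tangential boundary values and hence Killing by Theorem~\ref{infinitedimproblem}(1); the cocycle condition is the formal identity $\delta\delta=0$; and well-definedness of $[c]$ follows from the uniqueness-up-to-$\mathfrak{g}$ clause of Theorem~\ref{summarytheo}. Your write-up is in fact more explicit than the paper's, which compresses the first step into a single sentence invoking the uniqueness clause of Theorem~\ref{summarytheo} directly; the two points you flag as needing care (naturality of $\xi\mapsto(\mathcal{L}_\xi\textbf{g}_\dd)^{(2,0)}$ under isometries and the tangential $L^2$-extension of $\gamma^\ast\chi$) are exactly what the paper leaves implicit.
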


\begin{proof}
From Theorem \ref{summarytheo}, we know that $\chi$ is unique upto the addition of 
Killing vector fields on $\dd$, hence for every $\gamma \in \Gamma$, 
$\delta \chi(\gamma)$ is a holomorphic vector field which extends tangentially 
to the boundary circle $\ss^{1}$. Therefore, for every $\gamma \in \Gamma$, 
$\delta \chi(\gamma) \in \mathfrak{g}$. Recall that we have for 
every $\gamma \in \Gamma$, 
$c(\gamma) = \frac{\chi(\gamma)}{\gamma'} - \chi$.  Since $\chi$ is well-defined 
upto addition of a Killing vector field $X$ on $\dd$, it follows that 
$c$ is well defined upto addition of $\delta X$. Hence, the cohomology 
class $[c]$ of $c$ is well defined.
\hfill \qedsymbol
\end{proof}

\begin{remark}
 \normalfont
 In Corollary \ref{summarytheocor}, we view $\chi$ as a $0$-cochain with values 
 in the vector space of harmonic vector fields on $\dd$. 
\end{remark}

\begin{coro}
\label{onewaymap}
 Let $\Gamma$ in Corollary \ref{summarytheocor} be a discrete cocompact subgroup 
 of $\mathrm{Isom}^{+}(\dd)$. Then 
 we have an injective mapping 
 \begin{equation}
  \label{mainmap1}
  \begin{split}
   \varPhi: \mathrm{HQD}(\dd, \Gamma) & \longrightarrow H^{1}(\Gamma; \mathfrak{g}) \\
   q & \longmapsto [c],
  \end{split}
\end{equation}
where $\mathrm{HQD}(\dd, \Gamma)$ denotes the vector space of $\Gamma$-invariant holomorphic quadratic 
differentials on $\dd$ and $c= \delta \chi$. 
\end{coro}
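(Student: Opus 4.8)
The map $\varPhi$ is $\rr$-linear and well defined: Corollary~\ref{summarytheocor} already shows that $[\delta\chi]$ depends only on $q$, and since the assignment $q\mapsto\chi$ may be chosen linear modulo $\mathfrak{g}$ (the condition $\beta(\chi)=q$ is affine-linear in $\chi$ with linear part $\beta$, and $\chi$ is unique modulo $\mathfrak{g}$ by Theorem~\ref{summarytheo}), the composite $q\mapsto\delta\chi\mapsto[\delta\chi]$ is linear. Hence injectivity is exactly the statement $\ker\varPhi=0$, and the plan is to deduce $q=0$ from $\varPhi(q)=0$ by an energy / second-variation argument on the closed surface $\Sigma_g=\dd/\Gamma$.

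So suppose $q=f\,dz^2\in\mathrm{HQD}(\dd,\Gamma)$ satisfies $\varPhi(q)=0$, and let $\chi$ be the harmonic vector field attached to $q$ by Theorem~\ref{summarytheo}, so $\beta(\chi)=(\mathcal{L}_\chi\textbf{g}_{\dd})^{(2,0)}=q$ and $c:=\delta\chi=\delta X$ for some $X\in\mathfrak{g}$. First I would set $\psi:=\chi-X$. Then $\delta\psi=\delta\chi-\delta X=0$, i.e.\ $\psi$ is invariant under the action $\gamma^*F=F(\gamma)\gamma'^{-1}$ of \eqref{actiononvector}, which is precisely the deck-transformation law for a vector field; hence $\psi$ descends to a smooth vector field $V$ on $\Sigma_g$. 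Moreover $\psi$ is again a harmonic vector field: by Proposition~\ref{thm2} the harmonic condition is linear (it says the associated quadratic differential is holomorphic, and the zero differential is holomorphic), and $X$, being an infinitesimal isometry, has $\mathcal{L}_X\textbf{g}_{\dd}=0$; and by linearity of $\beta$, $\beta(\psi)=\beta(\chi)-\beta(X)=q$. Since both the hyperbolic metric and the harmonic-vector-field equation are $\mathrm{Isom}^+(\dd)$-equivariant, $V$ is a harmonic vector field on the closed surface $\Sigma_g$.

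Next I would run the energy argument. By Definition~\ref{defn12}(3) applied with base point the (harmonic) identity map $\mathrm{id}\colon(\Sigma_g,\textbf{g})\to(\Sigma_g,\textbf{g})$, the statement that $V$ is a harmonic vector field means exactly $\mathcal{J}(V)=0$, where $\mathcal{J}$ is the Jacobi operator of $\mathrm{id}$, i.e.\ the linearization at $\mathrm{id}$ of $\phi\mapsto\tau(\phi)$ — in local coordinates this is precisely the system \eqref{newharm2}, specialized in \eqref{equ9}--\eqref{equ10}. This $\mathcal{J}$ is self-adjoint and elliptic, and on the closed $\Sigma_g$ its quadratic form is $V\mapsto\int_{\Sigma_g}(|\nabla V|^2-\mathrm{Ric}(V,V))\,d\mu=\int_{\Sigma_g}(|\nabla V|^2+|V|^2)\,d\mu$, using $\mathrm{Ric}=-\textbf{g}$ because the Gauss curvature is $-1$. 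Pairing $\mathcal{J}(V)=0$ with $V$ and integrating over $\Sigma_g$ — with no boundary term, which is where cocompactness of $\Gamma$ is used, and used essentially — forces $\nabla V=0$ and $V\equiv0$. Hence $\psi\equiv0$ on $\dd$, so $q=\beta(\psi)=0$, proving $\ker\varPhi=0$ and therefore the injectivity of $\varPhi$. (Incidentally this shows that the \emph{only} $\Gamma$-invariant harmonic vector field on $\dd$ is $0$, a rigidity statement that in the Killing case also follows from triviality of the centralizer of $\Gamma$ in $\mathrm{Isom}^+(\dd)$ in the spirit of Remark~\ref{cyclic}, but which in general genuinely needs the Bochner estimate.)

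I expect the only substantive point to be the bridge used in the third paragraph: that Definition~\ref{defn12}(3) at $\phi_0=\mathrm{id}$ coincides with the Jacobi equation of the identity map, and that integrating the resulting self-adjoint operator against $V$ yields $\int(|\nabla V|^2-\mathrm{Ric}(V,V))$. Both are standard facts from the theory of harmonic maps, but they deserve a sentence each, since they are exactly what lets the strict negativity of the curvature do the work; the descent of $\psi$ and the linearity bookkeeping are routine.
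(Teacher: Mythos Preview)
Your argument is correct and tracks the paper's proof through the setup: the paper likewise replaces $\chi$ by $Y:=\chi-X$ with $X\in\mathfrak g$, observes that $Y$ is $\Gamma$-invariant and hence descends to a harmonic vector field on the closed surface $\dd/\Gamma$, and then concludes $q\equiv 0$.

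The only difference is in that last step. The paper invokes a black-box result of Dodson--P\'erez--V\'azquez-Abal (\cite{dodson}, Proposition~4.2): on a compact orientable surface without boundary, every harmonic vector field is conformal, hence its associated quadratic differential vanishes. You instead identify the harmonic-vector-field condition with the Jacobi equation for the identity map and run the second-variation/Bochner computation directly, using $\mathrm{Ric}=-\textbf g$ to get $\int(|\nabla V|^2+|V|^2)=0$, whence $V\equiv 0$. Your route is more self-contained and yields the slightly stronger conclusion $V=0$ (not just $q=0$), at the cost of using the negative curvature explicitly; the cited result applies in any metric but is opaque here. Either way the proof goes through, and your acknowledged ``bridge'' --- that Definition~\ref{defn12}(3) at $\phi_0=\mathrm{id}$ is exactly $\mathcal J(V)=0$ and that pairing with $V$ yields $\int(|\nabla V|^2-\mathrm{Ric}(V,V))$ --- is indeed standard from the harmonic-maps literature.
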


\begin{proof}
We assume that $\varPhi(q)=[c]=0$. Then, there exists an element $X \in \mathfrak{g}$ 
such that $c = \delta (X)$. By setting $Y = \chi - X$ we notice that the holomorphic 
quadratic differential associated to $Y$ is $q$, and $\delta Y =0$, i.e., 
$Y$ is invariant under the action of $\Gamma$. 
Therefore, $Y$ can be viewed as a harmonic vector field on the the surface 
$\dd / \Gamma$. From \cite[Proposition 4.2]{dodson}, on a two dimensional compact orientable Riemannian manifold 
without boundary, a harmonic vector field is a conformal vector field. 
Therefore, 
$q \equiv 0$. 
 \hfill \qedsymbol
\end{proof}

\section{Going from the cohomological description to the analytic description}
\label{cohomtoanal}
First we set some conventions.  
The group $\mathrm{SU}(1, 1)$ is the set of matrices
$$\mathrm{SU}(1,1) = \bigg \{ \begin{bmatrix}
                               a & b \\
                               \bar{b} & \bar{a}
                              \end{bmatrix} \in \mathrm{GL}(2, \cc) \big| 
                              |a|^{2}-|b|^{2}=1
 \bigg \},$$
 with group multiplication given by matrix multiplication. Note that the group 
 $\mathrm{SU}(1, 1)$ is isomorphic to the group $\mathrm{SL}(2, \rr)$ of $2 \times 2$ 
 real matrices with determinant $1$. We identify the circle group $\mathrm{SO}(2)$ with 
 the subgroup of $\mathrm{SU}(1, 1)$ given by 
 $$\mathrm{SO}(2) = \bigg \{ \begin{bmatrix}
                               \exp{(\iota \theta)} & 0 \\
                               0 & \exp{(-\iota \theta)}
                              \end{bmatrix} \bigg | \quad \theta \in [0, 2 \pi)
 \bigg \}. $$ 
 
 Recall that 
$\mathrm{Aut}(\dd)$, the 
orientation preserving isometries of the Poincar\'{e} disk $\dd$
with the hyperbolic metric $\textbf{g}_{\dd}$,  
is identified with 
$$\mathrm{PSU}(1, 1) = \mathrm{SU}(1, 1) / \{ \pm \mathrm{Id}\}$$
because every $\gamma \in \mathrm{PSU}(1,1)$ acts on $\dd$ by the following formula
\begin{equation*}
 \label{actiononthedisk}
 \gamma(z) = \frac{az+b}{\bar{b}z+\bar{a}}, \gamma = \begin{bmatrix}
                               a & b \\
                               \bar{b} & \bar{a}
                              \end{bmatrix}, 
 \quad |a|^{2}-|b|^{2}=1, \quad \forall z \in \dd.
\end{equation*}
\subsection{$\Gamma$-invariant partition of unity on $\dd$}
\label{cohomtoanalsection1}
Recall that a partition of unity subordinate to an open covering $\{ U_{i}\}$ of a manifold $M$ is a collection 
$\{\varphi_{i}\}$ of non-negative smooth functions such that 
\begin{enumerate}
 \item $\mathrm{supp}(\varphi_{i}) \subset U_{i}$. 
 \item Each $p \in M$ has a neighborhood that intersects with only finitely many $\mathrm{supp}(\varphi_{i})$. 
 \item $\sum \varphi_{i} =1$. 
\end{enumerate}

Let $\Gamma$ be a discrete cocompact subgroup of $\mathrm{PSU}(1, 1)$.
Below we give the existence of a $\Gamma$-invariant 
partition of unity on $\dd$. 
\begin{lemma}
\label{partition}
 There exists a smooth function $\varphi$ on $\dd$ such that
 \begin{enumerate}
  \item $0 \leq \varphi \leq 1$.
  \item For each $z \in \dd$, there is a neighborhood $U$ of $z$ and a finite subset 
  $S$ of $\Gamma$ such that $\varphi=0$ on $\gamma(U)$ for every $\gamma \in 
 \Gamma-S$.
 \item $\sum_{\gamma \in \Gamma} \varphi(\gamma(z))=1$ on $\dd$.
 \end{enumerate}
\end{lemma}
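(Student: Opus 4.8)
The plan is to construct $\varphi$ by starting from an ordinary bump function supported near a compact fundamental region and then normalizing by the $\Gamma$-orbit sum. First I would use the fact that $\Gamma$ acts freely, properly discontinuously, and cocompactly on $\dd$, so the quotient $\dd/\Gamma$ is a compact hyperbolic surface. Pick a point $z_0 \in \dd$ and let $p \colon \dd \to \dd/\Gamma$ be the projection. Cocompactness lets me choose finitely many open sets whose images cover $\dd/\Gamma$; pulling back and using proper discontinuity I can produce a single open set $W \subset \dd$ with compact closure such that $\Gamma \cdot \overline{W} = \dd$ and, crucially, $\{\gamma \in \Gamma : \gamma \overline{W} \cap \overline{W} \neq \emptyset\}$ is finite (this is the standard fact that a properly discontinuous action admits such sets — equivalently $\overline{W}$ meets only finitely many of its translates).

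Next I would pick a smooth function $\psi \colon \dd \to [0,1]$ with $\operatorname{supp}(\psi)$ compact, $\operatorname{supp}(\psi) \subseteq W$ would be too weak; instead I want $\psi > 0$ on a slightly smaller set $W_0$ whose translates still cover $\dd$, and $\operatorname{supp}(\psi) \subset W$. Then set
\begin{equation*}
\Phi(z) = \sum_{\gamma \in \Gamma} \psi(\gamma(z)), \qquad z \in \dd.
\end{equation*}
By the local finiteness property (for each $z$, only finitely many $\gamma$ have $\psi(\gamma(z)) \neq 0$, since $\gamma(z)$ must lie in $\operatorname{supp}(\psi) \subset W$ and the translates of $W$ are locally finite), this sum is locally finite, hence $\Phi$ is smooth. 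Because the translates of $W_0$ cover $\dd$, for every $z$ there is at least one $\gamma$ with $\psi(\gamma(z)) > 0$, so $\Phi(z) > 0$ everywhere. Moreover $\Phi$ is $\Gamma$-invariant: $\Phi(\gamma_0(z)) = \sum_\gamma \psi(\gamma\gamma_0(z)) = \Phi(z)$ by reindexing the sum. Now define $\varphi = \psi/\Phi$. This is smooth (quotient of smooth functions with nonvanishing denominator), satisfies $0 \le \varphi \le 1$ since $0 \le \psi \le \Phi$, and has support contained in $\operatorname{supp}(\psi) \subset W$, which gives property (2): for a given $z$, take a neighborhood $U$ of $z$ with $\overline{U}$ compact; then $\gamma(U) \cap \operatorname{supp}(\psi) = \emptyset$ for all but finitely many $\gamma$ by local finiteness of the translates of $W$, and on those $\gamma(U)$ we have $\varphi = 0$. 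Finally property (3):
\begin{equation*}
\sum_{\gamma \in \Gamma} \varphi(\gamma(z)) = \sum_{\gamma \in \Gamma} \frac{\psi(\gamma(z))}{\Phi(\gamma(z))} = \frac{1}{\Phi(z)}\sum_{\gamma \in \Gamma} \psi(\gamma(z)) = \frac{\Phi(z)}{\Phi(z)} = 1,
\end{equation*}
using $\Gamma$-invariance of $\Phi$ to pull it out of the sum.

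The main obstacle — really the only non-formal point — is establishing the existence of the set $W$ (equivalently $W_0 \subset W$) with compact closure, with $\Gamma$-translates of $W_0$ covering $\dd$, and with $\overline{W}$ meeting only finitely many of its own translates. I would derive this from cocompactness plus proper discontinuity: cover the compact quotient $\dd/\Gamma$ by finitely many evenly-covered coordinate balls, shrink slightly to get a relatively compact cover, lift one sheet of each to $\dd$, and take $W_0$ to be a union of such lifts arranged so that $p(W_0) = \dd/\Gamma$; enlarge to $W$ with $\operatorname{supp}(\psi) \subset W$ still relatively compact. Proper discontinuity then guarantees $\{\gamma : \gamma\overline{W}\cap\overline{W}\neq\emptyset\}$ is finite, and from this the local finiteness of the family $\{\gamma \overline{W}\}_{\gamma\in\Gamma}$ follows by a standard covering argument. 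Everything after that is the routine normalization computation sketched above.
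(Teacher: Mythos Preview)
Your argument is correct and complete; the normalization-by-orbit-sum construction is a standard and perfectly valid way to produce a $\Gamma$-invariant partition of unity on a properly discontinuous cocompact cover.

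The paper, however, takes a different (and slightly slicker) route: it starts on the compact quotient $\dd/\Gamma$, chooses a finite simply-connected open cover $\{U_i\}$ with a subordinate partition of unity $\{\alpha_i\}$, lifts each $\alpha_i$ to a \emph{single} chosen sheet $V_i \subset \pi^{-1}(U_i)$ (extended by zero elsewhere), and sets $\varphi = \sum_i \phi_i$. Because the $\alpha_i$ already sum to $1$ downstairs, the identity $\sum_\gamma \varphi(\gamma z) = \sum_i \alpha_i(\pi(z)) = 1$ is automatic and no normalization step is needed. Your approach trades this for a division by the $\Gamma$-invariant function $\Phi$; the price is having to verify $\Phi > 0$ and $\Gamma$-invariance, but the benefit is that you never need to invoke the existence of partitions of unity on the quotient surface---everything is built by hand from a single bump function upstairs. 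Both arguments ultimately rest on the same structural input (cocompactness plus proper discontinuity giving local finiteness of translates of a compact set), just packaged differently.
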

\begin{proof}
 We choose an open covering $\{U_{i}\}_{i \in I}$ of the closed 
 surface $\dd / \Gamma$ where 
 each $U_{i}$ is simply connected and a smooth partition of unity $\{\alpha_{i}\}$ 
 subordinate to the covering $\{U_{i}\}_{i \in I}$. 
 For each $U_{i}$, we choose a single component 
 $V_{i}$ of $\pi^{-1}(U_{i})$ where $\pi: \dd \longrightarrow \dd / \Gamma$ 
 is the projection map, and set 
 \begin{equation*}
 \phi_{i}(z) = \begin{cases}
             \alpha_{i}(\pi(z)), & \quad z \in V_{i} \\
             0, & \quad z \in \dd - V_{i}. 
            \end{cases}
 \end{equation*}
 Note that the mapping $\pi$ restricted to each component of $\pi^{-1}(U_{i})$ is a
 one-to-one covering. It's clear that $\phi_{i} \in C^{\infty}(\dd)$, and that 
 $\phi = \sum_{i} \phi_{i}(z), z \in \dd$ has the required properties. \hfill \qedsymbol 
 \end{proof}
 \begin{remark}
  \normalfont
 We suspect that Lemma \ref{partition} is a simpler version 
 of results on \textit{Kleinian groups} (see \cite{Kra}).
 \end{remark}
 \smallskip
 
 To go from the cohomological description of tangent spaces 
(to the Teichmueller space) 
to the analytic description which is given by the space of holomorphic 
quadratic differentials on 
$\Sigma_{g}$, we first construct a tangential vector field on the circle $\ss^{1}$ 
(recall \textbf{\cref{vectorfieldsonthecircle}} from \textbf{\cref{analtocohom}}) 
from a 
cocycle $c$ representing a cohomology class 
$[c] \in H^{1}(\Gamma; \mathfrak{g})$, where $\mathfrak{g}$ is the Lie algebra of 
the group of orientation preserving isometries of $\dd$. 
We use Lemma \ref{partition} to get the following: 
given any $[c] \in H^{1}(\Gamma; \mathfrak{g})$ 
we set 
$$\psi(z)= - \sum_{\gamma \in \Gamma} \varphi(\gamma(z)) 
c_{\gamma}(z), \quad z \in \dd.$$ 

\begin{lemma}[\cite{Kra}]
\label{continuousvector}
 $\psi$ is a $C^{\infty}$-vector field on $\dd$ such that for 
 $A \in \Gamma$, $z \in \dd$,
 \begin{equation}
 \label{actiononvector1}
  (A^{\ast}\psi)(z)-\psi(z)= c_{A}(z).
 \end{equation}
\end{lemma}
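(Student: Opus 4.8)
The plan is to verify the two assertions about $\psi(z) = -\sum_{\gamma \in \Gamma} \varphi(\gamma(z)) c_\gamma(z)$ separately: first that the sum defines a smooth vector field, and second that its coboundary recovers $c$. For smoothness, the key observation is that the sum is \emph{locally finite}. Fix $z_0 \in \dd$; by Lemma~\ref{partition}(2) there is a neighborhood $U$ of $z_0$ and a finite subset $S \subset \Gamma$ such that $\varphi$ vanishes on $\gamma(U)$ for all $\gamma \in \Gamma - S$, i.e.\ $\varphi(\gamma(z)) = 0$ for all $z \in U$ and $\gamma \notin S$. Hence on $U$ the sum $\psi(z) = -\sum_{\gamma \in S} \varphi(\gamma(z)) c_\gamma(z)$ is a finite sum of smooth vector fields (each $\varphi \circ \gamma$ is smooth since $\gamma$ is a holomorphic automorphism of $\dd$, and each $c_\gamma$ is a Killing field, hence smooth), so $\psi$ is $C^\infty$ on $U$; since $z_0$ was arbitrary, $\psi \in \mathcal{S}^\infty(T\dd)$.

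Next I would compute the coboundary. Here the ``$\ast$'' denotes the action of $\Gamma$ on vector fields given in (\ref{actiononvector}), namely $(A^\ast F)(z) = F(A(z))\, A'(z)^{-1}$, and the cocycle condition for $c$ with respect to this action is $c_{A \circ B} = B^\ast c_A + c_B$. The computation is:
\begin{equation*}
(A^\ast \psi)(z) = \psi(A(z))\, A'(z)^{-1} = -\sum_{\gamma \in \Gamma} \varphi(\gamma A(z))\, c_\gamma(A(z))\, A'(z)^{-1}.
\end{equation*}
Now $c_\gamma(A(z))\, A'(z)^{-1} = (A^\ast c_\gamma)(z) = c_{\gamma \circ A}(z) - c_A(z)$ by the cocycle identity. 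Substituting and reindexing the sum over $\gamma$ by $\gamma' = \gamma \circ A$ (which is a bijection of $\Gamma$, and note $\gamma' A^{-1}(A(z)) = \gamma A(z)$ so the argument of $\varphi$ becomes $\varphi(\gamma'(z))$ after the substitution $\gamma = \gamma' A^{-1}$ applied carefully), one gets
\begin{equation*}
(A^\ast \psi)(z) = -\sum_{\gamma' \in \Gamma} \varphi(\gamma'(z))\, c_{\gamma'}(z) + \Big(\sum_{\gamma \in \Gamma} \varphi(\gamma A(z))\Big) c_A(z) = \psi(z) + c_A(z),
\end{equation*}
where the last equality uses Lemma~\ref{partition}(3), $\sum_{\gamma} \varphi(\gamma(w)) = 1$, applied at $w = A(z)$. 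This yields $(A^\ast \psi)(z) - \psi(z) = c_A(z)$, which is exactly (\ref{actiononvector1}).

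The step requiring the most care is the reindexing in the coboundary computation: one must track the interplay between the $\Gamma$-action on $c$ (which shuffles the subscript) and the change of variable inside $\varphi$, and confirm that everything recombines so that the $\varphi$-weights sum to $1$ against the \emph{fixed} cocycle value $c_A(z)$. Because the sums are locally finite (by the argument above, now applied near $A(z)$), all rearrangements are legitimate. I do not anticipate a genuine obstacle here — this is the standard averaging trick for splitting a cocycle via a partition of unity (the same mechanism that shows $H^1$ with coefficients in a module admitting partitions of unity can be represented by smooth cochains) — but the bookkeeping with the particular action $\gamma^\ast F = F(\gamma)\gamma'^{-1}$ is where sign and composition-order errors tend to creep in, so I would write out the substitution $\gamma \mapsto \gamma \circ A$ explicitly rather than leaving it implicit.
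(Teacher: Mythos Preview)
Your proposal is correct and follows essentially the same route as the paper: expand $(A^{\ast}\psi)(z)$, apply the cocycle identity $c_{\gamma}(Az)A'(z)^{-1}=c_{\gamma\circ A}(z)-c_{A}(z)$, reindex $\gamma\mapsto\gamma\circ A$, and invoke $\sum_{\gamma}\varphi(\gamma(Az))=1$. The only difference is that you explicitly justify smoothness via local finiteness from Lemma~\ref{partition}(2), which the paper leaves implicit.
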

\begin{proof} Recall (\ref{actiononvector}). Consider the L.H.S of 
(\ref{actiononvector1}) in the Lemma, we have
 \begin{equation*}
  \begin{split}
  (A^{\ast}\psi)(z)-\psi(z) & = - \sum_{\gamma \in \Gamma} 
  \bigg( \varphi(\gamma (A z)) 
  c_{\gamma} (Az) A'(z)^{-1} - \varphi(\gamma(z)) c_{\gamma}(z)  \bigg) \\
              & = - \sum_{\gamma \in \Gamma} \bigg(\varphi(\gamma (A z)) 
              \bigg(c_{\gamma \circ A}(z)
              - c_{A}(z)\bigg) - \varphi(\gamma(z)) c_{\gamma}(z) \bigg) \\
              & = \sum_{\gamma \in \Gamma} \varphi(\gamma (Az)) c_{A}(z) = c_{A}(z). 
  \end{split}
\end{equation*}
The second equality in the above equation follows from the fact that $c$ is a 
cocycle. Therefore, 
$$\delta \psi=c.$$ 
\hfill \qedsymbol
\end{proof}
\begin{remark}
 \normalfont
Let 
$\mathcal{S}^{\infty}(T \dd)$ denote the vector space of $C^{\infty}$-vector 
fields on $\dd$. 
From Lemma \ref{continuousvector}, we have 
$H^{1}(\Gamma; \mathcal{S}^{\infty}(T \dd))= \{0\}$.

\end{remark}

\begin{coro}
\label{continuousvector1}
 If $\mathrm{HOL}$ is the vector space of holomorphic vector fields on $\dd$, 
 then for 
 every cocycle $c$ representing a cohomology class $[c] \in  
 H^{1}(\Gamma; \mathrm{HOL})$, there is a $\psi \in \mathcal{S}^{\infty}(T \dd)$ 
 such that 
 $$c= \delta \psi.$$
\end{coro}
\begin{proof}
 The injection of $\mathrm{HOL}$ into $\mathcal{S}^{\infty}(T \dd)$ induces a mapping 
 $$H^{1}(\Gamma; \mathrm{HOL}) \longrightarrow 
 H^{1}(\Gamma; \mathcal{S}^{\infty}(T \dd)).$$
 \hfill \qedsymbol
\end{proof}
\begin{remark}
\label{continuousvector2}
 \normalfont
Corollary \ref{continuousvector1} is true if we replace $\mathrm{HOL}$ 
by the vector space of Killing vector fields $\mathfrak{g}$ on $\dd$ because 
of $\mathfrak{g} \subset \mathrm{HOL} \subset \mathcal{S}^{\infty}(T\hh)$. 
\end{remark}
Let $c$ be a $1$-cocycle with values in the vector space 
$\mathfrak{g}$ of  
Killing vector fields on $\dd$. From \textbf{\cref{chapter3}} and  
\textbf{\cref{analtocohom}} we know that there exists a harmonic vector field $\chi$ 
with a tangential $L^{2}$-extension on the boundary circle $\ss^{1}$ such that 
$\delta \chi =c$.
From Lemma \ref{continuousvector}, Corollary \ref{continuousvector1}, and Remark 
\ref{continuousvector2}, we get another $0$-cochain $\psi$ in 
$\mathcal{S}^{\infty}(T \dd)$ such that 
$\delta \psi=c$. 
Therefore, $\chi-\psi$ is a $0$-cocycle in $\mathcal{S}^{\infty}(T \dd)$ and   
$\chi-\psi$ is invariant under the action of $\Gamma$, i.e.,
\begin{equation}
\label{newvectorfield}
\begin{split}
(\chi-\psi) & = \gamma^{\ast}(\chi-\psi) \\
& = \big((\chi-\psi)(\gamma)\big)\gamma'^{-1}, \quad  \forall \gamma \in \Gamma.
\end{split}
\end{equation}
Hence, $\chi-\psi$ is bounded in the hyperbolic metric on $\dd$.
\begin{coro}
\label{shortcoro}
 $\psi$ admits an $L^{2}$-extension to the closed unit disk $\overline{\dd}$ whose 
 restriction $\psi^{\sharp}$ to the boundary circle $\ss^{1}$ is tangential. 
\end{coro}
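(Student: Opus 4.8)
The plan is to exploit the decomposition $\psi = \chi - (\chi-\psi)$, where $\chi$ is the harmonic vector field furnished by \textbf{\cref{chapter3}} and \textbf{\cref{analtocohom}}: it satisfies $\delta\chi = c$, it $L^{2}$-extends to $\overline{\dd}$ in the sense of Remark \ref{l2cont}, and the restriction $\chi^{\sharp}$ of that extension to $\ss^{1}$ is tangential. (Concretely, $c$ is cohomologous to $\delta\chi_{0}$ for the harmonic field $\chi_{0}$ attached to the holomorphic quadratic differential $\varPhi^{-1}([c])$ — here one uses that $\varPhi$ is bijective, by $\dim_{\rr}\mathrm{HQD}(\dd,\Gamma)=\dim_{\rr}H^{1}(\Gamma;\mathfrak{g})=6g-6$ together with Corollary \ref{onewaymap} — and the coboundary correction is absorbed into a Killing field, which changes neither harmonicity nor tangentiality.) On the other hand, Lemma \ref{continuousvector}, Corollary \ref{continuousvector1} and Remark \ref{continuousvector2} give $\delta\psi=c$, so $\chi-\psi$ is a $0$-cocycle, i.e.\ $\Gamma$-invariant in the sense of (\ref{newvectorfield}); it therefore descends to a smooth vector field on the compact surface $\dd/\Gamma$, and since $\textbf{g}_{\dd}$ descends as well, the function $z\mapsto\lVert(\chi-\psi)(z)\rVert_{\textbf{g}_{\dd}}$ is the pullback of a continuous function on a compact space, hence bounded by some $M>0$.

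First I would prove the elementary fact that any smooth vector field $V$ on $\dd$ with $\lVert V(z)\rVert_{\textbf{g}_{\dd}}\le M$ admits an $L^{2}$-extension to $\overline{\dd}$ equal to the zero vector field on $\ss^{1}$. Indeed, the ratio of the hyperbolic to the Euclidean length on $\dd$ is $\tfrac{2}{1-|z|^{2}}$, so $|V(z)| = \tfrac{1-|z|^{2}}{2}\,\lVert V(z)\rVert_{\textbf{g}_{\dd}}\le\tfrac{M}{2}(1-|z|^{2})\to 0$ as $|z|\to 1$; thus $V$ extends continuously to $\overline{\dd}$ vanishing on $\ss^{1}$. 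For the $L^{2}$-extension of Remark \ref{l2cont}, restrict $V$ to the circle $\ss^{1}_{1-\epsilon}$ and transport it to $\ss^{1}$: the resulting field $V_{\epsilon}$ satisfies $|V_{\epsilon}|\le\tfrac{M}{2}(2\epsilon-\epsilon^{2})$ pointwise, so $\lVert V_{\epsilon}\rVert_{L^{2}(\ss^{1})}\to 0$, whence the $L^{2}$-boundary value of $V$ is the zero vector field, which is trivially tangential. Applying this to $V=\chi-\psi$ yields $(\chi-\psi)^{\sharp}=0$.

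It then remains only to assemble the pieces. Restriction to $\ss^{1}_{1-\epsilon}$ and the identification of $\ss^{1}_{1-\epsilon}$ with $\ss^{1}$ are linear, and $L^{2}$-limits are linear, so $\psi=\chi-(\chi-\psi)$ admits an $L^{2}$-extension to $\overline{\dd}$ with
\[
\psi^{\sharp}\;=\;\chi^{\sharp}-(\chi-\psi)^{\sharp}\;=\;\chi^{\sharp},
\]
which is tangential because $\chi^{\sharp}$ is. Uniqueness of the $L^{2}$-extension (as in the intro formulation, Corollary \ref{maincoro2}) is immediate, since the approximating sequence $\{\psi_{\epsilon}\}$ has at most one limit in the Hilbert space of $L^{2}$-vector fields on $\ss^{1}$.

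I do not anticipate a real obstacle at this stage: all the substance has been pushed into the prior existence of the harmonic field $\chi$ with a tangential $L^{2}$-extension (which relies on \textbf{\cref{chapter3}}--\textbf{\cref{analtocohom}} and, for surjectivity onto $H^{1}(\Gamma;\mathfrak{g})$, on the dimension count above). The only computation proper is the conformal-factor estimate $|V(z)|=\tfrac{1-|z|^{2}}{2}\lVert V(z)\rVert_{\textbf{g}_{\dd}}$; the conceptual content is merely that hyperbolic-boundedness forces Euclidean decay at the ideal boundary, so $\chi-\psi$ contributes nothing there and $\psi$ inherits the tangential $L^{2}$-trace of $\chi$.
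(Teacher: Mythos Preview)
Your proposal is correct and follows essentially the same route as the paper: the paper's argument preceding Corollary~\ref{shortcoro} is precisely to write $\psi=\chi-(\chi-\psi)$, observe via (\ref{newvectorfield}) that $\chi-\psi$ is $\Gamma$-invariant and hence bounded in $\textbf{g}_{\dd}$, and then infer the corollary. You have supplied the conformal-factor estimate $|V(z)|=\tfrac{1-|z|^{2}}{2}\lVert V(z)\rVert_{\textbf{g}_{\dd}}$ and the dimension-count justification for the existence of $\chi$ with $\delta\chi=c$ on the nose, both of which the paper leaves implicit, but the architecture is identical.
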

\begin{remark}
 \normalfont
 Note that in Corollary \ref{shortcoro} 
 such an extension is unique and it depends only on $c$, not on the choice of $\varphi$ in 
 Lemma \ref{partition}. 
\end{remark}

\subsection{The Poisson map adapted to vector fields}
\label{cohomtoanalsection2}
To get a vector field which is harmonic on the 
interior of $\dd$ from a tangential vector field on $\ss^{1}$, we first give the 
reincarnation of \textit{the Poisson integral formula} and then adapt it to the case 
of vector fields. Recall that the \textit{Dirichlet problem} asks for finding
a harmonic 
function $F$ on the disk $\dd$ given a continous function $f$ on the boundary circle 
$\ss^{1}$ such that they together make a continous function on the closed 
disk $\overline{\dd}$. 
The Poisson integral map is an important tool to solve the Dirichlet problem:
\begin{equation}
 \label{thepoissonmap}
 F(r e^{\iota \theta}) = \frac{1}{2 \pi} \int_{0}^{2 \pi}  f(e^{\iota \phi}) 
\frac{1-r^{2}}{1+r^{2}-2r \cos(\theta-\phi)} d \phi.
\end{equation}

The term $\frac{1-r^{2}}{1+r^{2}-2r \cos(\theta-\phi)}$ is called the 
\textit{Poisson Kernel} and denoted by $K$. When $z=r e^{\iota \theta}$ and 
$w=e^{\iota \phi}$, we have 
\begin{equation}
\label{poissonnewform}
 K(w, z) = 
\frac{|w|^{2}-|z|^{2}}{|w-z|^{2}} = \Re \bigg( \frac{w+z}{w-z}\bigg).
\end{equation}

Note that $K(w, z)$ is defined for $0 \leq |z| < |w| \leq 1$. 
We assume that $|w|=1$, then 
$$K(w, z)= 
\frac{1-|z|^{2}}{|1-z \bar{w}|^{2}},$$ 
since $|w-z|=|w\bar{w}-z\bar{w}|=|1-z \bar{w}|$. Therefore, 
$$\frac{1-|z|^{2}}{|1-z \bar{w}|^{2}} = 
\frac{1-z \bar{z}}{(1-z \bar{w})(1-\bar{z}w)} = 
\sum_{n=0}^{\infty} \bar{z}^{n} w^{n} + \sum_{n=1}^{\infty} z^{n} \bar{w}^{n}. $$
So, 
$$K(e^{\iota \phi}, r e^{\iota \theta})= \sum_{n=-\infty}^{\infty} r^{|n|} 
e^{\iota n(\phi-\theta)} = K_{r}(\phi-\theta).$$

It is obvious that $K$ is a positive function of $w$ and $z$. 
So, (\ref{thepoissonmap}) can also be written as 
$$F(re^{\iota \theta}) = \frac{1}{2 \pi} \int_{0}^{2 \pi} K_{r}(\theta-\phi) 
f(e^{\iota \phi}),$$
where $K_{r}(\theta-\phi) = K_{r}(\phi-\theta)$.  

\subsubsection{Reincarnation of the Poisson integral formula}
\label{invariancepoisson}
We denote the space of continuous functions on the circle $\ss^{1}$ 
by $C^{0}(\ss^{1})$ and 
the space of continuous functions on the 
open unit disk $\dd$ by $C^{0}(\dd)$. To construct and characterise the 
Poisson map 
$$P:C^{0}(\ss^{1}) \longrightarrow C^{0}(\dd) $$
given in 
(\ref{thepoissonmap}) which is continuous w.r.t to the 
topology of uniform convergence on both the source and the target space, we first 
observe that $P(f)(0)$ is nothing but the \textit{normalised Haar integral}\footnote{Let 
$G$ denote a locally compact group. The real vector space of the 
real valued continuous 
functions on $G$ with compact support is denoted by $C_{c}(G)$. The set of 
nonnegative 
functions in $C_{c}(G)$ is denoted by $C_{c}^{+}(G)$. 
A continuous linear functional 
$I: C_{c}(G) \longrightarrow \rr$ is called a \textit{Haar integral} if the following 
hold: 1) if $f \in C_{c}^{+}(G)$, then $I(f) \geq 0$, 2) if $g \in G$ and 
$f \in C_{c}(G)$, then $I(gf)=I(f)$, 3) 
there exists a function $f \in C_{c}^{+}(G)$ with $I(f) > 0$. Note that for 
$r >0$, $rI$ is again a Haar integral. For more information, see 
\cite{kap}, \cite{stro}.
} 
$$\frac{1}{2\pi}\int_{\ss^{1}} f.$$

By convention, integral of the constant function $1$ 
over $\ss^{1}$ is $2\pi$. 
Therefore, $P(f)(0)$ is linear, positive, 
continous, and invariant under the circle group. To obtain the expression for 
$P(f)(z)$, $z \in \dd$, we use the transitivity of the action of $\mathrm{PSU}(1, 1)$ 
on the open unit disk $\dd$, i.e., $P(f)(z)= P(f)(\gamma(0))$ for some 
$\gamma \in \mathrm{PSU}(1, 1)$ such that $\gamma(0)=z$. Moreover, 
\begin{equation}
\label{reincarnationpoisson}
 P(f)(z)= P(f)(\gamma(0)) = P(f \cdot \gamma)(0) = \frac{1}{2\pi} \int_{\ss^{1}} f \cdot \gamma,
\end{equation}

where the second equality follows from the fact that the Poisson map $P$ is
\textit{ 
$\mathrm{PSU}(1, 1)$-equivariant}, i.e., 
 $P(f \cdot \gamma) = P(f) \cdot \gamma$, 
 for all $\gamma \in \mathrm{PSU}(1, 1)$ and all $f \in  C^{0}(\ss^{1})$, where $\cdot$ 
 denotes the action of $\mathrm{PSU}(1, 1)$ on $C^{0}(\ss^{1})$ and 
 $C^{0}(\dd)$ by pre-composition. The condition can 
 also be understood as 
 the following commutative 
 diagram:
$$\xymatrix{
 C^{0}(\ss^{1}) \ar[r]^{P} \ar[d]^{\gamma \cdot} & C^{0}(\dd) \ar[d]^{\gamma \cdot} \\
 C^{0}(\ss^{1}) \ar[r]^{P} &
 C^{0}(\dd) 
 }$$
 
The $\mathrm{PSU}(1, 1)$-equivariance of the Poisson map follows from the
uniqueness of solutions to the Dirichlet problem for Laplace's equation, i.e., 
for a given $f \in C^{0}(\ss^{1})$, the Dirichlet problem 
for Laplace's equation 
\begin{equation*}
 \begin{split}
  \Delta F & = 0 \hspace{3pt} \text{on} \hspace{3pt} \dd \\
  F & = f \hspace{3pt} \text{on} \hspace{3pt} \ss^{1}
 \end{split}
\end{equation*}
has atmost one 
solution $F \in C^{2}(\dd) \cap C^{1}(\overline{\dd})$. Transforming 
$f \in C^{0}(\ss^{1})$ by an element $\gamma \in \mathrm{PSU}(1, 1)$ gives 
us a new harmonic extension $F_{1}$ of $f \cdot \gamma$ on $\dd$. 
From the weak maximum principle applied to the harmonic function $F \circ \gamma-F_{1}$, we 
have $F\circ \gamma-F_{1} \leq \mathrm{max}_{\ss^{1}} (F\circ \gamma-F_{1})=0$. 
Thus, $F\circ \gamma \leq F_{1}$ on 
$\dd$. Similarly, we get $F_{1} \leq F\circ \gamma$. Therefore, $F \circ \gamma$ and $F_{1}$ coincide. 
Note that the last equality in (\ref{reincarnationpoisson}) follows from the 
fact that $P(f)(0)$ is the Haar integral. $P(f)(z)$ is well-defined, i.e., it does not 
depend on $\gamma \in \mathrm{PSU}(1, 1)$ and is unique upto a positive 
scaling factor because if we take $z$ to be the origin again, then the stabilizer 
subgroup of $\mathrm{PSU}(1, 1)$ w.r.t to the origin is the circle group $SO(2)$ and the 
Haar integral is invariant under rotations. 
We list 
the following properties which are satisfied by $P$: 
\begin{enumerate}
\item $P$ is linear,
 \item $P$ is continous, 
 \item $P$ is $\mathrm{PSU}(1, 1)$-equivariant.
 \end{enumerate}
 \begin{prop}
\label{zerotoz}
 Given a point $z \in \dd$, the map $w \longmapsto \frac{w+z}{w\bar{z}+1}$ is a hyperbolic isometry that sends the 
 origin to the point $z$.
\end{prop}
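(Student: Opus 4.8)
The plan is to recognise the stated map as an explicit element of $\mathrm{PSU}(1,1)$ and then invoke the identification $\mathrm{PSU}(1,1) \cong \mathrm{Aut}(\dd) \cong \mathrm{Isom}^{+}(\dd)$ recalled at the beginning of \cref{cohomtoanal}. Write $\gamma(w) = \frac{w+z}{\bar{z}w+1}$. First I would dispatch the elementary half: substituting $w=0$ gives $\gamma(0) = \frac{0+z}{0+1} = z$, so $\gamma$ sends the origin to $z$.

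Next I would produce a matrix representative and normalise it. The transformation $\gamma$ is represented by $\begin{bmatrix} 1 & z \\ \bar{z} & 1 \end{bmatrix}$, whose determinant is $1 - |z|^{2}$, strictly positive precisely because $z \in \dd$. Dividing by $\sqrt{1-|z|^{2}}$ and setting $a = (1-|z|^{2})^{-1/2}$ (so $\bar a = a$) and $b = z(1-|z|^{2})^{-1/2}$, the normalised matrix takes the form $\begin{bmatrix} a & b \\ \bar{b} & \bar{a} \end{bmatrix}$ with $|a|^{2}-|b|^{2} = \frac{1-|z|^{2}}{1-|z|^{2}} = 1$. Hence this matrix lies in $\mathrm{SU}(1,1)$, so $\gamma \in \mathrm{PSU}(1,1)$, and therefore $\gamma$ is an isometry of $(\dd, \textbf{g}_{\dd})$, i.e.\ a hyperbolic isometry of the Poincar\'e disk.

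As an alternative route (and a useful sanity check) one verifies directly that $\gamma$ preserves the disk: for $|w|=1$ we have $\bar{w}=w^{-1}$, so $\bar{z}w+1 = w(\bar{z}+\bar{w})$ and hence $|\bar{z}w+1| = |\bar{z}+\bar{w}| = |z+w|$, giving $|\gamma(w)| = 1$; thus $\gamma$ maps $\ss^{1}$ onto $\ss^{1}$, and since $\gamma(0)=z \in \dd$ it maps $\dd$ into $\dd$, while its inverse $w \mapsto \frac{w-z}{1-\bar{z}w}$ has the same shape, so $\gamma$ is a bijection of $\dd$.

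There is no genuine obstacle here: the statement is a routine computation. The only point that warrants a word of attention is that the normalisation step uses $z \in \dd$ — so that $1-|z|^{2} > 0$ and the square root is real and nonzero — which is exactly what places the matrix in $\mathrm{SU}(1,1)$ rather than merely in $\mathrm{SL}(2,\cc)$.
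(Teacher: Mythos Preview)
Your proof is correct and takes a genuinely different route from the paper's. The paper verifies the isometry condition by direct computation: it computes $\gamma'(w)=\frac{1-|z|^{2}}{(w\bar z+1)^{2}}$, expands $1-|\gamma(w)|^{2}$, and checks that the pulled-back line element $\frac{2|d\gamma|}{1-|\gamma(w)|^{2}}$ simplifies to $\frac{2|dw|}{1-|w|^{2}}$; it then separately proves $|w|<1\Rightarrow|\gamma(w)|<1$ via the inequality $(1-|w|^{2})(1-|z|^{2})>0$. Your argument instead normalises the matrix $\begin{bmatrix}1&z\\\bar z&1\end{bmatrix}$ by $\sqrt{1-|z|^{2}}$ to land in $\mathrm{SU}(1,1)$ and then invokes the identification $\mathrm{PSU}(1,1)\cong\mathrm{Isom}^{+}(\dd)$ stated at the opening of \cref{cohomtoanal}. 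This is shorter and more conceptual, and it gives the disk-preservation for free (since $\mathrm{PSU}(1,1)$ acts on $\dd$ by definition), whereas the paper's approach is self-contained and does not rely on that prior identification. Both are standard; yours is the cleaner one given what has already been recalled in the paper.
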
 

\begin{proof}
We check that indeed $\gamma(0)=z$. 
Let $\gamma(z)= \frac{w+z}{w\bar{z}+1}$, 
and let $\gamma(w)=f(w)+\iota g(w)$. By differentiating, we get 
$\frac{d\gamma(w)}{dw}= \frac{1-|z|^{2}}{(w\bar{z}+1)^{2}}$. Observe that 
$$df(w)^{2}+dg(w)^{2}= (df(w)+\iota dg(w))(df(w)-\iota dg(w)) = 
d\gamma(w) \overline{d\gamma(w)}.$$ Therefore, 
\begin{equation*}
\resizebox{1.03\hsize}{!}{$
 \frac{2 \sqrt{df(w)^{2}+dg(w)^{2}}}{1-f(w)^{2}-g(w)^{2}} = 
\frac{2\sqrt{d\gamma(w) \overline{d\gamma(w)}}}{1-|\gamma(w)|^{2}} = 
\frac{2 \sqrt{\frac{d\gamma(w)}{dw} dw \frac{\overline{d\gamma(w)}}{\overline{dw}} 
\overline{dw}}}{1-|\gamma(w)|^{2}} = \frac{2 \sqrt{ \frac{(1-|z|^{2})^{2}}
{(w\bar{z}+1)^{2}(\bar{w}z+1)^{2}} }}{1-|\gamma(w)|^{2}} \sqrt{dx^{2}+dy^{2}}.
$}
\end{equation*}
Simplifying $1-|\gamma(w)|^{2}$ further, we get 
$$\frac{2 \sqrt{ \frac{(1-|z|^{2})^{2}}
{(w\bar{z}+1)^{2}(\bar{w}z+1)^{2}} }}{1-|\gamma(w)|^{2}} = \frac{2}{1-|w|^{2}}.$$ 
Therefore, $\gamma$ is a 
hyperbolic isometry. The final and remaining thing is to check that 
$\gamma$ maps $\dd$ to itself. Suppose that $|w| <1$. 
We want to show that 
$|\frac{w+z}{w\bar{z}+1}|<1$. This is equivalent 
to showing that $|w+z| <  |w\bar{z}+1|$. 
Furthermore, it is enough to show that 
$(w+z)(\bar{w}+\bar{z}) < (w\bar{z}+1)(\bar{w}z+1)$, or equivalently, 
$w\bar{w} + z \bar{z} < w\bar{w} z \bar{z} +1$, or $(1-w\bar{w})(1-z \bar{z}) >0$, 
which is true since $1-w\bar{w}$ and $1-z \bar{z}$ are both positive. 
 \hfill \qedsymbol
\end{proof} 
\bigskip
We summarize our discussion as follows:
\begin{prop}
\label{poissonrecons}
Every continuous linear map 
 $F: C^{0}(\ss^{1}) \longrightarrow C^{0}(\dd)$ which is 
 $\mathrm{PSU}(1, 1)$-equivariant is a scalar multiple of the continous linear map 
 $P: C^{0}(\ss^{1}) \longrightarrow C^{0}(\dd)$ 
given by the following formula
\begin{equation*}
 \label{poissonre}
 P(f)(z) = \frac{1}{2\pi} \int_{\ss^{1}} f \cdot \gamma,
\end{equation*}
where $\gamma \in \mathrm{PSU}(1, 1)$ is given in Proposition \ref{zerotoz} 
such that $\gamma(0)=z$ and $f  \in C^{0}(\ss^{1})$. 
\end{prop}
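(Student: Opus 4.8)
The plan is to reduce the statement to a rigidity argument about equivariant functionals on $C^0(\ss^1)$, exploiting the transitivity of the $\mathrm{PSU}(1,1)$-action on $\dd$ together with the fact that the stabiliser of the origin is the circle group $\mathrm{SO}(2)$. First I would observe that if $F\colon C^0(\ss^1)\longrightarrow C^0(\dd)$ is any continuous linear $\mathrm{PSU}(1,1)$-equivariant map, then the value $F(f)(0)$ defines a continuous linear functional $\Lambda\colon C^0(\ss^1)\longrightarrow\rr$ (or $\cc$), and equivariance with respect to the stabiliser $\mathrm{SO}(2)$ of $0$ forces $\Lambda$ to be invariant under rotations of the circle. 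By the Riesz representation theorem $\Lambda$ is integration against a finite (signed) measure $\mu$ on $\ss^1$; rotation invariance of $\Lambda$ pins $\mu$ down to a scalar multiple of normalised Haar (Lebesgue) measure $\tfrac{1}{2\pi}d\theta$, since the only rotation-invariant finite measure on the circle is Haar measure up to scaling. Hence $F(f)(0)=\lambda\cdot\tfrac{1}{2\pi}\int_{\ss^1}f$ for a fixed scalar $\lambda$.

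Next I would propagate this identity to every point $z\in\dd$ using transitivity. Given $z\in\dd$, Proposition \ref{zerotoz} furnishes an explicit $\gamma\in\mathrm{PSU}(1,1)$ with $\gamma(0)=z$, namely $w\longmapsto\frac{w+z}{w\bar z+1}$. Then equivariance gives
\begin{equation*}
 F(f)(z)=F(f)(\gamma(0))=\big(F(f)\cdot\gamma\big)(0)=F(f\cdot\gamma)(0)=\lambda\cdot\frac{1}{2\pi}\int_{\ss^1}f\cdot\gamma,
\end{equation*}
which is exactly $\lambda\,P(f)(z)$ with $P$ the map in the statement. The only point requiring a word of care here is well-definedness: a different choice $\gamma'$ with $\gamma'(0)=z$ differs from $\gamma$ by an element of the stabiliser $\mathrm{SO}(2)$ of $0$, and since $F(f)(0)$ is already known to be rotation-invariant (it is Haar integration), the two expressions agree. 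This is precisely the reincarnation of the Poisson formula developed in \cref{invariancepoisson}, so I would simply cite that discussion rather than redo it.

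Finally I would check that the resulting $P$ genuinely lands in $C^0(\dd)$ and is itself continuous and equivariant, so that the statement is not vacuous: continuity in $z$ follows because $\gamma$ depends smoothly (hence continuously) on $z$ and $f\cdot\gamma$ then varies continuously in $C^0(\ss^1)$, while continuity of $P$ as an operator follows from $|P(f)(z)|\le\sup_{\ss^1}|f|$ (the Poisson kernel being a probability density, as recorded after (\ref{poissonnewform})). Equivariance of $P$ is built into the construction. I expect the main obstacle — though it is a soft one — to be the identification of rotation-invariant functionals with Haar measure: one must be slightly careful that $C^0(\ss^1)$ carries the uniform topology, invoke Riesz–Markov to get a measure, and then argue that rotation-invariance of the functional transfers to translation-invariance of the measure, whence uniqueness of Haar measure on the compact group $\ss^1$ gives the scalar. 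Everything after that is a formal consequence of transitivity and the explicit isometry of Proposition \ref{zerotoz}.
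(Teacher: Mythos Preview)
Your proposal is correct and follows essentially the same route as the paper: reduce to the $\mathrm{SO}(2)$-invariant functional $f\mapsto F(f)(0)$, identify it with a scalar multiple of Haar integration via uniqueness of invariant measure on the circle, and then propagate to all of $\dd$ by transitivity and equivariance, checking well-definedness via the stabiliser. Your explicit invocation of Riesz--Markov is a slight elaboration of what the paper leaves implicit in the phrase ``the Haar integral is invariant under rotations,'' but the argument is the same.
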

\begin{remark}
\label{acharemark}
 \normalfont
 Alternatively, we can construct such a linear map 
 $F: C^{0}(\ss^{1}) \longrightarrow 
 C^{0}(\dd)$ in Proposition \ref{poissonrecons}
 by plugging the Dirac distribution $\delta$ 
 at the point 
  $1 \in \ss^{1}$ into the formula for $P$ instead of a 
  continous function $f$ on the circle $\ss^{1}$. 
  We adopt the view that $\delta$ is the limit of step functions $\{\epsilon^{-1}
  g_{\epsilon}\}$ where $g_{\epsilon}$ is the characteristic function
  of an arc of length $\epsilon$ centered at $1 \in \ss^{1}$. 
  Therefore, we define $\delta \cdot \gamma = \gamma^{\ast}\delta$ to be the 
  Dirac distribution at 
  the point
  $\gamma^{-1}(1)$ times $\big|\big(\gamma'(\gamma^{-1}(1))\big)^{-1}\big|$. 
 This suggests  
   \begin{equation*}
   F(\delta)(z)  = \frac{1}{2 \pi}\int_{\ss^{1}} \delta \cdot \gamma,
\end{equation*}
where $\gamma(0)=z$ and the explicit form of $\gamma$ is given by 
Proposition \ref{zerotoz}. Using $\gamma(w) = \frac{w+z}{w \bar{z}+1}$, 
we see that 
$$2 \pi \cdot \big(F(\delta)(z)\big) = \big|\big(\gamma'(\gamma^{-1}(1))\big)^{-1}\big| = 
 \frac{1-|z|^{2}}{|1-\bar{z}|^{2}} = \frac{1-|z|^{2}}{|1-z|^{2}}.
 $$
 We denote the real valued (positive) function $z \longmapsto \frac{1-|z|^{2}}{|1-z|^{2}}$ 
 defined on $\dd$ for $0 \leq |z| < 1$ by $K$. 
 The intuition is $F(\delta)=\frac{K}{2 \pi}$ and therefore, we define 
 \begin{equation}
 \label{newpois}
  F(f) = f \ast K,
 \end{equation}
where $f \in C^{0}(\ss^{1})$ and $K(z)=\frac{1-|z|^{2}}{|1-z|^{2}}$, and $\ast$
denotes the convolution\footnote{The convolution of $K$ and 
$f$ is defined as:
$ (f \ast  K)(z):= \frac{1}{2 \pi} \int_{\ss^{1}} f(w) K(zw^{-1}) dw.$}
of $K$ and $f$. 
To show the $\mathrm{PSU}(1, 1)$-equivariance, we first note that 
every element $A \in \mathrm{PSU}(1, 1)$ has a unique expression $A= BC$ where 
$B \in \mathrm{SO}(2)$ and $C$ is in the two-dimensional subgroup $\mathrm{Stab}_{\mathrm{PSU}(1, 1)}(1)$ 
of $\mathrm{PSU}(1, 1)$ consisting of 
all elements which fix the element $1$ in the boundary circle $\ss^{1}$. 
Also, $\mathrm{Stab}_{\mathrm{PSU}(1, 1)}(1)$ acts transitively on $\dd$. 
The general form of elements $\gamma$ of 
the group $\mathrm{Stab}_{\mathrm{PSU}(1, 1)}(1)$  
is given by the following:
\begin{equation}
 \label{gammafix1}
 \gamma(z)=\frac{az+b}{\bar{b}z+\bar{a}}, |a|^{2}-|b|^{2}=1, a+b = \overline{a+b}.
\end{equation}
Hence, showing 
 the $\mathrm{PSU}(1, 1)$-equivariance of $F$ is equivalent to showing 
 the $\mathrm{SO}(2)$-equivariance and $\mathrm{Stab}_{\mathrm{PSU}(1, 1)}(1)$-equivariance of $F$.  
 It is easy to see that $F$ in (\ref{newpois}) is $\mathrm{SO}(2)$-equivariant. 
 To show the $\mathrm{Stab}_{\mathrm{PSU}(1, 1)}(1)$-equivariance of 
 $F$ in (\ref{newpois}), we claim that $\gamma^{\ast}K= cK$, where $c$ is a positive
 constant and $\gamma \in \mathrm{Stab}_{\mathrm{PSU}(1, 1)}(1)$. We have
 \begin{equation*}
  \begin{split}
   K(\gamma(z)) & = \frac{1-|\gamma(z)|^{2}}{|1-\gamma(z)|^{2}} = 
   \frac{1-\gamma(z)\overline{\gamma(z)}}{\big(1-\gamma(z)\big)
   \big(1-\overline{\gamma(z)}\big)}\\
   & = \frac{1 - \frac{az+b}{\bar{b}z+\bar{a}} \cdot 
 \frac{\bar{a}\bar{z}+\bar{b}}{b\bar{z}+a}}
 {\bigg(1-\frac{az+b}{\bar{b}z+\bar{a}}\bigg) \cdot 
 \bigg(1-\frac{\bar{a}\bar{z}+\bar{b}}{b\bar{z}+a}\bigg)}
   = \frac{1 - \frac{|a|^{2}|z|^{2} + az \bar{b}+b\bar{a}\bar{z}+|b|^{2}}
 {|b|^{2}|z|^{2}+az \bar{b}+b\bar{a}\bar{z}+|a|^{2}}} 
 {\bigg(\frac{(b-\bar{a})\bar{z} - (\bar{b}-a)}{b \bar{z}+a} \bigg) \bigg(
 \frac{(\bar{b}-a)z - (b-\bar{a})}{\bar{b} z+\bar{a}} \bigg)} \\
 & = \frac{1-|z|^{2}}
 {(b\bar{z}+a) (\bar{b}z+\bar{a})} \cdot 
 \Bigg(\frac{\big(b-\bar{a}\big)^{2} \cdot \big|1-\bar{z}\big|^{2}}
 {(b\bar{z}+a) (\bar{b}z+\bar{a})}\Bigg)^{-1} = 
 \frac{1-|z|^{2}}{\big(b-\bar{a}\big)^{2} \cdot \big|1-\bar{z}\big|^{2}} \\
 &  = 
 \frac{\gamma'(1)^{-1} \big(1-|z|^{2}\big)}{\big|1-\bar{z}\big|^{2}} = 
 \gamma'(1)^{-1} K(z), 
  \end{split}
\end{equation*}
where $\gamma'(1)=\big(\bar{b}+\bar{a}\big)^{-2}$. 
Note that $K$ is the real part of a
 holomorphic function, hence harmonic. Therefore, 
$F(f)$ is also harmonic. 
\end{remark}
\begin{coro}
 The map $F$ in (\ref{newpois}) is the Poisson map given in 
 (\ref{thepoissonmap}). Hence, the map $P$ in  
 Proposition \ref{poissonrecons} lands in the vector space of harmonic 
  functions on the open unit disk $\dd$.
\end{coro}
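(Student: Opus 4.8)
The plan is to compute $F(f)=f\ast K$ explicitly, recognise in it the classical Poisson kernel, and then read off harmonicity of the image from that of $K$. First I would unwind the convolution using the definition recorded in the footnote of Remark \ref{acharemark}: for $z\in\dd$,
\[
F(f)(z)=\frac{1}{2\pi}\int_{\ss^{1}} f(w)\,K\!\left(zw^{-1}\right)\,dw .
\]
Since $|w|=1$ we have $|zw^{-1}|=|z|$ and $|1-zw^{-1}|=|w^{-1}|\,|w-z|=|w-z|$, so that
\[
K\!\left(zw^{-1}\right)=\frac{1-|zw^{-1}|^{2}}{|1-zw^{-1}|^{2}}=\frac{1-|z|^{2}}{|w-z|^{2}},
\]
which is exactly the two-variable kernel appearing in (\ref{poissonnewform}). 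Writing $z=re^{\iota\theta}$ and $w=e^{\iota\phi}$ gives $|w-z|^{2}=1+r^{2}-2r\cos(\theta-\phi)$, hence
\[
F(f)(re^{\iota\theta})=\frac{1}{2\pi}\int_{0}^{2\pi} f(e^{\iota\phi})\,\frac{1-r^{2}}{1+r^{2}-2r\cos(\theta-\phi)}\,d\phi ,
\]
which is precisely (\ref{thepoissonmap}). Thus $F$ coincides with the Poisson map $P$, and combining with (\ref{reincarnationpoisson}) it agrees with the $P$ of Proposition \ref{poissonrecons}; as a check on the normalisation, $F(1)=\tfrac{1}{2\pi}\int_{\ss^{1}}1=1$, as required.

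Next I would record why this common operator has harmonic image. By (\ref{poissonnewform}), for each fixed $w\in\ss^{1}$ the function $z\mapsto K(zw^{-1})=\Re\!\bigl(\tfrac{w+z}{w-z}\bigr)$ is the real part of a function holomorphic on $\dd$ (note $w-z\neq 0$ for $|z|<1=|w|$), hence harmonic in $z$, with all $z$-derivatives continuous and, on each compact subset of $\dd$, bounded uniformly in $w\in\ss^{1}$. Since $f\in C^{0}(\ss^{1})$ is bounded and $\ss^{1}$ has finite measure, one may differentiate under the integral sign, so
\[
\Delta_{z}\,F(f)(z)=\frac{1}{2\pi}\int_{\ss^{1}} f(w)\,\Delta_{z}K(zw^{-1})\,dw=0 .
\]
Therefore $F(f)$ is harmonic on $\dd$ for every $f\in C^{0}(\ss^{1})$, and via the identification $F=P$ the map $P$ of Proposition \ref{poissonrecons} lands in the vector space of harmonic functions on $\dd$.

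There is no serious obstacle here: the statement is essentially a bookkeeping consequence of the identities already assembled in Remark \ref{acharemark}, together with the standard fact that the Poisson kernel is harmonic. The only points that need a word of care are the manipulation $|1-zw^{-1}|=|w-z|$ (which uses $|w|=1$) and the justification that the Laplacian may be pulled inside the convolution integral; both are routine, so I expect the write-up to be short.
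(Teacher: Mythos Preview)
Your argument is correct. You identify $F$ with the classical Poisson integral by a direct kernel computation: unwinding the convolution and using $|w|=1$ to simplify $K(zw^{-1})$ to the two-variable Poisson kernel. Harmonicity then follows by differentiating under the integral sign, and the link to the $P$ of Proposition~\ref{poissonrecons} comes from~(\ref{reincarnationpoisson}).

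The paper's intended route is slightly different in spirit. The whole point of Remark~\ref{acharemark} is to verify that the convolution map $F$ is $\mathrm{PSU}(1,1)$-equivariant (via the $\mathrm{SO}(2)$- and $\mathrm{Stab}_{\mathrm{PSU}(1,1)}(1)$-equivariance checks and the computation $\gamma^{\ast}K=\gamma'(1)^{-1}K$). Proposition~\ref{poissonrecons} then forces $F$ to be a scalar multiple of $P$; matching values at the origin (both give the normalised Haar integral, since $K(0)=1$) pins down the scalar as $1$. The identification with the classical formula~(\ref{thepoissonmap}) is already contained in the discussion around~(\ref{reincarnationpoisson}). Your direct computation bypasses the equivariance machinery entirely, which is more elementary but makes less use of the characterisation result the paper has just set up; the paper's route, by contrast, is what motivates the parallel argument for vector fields in Proposition~\ref{propcon} and Corollary~\ref{coro4213}, where no such explicit kernel simplification is available and equivariance does the real work.
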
 
\bigskip
Let $\mathcal{S}_{C^{0}}(T\ss^{1})$ be the Banach space 
of (tangential) continuous vector fields on 
$\ss^{1}$ and $\mathcal{S}_{C^{0}}(T\dd)$ be the 
space of continuous vector fields on the 
open disk $\dd$. 
We want to mimick the reincarnation of the Poisson map in the 
case 
of vector fields. 

\begin{prop}
\label{so2linear}
 Every continuous and $\mathrm{SO}(2)$-equivariant linear functional $\Lambda$ from 
 the real Banach space 
 of continous tangential vector fields on $\ss^{1}$ to $\cc$ has the following form: 
 $$\Lambda(X)= \bigg(\int_{\ss^{1}} X\bigg) \cdot v, $$
 where $X$ is a tangential vector field on $\ss^{1}$ and $v \in \cc$. 
\end{prop}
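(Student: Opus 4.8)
The plan is to identify the real Banach space of continuous tangential vector fields on $\ss^{1}$ with $C^{0}(\ss^{1}; \rr)$ via the trivialisation $X = fY$, where $Y(z) = \iota z$ is the norm-one tangential vector field from Example \ref{examoftan} and $f$ is a real-valued continuous function. Under this identification the $\mathrm{SO}(2)$-action on vector fields (by pushforward under rotations) corresponds, up to the natural rotation action on functions, to the translation action of $\mathrm{SO}(2) \cong \ss^{1}$ on $C^{0}(\ss^{1})$; the point is that $Y$ is itself $\mathrm{SO}(2)$-invariant as a vector field, so conjugating $fY$ by a rotation $R_{\theta}$ just replaces $f$ by $f \circ R_{-\theta}$. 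Thus $\Lambda$ becomes a continuous $\rr$-linear functional $\lambda: C^{0}(\ss^{1};\rr) \longrightarrow \cc$ which is $\mathrm{SO}(2)$-equivariant, where $\mathrm{SO}(2)$ acts on the target $\cc \cong \rr^{2}$ through the standard rotation representation (since the identification $T_{z}\ss^{1}\cong\cc$ at the basepoint transforms by $\gamma'$ under $\gamma$).

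First I would decompose $\lambda$ into its two real-coordinate functionals $\lambda = (\lambda_{1}, \lambda_{2})$, each a continuous $\rr$-linear functional on $C^{0}(\ss^{1};\rr)$, hence by the Riesz representation theorem given by integration against a finite signed Borel measure $\mu_{i}$ on $\ss^{1}$. The equivariance condition translates into a transformation law relating $\mu_{1}$ and $\mu_{2}$ under rotations: rotating the argument function by $\theta$ must rotate the output vector by $\theta$. Writing the complex-valued measure $\mu = \mu_{1} + \iota \mu_{2}$, this says precisely that for every $\theta$, $(R_{\theta})_{*}\mu = e^{\iota\theta}\mu$ (or $e^{-\iota\theta}\mu$, depending on orientation conventions, which I would pin down by testing on $Y$ itself). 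Expanding $\mu$ in Fourier modes $d\mu = \sum_{k} c_{k} z^{k}\,\frac{dz}{2\pi \iota z}$ and comparing, this forces all Fourier coefficients to vanish except the single mode matching the weight of the target representation — leaving $\mu$ a constant multiple of the rotation-invariant arc-length measure times a fixed vector. Concretely, $\lambda(f) = \big(\int_{\ss^{1}} f\big)\cdot v$ for a fixed $v \in \cc$, which upon translating back through $X = fY$ yields $\Lambda(X) = \big(\int_{\ss^{1}} X\big)\cdot v$ (here $\int_{\ss^{1}} X$ is understood via the trivialisation, exactly as $\int_{\ss^{1}}X$ is used in the statement).

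The main obstacle, and the step deserving the most care, is bookkeeping the two intertwined $\mathrm{SO}(2)$-actions correctly: the action on the source space of vector fields is geometric (pushforward of vector fields, which on the function factor $f$ is ordinary rotation), while the action on the target $\cc$ is through a specific weight-one character coming from how tangent vectors transform. Getting the weight exactly right — and hence knowing which single Fourier mode survives — is where an orientation or sign slip would give a wrong answer; I would fix this once and for all by evaluating $\Lambda$ on the basis elements in (\ref{realbasis}) and checking consistency with the invariance of $Y$. The rest — Riesz representation, the Fourier-mode annihilation argument, and reassembling the formula — is routine. This proposition then feeds directly into the construction of the Poisson kernel vector field $\textbf{K}$ of Theorem \ref{maintheorem4} by the same extension-to-the-disk argument used in Remark \ref{acharemark}, now applied with the Dirac distribution at $1 \in \ss^{1}$ in place of $f$.
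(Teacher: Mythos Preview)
The paper states Proposition~\ref{so2linear} without proof, so there is nothing to compare your argument against line by line. Your approach is correct in outline and would yield a complete proof: trivialising tangential vector fields as $X=fY$ with $Y(z)=\iota z$ reduces the problem to classifying continuous $\rr$-linear maps $\lambda: C^{0}(\ss^{1};\rr)\to\cc$ satisfying $\lambda(f\circ R_{\theta})=e^{\pm\iota\theta}\lambda(f)$, and the Riesz--Fourier argument you describe then pins down $\lambda$ as integration against a single Fourier mode, giving the claimed form.

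One small correction of emphasis: the target $\cc$ here is $T_{0}\dd$, not $T_{z}\ss^{1}$; the weight-one action comes from the derivative at $0$ of a rotation of the disk, exactly as in the third equality of~(\ref{poissonvector}). You identify this correctly in spirit, and your plan to fix the sign by evaluating on the basis elements in~(\ref{realbasis}) (or simply on $Y$ itself) is the right way to settle it. Once that weight is nailed down, the surviving Fourier mode of the representing measure is forced, and the rest is routine. Your final remark connecting this to the construction of $\textbf{K}$ via the Dirac vector field is also on target and matches how the paper uses the proposition.
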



\begin{prop}
\label{prop412}
 Every continous linear map
$$\mathcal{F}: \mathcal{S}_{C^{0}}(T\ss^{1}) \longrightarrow 
\mathcal{S}_{C^{0}}(T\dd)$$ 
which is equivariant under the action of $\mathrm{PSU}(1,1)$
is a scalar multiple of the continous linear map 
$$\mathcal{P}: \mathcal{S}_{C^{0}}(T\ss^{1}) \longrightarrow 
\mathcal{S}_{C^{0}}(T\dd)$$ 
given by the following formula 
\begin{equation}
\label{poissonvector}
 \mathcal{P}(X)(z)=\mathcal{P}(X)(\gamma(0)) = 
 \gamma'(0) \cdot \Big(\mathcal{P}(\gamma^{\ast}X)(0) \Big) = 
 \gamma'(0) \cdot \bigg(\frac{1}{2\pi}\int_{\ss^{1}} \gamma^{\ast}X\bigg),
\end{equation}
for some $\gamma \in \mathrm{PSU}(1, 1)$ such that $\gamma(0)=z$. 
\end{prop}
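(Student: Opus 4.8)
The plan is to run the argument of Proposition \ref{poissonrecons} one degree higher, with Proposition \ref{so2linear} taking over the role played there by the characterisation of rotation-invariant positive functionals. Thus the first step is to localise at the origin: given a continuous $\mathrm{PSU}(1,1)$-equivariant linear map $\mathcal{F}\colon\mathcal{S}_{C^{0}}(T\ss^{1})\longrightarrow\mathcal{S}_{C^{0}}(T\dd)$, set $\Lambda_{\mathcal{F}}(X):=\mathcal{F}(X)(0)\in T_{0}\dd\cong\cc$. Since evaluation at a fixed point is continuous on $\mathcal{S}_{C^{0}}(T\dd)$, the map $\Lambda_{\mathcal{F}}$ is a continuous $\rr$-linear functional on the real Banach space $\mathcal{S}_{C^{0}}(T\ss^{1})$. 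The stabiliser of $0\in\dd$ in $\mathrm{PSU}(1,1)$ is the circle group $\mathrm{SO}(2)$, and for $\gamma\in\mathrm{SO}(2)$ one has $(\gamma^{\ast}F)(0)=F(\gamma(0))\gamma'(0)^{-1}=\gamma'(0)^{-1}F(0)$ for every $F\in\mathcal{S}_{C^{0}}(T\dd)$; combining this with the equivariance $\mathcal{F}(\gamma^{\ast}X)=\gamma^{\ast}\mathcal{F}(X)$ gives $\Lambda_{\mathcal{F}}(\gamma^{\ast}X)=\gamma'(0)^{-1}\Lambda_{\mathcal{F}}(X)$, which is exactly the $\mathrm{SO}(2)$-equivariance hypothesis of Proposition \ref{so2linear}. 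Hence $\Lambda_{\mathcal{F}}(X)=\big(\int_{\ss^{1}}X\big)\,v_{\mathcal{F}}$ for a unique $v_{\mathcal{F}}\in\cc$.

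Next I would propagate this identity over all of $\dd$ by transitivity. Given $z\in\dd$, pick $\gamma\in\mathrm{PSU}(1,1)$ with $\gamma(0)=z$, for concreteness the isometry $\gamma(w)=\frac{w+z}{w\bar z+1}$ of Proposition \ref{zerotoz}. Using the equivariance of $\mathcal{F}$ together with the computation of $(\gamma^{\ast}\mathcal{F}(X))(0)$ as above,
\begin{equation*}
\mathcal{F}(X)(z)\,\gamma'(0)^{-1}=(\gamma^{\ast}\mathcal{F}(X))(0)=\mathcal{F}(\gamma^{\ast}X)(0)=\Lambda_{\mathcal{F}}(\gamma^{\ast}X)=\Big(\int_{\ss^{1}}\gamma^{\ast}X\Big)v_{\mathcal{F}},
\end{equation*}
so $\mathcal{F}(X)(z)=\gamma'(0)\big(\int_{\ss^{1}}\gamma^{\ast}X\big)v_{\mathcal{F}}$, which is precisely $2\pi\,v_{\mathcal{F}}$ times the right-hand side of (\ref{poissonvector}). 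To see that this right-hand side does not depend on the choice of $\gamma$ one writes a second choice as $\gamma_{1}\circ\rho$ with $\rho\in\mathrm{SO}(2)$, uses $(\gamma_{1}\rho)'(0)=\gamma_{1}'(0)\rho'(0)$ and $\int_{\ss^{1}}\rho^{\ast}Y=\rho'(0)^{-1}\int_{\ss^{1}}Y$, and observes that the two factors of $\rho'(0)$ cancel; the same computation, read in the forward direction, shows that the formula in (\ref{poissonvector}) does define a continuous $\rr$-linear $\mathrm{PSU}(1,1)$-equivariant map $\mathcal{P}$, so that speaking of scalar multiples of $\mathcal{P}$ is legitimate. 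Since $\mathcal{P}$ is visibly non-zero — it corresponds to $v_{\mathcal{F}}=\frac{1}{2\pi}$ and is realised concretely by convolution with the Poisson kernel vector field in Theorem \ref{maintheorem4} — we conclude $\mathcal{F}=(2\pi\,v_{\mathcal{F}})\,\mathcal{P}$, as claimed.

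The main obstacle is keeping straight the two ``twists'' that occur: the action $\gamma^{\ast}X=X(\gamma)\gamma'^{-1}$ on vector fields on $\ss^{1}$ and on $\dd$, and the induced action of the stabiliser $\mathrm{SO}(2)$ on the one-dimensional target $T_{0}\dd\cong\cc$ by multiplication by $\gamma'(0)$. One must check that these are precisely compatible with the equivariance appearing in Proposition \ref{so2linear} — this is what forces an honest continuous $\mathrm{SO}(2)$-equivariant functional on tangential fields to factor through $X\mapsto\int_{\ss^{1}}X$ — and that the resulting closed formula for $\mathcal{F}(X)(z)$ is genuinely independent of the auxiliary element $\gamma$. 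The remaining points — continuity of point evaluation, $\rr$-linearity, and the well-definedness and equivariance of the formula defining $\mathcal{P}$ itself — are routine.
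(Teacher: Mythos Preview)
Your argument is correct and is essentially the approach the paper has in mind: the paper does not give a separate formal proof of Proposition~\ref{prop412} but indicates, via the remarks immediately following it, that one should mimic the reconstruction of the Poisson map in Proposition~\ref{poissonrecons}, with Proposition~\ref{so2linear} replacing the uniqueness of the Haar integral at the origin and transitivity of the $\mathrm{PSU}(1,1)$-action on $\dd$ doing the rest. Your write-up makes explicit the $\mathrm{SO}(2)$-equivariance check for $\Lambda_{\mathcal{F}}$ and the cancellation of $\rho'(0)$ verifying independence of the choice of $\gamma$, which the paper leaves to the reader.
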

\begin{remark}
 \normalfont
 The third equality in the expression of $\mathcal{P}(X)(z)$ in 
 (\ref{poissonvector}) follows from Proposition \ref{so2linear}. 
\end{remark}

\begin{remark}
 \normalfont
 The scalar in Proposition \ref{prop412} can be any complex number. Also, 
 note that $\mathcal{P}(X)(0) \in T_{0}\dd$ and the 
second equality in (\ref{poissonvector}) follows from the 
$\mathrm{PSU}(1,1)$-equivariance 
of $\mathcal{P}$, i.e., 
$$\mathcal{P}(\gamma^{\ast}X)=\gamma^{\ast}(\mathcal{P}(X)), \quad \forall \gamma 
\in \mathrm{PSU}(1, 1),$$ 
where $\gamma^{\ast}X=X(\gamma)\gamma'^{-1}, \gamma \in \mathrm{PSU}(1,1), X \in 
\mathcal{S}_{C^{0}}(T\ss^{1})$. 
\end{remark}

\begin{remark}
 \normalfont
 We can also construct such a linear map 
 $$\mathcal{F}: \mathcal{S}_{C^{0}}(T\ss^{1}) \longrightarrow 
\mathcal{S}_{C^{0}}(T\dd)$$ in Proposition \ref{prop412}
 by plugging the \textit{Dirac vector field} $\boldsymbol \delta$ 
 in the formula for $\mathcal{P}$ instead of a 
  tangential vector field $X$ on the circle $\ss^{1}$. 
  We adopt the view that $\boldsymbol \delta$ is the limit of 
  vector fields $\{\epsilon^{-1}
  g_{\epsilon}\}$ where $g_{\epsilon}$ is the norm $1$ (positively oriented) 
  tangential vector field 
  supported on an arc of length $\epsilon$ centered at $1 \in \ss^{1}$. 
 Therefore, we have 
 \begin{equation}
 \label{pullbackdirac}
 \mathcal{F}(\boldsymbol \delta)(z) = \mathcal{F}(\boldsymbol \delta)(\gamma(0)) = 
 \gamma'(0) \cdot \Big(\mathcal{F}\big(\gamma^{\ast} \boldsymbol \delta \big)(0)\Big)  = 
 \gamma'(0) \cdot \bigg(\frac{1}{2\pi}\int_{\ss^{1}} \gamma^{\ast} \boldsymbol \delta \bigg), 
 \end{equation}
 where $\gamma(0)=z$ and the explicit form of $\gamma$ is given by 
Proposition \ref{zerotoz}. (\ref{pullbackdirac}) is further simplified to 
\begin{equation}
 \label{pullbackdirac1}
 2 \pi \cdot \big(\mathcal{F}(\boldsymbol \delta)(z)\big) = 
 \gamma'(0) \cdot \Big(\iota \cdot \big|\big(\gamma'(\gamma^{-1}(1))\big)^{-1}\big| \cdot 
      \big(\gamma'(\gamma^{-1}(1))\big)^{-1}\Big).
\end{equation}
Observe that the factor $\big|\big(\gamma'(\gamma^{-1}(1))\big)^{-1}\big|$ 
accounts 
for the streching of the arc length when we pull back the Dirac vector 
field under $\gamma$
and the factor $\big(\gamma'(\gamma^{-1}(1))\big)^{-1}$ accounts 
for the streching of 
vectors. 
Using $\gamma(w) = \frac{w+z}{w \bar{z}+1}$, the expression 
$$\gamma'(0) \cdot \Big(\iota \cdot \big|\big(\gamma'(\gamma^{-1}(1))\big)^{-1}\big| 
\cdot 
      \big(\gamma'(\gamma^{-1}(1))\big)^{-1}\Big)$$ in (\ref{pullbackdirac1}) 
      simplifies to 
      \begin{equation}
       \label{finalharmonic}
       \frac{\iota (1-|z|^{2})^{3}}{|1-\bar{z}|^{2} \cdot (1-\bar{z})^{2}}.
      \end{equation}
We call the vector field given by (\ref{finalharmonic}) 
the \textit{Poisson kernel vector field} and denote it by $\textbf{K}$. 
By definition $\mathcal{F}(\boldsymbol \delta) = \frac{1}{2\pi}\textbf{K}$. 
Let $X$ be a tangential vector field on the boundary circle 
$\ss^{1}$ of
the form $f Y$ where $f$ is a real-valued continuous function on 
the boundary and $Y$ is the norm $1$
tangential vector field on $\ss^{1}$ given by $z \longmapsto \iota z$. From the above 
discussion, a vector field $\mathcal{F}(X)$ on $\dd$ is given 
by the convolution of the Poisson 
Kernel vector field $\textbf{K}$ with a given function $f$ on $\ss^{1}$, i.e.,
\begin{equation}
 \label{finals}
 \mathcal{F}(X) =  f \ast \textbf{K}.
\end{equation}
\end{remark}

\begin{prop}
\label{propcon}
 The map $\mathcal{F}$ in (\ref{finals})
 satisfies the conditions of the map $\mathcal{F}$ in Proposition \ref{prop412}. 
\end{prop}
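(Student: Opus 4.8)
The plan is to check, one at a time, the four conditions that characterise the map in Proposition \ref{prop412}: that $\mathcal{F}$ given by (\ref{finals}) is a well-defined map $\mathcal{S}_{C^0}(T\ss^{1}) \longrightarrow \mathcal{S}_{C^0}(T\dd)$, that it is $\rr$-linear, that it is continuous for the relevant topologies, and that it is equivariant under the action of $\mathrm{PSU}(1,1)$. The first three are straightforward bookkeeping; the equivariance is where the real work lies.

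First I would establish well-definedness. A tangential continuous vector field $X$ on $\ss^{1}$ is uniquely of the form $X = fY$ with $f \in C^0(\ss^{1})$ real-valued and $Y(z) = \iota z$, so $\mathcal{F}(X) = f \ast \textbf{K}$ is genuinely determined by $X$. Since $\textbf{K}(z) = \frac{\iota(1-|z|^{2})^{3}}{|1-\bar z|^{2}(1-\bar z)^{2}}$ is smooth and bounded on every compact subset of $\dd$ (its denominator has no zero for $|z|<1$), the convolution $(f \ast \textbf{K})(z) = \frac{1}{2\pi}\int_{\ss^{1}} f(w)\,\textbf{K}(zw^{-1})\,dw$ is a continuous — indeed smooth, by differentiation under the integral sign — $\cc$-valued function on $\dd$, which we read as a vector field via $T_z\dd \cong \cc$. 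Thus $\mathcal{F}(X) \in \mathcal{S}_{C^0}(T\dd)$; the preceding remark even shows it is harmonic, but only continuity on $\dd$ is needed here. Linearity is immediate, since $X \mapsto f$ is linear ($Y$ being fixed) and $f \mapsto f \ast \textbf{K}$ is linear in $f$ by linearity of the integral. Continuity follows from the estimate $|(f\ast\textbf{K})(z)| \le \frac{1}{2\pi}\lVert f\rVert_{\infty} \int_{\ss^{1}}|\textbf{K}(zw^{-1})|\,dw \le C_L\,\lVert f\rVert_{\infty}$ valid for $z$ in a fixed compact $L \subset \dd$, where $C_L = \sup_{z\in L}\frac{1}{2\pi}\int_{\ss^{1}}|\textbf{K}(zw^{-1})|\,dw$ is finite; this bounds $\mathcal{F}$ from the sup-norm on $\ss^{1}$ to uniform convergence on compacta on $\dd$.

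The main obstacle is the $\mathrm{PSU}(1,1)$-equivariance $\mathcal{F}(\gamma^{*}X) = \gamma^{*}\big(\mathcal{F}(X)\big)$, where $\gamma^{*}X = X(\gamma)\,\gamma'^{-1}$. I would mirror the scalar argument of the remark preceding this proposition: factor $\gamma = BC$ with $B \in \mathrm{SO}(2)$ and $C \in \mathrm{Stab}_{\mathrm{PSU}(1,1)}(1)$, and reduce to checking equivariance under $\mathrm{SO}(2)$ and under $\mathrm{Stab}_{\mathrm{PSU}(1,1)}(1)$ separately. The $\mathrm{SO}(2)$-part is the rotational symmetry of $\textbf{K}$ together with the translation-invariance of the normalised circle integral, exactly as in the function case. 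For $\gamma \in \mathrm{Stab}_{\mathrm{PSU}(1,1)}(1)$, written as $\gamma(z) = \frac{az+b}{\bar b z+\bar a}$ with $|a|^{2}-|b|^{2} = 1$ and $a+b$ real, I would prove a transformation law for the Poisson kernel vector field of the shape $\gamma^{*}\textbf{K} = \textbf{K}$ up to the appropriate scalar and Jacobian factors, by a direct computation parallel to the display $K(\gamma(z)) = \gamma'(1)^{-1}K(z)$ in the remark, now carrying the extra factor $(1-\bar z)^{-2}$, the power $(1-|z|^{2})^{3}$, and the derivative $\gamma'(z)$ through the algebra; equivalently one may verify the identity at the level of the Dirac vector field using $\mathcal{F}(\boldsymbol\delta) = \frac{1}{2\pi}\textbf{K}$ and then extend by linearity and continuity. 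Combining the transformation law with the behaviour of convolution under the circle action then yields $\mathcal{F}(\gamma^{*}X) = \gamma^{*}\mathcal{F}(X)$. The delicate point is the correct bookkeeping of the holomorphic and anti-holomorphic factors in $\textbf{K}$ under the vector-field action $X(\gamma)\gamma'^{-1}$ — once that is in place, Proposition \ref{propcon} follows, and in fact $\mathcal{F}$ then coincides with the map $\mathcal{P}$ of (\ref{poissonvector}) by the uniqueness part of Proposition \ref{prop412}.
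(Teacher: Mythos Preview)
Your proposal is correct and follows essentially the same route as the paper. The paper's proof of Proposition~\ref{propcon} is extremely terse: it simply asserts that $\mathrm{PSU}(1,1)$-equivariance of $\mathcal{F}$ follows from the transformation law $\gamma^{*}\textbf{K} = (\gamma'(1))^{-2}\,\textbf{K}$ for $\gamma \in \mathrm{Stab}_{\mathrm{PSU}(1,1)}(1)$, which is computed in full in the proof of Theorem~\ref{kernelvectorfieldisharmonic} (placed between the statement and the proof of Proposition~\ref{propcon}); so the computation you propose to carry out is already available to you, and the remaining well-definedness, linearity, and continuity checks you supply are exactly the ones the paper leaves implicit.
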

Before we prove Proposition \ref{propcon}, we state and prove the following:
\begin{theorem}
\label{kernelvectorfieldisharmonic}
 The Poisson 
Kernel vector field $\textbf{K}$ given by (\ref{finalharmonic}) in Remark 
\ref{pullbackdirac1} is harmonic at every 
point $z \in \dd$. 
\end{theorem}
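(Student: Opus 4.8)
The plan is to reduce the harmonicity of $\textbf{K}$ to a holomorphicity check by invoking the $\dd$-version of Proposition \ref{thm2} (equivalently Proposition \ref{firstprop}): a smooth vector field $\xi$ on $\dd$ is harmonic if and only if the associated quadratic differential $(\mathcal{L}_\xi\textbf{g}_\dd)^{(2,0)}$ is holomorphic, and in the standard coordinate this differential is $f\,dz^2$ with $\overline{f(z)} = 2\rho^2\,\partial_{\bar z}\xi$, where $\rho = 2/(1-|z|^2)$ is the conformal factor of $\textbf{g}_\dd = \rho^2\,dz\,d\bar z$. This is the verbatim transcription to $\dd$ of the computation in \S\ref{chapter3section2} (the Lie-derivative formula $\mathcal{L}_\xi\textbf{g} = (d\xi^T+d\xi)\textbf{g}+\textbf{g}'(\xi)$ gives $\overline{f}=2\rho^2\partial_{\bar z}\xi$ for any conformal metric $\rho^2\,dz\,d\bar z$), so the only preliminary point is to pin down the constant $2\rho^2$. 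Since the only singularity of $\textbf{K}(z) = \iota(1-|z|^2)^3/\bigl(|1-\bar z|^2(1-\bar z)^2\bigr)$ is at $z=1\in\partial\dd$, the vector field $\textbf{K}$ is smooth throughout $\dd$ and the criterion applies.

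First I would rewrite $\textbf{K}$ in a form suited to differentiation in $\bar z$: using $|1-\bar z|^2 = (1-z)(1-\bar z)$ one gets $\textbf{K}(z) = \iota(1-z\bar z)^3/((1-z)(1-\bar z)^3)$. Differentiating with $z$ held fixed and factoring out $3\iota(1-z\bar z)^2(1-z)^{-1}(1-\bar z)^{-4}$, the bracket that remains is $-z(1-\bar z) + (1-z\bar z) = 1-z$, which cancels the surviving $(1-z)^{-1}$; hence
\[
\frac{\partial\textbf{K}}{\partial\bar z} = \frac{3\iota(1-z\bar z)^2}{(1-\bar z)^4}.
\]
Multiplying by $2\rho^2 = 8/(1-z\bar z)^2$ removes the factor $(1-z\bar z)^2$ as well, so $\overline{f(z)} = 24\iota/(1-\bar z)^4$, i.e. $(\mathcal{L}_{\textbf{K}}\textbf{g}_\dd)^{(2,0)} = f\,dz^2$ with $f(z) = -24\iota/(1-z)^4$.

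The function $f(z) = -24\iota/(1-z)^4$ is holomorphic on all of $\dd$, since its only pole sits at $z=1$ on the boundary circle $\ss^1$; therefore $(\mathcal{L}_{\textbf{K}}\textbf{g}_\dd)^{(2,0)}$ is a holomorphic quadratic differential on $\dd$, and by the $\dd$-version of Proposition \ref{thm2} the vector field $\textbf{K}$ is harmonic at every point of $\dd$. I do not anticipate a real obstacle here: the argument is purely computational, and the only places demanding care are the bookkeeping of the conformal constant $2\rho^2$ (a one-line conformal change from the $\hh$-computation of \S\ref{chapter3section2}) and the tracking of the two successive cancellations — first of $(1-z)$, then of $(1-z\bar z)^2$ — which are precisely what force $f$ to be holomorphic. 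As an alternative to invoking Proposition \ref{thm2}, one could write down the disk analogues of the harmonic-vector-field equations (\ref{equ9})--(\ref{equ10}) and substitute $\textbf{K}$ directly, but passing through the $(2,0)$-part is shorter and makes transparent why the cancellation happens.
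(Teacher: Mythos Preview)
Your argument is correct and is in fact cleaner than the paper's proof. You invoke the $\dd$-version of Proposition~\ref{thm2} globally: you compute $\partial_{\bar z}\textbf{K}$ directly, multiply by $2\rho^2$, observe the two cancellations, and obtain the holomorphic function $f(z)=-24\iota/(1-z)^4$ on all of $\dd$ at once.

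The paper takes a different route. It first checks harmonicity \emph{only at the origin}, via a second-order Taylor expansion of $\textbf{K}$ there (so the conformal factor and Christoffel symbols can be ignored to leading order). It then proves a separate equivariance lemma: for every $\gamma\in\mathrm{Stab}_{\mathrm{PSU}(1,1)}(1)$ one has $\gamma^*\textbf{K}=(\gamma'(1))^{-2}\,\textbf{K}$, i.e.\ $\textbf{K}$ is invariant up to a real scalar under the stabiliser of $1\in\ss^1$. Since that stabiliser acts transitively on $\dd$ and harmonicity is preserved by conformal automorphisms, harmonicity at the origin propagates to all of $\dd$.

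What each approach buys: your direct computation is shorter and yields the associated holomorphic quadratic differential $-24\iota(1-z)^{-4}\,dz^2$ explicitly, which is nice to have. The paper's approach is longer but produces, as a by-product, the scalar-equivariance identity $\gamma^*\textbf{K}=(\gamma'(1))^{-2}\textbf{K}$; this identity is \emph{not} a throwaway---it is exactly what the paper uses in the proof of Proposition~\ref{propcon} to establish $\mathrm{PSU}(1,1)$-equivariance of the map $\mathcal{F}(X)=f\ast\textbf{K}$. So if you adopt your proof in place of the paper's, be aware that the equivariance of $\textbf{K}$ under $\mathrm{Stab}_{\mathrm{PSU}(1,1)}(1)$ still needs to be established separately for Proposition~\ref{propcon}.
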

\begin{proof}
Recall 
Theorem \ref{thm2} in \textbf{\cref{chapter3section2}} in \textbf{\cref{chapter3}}  
where we show that a 
vector field $\xi$ on $\dd$
is harmonic iff the quadratic differential $(L_{\xi}\textbf{g}_{\dd})^{(2, 0)}$
associated with it is holomorphic.
We 
first prove that $\textbf{K}$ is harmonic at the origin in $\dd$. We write the Taylor 
approximation of $\textbf{K}$ up to 
the second order at 
the origin as follows:
\begin{equation}
 \label{finalharmonic1} 
 \begin{split}
         \textbf{K}(z) & = \frac{\iota (1-|z|^{2})^{3}}{|1-\bar{z}|^{2} \cdot (1-\bar{z})^{2}} \\
         & = \frac{\iota (1-|z|^{2})^{3} }{ (1-\bar{z}) \overline{(1-\bar{z})} 
         (1-\bar{z})^{2}} \\
         & = \iota (1-|z|^{2})^{3} (1- \bar{z})^{-3} (1-z)^{-1} \\
         & \approx \iota(1-3|z|^{2})(1+\bar{z}+\bar{z}^{2})^{3} (1+z+z^{2}) \\
         & \approx \iota(1-3|z|^{2}) (1+3 \bar{z} + 3 \bar{z}^{2}+3 \bar{z}^{2})(1+z+z^{2}) \\
         & \approx \iota(1+ 3 \bar{z} + 6 \bar{z}^{2} - 3|z|^{2} ) (1+z+z^{2}) \\
         & \approx \iota(1 + 3 \bar{z} + 6 \bar{z}^{2} - 3|z|^{2} + z + 3|z|^{2} + z^{2}) \\
         & = \iota (1 + z + 3 \bar{z} + z^{2} + 6 \bar{z}^{2}) \\
         & = \iota \big(1+ (x+ \iota y) + 3 (x- \iota y) + x^{2}-y^{2} + 2 \iota xy + 6 
         (x^{2}-y^{2}) - 12 \iota xy \big) \\
         & = \iota (1+ 4x -2 \iota y + 7x^{2}-7y^{2} - 10 \iota xy) \\
         & = (2y+10xy, 1+4x+7x^{2}-7y^{2}).
\end{split}
\end{equation}
Note that the metric $\textbf{g}_{\dd}$ at the origin does not change. Following 
the criteria for harmonicity of a vector field from \textbf{\cref{chapter3section2}} in  
\textbf{\cref{chapter3}}, 
we 
notice that the quadratic differential $q$ associated to $\textbf{K}$ is given as 
$(6\iota-24 \iota z) dz^{2}$. The function 
$f(z)= 6\iota-24 \iota z$ is holomorphic. Hence, $\textbf{K}$ is harmonic at 
the origin in $\dd$. Now, we claim that the vector field $\textbf{K}$ when transformed using elements 
$\gamma \in \mathrm{PSU}(1,1)$ which fix the element $1$ in the boundary circle 
$\ss^{1}$, 
changes 
only by multiplying it by a non-zero real constant. 
\paragraph{\textit{Proof of the claim:}} 
Recall the general form of elements $\gamma$ of 
the group 
$\mathrm{PSU}(1, 1)$ which fix the element $1$ in the boundary circle 
$\ss^{1}$ given by 
(\ref{gammafix1}) in Remark \ref{acharemark}.
Now, 
 $\gamma$ acts on 
$\textbf{K}$ in the usual way: 
\begin{equation}
\label{actiononkernelvector}
\begin{split}
 \gamma^{\ast}\textbf{K} & = \textbf{K}(\gamma(z))\gamma'(z)^{-1} \\
 & = \frac{\iota \big(1-|\gamma(z)|^{2}\big)^{3}}{\big|1-\overline{\gamma(z)}\big|^{2} 
 \cdot \big(1-\overline{\gamma(z)}\big)^{2}} \cdot \gamma'(z)^{-1}.
\end{split}
\end{equation}
Using (\ref{gammafix1}), the numerator and the denominator of the term $
\frac{\iota \big(1-|\gamma(z)|^{2}\big)^{3}}{\big|1-\overline{\gamma(z)}\big|^{2} 
 \cdot \big(1-\overline{\gamma(z)}\big)^{2}}$ in 
the R.H.S of (\ref{actiononkernelvector}) are explicitly written as: 
\begin{equation}
\label{numerator}
 \begin{split}
 \iota \big(1-|\gamma(z)|^{2}\big)^{3} & = 
 \iota \bigg(1- \gamma(z) \overline{\gamma(z)} \bigg)^{3} \\
 & = \iota \bigg(1 - \frac{az+b}{\bar{b}z+\bar{a}} \cdot 
 \frac{\bar{a}\bar{z}+\bar{b}}{b\bar{z}+a} \bigg)^{3}
 = 
 \iota \bigg(1 - \frac{|a|^{2}|z|^{2} + az \bar{b}+b\bar{a}\bar{z}+|b|^{2}}
 {|b|^{2}|z|^{2}+az \bar{b}+b\bar{a}\bar{z}+|a|^{2}}   \bigg)^{3} \\
 & = \iota \bigg( \frac{1-|z|^{2}}
 {|b|^{2}|z|^{2}+az \bar{b}+b\bar{a}\bar{z}+|a|^{2}}\bigg)^{3} 
  = \frac{\iota \big(1-|z|^{2}\big)^{3}}{\big((\bar{b}z+\bar{a})(b\bar{z}+a)\big)^{3}}
 \end{split}
\end{equation}
and 
\begin{equation}
\label{denominator}
 \begin{split}
  \big|1-\overline{\gamma(z)}\big|^{2} 
 \cdot \big(1-\overline{\gamma(z)}\big)^{2} & = 
 \big(1-\overline{\gamma(z)}\big) \overline{\big( 1-\overline{\gamma(z)}\big)}
 \cdot \big(1-\overline{\gamma(z)}\big)^{2} \\
 & = \big(1-\overline{\gamma(z)}\big) \big(1-\gamma(z)\big) \cdot \big(1-\overline{\gamma(z)}\big)^{2} \\
 & = \bigg(\frac{(b-\bar{a})\bar{z} - (\bar{b}-a)}{b \bar{z}+a} \bigg) \bigg(
 \frac{(\bar{b}-a)z - (b-\bar{a})}{\bar{b} z+\bar{a}} \bigg)
 \bigg(\frac{(b-\bar{a})\bar{z} - (\bar{b}-a)}{b \bar{z}+a} \bigg)^{2} \\
 & = \frac{\big(b-\bar{a}\big)^{2} \cdot \big|1-\bar{z}\big|^{2}}
 {(b\bar{z}+a) (\bar{b}z+\bar{a})} \cdot 
 \frac{\big(b-\bar{a}\big)^{2}\big(1-\bar{z}\big)^{2}}{\big(b \bar{z}+a\big)^{2}} 
  = \frac{\big(b-\bar{a}\big)^{4} \big|1-\bar{z}\big|^{2} \big(1-\bar{z}\big)^{2} }
 { (b\bar{z}+a)^{3} (\bar{b}z+\bar{a})},
 \end{split}
\end{equation}
where in the last two equalities in (\ref{denominator}) we have used the fact that 
$b-\bar{a}$ is real, i.e., $b-\bar{a}=\bar{b}-a$. 
Also, $\gamma'(z)^{-1}= \big(\bar{b} z+\bar{a}\big)^{2}$. 
Using (\ref{numerator}) and (\ref{denominator}), the explicit form of the R.H.S of 
(\ref{actiononkernelvector}) is 
\begin{equation*}
 \begin{split}
  \frac{\iota \big(1-|\gamma(z)|^{2}\big)^{3}}{\big|1-\overline{\gamma(z)}\big|^{2} 
 \cdot \big(1-\overline{\gamma(z)}\big)^{2}} \cdot \gamma'(z)^{-1} & = 
\frac{\iota \big(1-|z|^{2}\big)^{3} (b\bar{z}+a)^{3} (\bar{b}z+\bar{a})}
{\big((\bar{b}z+\bar{a})(b\bar{z}+a)\big)^{3} \big(b-\bar{a}\big)^{4} 
\big|1-\bar{z}\big|^{2} \big(1-\bar{z}\big)^{2}} \cdot \big(\bar{b} z+\bar{a}\big)^{2} \\
& = \frac{1}{\big(b-\bar{a}\big)^{4}} \cdot 
\frac{\iota (1-|z|^{2})^{3}}{|1-\bar{z}|^{2} \cdot (1-\bar{z})^{2}} \\
& = \frac{1}{\big(b-\bar{a}\big)^{4}} \textbf{K}(z) = 
(\gamma'(1))^{-2} \textbf{K}(z).
 \end{split}
\end{equation*}
As mentioned in Remark \ref{acharemark},  
every element $A \in \mathrm{PSU}(1, 1)$ has a unique expression $A= BC$ where 
$B \in \mathrm{SO}(2)$ and $C$ is in the two-dimensional subgroup 
$\mathrm{Stab}_{\mathrm{PSU}(1, 1)}(1)$ 
of $\mathrm{PSU}(1, 1)$ consisting of 
all elements which fix the element $1$ in the boundary circle $\ss^{1}$. 
Therefore, 
$\textbf{K}$ is $\mathrm{Stab}_{\mathrm{PSU}(1, 1)}(1)$-invariant up 
to multiplication by 
real scalars. 
Note
that the harmonicity of a vector field on the open unit disk 
$\dd$ is preserved by 
conformal automorphisms of $\dd$. 
Hence, $\textbf{K}$ is harmonic everywhere on 
the open unit disk $\dd$.
\hfill \qedsymbol
\end{proof}
\begin{remark}
 \normalfont
 $\mathrm{Stab}_{\mathrm{PSU}(1, 1)}(1)$-invariance of 
 $\textbf{K}$ up 
to multiplication by 
real scalars suffices to ensure that $\textbf{K}$ is harmonic on the open unit disk 
$\dd$ because $\mathrm{Stab}_{\mathrm{PSU}(1, 1)}(1)$ acts transitively on the 
open unit disk $\dd$.
\end{remark}
\begin{remark}
 \normalfont
 Since the Poisson Kernel vector field $\textbf{K}$ is harmonic, $\mathcal{F}(X)$
 given by (\ref{finals}) is also harmonic on $\dd$, 
 where $X$ is a tangential vector field 
 on $\ss^{1}$. 
\end{remark}
\bigskip
\paragraph{\textit{Proof of Proposition \ref{propcon}:}} 
The map $\mathcal{F}$, given by (\ref{finals}), 
is clearly $\mathrm{PSU}(1, 1)$-equivariant. It follows from 
$\mathrm{Stab}_{\mathrm{PSU}(1, 1)}(1)$-invariance of $\textbf{K}$ up to 
multiplication by real scalars (see Proof of 
Proposition \ref{kernelvectorfieldisharmonic}). Hence, it immediately 
follows that $\mathcal{F}$ satisfies all the conditions stated in Proposition 
\ref{prop412}. 
\hfill \qedsymbol
\begin{coro}
\label{coro4213}
 The map $\mathcal{F}$, given by (\ref{finals}), is same as the map 
 $\mathcal{P}$ in Proposition \ref{prop412}. 
 Hence, the map $\mathcal{P}$ in  
 Proposition \ref{prop412} lands in the vector space of harmonic 
  vector fields on the open unit disk $\dd$.
\end{coro}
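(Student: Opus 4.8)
The statement to prove is Corollary \ref{coro4213}: the map $\mathcal{F}$ given by $\mathcal{F}(X) = f \ast \textbf{K}$ coincides with the Poisson map $\mathcal{P}$ of Proposition \ref{prop412}, and consequently $\mathcal{P}$ lands in the space of harmonic vector fields on $\dd$.

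The plan is to invoke the uniqueness statement in Proposition \ref{prop412}: any continuous linear $\mathrm{PSU}(1,1)$-equivariant map $\mathcal{S}_{C^0}(T\ss^1) \to \mathcal{S}_{C^0}(T\dd)$ is a scalar multiple of $\mathcal{P}$. First I would note that Proposition \ref{propcon} (just proved) tells us $\mathcal{F}$ satisfies all the hypotheses of Proposition \ref{prop412} — it is continuous, linear, and $\mathrm{PSU}(1,1)$-equivariant. Hence $\mathcal{F} = \lambda \mathcal{P}$ for some scalar $\lambda \in \cc$. To pin down $\lambda$, I would evaluate both sides on a convenient vector field at a convenient point: the natural choice is to compare $\mathcal{F}(\boldsymbol\delta)$ and $\mathcal{P}(\boldsymbol\delta)$ (or a smooth approximant thereof) at the origin, or equivalently to trace through the construction in the Remark preceding Proposition \ref{propcon}, where $\mathcal{F}$ was defined precisely by declaring $\mathcal{F}(\boldsymbol\delta) = \tfrac{1}{2\pi}\textbf{K}$ with $\textbf{K}$ obtained from the formula (\ref{poissonvector}) defining $\mathcal{P}$. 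Since both maps were normalized by the same $\tfrac{1}{2\pi}$ factor and both send the Dirac vector field to $\tfrac{1}{2\pi}\textbf{K}$, the scalar is $\lambda = 1$, so $\mathcal{F} = \mathcal{P}$.

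Once the identification $\mathcal{F} = \mathcal{P}$ is established, the second assertion is immediate: Theorem \ref{kernelvectorfieldisharmonic} says the Poisson kernel vector field $\textbf{K}$ is harmonic at every point of $\dd$, and the Remark following that theorem observes that $\mathcal{F}(X) = f \ast \textbf{K}$ inherits harmonicity — convolution with a harmonic vector field against a boundary function produces a harmonic vector field, by the same argument (linearity of the harmonicity condition plus $\mathrm{PSU}(1,1)$-equivariance, exactly as in the scalar Poisson case where $P(f)$ is harmonic because $K$ is). Therefore $\mathcal{P} = \mathcal{F}$ lands in the vector space of harmonic vector fields on $\dd$, which is the claim.

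The only genuinely delicate point is the normalization: one must be careful that the distributional manipulation with $\boldsymbol\delta$ in the Remark — approximating $\boldsymbol\delta$ by norm-$1$ tangential vector fields supported on shrinking arcs, and tracking both the arc-length stretching factor $|(\gamma'(\gamma^{-1}(1)))^{-1}|$ and the vector-stretching factor $(\gamma'(\gamma^{-1}(1)))^{-1}$ — really does reproduce the same formula (\ref{poissonvector}) for $\mathcal{P}$ and not some rescaling of it. Since the defining formula (\ref{finals}) for $\mathcal{F}$ was derived directly from (\ref{poissonvector}) by this very substitution, the two agree on the dense set of finite linear combinations of translated Dirac vector fields (equivalently, by continuity and the Stone–Weierstrass-type density of such combinations in $\mathcal{S}_{C^0}(T\ss^1)$), and continuity of both maps then forces $\mathcal{F} = \mathcal{P}$ everywhere. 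No further computation is needed beyond what was already carried out in the Remark and in the proof of Theorem \ref{kernelvectorfieldisharmonic}.
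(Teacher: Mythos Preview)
Your proposal is correct and takes essentially the same approach as the paper. The paper states the corollary without explicit proof, leaving it as an immediate consequence of Proposition~\ref{propcon} (which feeds $\mathcal{F}$ into the uniqueness statement of Proposition~\ref{prop412}), the construction of $\mathcal{F}$ from the very formula defining $\mathcal{P}$ applied to the Dirac vector field (which fixes the scalar to be $1$), and Theorem~\ref{kernelvectorfieldisharmonic} together with the remark after it (which gives harmonicity of $\mathcal{F}(X)$); your write-up simply spells out these steps and correctly flags the normalization as the only point requiring care.
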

\begin{lemma}\label{finaltheorem}
 For a continuous tangential vector field $X$ on $\ss^{1}$, 
 $\mathcal{F}(X)$ and $X$ together 
 make up a continuous vector field on the closed unit disk $\overline{\dd}$. 
\end{lemma}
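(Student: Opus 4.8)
The plan is to recognize $\mathcal{F}$ as a Poisson-type map and to show that the Poisson kernel vector field $\textbf{K}$ of (\ref{finalharmonic}) behaves as an \emph{approximate identity} for tangential vector fields on $\ss^{1}$, in exact analogy with the classical fact (recalled in \cref{invariancepoisson}) that the scalar Poisson kernel solves the Dirichlet problem with continuous boundary data. Since $\mathcal{F}(X)=f\ast\textbf{K}$ is harmonic, hence continuous, on the open disk by Theorem \ref{kernelvectorfieldisharmonic} and Corollary \ref{coro4213}, the only thing to prove is that the field $\widehat{X}$ which equals $\mathcal{F}(X)$ on $\dd$ and $X$ on $\ss^{1}$ is continuous at each boundary point. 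Because $\mathcal{F}$ is $\mathrm{PSU}(1,1)$-equivariant (Proposition \ref{propcon}), in particular $\mathrm{SO}(2)$-equivariant, and the relevant integrals below are rotation invariant, it suffices to prove $\lim_{z\to 1,\,z\in\dd}\mathcal{F}(X)(z)=X(1)$.

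First I would reduce to two model situations. Write the continuous tangential field as $X=fY$ with $Y(z)=\iota z$ and $f\in C^{0}(\ss^{1})$ real-valued, and split $f=f(1)+g$ with $g(1)=0$; by linearity, $\mathcal{F}(X)=f(1)\,\mathcal{F}(Y)+\mathcal{F}(gY)$. For the first summand I would compute $\mathcal{F}(Y)$ outright. Putting $f\equiv 1$ into (\ref{finals}) (interpreting the convolution so that the value of $\textbf{K}$ at $zw^{-1}$ is carried to a vector at $z$ by the rotation $w$, which is what the derivation of $\textbf{K}$ builds in), substituting $w=e^{\iota\phi}$ and $\zeta=e^{\iota\phi}$, and using (\ref{finalharmonic}), the integral becomes a contour integral over $|\zeta|=1$ whose only pole inside the disk is at $\zeta=z$, with residue $\iota z$; hence $\mathcal{F}(Y)(z)=\iota z=Y(z)$ on all of $\dd$, which extends continuously to $\overline{\dd}$ with boundary value $Y$. (This is consistent with $\mathcal{F}(Y)$ being a harmonic $\mathrm{SO}(2)$-invariant vector field: the associated quadratic differential is then holomorphic and $\mathrm{SO}(2)$-invariant, hence zero by Theorem \ref{sess}, so $\mathcal{F}(Y)$ is a holomorphic $\mathrm{SO}(2)$-invariant field, necessarily of the form $bz$.) Consequently $f(1)\,\mathcal{F}(Y)(z)=f(1)\iota z\to f(1)\iota=X(1)$ as $z\to 1$.

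The heart of the argument is then $\mathcal{F}(gY)(z)\to 0$, which I would establish by the standard approximate-identity estimate. From (\ref{finalharmonic}) one gets $|\textbf{K}(zw^{-1})|=(1-|z|^{2})^{3}/|z-w|^{4}$ for $w\in\ss^{1}$, so $|\mathcal{F}(gY)(z)|\le\frac{1}{2\pi}\int_{\ss^{1}}|g(w)|\,(1-|z|^{2})^{3}|z-w|^{-4}\,dw$. The key elementary input is the uniform mass bound
\begin{equation*}
 \frac{1}{2\pi}\int_{\ss^{1}}\frac{(1-|z|^{2})^{3}}{|z-w|^{4}}\,dw\leq M\qquad\text{for all }z\in\dd,
\end{equation*}
obtained by bounding $|z-w|^{2}$ below by $c\big((1-|z|)^{2}+(\theta-\psi)^{2}\big)$ and rescaling the angular variable by $t=(\theta-\psi)/(1-|z|)$, which turns the integral into a fixed convergent one of the shape $\int dt/(1+c^{2}t^{4})$. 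Given $\epsilon>0$, continuity of $f$ at $1$ yields $\delta>0$ with $|g(w)|<\epsilon$ for $|w-1|<\delta$; splitting $\ss^{1}$ into $\{|w-1|<\delta\}$ and its complement, the near part is $\le\epsilon M$ by the mass bound, while on the far part $|z-w|$ stays bounded away from $0$ as $z\to 1$ so that $(1-|z|^{2})^{3}|z-w|^{-4}\to 0$ uniformly, making that contribution vanish. Hence $\limsup_{z\to 1}|\mathcal{F}(gY)(z)|\le\epsilon M$ for every $\epsilon$, so it is $0$, and combining the two summands gives $\mathcal{F}(X)(z)\to X(1)$.

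The main obstacle I anticipate is exactly the uniform mass bound for $\textbf{K}$: the integrand $(1-|z|^{2})^{3}|z-w|^{-4}$ is borderline singular as $z$ tends to the boundary and $w\to z$, and one must check carefully both that the exponents balance (the rescaling above produces the clean cancellation $(1-|z|)^{3}\cdot(1-|z|)/(1-|z|)^{4}=1$) and that the bound holds for \emph{all} approach directions, including tangential ones, so that one genuinely gets continuity on $\overline{\dd}$ rather than merely non-tangential boundary behaviour. A secondary, bookkeeping-level point is to fix once and for all the precise meaning of the vector-valued convolution $f\ast\textbf{K}$ (the rotation by $w$ transporting $\textbf{K}(zw^{-1})$ to $T_{z}\dd$), since both the computation $\mathcal{F}(Y)=Y$ and the equivariance reduction depend on using it consistently.
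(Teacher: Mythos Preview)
Your proposal is correct and follows essentially the same approximate-identity strategy as the paper, just packaged differently. The paper restricts $\textbf{K}$ to circles of radius $1-\epsilon$, derives the two estimates $|\textbf{K}_{1-\epsilon}(z)|\le 8/\epsilon$ and $|\textbf{K}_{1-\epsilon}(z)|\le 8\epsilon^{3}/\lambda_{z}^{4}$ (localisation), and then computes the total mass $\frac{1}{2\pi}\int_{\ss^{1}}\textbf{K}_{1-\epsilon}\to\iota$ by a residue calculation; your splitting $f=f(1)+g$ plays the same role, with your identity $\mathcal{F}(Y)=Y$ replacing the paper's mass computation and your near/far decomposition replacing the paper's two pointwise bounds. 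Your version is arguably tidier on one point: the mass bound you flag as the ``main obstacle'' is in fact exact and small, since by Parseval applied to the scalar Poisson kernel one gets $\frac{1}{2\pi}\int_{\ss^{1}}(1-|z|^{2})^{3}|z-w|^{-4}\,dw=1+|z|^{2}\le 2$, which removes any worry about tangential approach. You are also right to keep track of the rotation factor in the vector-valued convolution; the paper's sketch suppresses it.
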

\begin{proof}[Sketch]
For every $\epsilon > 0$, 
we get a continuous vector field $\textbf{K}_{1-\epsilon}$ on 
$\ss^{1}$ by composing 
$\textbf{K}$ with the map $z \longmapsto (1-\epsilon)z$. 
We first notice that 
\begin{equation}
\label{diractvector}
  \textbf{K}_{1-\epsilon}(z) = 
  \frac{\iota \big(1-|(1-\epsilon)z|^{2}\big)^{3}}
  {\big(1-(1-\epsilon)\bar{z}\big)^{3} \cdot \big(1-(1-\epsilon)z\big)},
\end{equation}
where $|z|=1$. Simplifying (\ref{diractvector}), we get 
\begin{equation}
 \label{diractvector1}
 \textbf{K}_{1-\epsilon}(z) \approx 
 \frac{\iota 8 \epsilon^{3} z^{3}}{\big(1-(1-\epsilon)z\big) \cdot 
 \big(z-(1-\epsilon) \big)^{3}},
\end{equation}
where we used the fact that $\bar{z}=z^{-1}$. We put $1-\epsilon =s$ in 
(\ref{diractvector1}) and get 
\begin{equation*}
\label{diractvector2}
\textbf{K}_{1-\epsilon}(z) \approx 
 \frac{\iota 8 \epsilon^{3} z^{3}}{(1-sz) \cdot 
 (z-s)^{3}}.
\end{equation*}
Let $\lambda_{z}=|z-(1-\epsilon)|$. 
Notice that 
\begin{equation}
 \label{estimatedelta}
 |\textbf{K}_{1-\epsilon}(z)| \leq \frac{8}{\epsilon}.
\end{equation}
The estimate in (\ref{estimatedelta}) is independent of $z$. 
And now, let us try to make an upper bound for $|\textbf{K}_{1-\epsilon}(z)|$ 
which is dependent on $z$. The following estimate for 
$|\textbf{K}_{1-\epsilon}(z)|$ ensures that $\textbf{K}_{1-\epsilon}$ is 'very small' 
outside the arc of length $\sqrt{2\epsilon}$ centered at $1$. 
\begin{equation}
 \label{estimatedelta1}
 |\textbf{K}_{1-\epsilon}(z)| \leq \frac{8 \epsilon^{3}}{\lambda_{z}^{4}}. 
\end{equation}
(\ref{estimatedelta}) and (\ref{estimatedelta1}) have following two consequences:
\begin{enumerate}
 \item if $X=fY$, where $f$ is a real-valued continuous function on 
 $\ss^{1}$ and $Y$ is the norm $1$ continous tangential vector field on $\ss^{1}$, 
 we will have 
 \begin{equation}
  \label{estimatedelta2}
  (f \ast K_{1-\epsilon})(z) \approx f(z) \cdot \bigg(\frac{1}{2 \pi} 
 \int_{\ss^{1}} K_{1-\epsilon} \bigg), \quad z \in \ss^{1}.
 \end{equation}
 Therefore, it is enough to show that 
 $$ \lim_{\epsilon \rightarrow 0}\bigg(\frac{1}{2 \pi} 
 \int_{\ss^{1}} K_{1-\epsilon} \bigg) = \iota. $$
 \item we may replace the ordinary Haar integral by the complex path integral 
 at the price of dividing by $\iota$. 
\end{enumerate}
Therefore,  
\begin{equation}
 \label{diractvector3}
 \begin{split}
  \lim_{\epsilon \rightarrow 0} \Bigg(\frac{1}{2\pi \iota} \int_{\ss^{1}} 
 \frac{\iota 8 \epsilon^{3} z^{3}}{\big(1-sz\big) \cdot 
 \big(z-s \big)^{3}} dz \Bigg) & = \lim_{\epsilon \rightarrow 0} 
 \Bigg( \frac{\iota 8 \epsilon^{3}}{2 \pi \iota} \int_{\ss^{1}}
 \frac{ z^{3}}{\big(1-sz\big) \cdot 
 \big(z-s \big)^{3}} dz \Bigg) \\
 & = \lim_{\epsilon \rightarrow 0} \bigg( \frac{\iota 8 \epsilon^{3}}{2 \pi \iota} \big(
 2 \pi \iota \cdot \mathrm{Res}(f, s) \big)\bigg),
  \end{split}
\end{equation}
where $f(z) = \frac{z^{3}}{(1-sz) \cdot (z-s)^{3}}$, and 
$$\mathrm{Res}(f, s) = \frac{6s-12s^{3}+8s^{5}-2s^{7}}{2(1-s^{2})^{4}}
=\frac{4 \epsilon + 2 \epsilon^{2}+2\epsilon^{3}-
30 \epsilon^{4}+34\epsilon^{5}- 14 \epsilon^{6}+2 \epsilon^{7}}
{2\big(16 \epsilon^{4}-32 \epsilon^{5}+20 \epsilon^{6}-8 
\epsilon^{7}+\epsilon^{8}\big)}.$$
Rewriting (\ref{diractvector3}), we get
\begin{equation*}
\begin{split}
 \lim_{\epsilon \rightarrow 0} \bigg( \frac{\iota 8 \epsilon^{3}}{2 \pi \iota} \Big(
 2 \pi \iota \cdot \mathrm{Res}(f, s) \big)\bigg) & = 
 \lim_{\epsilon \rightarrow 0} \Bigg(8 \iota \cdot 
 \frac{4 \epsilon + 2 \epsilon^{2}+2\epsilon^{3}-
30 \epsilon^{4}+34\epsilon^{5}- 14 \epsilon^{6}+2 \epsilon^{7}}
{2\big(16 \epsilon-32 \epsilon^{2}+20 \epsilon^{3}-8 
\epsilon^{4}+\epsilon^{5}\big)}
    \Bigg) \\
    & = 4 \iota \Bigg( \lim_{\epsilon \rightarrow 0} 
    \frac{4 \epsilon + 2 \epsilon^{2}+2\epsilon^{3}-
30 \epsilon^{4}+34\epsilon^{5}- 14 \epsilon^{6}+2 \epsilon^{7}}
{16 \epsilon-32 \epsilon^{2}+20 \epsilon^{3}-8 
\epsilon^{4}+\epsilon^{5}}
    \Bigg) \\
    & = \iota. 
    \end{split}
\end{equation*} \hfill \qedsymbol 
\end{proof}

\begin{coro}
\label{lastcoro}
 For an $L^{2}$-tangential vector field $X$ on $\ss^{1}$, $X$ is an $L^{2}$-boundary 
 extension of the smooth vector field $\mathcal{F}(X)$ on the open unit disk $\dd$. 
\end{coro}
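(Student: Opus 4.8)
The plan is to upgrade the continuous statement of Lemma~\ref{finaltheorem} to the $L^{2}$ setting by a uniform boundedness argument for the boundary-restriction operators, carried out on the Fourier side. Write $X = fY$ with $Y(z) = \iota z$ and $f$ a real-valued $L^{2}$-function on $\ss^{1}$, so that $\mathcal{F}(X) = f \ast \textbf{K}$ with $\textbf{K}(z) = \dfrac{\iota(1-|z|^{2})^{3}}{|1-\bar{z}|^{2}\,(1-\bar{z})^{2}}$ as in (\ref{finals}), (\ref{finalharmonic}). For fixed $z \in \dd$ the map $w \mapsto \textbf{K}(zw^{-1})$ is smooth on $\ss^{1}$ (its argument has modulus $|z| < 1$), so the convolution integral converges for $f \in L^{2}(\ss^{1}) \subset L^{1}(\ss^{1})$, and differentiating under the integral shows $\mathcal{F}(X)$ is smooth on $\dd$; it is harmonic since $\textbf{K}$ is, by Theorem~\ref{kernelvectorfieldisharmonic}. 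For $0 < r < 1$ let $\mathcal{F}(X)_{r} \in L^{2}(\ss^{1})$ denote the restriction of $\mathcal{F}(X)$ to the circle $\ss^{1}_{r}$ of radius $r$, identified with $\ss^{1}$ by rescaling as in Remark~\ref{l2cont}; the goal is to prove $\mathcal{F}(X)_{r} \to X$ in $L^{2}(\ss^{1})$ as $r \to 1^{-}$.

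First I would record the Fourier-multiplier description of the operators $R_{r}\colon X \mapsto \mathcal{F}(X)_{r}$. Since convolution on the circle acts by multiplication of Fourier coefficients, $\widehat{\mathcal{F}(X)_{r}}(n) = \widehat{f}(n)\,\kappa_{n}(r)$, where $\kappa_{n}(r)$ is the $n$-th Fourier coefficient of $\theta \mapsto \textbf{K}(re^{\iota\theta})$ (the rescaling identification of $\ss^{1}_{r}$ with $\ss^{1}$ contributes only the harmless factor $r^{-1} \to 1$). Using $\textbf{K}(z) = \iota(1-|z|^{2})^{3}(1-\bar{z})^{-3}(1-z)^{-1}$ together with the binomial series for $(1-\bar{z})^{-3}$ and $(1-z)^{-1}$, a short computation gives $\kappa_{n}(r) = \iota r^{n}$ for $n \geq 0$, and, for $n = -m$ with $m \geq 1$,
\begin{equation*}
\kappa_{-m}(r) = \iota\, r^{m}\Big(\binom{m+2}{2}(1-r^{2})^{2} + r^{2} + (m+1)\,r^{2}(1-r^{2})\Big).
\end{equation*}

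The hard part will be the estimate $M := \sup_{n \in \zz}\ \sup_{0 < r < 1}|\kappa_{n}(r)| < \infty$; this is the analytic core of the corollary, and it is genuinely nontrivial because the crude bound $\|f \ast \textbf{K}_{r}\|_{L^{2}} \leq \|\textbf{K}_{r}\|_{L^{1}}\|f\|_{L^{2}}$ is useless here: the estimates in the proof of Lemma~\ref{finaltheorem} only give $\|\textbf{K}_{1-\epsilon}\|_{L^{1}(\ss^{1})} = O(\epsilon^{-1/2})$, so one must exploit the oscillatory cancellation in $\textbf{K}$, which is exactly what the Fourier coefficients see. For $n \geq 0$ one has $|\kappa_{n}(r)| = r^{n} \leq 1$. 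For $n = -m < 0$ the three summands are handled separately using the elementary bounds $\sup_{0 \leq r < 1} r^{m}(1-r)^{2} = O(m^{-2})$ and $\sup_{0 \leq r < 1} r^{m}(1-r) = O(m^{-1})$ (maximise at $r = m/(m+2)$ and $r = m/(m+1)$ respectively); combined with $\binom{m+2}{2} = O(m^{2})$ this makes each of $r^{m}\binom{m+2}{2}(1-r^{2})^{2}$, $r^{m}r^{2}$ and $r^{m}(m+1)r^{2}(1-r^{2})$ bounded uniformly in $m$ and $r$, so $M < \infty$. By Plancherel, $\|R_{r}X\|_{L^{2}} = \|\mathcal{F}(X)_{r}\|_{L^{2}} \leq (M + o(1))\,\|f\|_{L^{2}} = (M + o(1))\,\|X\|_{L^{2}}$ as $r \to 1^{-}$, i.e. the operators $R_{r}$ on the Hilbert space of $L^{2}$-tangential vector fields on $\ss^{1}$ are uniformly bounded for $r$ near $1$.

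Finally I would close the argument by density. Continuous tangential vector fields are dense in the $L^{2}$-tangential ones (because $C^{0}(\ss^{1})$ is dense in $L^{2}(\ss^{1})$ and every tangential field has the form $fY$), and for any continuous tangential $X'$ Lemma~\ref{finaltheorem} says $\mathcal{F}(X')$ together with $X'$ is a continuous vector field on $\overline{\dd}$, whence $R_{r}X' \to X'$ uniformly, hence in $L^{2}$, as $r \to 1^{-}$. Given $X \in L^{2}$ and $\eta > 0$, pick such an $X'$ with $\|X - X'\| < \eta/(2M+2)$ and estimate
\begin{equation*}
\|R_{r}X - X\| \leq \|R_{r}(X-X')\| + \|R_{r}X' - X'\| + \|X' - X\| \leq (M+1)\|X-X'\| + \|R_{r}X'-X'\|,
\end{equation*}
which is $< \eta$ for $r$ sufficiently close to $1$. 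Hence $\mathcal{F}(X)_{r} \to X$ in $L^{2}(\ss^{1})$ as $r \to 1^{-}$, which by Remark~\ref{l2cont} is precisely the assertion that $X$ is an $L^{2}$-boundary extension of the smooth vector field $\mathcal{F}(X)$ on $\dd$.
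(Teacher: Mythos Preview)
Your argument is correct and takes a genuinely different route from the paper's. The paper disposes of the corollary in two lines: it notes that the estimates in the proof of Lemma~\ref{finaltheorem} amount to $\textbf{K}_{1-\epsilon} \to 2\pi\boldsymbol{\delta}$, and then invokes \cite[Proposition~5.4]{shubin} together with Lemma~\ref{finaltheorem} to conclude. You instead compute the Fourier coefficients $\kappa_n(r)$ of $\textbf{K}_r$ explicitly, establish the uniform multiplier bound $\sup_{n,r}|\kappa_n(r)| < \infty$, and close by uniform operator boundedness of the $R_r$ plus density of continuous tangential fields --- a fully self-contained argument that avoids any external reference and yields the exact multiplier formulas as a bonus. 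One side remark: your parenthetical that the crude estimates of Lemma~\ref{finaltheorem} only give $\|\textbf{K}_{1-\epsilon}\|_{L^{1}} = O(\epsilon^{-1/2})$ is true of \emph{those} particular bounds, but the actual normalized $L^{1}$ norm is exactly $1 + r^{2} \leq 2$ (by Parseval: $|\textbf{K}(re^{\iota\theta})| = (1-r^{2})^{3}|1-re^{\iota\theta}|^{-4}$ and $\frac{1}{2\pi}\int_0^{2\pi}|1-re^{\iota\theta}|^{-4}\,d\theta = (1+r^{2})(1-r^{2})^{-3}$). So the $L^{1}$ approximate-identity route is in fact available and is presumably what underlies the Shubin citation; your Fourier-multiplier proof nonetheless stands on its own and is the more transparent of the two.
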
 
\begin{proof}
Notice that in the proof of Lemma \ref{finaltheorem}, we showed that 
$$\lim_{\epsilon \rightarrow 0} \textbf{K}_{1-\epsilon} 
= 2 \pi \boldsymbol \delta.$$
Hence, Corollary \ref{lastcoro} follows from Lemma \ref{finaltheorem} and 
\cite[Proposition 5.4]{shubin}. \hfill \qedsymbol
\end{proof}
\begin{remark}
 \normalfont
 We suspect that Corollary \ref{lastcoro} is an infinitesimal version of 
 the problem of finding harmonic extensions of quasiconformal maps (from $\ss^{1}$ to 
 itself) to the open unit disk $\dd$ or the upper half plane $\hh$. See \cite{hardt} 
 for more details. 
\end{remark}

\subsection{A detailed map from $H^{1}(\Gamma; \mathfrak{g})$ 
to $\mathrm{HQD}(\dd, \Gamma)$}
Here we summarize the main consequences of 
\textbf{\cref{cohomtoanalsection1}} and \textbf{\cref{cohomtoanalsection2}}. 
\begin{theorem}
\label{lastsectiontheo}
 Let $\Gamma$ be a discrete cocompact subgroup of $\mathrm{PSU}(1, 1)$. 
 For every cocycle $c$ representing a cohomology class $[c] \in  
 H^{1}(\Gamma; \mathfrak{g})$, there exists a smooth vector field 
 $\psi$ on the open unit disk $\dd$ 
 such that $c= \delta \psi$.
Moreover, any such $\psi$ admits an 
$L^{2}$-extension to $\overline{\dd}$ whose restriction $\psi^{\sharp}$ to the 
boundary circle $\ss^{1}$ is tangential. 
There exists a homomorphism
\begin{equation}
\label{mainmap2}
 \begin{split}
  \varPsi: H^{1}(\Gamma; \mathfrak{g}) & \longrightarrow 
  \mathrm{HQD}(\dd, \Gamma) \\
  [c] & \longmapsto 
  \big(\mathcal{L}_{\mathcal{F}(\psi^{\sharp})}\textbf{g}_{\dd} \big)^{(2, 0)},
 \end{split}
\end{equation}
  where the map $\mathcal{F}$ is introduced in (\ref{finals}) 
  and 
  $\mathcal{F}(\psi^{\sharp})$ is a harmonic vector field on the open disk $\dd$. 
 \end{theorem}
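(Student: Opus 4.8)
\textbf{Proof proposal for Theorem \ref{lastsectiontheo}.}
The plan is to assemble the statement from the pieces already established in \textbf{\cref{cohomtoanalsection1}} and \textbf{\cref{cohomtoanalsection2}}, and to pin down the only genuinely new point, namely that the quadratic differential produced at the end is $\Gamma$-invariant and that $\varPsi$ is well defined on cohomology classes. First I would invoke Lemma \ref{continuousvector} (together with Corollary \ref{continuousvector1} and Remark \ref{continuousvector2}) applied to the $\Gamma$-module $\mathfrak{g}$: using the $\Gamma$-invariant partition of unity $\varphi$ from Lemma \ref{partition}, the explicit formula $\psi(z) = -\sum_{\gamma \in \Gamma} \varphi(\gamma(z)) c_{\gamma}(z)$ yields a smooth vector field on $\dd$ with $\delta \psi = c$. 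Then I would quote Corollary \ref{shortcoro} to say that this $\psi$ admits an $L^{2}$-extension to $\overline{\dd}$ whose restriction $\psi^{\sharp}$ to $\ss^{1}$ is tangential; I should also remark, as in the remark following Corollary \ref{shortcoro}, that this extension is unique and depends only on $c$, not on the auxiliary choice of $\varphi$, so that the later construction is unambiguous.

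Next I would feed $\psi^{\sharp}$ into the Poisson map for vector fields: by Theorem \ref{kernelvectorfieldisharmonic} and the remarks following it, $\mathcal{F}(\psi^{\sharp}) = f \ast \textbf{K}$ (where $\psi^{\sharp} = fY$, $Y(z) = \iota z$) is a harmonic vector field on the open disk $\dd$, and by Lemma \ref{finaltheorem} / Corollary \ref{lastcoro} the pair $\big(\mathcal{F}(\psi^{\sharp}), \psi^{\sharp}\big)$ is a vector field on $\overline{\dd}$ with $\psi^{\sharp}$ as its ($L^{2}$-)boundary value. By Proposition \ref{thm2} (in its $\dd$ version) the associated quadratic differential $\big(\mathcal{L}_{\mathcal{F}(\psi^{\sharp})}\textbf{g}_{\dd}\big)^{(2,0)}$ is holomorphic. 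The crucial remaining step — and the one I expect to be the main obstacle — is $\Gamma$-invariance of this quadratic differential. The point is that $\mathcal{F}$ is $\mathrm{PSU}(1,1)$-equivariant (Proposition \ref{propcon}), so $\mathcal{F}(\gamma^{\ast}\psi^{\sharp}) = \gamma^{\ast}\mathcal{F}(\psi^{\sharp})$ for $\gamma \in \Gamma$; since $\delta \psi^{\sharp} = c$ takes values in $\mathfrak{g}$, the difference $\gamma^{\ast}\mathcal{F}(\psi^{\sharp}) - \mathcal{F}(\psi^{\sharp})$ is the harmonic vector field $\mathcal{F}(c_{\gamma}|_{\ss^{1}})$; because $c_{\gamma}$ is a Killing field (hence extends tangentially to $\ss^{1}$ and restricts there to an element of $\mathfrak{X}_{\textrm{Killing}}(\ss^{1})$) and $\mathcal{F}$ reproduces such a field up to the relevant normalisation, this difference is itself Killing, and Killing fields have vanishing $(2,0)$-part of the Lie derivative of $\textbf{g}_{\dd}$ (Corollary \ref{rem1}). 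Therefore $\gamma^{\ast}\big(\mathcal{L}_{\mathcal{F}(\psi^{\sharp})}\textbf{g}_{\dd}\big)^{(2,0)} = \big(\mathcal{L}_{\mathcal{F}(\psi^{\sharp})}\textbf{g}_{\dd}\big)^{(2,0)}$, i.e.\ the quadratic differential lies in $\mathrm{HQD}(\dd,\Gamma)$.

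Finally I would check that $\varPsi$ is well defined and linear: replacing the cocycle $c$ by a cohomologous one $c + \delta X$ with $X \in \mathfrak{g}$ changes $\psi$ by $X$ (by the uniqueness in Corollary \ref{shortcoro}), hence changes $\psi^{\sharp}$ by the tangential boundary field $X|_{\ss^{1}}$ and $\mathcal{F}(\psi^{\sharp})$ by the Killing field $\mathcal{F}(X|_{\ss^{1}})$, which again does not affect $\big(\mathcal{L}_{(-)}\textbf{g}_{\dd}\big)^{(2,0)}$; linearity in $[c]$ is immediate from linearity of $\psi \mapsto \psi^{\sharp}$, of $\mathcal{F}$, and of $\xi \mapsto (\mathcal{L}_{\xi}\textbf{g}_{\dd})^{(2,0)}$. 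This establishes the homomorphism $\varPsi$ of \eqref{mainmap2}. The one place where care is needed is matching normalisation constants in $\mathcal{F}$ so that $\mathcal{F}$ restricted to (tangential boundary values of) Killing fields really returns Killing fields on $\dd$ rather than merely harmonic ones; this is exactly the content of Corollary \ref{coro4213} together with Theorem \ref{infinitedimproblem}(1), and I would make that dependence explicit in the write-up.
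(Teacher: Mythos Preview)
Your proposal is correct and follows the same path the paper takes: the paper does not give a separate proof of Theorem \ref{lastsectiontheo}, treating it explicitly as a summary of \cref{cohomtoanalsection1} and \cref{cohomtoanalsection2}, with the only additional verification appearing in the sketch proof of Corollary \ref{lastsectioncorol}. Your write-up assembles exactly those pieces (Lemma \ref{continuousvector}, Corollary \ref{shortcoro}, Theorem \ref{kernelvectorfieldisharmonic}, Corollary \ref{lastcoro}, Proposition \ref{thm2}), and your argument for $\Gamma$-invariance via $\mathrm{PSU}(1,1)$-equivariance of $\mathcal{F}$ is precisely what the paper encapsulates in the single sentence ``the map $\mathcal{F}$ maps Killing vector fields on $\ss^{1}$ to Killing vector fields on the open unit disk $\dd$'' in the proof of Corollary \ref{lastsectioncorol}; you are right to flag that this is the place where normalisation needs checking.
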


\begin{coro}
\label{lastsectioncorol}
$$ \varPhi \circ \varPsi  = \mathrm{Id},$$
where the map $\varPhi$ is constructed in (\ref{mainmap1}) (see 
Corollary \ref{onewaymap}) and the map $\varPsi$ in (\ref{mainmap2}) (see Theorem 
\ref{lastsectiontheo}). 
\end{coro}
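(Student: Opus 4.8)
The plan is to trace both maps carefully and observe that the composition reconstructs the same $\Gamma$-invariant holomorphic quadratic differential one started with. Fix $q \in \mathrm{HQD}(\dd, \Gamma)$ and apply $\varPsi \circ \varPhi$ — wait, the claim is $\varPhi \circ \varPsi = \mathrm{Id}$, so instead fix a class $[c] \in H^1(\Gamma;\mathfrak g)$ and compute $\varPhi(\varPsi([c]))$. First I would unwind $\varPsi([c])$: by Theorem \ref{lastsectiontheo} (equivalently Lemma \ref{lemmaintro}, Lemma \ref{lemmaintro1}, Corollary \ref{maincoro2}) one picks a smooth vector field $\psi$ on $\dd$ with $\delta\psi = c$, observes $\psi$ admits an $L^2$-extension to $\overline\dd$ with tangential boundary restriction $\psi^\sharp$, and then forms the harmonic vector field $\mathcal F(\psi^\sharp)$ on $\dd$ via the Poisson kernel vector field $\textbf K$. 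Then $\varPsi([c]) = \big(\mathcal L_{\mathcal F(\psi^\sharp)}\textbf g_\dd\big)^{(2,0)} =: q$. I would first check $q$ is indeed $\Gamma$-invariant and holomorphic: holomorphicity is Proposition \ref{firstprop} (since $\mathcal F(\psi^\sharp)$ is harmonic by Theorem \ref{kernelvectorfieldisharmonic} and the subsequent remarks), and $\Gamma$-invariance follows because $\psi^\sharp$ differs from a $\Gamma$-invariant vector field (namely $\chi-\psi$ in the notation of \S\ref{cohomtoanal}, or by the cocycle relation $\delta\psi=c$ with $c$ $\mathfrak g$-valued) by something whose Lie derivative of $\textbf g_\dd$ has vanishing $(2,0)$-part, so the associated quadratic differential is $\Gamma$-invariant.

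Next I would compute $\varPhi(q)$. By Corollary \ref{onewaymap}, $\varPhi(q) = [\delta\chi_q]$ where $\chi_q$ is \emph{the} harmonic vector field associated to $q$ produced by Theorem \ref{summarytheo} (i.e.\ Theorem \ref{chap3theorem1}): it $L^2$-extends to $\overline\dd$ with tangential boundary restriction, satisfies $(\mathcal L_{\chi_q}\textbf g_\dd)^{(2,0)}=q$, and is unique up to addition of a Killing field. The heart of the argument is then to identify $\chi_q$ with $\mathcal F(\psi^\sharp)$ modulo Killing fields. Both are harmonic vector fields on $\dd$ with the same associated holomorphic quadratic differential $q$; both have an $L^2$-extension to $\overline\dd$ whose boundary restriction is tangential (for $\mathcal F(\psi^\sharp)$ this is Lemma \ref{mainlemma3} / Corollary \ref{lastcorointro}, noting the boundary value is exactly $\psi^\sharp$). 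By the uniqueness statement in Theorem \ref{summarytheo}, any two such vector fields differ by a holomorphic vector field on $\dd$ that extends tangentially to $\ss^1$, hence by Theorem \ref{infinitedimproblem}(1) by an element of $\mathfrak g$. Therefore $\chi_q = \mathcal F(\psi^\sharp) + X$ for some Killing field $X$.

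Finally, I would conclude by comparing cocycles. Since $\delta$ is linear, $\delta\chi_q = \delta(\mathcal F(\psi^\sharp)) + \delta X$, and $\delta X$ is a coboundary, so $[\delta\chi_q] = [\delta(\mathcal F(\psi^\sharp))]$ in $H^1(\Gamma;\mathfrak g)$. Now I must show $[\delta(\mathcal F(\psi^\sharp))] = [c]$. Here I would use that $\mathcal F(\psi^\sharp) - \psi$ is $\Gamma$-invariant: indeed $\psi$ and $\chi_q$ (hence $\mathcal F(\psi^\sharp)$ up to a Killing field) both have coboundary equal to $c$ when regarded appropriately — more precisely, $\delta\psi = c$ by construction, and the difference $\mathcal F(\psi^\sharp)-\psi$ is a $0$-cocycle (bounded, $\Gamma$-invariant) exactly as in equation \eqref{newvectorfield} of \S\ref{cohomtoanal}, so $\delta(\mathcal F(\psi^\sharp)) = \delta\psi = c$. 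Hence $\varPhi(\varPsi([c])) = [\delta\chi_q] = [c]$, proving $\varPhi\circ\varPsi = \mathrm{Id}$. The main obstacle I anticipate is the bookkeeping in the last paragraph: one must be careful that the boundary value of $\mathcal F(\psi^\sharp)$ really is $\psi^\sharp$ (so that $\mathcal F(\psi^\sharp)$ and $\psi$ have the same $L^2$-boundary extension, forcing their difference to be genuinely $\Gamma$-invariant rather than merely agreeing on cohomology), and that the various uniqueness clauses from Theorem \ref{summarytheo} and Theorem \ref{infinitedimproblem} are invoked in the right order so that all ambiguities are absorbed into $\mathfrak g$, which is precisely the kernel of passing from $Z^1$ to $H^1$.
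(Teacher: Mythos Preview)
Your overall strategy is right, but the final step has a circularity. You want to conclude $\delta(\mathcal F(\psi^\sharp)) = c$, and you appeal to equation \eqref{newvectorfield} to assert that $\mathcal F(\psi^\sharp) - \psi$ is $\Gamma$-invariant. But the derivation of \eqref{newvectorfield} is nothing more than the observation that $\delta\chi = c$ and $\delta\psi = c$ together force $\delta(\chi-\psi)=0$; applying this with $\mathcal F(\psi^\sharp)$ in place of $\chi$ presupposes $\delta(\mathcal F(\psi^\sharp))=c$, which is exactly the conclusion you are after. Knowing that $\mathcal F(\psi^\sharp)$ and $\psi$ share the $L^2$-boundary value $\psi^\sharp$ does not, by itself, force two smooth vector fields on $\dd$ to have equal coboundaries.

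The missing ingredient --- and the one the paper uses --- is the $\mathrm{PSU}(1,1)$-equivariance of $\mathcal F$ (Proposition \ref{propcon}). For every $\gamma\in\Gamma\subset\mathrm{PSU}(1,1)$ one has $\gamma^*\mathcal F(\psi^\sharp) = \mathcal F(\gamma^*\psi^\sharp)$, so by linearity
\[
\delta(\mathcal F(\psi^\sharp))(\gamma) \;=\; \mathcal F\big(\delta\psi^\sharp(\gamma)\big).
\]
Now $\delta\psi^\sharp(\gamma)=c^\sharp(\gamma)$ is the restriction to $\ss^1$ of the Killing field $c(\gamma)\in\mathfrak g$, and $\mathcal F$ sends Killing vector fields on $\ss^1$ to the corresponding Killing vector fields on $\dd$ (equivariance again, with Lemma \ref{finaltheorem} fixing the normalisation so that boundary values match). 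Hence $\delta(\mathcal F(\psi^\sharp))(\gamma)=c(\gamma)$ on the nose, not just in cohomology.

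This also shows that your detour through the uniqueness clause of Theorem \ref{summarytheo} is unnecessary: $\mathcal F(\psi^\sharp)$ is itself harmonic on $\dd$, has tangential $L^2$-boundary value $\psi^\sharp$, and satisfies $(\mathcal L_{\mathcal F(\psi^\sharp)}\textbf{g}_{\dd})^{(2,0)}=q$, so one may take $\chi_q = \mathcal F(\psi^\sharp)$ directly when evaluating $\varPhi(q)$. The paper's proof is precisely this shortcut.
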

\begin{proof}[Sketch]
Recall from Corollary \ref{shortcoro} (and \textbf{\cref{cohomtoanalsection1}}) 
that given a 
cocycle $c$ representing a cohomology class $[c] \in  
 H^{1}(\Gamma; \mathfrak{g})$, there exists a smooth vector field 
 $\psi$ on the open unit disk $\dd$ 
 such that $c= \delta \psi$ and $\psi$ admits a unique $L^{2}$-extension to 
 $\overline{\dd}$. We denote the restriction of that extension 
 to the boundary circle $\ss^{1}$ by $\psi^{\sharp}$, and $\psi^{\sharp}$ is tangential. 
 Note that 
 $\delta \psi^{\sharp} = c^{\sharp}$, where $c^{\sharp}$ is a $1$-cocycle (determined 
 by $c$) with 
 values in the vector space of Killing vector fields on $\ss^{1}$. 
 The map $\mathcal{F}$ maps 
Killing vector fields on $\ss^{1}$ to Killing vector fields on the open unit disk 
$\dd$.
 Therefore, it 
 is clear that $\delta \mathcal{F}(\psi^{\sharp}) =c$ and 
 $\mathcal{F}(\delta \psi^{\sharp}(\gamma)) = c^{\sharp}(\gamma)$, for every 
 $\gamma \in \Gamma$.  \hfill \qedsymbol
\end{proof} 
\subsection{Open Problems}
We state the following non-exhaustive list of
open problems based on this section: 
\medskip

From Corollary \ref{shortcoro} we know that  
 given a 
cocycle $c$ representing a cohomology class $[c] \in  
 H^{1}(\Gamma; \mathfrak{g})$, there exists a smooth vector field 
 $\psi$ on the open unit disk $\dd$ 
 such that $c= \delta \psi$ and $\psi$ admits a unique $L^{2}$-extension to the closed 
 unit disk $\overline{\dd}$
 whose restriction $\psi^{\sharp}$ to the 
boundary circle $\ss^{1}$ is tangential. 
 For the construction of $\psi$ we can either use the $\Gamma$-invariant partition 
 of unity method or the difficult theory of \textbf{\cref{chapter3}} and 
 \textbf{\cref{analtocohom}} which produces a harmonic solution. 
 Corollary \ref{shortcoro} 
 is valid for all of these but the construction of an $L^{2}$-extension of $\psi$ 
 to $\overline{\dd}$ 
 relies on the existence of harmonic vector fields. 
 \begin{problem}
\label{openprob1}
  Is there 
 a more direct way of proving Corollary \ref{shortcoro} which does not 
 take harmonicity into account? 
\end{problem}
\medskip

In \textbf{\cref{invariancepoisson}}, 
we have not shown that there exists a \textit{unique} harmonic
extension of a tangential $L^{2}$-vector field $X$ on $\ss^{1}$ to the closed unit 
disk $\overline{\dd}$. 
\begin{problem}
\label{openprob2}
Given a tangential $L^{2}$-vector field $X$ on the boundary circle $\ss^{1}$, 
 does there exist a unique harmonic extension to the closed unit disk
$\overline{\dd}$? 
\end{problem}
 
\section{Application: a connection on the universal Teichm\"uller curve}
\label{connectionuniversal}
\begin{defn}[Ehresmann's definition]
\label{connectionnew}
 \normalfont
 A connection 
on a smooth fiber bundle $f: E \longrightarrow M$ is a smooth vector subbundle 
$T_{h}E$ - the horizontal tangent bundle - 
of the tangent bundle $TE \longrightarrow E$ 
such that $T_{h}E \oplus T_{v}E = TE$.
\end{defn}
\begin{remark}
 \normalfont
 According to \cite[Note 1, Page 287]{kob}, 
Definition \ref{connectionnew} of a connection on a smooth fibre bundle 
$f: E \longrightarrow M$
is given for the first time in \cite{ehres}. 
\end{remark}

\begin{remark}
\label{connectionnew1}
 \normalfont
Equivalently, 
a connection on a smooth fiber bundle $f: E \longrightarrow M$ is a smooth 
vector bundle homomorphism $f^{\ast}TM \longrightarrow TE$ such that the 
composition
$$f^{\ast}TM \longrightarrow TE \longrightarrow TE/T_{v}E $$
is identity. 
\end{remark}

\begin{remark}
\label{remarkconnectionnew}
 \normalfont
 Consequently, the difference of two connections on a smooth fiber bundle 
 $f: E \longrightarrow M$ 
 is a vector bundle homomorphism $f^{\ast}TM \longrightarrow T_{v}E$. In particular, 
 if $E$ is a product, i.e., $E= M \times F$, and $f$ is the projection, then there 
 is a preferred choice of connection and any other connection on this 
 trivial bundle is described by a vector field on the fiber $F$ 
 for every tangent vector $X \in T_{p}M$, where $p \in M$. 
\end{remark}
\medskip
With the help of Definition 
\ref{connectionnew} and Remark \ref{remarkconnectionnew}, we will describe a 
connection on the \textit{universal Teichm\"uller curve}. 
Associated to the $\mathrm{PSL}(2, \rr)$-principal bundle 
$$ \mathrm{Hom}_{0}(\Gamma_{g}, \mathrm{PSL}(2, \rr)) \longrightarrow 
\mathrm{Hom}_{0}(\Gamma_{g}, \mathrm{PSL}(2, \rr)) / \mathrm{PSL}(2, \rr)$$
we have the following smooth fiber bundle \footnote{Actually, $\pi$ is a proper submersion and from the Ehresmann fibration theorem, it 
 follows that $\pi$ is a smooth fiber bundle.} known as the 
 universal Teichm\"uller curve
 \begin{equation}
  \label{universalbundle}
  \pi: \Gamma_{g} \big \backslash \mathrm{Hom}_{0}(\Gamma_{g}, \mathrm{PSL}(2, \rr)) 
\times_{\mathrm{PSL}(2, \rr)} \hh \longrightarrow 
 \mathrm{Hom}_{0}(\Gamma_{g}, \mathrm{PSL}(2, \rr)) / \mathrm{PSL}(2, \rr)
 \end{equation}
 with 
the fiberwise $\Gamma_{g}$-action (which is free and proper), given by the following map 
 $$\gamma \cdot [\rho, z] \longmapsto [\rho, \rho(\gamma)z], \quad \forall \gamma 
 \in \Gamma_{g}.$$ 
The kernel of the map $d\pi$ gives us a line bundle over the total space 
of the bundle $\pi$ given in (\ref{universalbundle}).  
To make the process of describing a connection on the universal Teichm\"uller curve more clear 
and digestible to the reader, we first restrict our attention to the trivial bundle 
$$ \mathrm{Hom}_{0}(\Gamma_{g}, \mathrm{PSL}(2, \rr)) 
\times \hh \longrightarrow 
 \mathrm{Hom}_{0}(\Gamma_{g}, \mathrm{PSL}(2, \rr)).$$
From Remark \ref{remarkconnectionnew}, we know that to describe a connection on 
 the trivial bundle 
 \begin{equation}
  \label{trivialbundle}
  \mathrm{Hom}_{0}(\Gamma_{g}, \mathrm{PSL}(2, \rr)) 
\times \hh \longrightarrow 
 \mathrm{Hom}_{0}(\Gamma_{g}, \mathrm{PSL}(2, \rr))
 \end{equation}
 we need to describe a vector field $\mathfrak{Y}$ on $\hh$ for every $1$-cocycle 
 $$c \in 
 T_{\rho}(\mathrm{Hom}_{0}(\Gamma_{g}, \mathrm{PSL}(2, \rr))) \cong 
 Z^{1}(\Gamma_{g}; \mathfrak{g}_{\mathrm{Ad}\rho}).$$ 
 But there is more to it than meets the eye.  
 We need to describe a connection that respects the $\Gamma_{g}$-action on each fiber 
 of the bundle given in (\ref{trivialbundle}). So, this condition translates 
 to the following condition on $\mathfrak{Y}$ 
 $$\delta \mathfrak{Y} =c, $$
 where $\delta$ is the coboundary operator. 
 \medskip
 
 Therefore, the description of a connection on the universal Teichm\"uller curve given in 
 (\ref{universalbundle}) is 
 equivalent to the description of a vector field  $\mathfrak{Y}$ on $\hh$ (or on $\dd$) 
 for every $c \in T_{\rho}(\mathrm{Hom}_{0}(\Gamma_{g}, \mathrm{PSL}(2, \rr)))$ 
 representing the 
 cohomology class $[c] \in T_{[\rho]}
 (\mathrm{Hom}_{0}(\Gamma_{g}, \mathrm{PSL}(2, \rr)) / \mathrm{PSL}(2, \rr))$ such that 
 $$\delta \mathfrak{Y} =c.$$ 
 We choose $\mathfrak{Y}$ so that it is the unique ``harmonic'' vector field on $\dd$ 
 satisfying $\delta \mathfrak{Y} =c$. See \textbf{\cref{chapter3section2}} 
 and \textbf{\cref{analtocohom}}. Note that the connection on the trivial bundle given in 
 (\ref{trivialbundle}) so constructed is not only invariant under the action of $
 \Gamma_{g}$, but also under the action of $\mathrm{PSL}(2, \rr)$. Also, 
 this connection on the universal Teichm\"uller curve is identical with the one 
 proposed by S. Wolpert in \cite[Section 5]{wolpert1}. The reasons for this agreement 
 are given in \cref{genesis}. 
\begin{problem}
 \normalfont
 The Chern form of the vertical bundle $\mathrm{ker}d\pi$ is calculated by 
 S. Wolpert using 
 the connection $1$-form and curvature $2$-form for a smooth metric on the 
 vertical bundle $\mathrm{ker}d\pi$ (\cite[Section 4 \& 5]{wolpert1}). 
  An obvious question would be whether any of the forms in Chern forms and Riemann 
  tensor would be \textit{harmonic} (in the sense of Hodge theory) w.r.t the Weil-Petersson metric (\cite{wolpert2}, 
  \cite{yamada}).  
  If not, what are obstructions for Chern class forms to be harmonic? 
\end{problem}
\begin{remark}
 \normalfont
 This open problem arises in an email correspondence between the author and S. Wolpert.  
\end{remark}

\newpage
\appendix
 \section{The genesis of the potential equation $F_{\bar{z}}=(z-\bar{z})^{2}\overline{\phi(z)}$ }
 \label{genesis}
\subsection{A swift introduction to Beltrami differentials}
Let 
 $(V, J_{V}), (W, J_{W})$ be complex vector spaces which we treat as real vector 
 spaces with linear operators $J_{V}$ and $J_{W}$ such that 
 $J_{V}^{2}=J_{W}^{2}=-\mathrm{Id}$.  
A $\rr$-linear map 
$$f: (V, J_{V}) \longrightarrow (W, J_{W})$$
can be written uniquely as a sum of $\cc$-linear map $f_{1}$ and $\cc$-antilinear map $f_{2}$, i.e., 
$$f_{1} \circ J_{V} = J_{W} \circ f_{1}, \quad  f_{2} \circ J_{V} = -J_{W} \circ f_{2}.$$

\begin{defn}
\label{beltramiform}
 \normalfont
 Given an invertible $\rr$-linear map which is orientation preserving 
 $$f: (V, J_{V}) \longrightarrow (W, J_{W})$$
 of complex 
 vector spaces, the \textit{Beltrami form} of $f$ is the map 
 \begin{equation}
 \label{beltramiequation1}
  \mu(f):= f_{1}^{-1} \circ f_{2} \in \mathrm{End}_{\rr}((V, J_{V})).
 \end{equation}
\end{defn}
\begin{remark}
 \normalfont
 $\mu(f)$ anticommutes with $J_{V}$. 
\end{remark} 
\smallskip
Now, we will restrict our 
discussion to one dimensional complex vector spaces.
Any $\rr$-linear map $f: (\cc, \iota) \longrightarrow (\cc, \iota)$ can be 
written as $f(z)=az+b\bar{z}$, $a, b, z \in \cc$. Here $f_{1}(z)=az$ and $f_{2}(z)=b\bar{z}$.
From 
\cite[Exercise 4.8.5, Chapter 4]{hub}, we have 
$$ \frac{\lVert f \rVert^{2}}{\mathrm{det}f} = \frac{|f_{1}|+|f_{2}|}{|f_{1}|-|f_{2}|},$$
where $\lVert \cdot \rVert$ denotes the operator norm on the vector 
space of $\rr$-linear maps 
$(\cc, \iota) \longrightarrow (\cc, \iota)$, and $| \cdot |$ denotes the 
operator norm on the vector space of $\cc$-linear maps 
$(\cc, \iota) \longrightarrow (\cc, \iota)$ and $\cc$-antilinear maps 
$(\cc, \iota) \longrightarrow (\cc, \iota)$. Moreover, if $|a| > |b|$, then the map 
$f$ is orientation-preserving. Hence, it immediately follows that
$\lVert \mu(f) \rVert < 1$. 
The space of all Beltrami forms on $(\cc, \iota)$ is defined as follows: 
$$\mathrm{Bel}(\cc):= 
\{\mu \in \mathrm{End}_{\rr}(\cc)| \exists c \in \cc, |c|<1,  \mu(z)=c \bar{z} \}.$$

Now, we ask the following question: given $\mu \in \mathrm{Bel}(\cc)$, how do we find 
an orientation preserving $f: \cc \longrightarrow \cc$ with $\mu(f)=\mu$? 
The equation $\mu(f)=\mu$ is famously known as the  \textit{Beltrami equation}. 
The most sophisticated answer to the above question is that $f$ solves $\mu(f)=\mu$ iff 
$f$ maps an ellipse in $\cc$ whose ratio of the major to the minor axis is 
$\frac{1+\lVert \mu \rVert}{1-\lVert \mu \rVert}$ to a circle in $\cc$. 
Let's discuss 
how the above discussion translates to the case of Riemann surfaces $X$ and $Y$
and an orientation 
preserving $C^{1}$ map $f: X \longrightarrow Y$ between them. Note that 
$df(x): T_{x}X \longrightarrow 
T_{f(x)}Y$ can be written as a sum of a $\cc$-linear map and a $\cc$-antilinear 
map. For example, when $f: U \subset \cc \longrightarrow \cc$, we have 
$df = df^{(1, 0)} + df^{(0, 1)}$ (see the discussion just before Example \ref{harmexamplenice}), 
where $df^{(1, 0)}= \frac{\partial f}{\partial z} dz$ and 
$df^{(0, 1)} = \frac{\partial f}{\partial \bar{z}} d\bar{z}$. For a function 
$f: X \longrightarrow \cc$, 
\begin{equation}
\label{beltramiequation2}
 \mu(f) = (df^{(1, 0)})^{-1} \circ df^{(0, 1)}.
\end{equation}
Compare (\ref{beltramiequation2}) it with (\ref{beltramiequation1}) 
given in Definition \ref{beltramiform}. 
$\mu(f)$ is an antilinear bundle map $TX \longrightarrow TX$. 
\begin{defn}
\label{belt}
\normalfont
 A smooth \textit{Beltrami differential} on $X$ is a smooth antilinear bundle map 
 $\mu: TX \longrightarrow TX$. 
\end{defn}
\begin{remark}
\label{belt1}
 \normalfont
 We can think of a Beltrami differential $\mu$ as 
 a smooth section of the bundle 
 $\overline{T^{\ast}X} \otimes_{\cc} TX$. 
\end{remark}
\subsection{Filling in the gap} 
\label{fillingingap}
Let $\Sigma_{g}$ be given as $\hh/\Gamma$ where $\Gamma$ is a discrete-cocompact subgroup of 
$\mathrm{PSL}(2, \rr)$. Given a $\Gamma$-invariant Beltrami differential $\mu$ on $\hh$ 
with $\lVert \mu \rVert < 1$, 
there exists a smooth map $f: \hh \longrightarrow \hh$ such that 
$f_{\bar{z}} = \mu f_{z}$ 
(see \cite{ahlbers}, \cite{ahlfors111}, \cite{ahlfors22}, \cite{hub}, \cite{imayoshi}).
For $t$ real and small, $\{f^{t\mu}\}$ denotes the family of 
smooth maps determined by the Beltrami differential $t\mu$. Then the 
\textit{deformation vector field} 
$$F := \frac{d}{dt}f^{t\mu}|_{t=0}$$ on $\hh$ satisfies 
the famous potential equation 
$$F_{\bar{z}}\frac{d\bar{z}}{dz}=\mu.$$ 

Let 
$T\Sigma_{g}$ denote the holomorphic tangent bundle of $\Sigma_{g}$. 
Recall Definition \ref{belt} and Remark \ref{belt1}. 
A Beltrami differential $\mu$ can be thought of as a smooth differential form 
on $\Sigma_{g}$
of type $(0,1)$ with values in the bundle $T\Sigma_{g}$. 
In classical Teichmueller theory, we have 
the following short exact sequence of sheaves:
 \begin{equation}
 \label{shortex}
  \xymatrix{
  0 \ar[r] & \mathcal{S}_{\mathrm{Hol}}\big(T\Sigma_{g}\big) \ar[r]^-{i} 
  & \mathcal{S}(T\Sigma_{g})
  \ar[r]^-{\frac{\partial}{\partial \bar{z}}} & \mathcal{BEL} \ar[r] & 0
  }
 \end{equation}
where
\newline
$$\mathcal{S}_{\mathrm{Hol}}\big(T\Sigma_{g}\big) \text{ is the 
sheaf of holomorphic sections of} \hspace{2pt} T\Sigma_{g} 
\hspace{1pt} \text{on} \hspace{2pt} \Sigma_{g},$$
$$\mathcal{S}(T\Sigma_{g}) \text{is the 
sheaf of smooth sections of} \hspace{1pt} T\Sigma_{g},$$ 
$$\mathcal{BEL}  \hspace{1pt} 
\text{is the sheaf of (smooth) Beltrami differentials on} \hspace{1pt} \Sigma_{g}.$$ 

Clearly, $i$ is the inclusion map.
Locally, a smooth section of $T\Sigma_{g}$ can be written as 
$f_{i} \frac{\partial}{\partial z_{i}}$, where $f_{i}$ is a smooth function.  
Applying $\frac{\partial}{\partial \bar{z}}$ on 
$f_{i}\frac{\partial}{\partial z_{i}}$ gives 
us a Beltrami differential 
$\frac{\partial f_{i}}{\partial \bar{z}} d\bar{z}_{i} \otimes 
\frac{\partial}{\partial z_{i}}$. This definition of $\frac{\partial}{\partial \bar{z}}$
is independent of the choice of coordinates.  
Note that (\ref{shortex}) is a special case of a 
more general construction \big(called the  
\textit{Dolbeault resolution} of the sheaf 
$\mathcal{S}_{\mathrm{Hol}}\big(T\Sigma_{g}\big)$\big). 
See \cite[Chapter 4]{voisin} for more details. In particular, 
the map $\frac{\partial}{\partial \bar{z}}$ in (\ref{shortex}) contributes to a 
long exact sequence in sheaf cohomology. Therefore, we have 
$$H^{1}\big(\Sigma_{g}; \mathcal{S}_{\mathrm{Hol}}\big(T\Sigma_{g}\big)\big) \cong 
\frac{H^{0}(\Sigma_{g}; \mathcal{BEL})}{
\overline{\partial}H^{0}\big(\Sigma_{g}; \mathcal{S}
  \big(T\Sigma_{g}\big)\big)}. $$ 
  It is a well known fact that for $\Sigma_{g}$ with a hyperbolic metric, a 
cohomology class in 
$$\frac{H^{0}(\Sigma_{g}; \mathcal{BEL})}{
\overline{\partial}H^{0}\big(\Sigma_{g}; \mathcal{S}
  \big(T\Sigma_{g}\big)\big)}$$ has a unique 
  representative known as a 
\textit{harmonic Beltrami differential}, i.e.,
a Beltrami differential which is annihilated by the appropriate Laplacian 
(see \cite[Chapter 5]{voisin}). 
In the famous potential equation 
$$F_{\bar{z}} = 
(z-\bar{z})^{2} \overline{f(z)},$$ 
\begin{equation}
\label{lastequation}
 \mu = (z-\bar{z})^{2} \overline{f(z)} \frac{d\bar{z}}{dz}
\end{equation}
is a harmonic Beltrami differential if $f$ is holomorphic. 
\smallskip

For $\hh$, we have the following sequence: 
\begin{equation}
 \label{shortex2}
 \xymatrix{
  0 \ar[r] & \mathcal{S}_{\mathrm{Hol}}\big(T \mathbb{H}^{2}\big) 
  \ar[r]^-{i} 
  & \mathcal{S} \big(T \mathbb{H}^{2}\big)
  \ar[r]^-{\frac{\partial}{\partial \bar{z}}} & \mathcal{BEL} \ar[r] & 0
  }
 \end{equation}
 In (\ref{shortex2}), $\mathcal{BEL}$ is the sheaf of smooth Beltrami differentials 
 on $\hh$. 
Note that the short exact sequence given by 
(\ref{shortexact})
in Thereom \ref{sess}, i.e., 
 \begin{equation}
 \label{shortex1}
  \xymatrix{
  0 \ar[r] & \mathcal{HOL} \ar[r]^-{\alpha} & \mathcal{HARM}
  \ar[r]^-{\beta} & \mathcal{HQD} \ar[r] & 0
  }
 \end{equation}
is similar to the one given by (\ref{shortex2}). In (\ref{shortex1}), 
$\mathcal{HOL}$ is the sheaf of holomorphic vector fields on $\hh$, 
$\mathcal{HARM}$ is 
the sheaf of harmonic vector fields on $\hh$ and $\mathcal{HQD}$ is 
the sheaf of holomorphic quadratic differentials on $\hh$. 
So, the gap is following: 
\begin{qus}
\label{gap}
\nonumber
\normalfont
 How do (\ref{shortex2}) and (\ref{shortex1}) relate? 
\end{qus}
\smallskip

The following diagram fills the gap: 
\begin{equation}
\label{shortex3}
\xymatrix{
 0 \ar[r] & \mathcal{HOL} \ar@{=}[d] \ar[r]^-{\alpha} & 
 \mathcal{HARM} \ar@{^{(}->}[d]^{\varrho_{1}} \ar[r]^-{\beta} & 
\mathcal{HQD} \ar[d]^{\varrho_{2}} \ar[r] & 0 \\
0 \ar[r] & \mathcal{S}_{\mathrm{Hol}}\big(T \mathbb{H}^{2}\big) 
\ar[r]^-{i}
& \mathcal{S}\big(T \mathbb{H}^{2}\big)
  \ar[r]^-{\frac{\partial}{\partial \bar{z}}} & \mathcal{BEL} \ar[r] & 0
}
\end{equation}
In (\ref{shortex3}), $\varrho_{1}$ is clearly the inclusion map because a harmonic 
vector field on $\hh$ is a smooth vector field. And, $\varrho_{2}$ is defined as: 
$$\varrho_{2}(q) = \frac{g_{\hh}^{-1}}{2} \bar{q},$$
where $q$ is a holomorphic quadratic differential on $\hh$. 
Note that $\varrho_{2}(q)$ is a harmonic Beltrami differential 
(see (\ref{lastequation})) on $\hh$. Moreover, $\varrho_{2}$ has a coordinate 
independent meaning. Here is an 
argument: 
recall that a Riemannian metric on an almost complex manifold $M$ 
is the real part of a unique
Hermitian metric on $M$. Therefore, a Riemannian metric on 
$M$ determines an isomorphism 
$$TM \longrightarrow \overline{T^{\ast}M}.$$ That in 
turn determines an isomorphism 
$$
\mathrm{Hom}(\overline{TM}, TM) \longrightarrow \mathrm{Hom}(\overline{TM}, 
\overline{T^{\ast}M}).$$ 
Note that $\mathrm{Hom}(\overline{TM}, 
\overline{T^{\ast}M}) = \mathrm{Hom}(TM, 
T^{\ast}M)$. 
Therefore, the bundle 
of Beltrami differetials on $\hh$ is isomorphic to the 
bundle of quadratic differentials on $\hh$.



\end{document}